\tikzset{>=stealth}
\tikzset{
    dot/.style={circle,draw,fill,inner sep=1pt},
    arrow/.style={->,thick,shorten <=2pt,shorten >=2pt},
    twoarrow/.style={double,double distance=1.5pt,shorten <=9pt,shorten >=10pt,decoration={markings,mark=at position -8pt with {\arrow[scale=2]{>}}},preaction={decorate}},
    twoarrowlonger/.style={double,double distance=1.5pt,shorten <=5pt,shorten >=6pt,decoration={markings,mark=at position -4pt with {\arrow[scale=2]{>}}},preaction={decorate}},
}
\DeclareFontFamily{OT1}{pzc}{}
\DeclareFontShape{OT1}{pzc}{m}{it}{<-> s * [1.10] pzcmi7t}{}
\DeclareMathAlphabet{\mathpzc}{OT1}{pzc}{m}{it}
\newtheorem{thm}{Theorem}[section]
\newtheorem*{thm*}{Theorem}
\newtheorem{prop}[thm]{Proposition}
\newtheorem*{prop*}{Proposition}
\newtheorem{cor}[thm]{Corollary}
\newtheorem{lemma}[thm]{Lemma}
\theoremstyle{definition}
\newtheorem{defn}[thm]{Definition}
\newtheorem*{defn*}{Definition}
\newtheorem*{warning*}{Warning}
\newtheorem{ex}[thm]{Example}
\newtheorem{example}[thm]{Example}
\newtheorem{rem}[thm]{Remark}
\newtheorem{hypothesis}[thm]{Hypothesis}
\newtheorem{notation}[thm]{Notation}
\def\part{\@startsection{part}{0}%
  \z@{\linespacing\@plus\linespacing}{.5\linespacing}%
  {\normalfont\scshape\centering}}
\def\subsubsection{\@startsection{subsubsection}{3}%
  \z@{.5\linespacing\@plus.7\linespacing}{-.5em}%
  {\normalfont\bfseries}}
\def\paragraph{\@startsection{paragraph}{4}%
  \z@\z@{-\fontdimen2\font}%
  {\normalfont\bfseries}}
\let\orgdescriptionlabel\descriptionlabel
\renewcommand*{\descriptionlabel}[1]{%
  \let\orglabel\label
  \let\label\@gobble
  \phantomsection
  \edef\@currentlabel{#1}%
  \let\label\orglabel
  \orgdescriptionlabel{#1}%
}
\newcommand*{\longhookrightarrow}{\ensuremath{\lhook\joinrel\relbar\joinrel\rightarrow}}
\newcommand{\R}{{\mathbb{R}}}
\DeclareMathOperator{\Hom}{Hom}
\DeclareMathOperator{\Fun}{Fun}
\newcommand{\h}{\mathrm{h}}
\DeclareMathOperator{\Iso}{Iso}
\DeclareMathOperator{\Space}{\mathcal{S}\!\mathpzc{pace}}
\DeclareMathOperator{\sSet}{\mathpzc{s}\mathcal{S}\!\mathpzc{et}}
\DeclareMathOperator{\sSpace}{\mathpzc{s}\mathcal{S}\!\mathpzc{pace}}
\DeclareMathOperator{\SeSp}{\mathcal{S}\!\mathpzc{e}\mathcal{S}\!\mathpzc{p}}
\DeclareMathOperator{\SeCat}{\mathcal{S}\!\mathpzc{e}\mathcal{C}\!\mathpzc{at}}
\DeclareMathOperator{\ICat}{\mathcal{IC}\!\mathpzc{at}}
\DeclareMathOperator{\CSSp}{\mathcal{CSS}\!\mathpzc{p}}
\DeclareMathOperator{\CSSP}{\mathbf{CSSp}}
\DeclareMathOperator{\Man}{Man}
\DeclareMathOperator{\CMan}{\mathcal{M}\!\mathpzc{an}}
\newcommand{\C}{\mathcal{C}}
\renewcommand{\S}{\mathcal{S}}
\newcommand{\W}{\mathcal{W}}
\newcommand{\pr}{\mathrm{pr}}
\newcommand{\bullets}{{\bullet,\ldots, \bullet}}
\newcommand{\unit}{\mathbbm{1}}
\newcommand{\Int}{\mathrm{Int}}
\newcommand{\TInt}{\mathfrak{Int}}
\DeclareMathOperator{\PBord}{PBord}
\DeclareMathOperator{\PPBord}{\mathbb{P}Bord}
\newcommand{\TPBord}{\mathfrak{PBord}}
\DeclareMathOperator{\Bord}{Bord}
\DeclareMathOperator{\Cob}{\!Cob}
\newcommand{\ptpos}{
\begin{tikzpicture}[scale=0.3]
\fill (0.85,0.35) circle (0.1);
\draw[->, red] (0.85,0.35) -- (1.85,0.35) node[anchor = west] {\tiny 1};
\draw[->, blue] (0.85,0.35) -- (1.55, 1.05) node[anchor = west] {\tiny 2};
\end{tikzpicture}
}
\newcommand{\ptframe}{
\begin{tikzpicture}[scale=0.3]
\fill (0,0) circle (0.1);
\draw[->, red] (0,0) -- (1,0) node[anchor = west] {\tiny 1};
\draw[->, blue] (0,0) -- (0,1) node[anchor = south] {\tiny 2};
\end{tikzpicture}
}
\def\vdotssmall{\vbox{\baselineskip=2.5pt \lineskiplimit=0pt 
\kern6pt \hbox{.}\hbox{.}\hbox{.}}}
\newcommand{\overunder}[3][{}]{\overset{#2}{\underset{#1}{#3}}}
\newcommand{\ul}[1]{{\underline{#1}}}
\newcommand{\oul}[1]{{\overline{\underline{#1}}}}
\newcommand{\<}[1]{{\langle #1 \rangle}}
\newcommand{\myloopnop}[2][]{\Omega^{#1}(#2)}
\newcommand{\myloop}[3][]{\Omega^{#1}_{#3}(#2)}
\newcommand{\deloop}[3][]{\mathscr{L}^{#1}_{#3}(#2)}
\newcommand{\bound}[1]{B(c,#1)}
\newcommand{\eDelta}[1]{{|\Delta^{#1}|_e}}
\protected\def\hiddenmath{$(\infty,n)$}
\title{A note on the \texorpdfstring{\hiddenmath}{(infty,n)}-category of bordisms}
\author{Damien Calaque}
\address{IMAG, Universit\'e Montpellier, CNRS, 34095 Montpellier, France \& Institut Universitaire de France}
\email{damien.calaque@umontpellier.fr}
\author{Claudia Scheimbauer}
\address{Mathematical Insitute, University of Oxford, OX2 6GG Oxford, UK}
\email{scheimbauer@maths.ox.ac.uk}
\begin{document}

\begin{abstract}
In this extended note we give a precise definition of fully extended topological field theories \`a la Lurie. Using complete $n$-fold Segal spaces as a model, we construct an $(\infty,n)$-category of $n$-dimensional bordisms, possibly with tangential structure. We endow it with a symmetric monoidal structure and show that we can recover the usual category of bordisms.
\end{abstract}

\maketitle

\tableofcontents

\section*{Introduction}

Topological field theories (TFTs) arose as toy models for physical quantum field theories and have proven to be of mathematical interest, notably because they are a fruitful tool for studying topology. An $n$-dimensional TFT is a symmetric monoidal functor from the category of bordisms, which has closed $(n-1)$-dimensional manifolds as objects and $n$-dimensional bordisms as morphisms, to any other symmetric monoidal category, which classically is taken to be the category of vector spaces or chain complexes. 

A classification of 1- and 2-dimensional TFTs follows from classification theorems for 1- and 2-dimensional compact manifolds with boundary, cf.~\cite{Abrams}. In order to obtain a classification result for larger values of $n$ one needs a suitable replacement of the classification of compact $n$-manifolds with boundary used in the low-dimensional cases. Moreover, as explained in \cite{BaezDolan}, this approach requires passing to ``extended'' topological field theories. Here extended means that we need to be able to evaluate the $n$-TFT not only at $n$- and $(n-1)$-dimensional manifolds, but also at $(n-2)$-,..., 1-, and 0-dimensional manifolds. Thus, an extended $n$-TFT is a symmetric monoidal functor out of a higher category of bordisms. In light of the hope of computability of the invariants determined by an $n$-TFT, e.g.~by a triangulation, it is natural to include this data. Furthermore, Baez and Dolan conjectured that, analogously to the 1-dimensional case, extended $n$-TFTs are fully determined by their value at a point, calling this the Cobordism Hypothesis. A definition of a suitable bicategory of $n$-bordisms and a proof of a classification theorem of extended TFTs for dimension 2 was given in \cite{Schommer}.

In his expository manuscript \cite{Lurie}, Lurie suggested passing to $(\infty, n)$-categories for a proof of the Cobordism Hypothesis in arbitrary dimension $n$. He gave a detailed sketch of such a proof using a suitable higher category of bordisms, which, informally speaking, has zero-dimensional manifolds as objects, bordisms between objects as 1-morphisms, bordisms between bordisms as 2-morphisms, etc., and for $k>n$ there are only invertible $k$-morphisms given by diffeomorphisms and their isotopies. However, finding an explicit model for such a higher category poses one of the difficulties in rigorously defining these $n$-dimensional TFTs, which are called ``fully extended''. 

In \cite{Lurie},  Lurie gave a short sketch of a definition of this $(\infty, n)$-category using complete $n$-fold Segal spaces as a model. Instead of using manifolds with corners and gluing them, his approach was to conversely use embedded closed (not necessarily compact) manifolds, following along the lines of \cite{GMTW, galatius06, BoekstedtMadsen}, and to specify points where they are cut into bordisms of which the embedded manifold is a composition. Whitney's embedding theorem ensures that every $n$-dimensional manifold $M$ can be embedded into some large enough vector space and suitable versions for manifolds with boundary can be adapted to obtain an embedding theorem for bordisms, see Section \ref{hocat}. Moreover, the rough idea behind the definition of the $n$-fold Segal space is that it includes the data, for $k_1,\ldots, k_n$, of the 
classifying space for diffeomorphisms of, in the $i$th direction $k_i$-fold, composable $n$-bordisms. Lurie's idea was to use the fact that the space of embeddings of $M$ into $\R^\infty$ is contractible to justify the construction. 

Modifying this approach, the main goal of this note is to provide a detailed construction of such an $(\infty, n)$-category of bordisms, suitable for explicitly constructing an example of a fully extended $n$TFT, which will be the content of a subsequent paper \cite{CalaqueScheimbauer2}. As we explain in Section \ref{Lurie's vs}, Lurie's sketch does not lead to an $n$-fold Segal space, as the essential constancy condition is violated.  In our Definition \ref{def PBord}, we propose a stronger condition on elements in the levels of the Segal space. We show that this indeed yields an $n$-fold Segal space $\PBord_n$. Its completion $\Bord_n$ defines an $(\infty, n)$-category of $n$-bordisms and thus is a corrigendum to Lurie's $n$-fold simplicial space of bordisms from \cite{Lurie}.

Furthermore, we endow it with a symmetric monoidal structure and also consider bordism categories with additional structure, e.g.~orientations and framings, which allows us, in Section \ref{TFT}, to rigorously define fully extended topological field theories.

Our main motivation to have a precise definition of the $(\infty, n)$-category of bordisms was the following: in the subsequent paper \cite{CalaqueScheimbauer2} we explicitly construct an example of a fully extended topological field theory. Given an $E_n$-algebra $A$ we show that factorization homology with coefficients in $A$ leads to a fully extended $n$-dimensional topological field theory with target category a suitable $(\infty,n)$-Morita category with $E_n$-algebras as objects, bimodules as 1-morphisms, bimodules between bimodules as 2-morphisms, etc.

\paragraph{Organization of the paper}

In Part I, consisting of the first three sections, we recall the necessary tools from higher category theory needed to define fully extended TFTs. 

Section \ref{sec CSS} reviews the model for $(\infty,1)$-categories given by complete Segal spaces and recalls some useful information about other models. In Section \ref{sec CSSn} we explain the model for $(\infty,n)$-categories given by complete $n$-fold Segal spaces and introduce a model which is a hybrid between complete $n$-fold Segal spaces and Segal $n$-categories.

We propose two equivalent definitions of symmetric monoidal structures on complete $n$-fold Segal spaces in Section \ref{sec monCSSn}; one as a $\Gamma$-object in complete $n$-fold Segal spaces following \cite{ToenVezzosi} and one as a tower of suitable $(n+k)$-fold Segal spaces with one object, 1-morphism,..., $(k-1)$-morphism for $k\geq0$ following the Delooping Hypothesis.

Part II is devoted to the construction of $\Bord_n$. 

Our construction of the $(\infty,n)$-category $\Bord_n$ of higher bordisms is based on a simpler complete Segal space $\Int$ of closed intervals, which we introduce in Section \ref{sec Int}. The closed intervals correspond to places where we are allowed to cut the manifold into the bordisms it composes. The fact that we prescribe closed intervals instead of just a point corresponds to fixing collars of the bordisms.

Section \ref{sec Bord} is the central part of this article and consists of the construction of the complete $n$-fold Segal space $\Bord_n$ of bordisms. We discuss variants of $\Bord_n$, including $(\infty, d)$-categories of bordisms and tangles for arbitrary $d$, and compare our definition to Lurie's sketch in Section \ref{sec variants}.

In Section \ref{sec Bord symm} we endow $\Bord_n$ with a symmetric monoidal structure, both as a $\Gamma$-object and as a tower and compare the two definitions.

In Section \ref{hocat} we elaborate on the interpretation of the objects in $\Bord_n$ as $n$-bordisms. Furthermore we show that the homotopy category of the $(\infty,1)$-category of bordisms is what one should expect, namely the homotopy category of the $(\infty,1)$-category of $n$-bordisms $\Bord_n^{(\infty,1)}$ gives back the classical bordism category $n\Cob$.

Finally, in Section \ref{sec decorations} we consider bordism categories with additional structure such as orientations, denoted by $\Bord_n^{or}$, and framings, denoted by $\Bord_n^{fr}$, which allows us to define fully extended $n$-dimensional topological field theories in Section \ref{TFT}.

\paragraph{Conventions}

\begin{enumerate}
\item Let $\Space$ denote the category of simplicial sets with its usual model structure. By {\em space} we mean a fibrant object in $\Space$, i.e.~a Kan complex.
\item We denote the simplex category by $\Delta$. Objects are finite ordered sets denoted by $[m]=(0<\cdots <m)$ and morphisms are monotone maps. As an ordered set, we can view $[m]$ as a category. Functors between the associated categories arise exactly from monotone maps. Thus, we can take the nerve of $[m]$ which we will denote by $\Delta^m$.
\item The geometric realization $|\Delta^l|$ is the standard geometric simplex $\{(x_0,\ldots, x_l)\in \R^{l+1}: \sum_i x_i=1, x_i\geq 0\}$. We denote the extended simplex $\{(x_0,\ldots, x_l)\in \R^{l+1}: \sum_i x_i=1\}$ by $\eDelta{l}$.
\end{enumerate}

\subsection*{Acknowledgements.}
We would like to thank Mathieu Anel, David Li-Bland, Chris Schommer-Pries, and Hiro L.~Tanaka for helpful discussions, and Giovanni Felder and Bertrand To\"en for their careful proof-reading.  We thank the referee for his/her thorough reading and for the extensive comments which improved the exposition of the paper.
This paper is extracted from the first part of the second author's PhD thesis and was partially supported by the SNF grants 200021\textunderscore 137778 and P2EZP2\textunderscore 159113. CS thanks IH\'ES and FIM at ETH Zurich for the hospitality during her stays in fall 2014 and September 2015 during which parts of this paper were written. DC acknowledges the support of the Institut Universitaire de France and the ANR grant ``SAT'' ANR-14-CE25-0008. 

 

\part{Symmetric monoidal \texorpdfstring{$(\infty, n)$}{(infty,n)}-categories}

A higher category, or $n$-category for $n\geq0$, has not only objects and (1-)morphisms, but also $k$-morphisms between $(k-1)$-morphisms 
for $1\leq k\leq n$. Strict higher categories can be rigorously defined, however, most higher categories which occur in nature are not strict. 
Thus, we need to weaken some axioms and coherences between the weakenings become rather involved to formulate explicitly. 
Things turn out to become somewhat easier when using a geometric definition, in particular when furthermore allowing $k$-morphisms for all 
$k\geq 1$, which for $k\geq n$ are invertible. Such a higher category is called an $(\infty,n)$-category. There are several models for such 
$(\infty,n)$-categories, e.g.~Segal $n$-categories (cf.~\cite{HirschowitzSimpson}), $\Theta_n$-spaces (cf.~\cite{Re2}), and complete $n$-fold 
Segal spaces (cf.~\cite{BarwickCSS}), which all are equivalent in an appropriate sense (cf.~\cite{Toen05, BarwickSP}). For our purposes, the latter model turns out to 
be well-suited and in this Part we recall some basic facts about complete $n$-fold Segal spaces as higher categories. 
This is not at all exhaustive, and more details can be found in e.g.~\cite{BergnerRezk}. We also refer to \cite{Bergner}, especially for their role in the proof of the Cobordism Hypothesis in \cite{Lurie}.

Symmetric monoidal structures on $(\infty,n)$-categories \textit{per se} have not been very much studied in the literature for $n>1$ (even though they are particular instances of commutative monoids in $\infty$-categories, which are extensively studied in \cite{LurieHA}). We provide a brief review of these in Section \ref{sec monCSSn} and describe them in two different, but equivalent, ways: as $\Gamma$-objects on the one hand and using the Delooping Hypothesis on the other hand. A comparison follows from \cite{GepnerHaugseng}.

\section{Models for \texorpdfstring{$(\infty,1)$}{(infty,1)}-categories}\label{sec CSS}

We start with $n=1$. An $(\infty,1)$-category should be a category up to coherent homotopy which is encoded in the invertible higher morphisms. In this section, we will mention and give references for several models for $(\infty,1)$-categories we will use in the later sections. A good overview on different models for $(\infty,1)$-categories and their comparison can be found in \cite{Bergner10}. It should be mentioned that by \cite{Toen05} up to equivalence, there is essentially only one theory of $(\infty,1)$-categories; explicit equivalences between the models mentioned here have been proved e.g.~in \cite{DwyerKanSmith, BergnerThreeModels, BarwickKanRelCat, HorelModel}. One additional model which should be mentioned is that of Joyal's quasi-categories. It has been intensively studied, most prominently in \cite{LurieHTT}.

\subsection{The homotopy hypothesis and \texorpdfstring{$(\infty,0)$}{(infty,0)}-categories}

The basic hypothesis upon which $\infty$-category theory is based goes back to Grothendieck \cite{Grothendieck} and is the following:
\begin{hypothesis}[Homotopy hypothesis]\label{homotopy hypothesis}
Spaces are models for $\infty$-groupoids, also referred to as $(\infty,0)$-categories.
\end{hypothesis}
Given a space $X$, its points, i.e.~0-simplices, are thought of as objects of the $(\infty,0)$-category, paths between points as $1$-morphisms, 
homotopies between paths as $2$-morphisms, homotopies between homotopies as $3$-morphisms, and so forth. With this interpretation, it is clear that all $n$-morphisms are invertible up to homotopies, which are higher morphisms.

We take this hypothesis as the basic definition, and model ``spaces'' with simplicial sets rather than with topological spaces. 
\begin{defn}
An {\em $(\infty,0)$-category}, or {\em $\infty$-groupoid}, is a space. According to our conventions, it is a fibrant simplicial set, i.e.~a Kan complex.
\end{defn}

\subsection{Topologically and simplicially enriched categories}\label{sec top cat}

 Two particularly simple, but quite rigid models are topologically or simplicially enriched categories.
\begin{defn}
A {\em topological category} is a category enriched in topological spaces. A {\em simplicial category} is a category enriched in simplicial sets.
\end{defn}
Topological and simplicial categories are discussed and used in \cite{LurieHTT, ToenVezzosiHAG1}. However, for our applications they turn out to be too rigid. We would also like to allow some flexibility for objects, not only morphisms, thus also requiring spaces of objects.

\subsection{Segal spaces}

Complete Segal spaces, first introduced by Rezk in \cite{Rezk} as a model for $(\infty,1)$-categories, turn out to be very well-suited for geometric applications. We recall the definition in this section.

\begin{defn}
A ({\em $1$-fold}) {\em Segal space} is a simplicial space $X=X_\bullet$ which satisfies the {\em Segal condition}: 
for any $n,m\geq0$ the commuting square 
$$
\xymatrix{
X_{m+n}\ar[r] \ar[d] & X_m \ar[d] \\
X_n \ar[r] & X_0
}
$$
induced by the maps $[m]\to [m+n]$, $(0<\cdots<m) \mapsto (0<\cdots< m)$, and $[n]\to [m+n]$, $(0<\cdots<n) \mapsto (m<\cdots < m+n)$, 
is a homotopy pullback square.
In other words, the induced map
$$
X_{m+n}\longrightarrow X_m\underset{X_0}{\overset{h}{\times}}X_n
$$
is a weak equivalence. 

Defining a {\em map of Segal spaces} to be a map of the underlying simplicial spaces gives a category of Segal spaces 
$\SeSp=\SeSp_1$.
\end{defn}

\begin{rem}\label{rem Segal g}
For any $m\geq 1$, consider the maps $g_\beta:[1]\to [m]$, $(0<1)\mapsto (\beta-1<\beta)$ for $1\leq \beta\leq m$. Note that requiring the Segal condition is equivalent to requiring the condition that the maps
$$X_m\longrightarrow X_1\underset{X_0}{\overset{h}{\times}} \cdots \underset{X_0}{\overset{h}{\times}} X_1$$
induced by $g_1,\ldots, g_m$ are weak equivalences.
\end{rem}

\begin{rem}
Following \cite{Lurie} we omit the Reedy fibrancy condition which often appears in the literature. 
In particular, this condition would guarantee that for $m,n\geq 0$ the canonical map
$$X_m\underset{X_0}{\times}X_n\longrightarrow X_m\underset{X_0}{\overset{h}{\times}}X_n$$
is a weak equivalence. Our definition corresponds to the choice of the projective model structure instead of the injective (Reedy) model structure, which is slightly different (though Quillen equivalent) compared to \cite{Rezk}. We will explain this in more detail in Section \ref{sec css fibrant}.
\end{rem}

\begin{defn}
We will refer to the spaces $X_n$ as the {\em levels} of the Segal space $X$.
\end{defn}

\begin{ex}\label{ex: nerve of top category}
Let $\mathcal C$ be a small topological category. 
Recall that its nerve is the simplicial set
$$N(\mathcal C)_n = \Hom([n], \mathcal C) =\bigsqcup_{x_0,\ldots, x_n\in \mathrm{Ob} \,\C} \Hom_\C(x_0, x_1)\times\cdots \Hom_\C(x_{n-1}, x_n),$$
with face maps given by composition of morphisms, and degeneracies by insertions of identities. 
The nerve $N(\mathcal C)$ is a Segal space. 
Moreover, a simplicial set, viewed as a simplicial space with discrete levels, satisfies the Segal condition if and only if it is the nerve of an (ordinary) category.
\end{ex}

\subsubsection{Segal spaces as \texorpdfstring{$(\infty,1)$}{(infty,1)}-categories}

The above example motivates the following interpretation of Segal spaces as models for $(\infty,1)$-categories.
If $X_\bullet$ is a Segal space then we view the set of 0-simplices of the space $X_0$ as the set of objects. For $x,y\in X_0$ we view 
$$\Hom_X(x,y) = \{x\}\times^h_{X_0}X_1\times^h_{X_0}\{y\}$$
as the $(\infty,0)$-category, i.e.~the space, of arrows from $x$ to $y$. More generally, we view $X_n$ as the $(\infty,0)$-category, i.e.~the space, of $n$-tuples of composable arrows together with a composition. Note that given an $n$-tuple of composable arrows, the Segal condition implies that the corresponding fiber of the Segal map $X_n\to X_1\times^h_{X_0} \cdots \times^h_{X_0} X_1$ is a contractible space. The map $X_n\to X_1$ determined by the functor $[1]\to [n], 0<1\mapsto 0<n$ can be thought of as ``composition'', and thus we can think of the $n$-tuple as having a contractible space of possible compositions. Moreover, one can interpret paths in the space $X_1$ of 1-morphisms as 2-morphisms, which are invertible up to homotopies, which in turn are 3-morphisms, and so forth.

\subsubsection{The homotopy category of a Segal space}

To a higher category one can intuitively associate an ordinary category, its {\em homotopy category}, which has the same objects 
and whose morphisms are $2$-isomorphism classes of $1$-morphisms. For Segal spaces, one can realize this idea as follows.

\begin{defn}
The {\em homotopy category} $h_1(X)$ of a Segal space $X=X_\bullet$ is the (ordinary) category whose objects are the 0-simplices of the space $X_0$ and whose morphisms between objects $x,y\in X_0$ are
\begin{align*}
\Hom_{h_1(X)}(x,y) &=\pi_0\left(\Hom_X(x,y)\right)\\
&=\pi_0\left(\{x\}\underset{X_0}{\overset{h}{\times}} X_1 \underset{X_0}{\overset{h}{\times}} \{y\}\right).
\end{align*}
For $x,y,z\in X_0$, the following diagram induces the composition of morphisms, as weak equivalences induce bijections on $\pi_0$.
\begin{eqnarray*}
\left(\{x\}\underset{X_0}{\overset{h}{\times}} X_1 \underset{X_0}{\overset{h}{\times}} \{y\}\right)\times
\left(\{y\}\underset{X_0}{\overset{h}{\times}} X_1 \underset{X_0}{\overset{h}{\times}} \{z\}\right)
& \longrightarrow & \{x\}\underset{X_0}{\overset{h}{\times}} X_1 \underset{X_0}{\overset{h}{\times}}X_1 \underset{X_0}{\overset{h}{\times}} \{z\} \\
& \overset{\simeq}{\longleftarrow} & \{x\}\underset{X_0}{\overset{h}{\times}} X_2 \underset{X_0}{\overset{h}{\times}} \{z\} \\
& \longrightarrow & \{x\}\underset{X_0}{\overset{h}{\times}} X_1 \underset{X_0}{\overset{h}{\times}} \{z\}\,.
\end{eqnarray*}
\end{defn}

\begin{ex}
Given a small (ordinary) category $\C$, the homotopy category of its nerve, viewed as a simplicial space with discrete levels, is equivalent to $\C$,
$$h_1(N(\C))\simeq \C.$$
\end{ex}

The above example motivates the following definition of equivalences of Segal spaces.
\begin{defn}\label{DK equiv}
A map $f:X\to Y$ of Segal spaces is a {\em Dwyer-Kan equivalence} if
\begin{enumerate}
\item the induced map $h_1(f): h_1(X) \to h_1(Y)$ on homotopy categories is essentially surjective, and
\item for each pair of objects $x,y\in X_0$ the induced map $\Hom_X(x,y)\to \Hom_{Y}(f(x), f(y))$ is a weak equivalence.
\end{enumerate}
\end{defn}

\subsection{Complete Segal spaces}\label{complete}

We would like the equivalences of Segal spaces to be the Dwyer-Kan equivalences. However, instead of considering all Segal spaces and their the Dwyer-Kan equivalences, it turns out that we can instead consider a full subcategory of Segal spaces which satisfy an extra condition called {\em completeness}, for which Dwyer-Kan equivalences have an equivalent, simpler, description. To make sense of this, we need to first introduce the model categories involved.

\subsubsection{The model structures of Segal spaces}

We now describe various model structures on the category $\sSpace$ of simplicial spaces in this section. Ultimately, the goal is to have a model category whose fibrant objects deserve to be called ``$(\infty,1)$-categories'' and whose equivalences are analogs of equivalences of categories. We will first introduce model categories whose fibrant objects are Segal spaces. Then, in the next step, we will fix the weak equivalences. We refer to \cite{Rezk} and \cite{HorelModel} for more details.

Let us first consider the injective and projective model structures on the category of simplicial spaces, denoted by $\sSpace_c$ and $\sSpace_f$, respectively. Note that the fibrant objects in $\sSpace_f$ are the levelwise fibrant ones, while the fibrant objects of $\sSpace_c$ turn out to be the {\em Reedy fibrant} simplicial spaces\footnote{See for example \cite[Theorem 15.8.7]{Hirschhorn} for a proof that the injective and Reedy model structures coincide.}.  Conversely, every object in $\sSpace_c$ is cofibrant, see for example \cite[Corollary 15.8.8.]{Hirschhorn}. These model categories are Quillen equivalent (via the identity functor). 

In the first step we perform left Bousfield localizations of the previous model structures $\sSpace_c$ and $\sSpace_f$ with respect to the morphisms 
$$
\Delta^1\coprod_{\Delta^0}\cdots\coprod_{\Delta^0}\Delta^1\longrightarrow\Delta^n.
$$
This provides two model categories, denoted $\sSpace_c^{Se}$ and $\sSpace_f^{Se}$, which still are Quillen equivalent. For the injective model structure, it is immediate that fibrant objects in $\sSpace_c^{Se}$ satisfy $X_n\xrightarrow{\simeq}  X_1\times_{X_0}\cdots\times_{X_0}X_1$ and thus are Reedy fibrant Segal spaces. For the projective model structure, it follows from \cite{HorelModel} that the fibrant objects in $\sSpace_f^{Se}$ satisfy $X_n\xrightarrow{\simeq} X_1\times^h_{X_0}\cdots\times^h_{X_0}X_1$ and thus are Segal spaces\footnote{Note that this terminology is not consistent throughout the literature: often ``Segal space'' includes the Reedy fibrancy condition. Our examples will not be Reedy fibrant, which is the reason for our choice of terminology.}.

\subsubsection{Complete Segal spaces}

Even though the model categories $\sSpace_c^{Se}$ and $\sSpace_f^{Se}$ have the (Reedy fibrant) Segal spaces as their fibrant objects, there are not enough weak equivalences: every weak equivalence between Segal spaces is indeed a Dwyer-Kan equivalence, but there are more Dwyer-Kan equivalences. 

This problem can be circumvented by further localizing the model structures. For this new model structure, the weak equivalences between Segal spaces turn out to be exactly the Dwyer-Kan equivalences. We will see that these further localized model structures have fewer fibrant objects, which are the {\em complete} (Reedy fibrant) Segal spaces. We will focus on the case of the projective model structure, since the other case can be found spelled out in great detail in many references, for example the original \cite{Rezk}, but to our knowledge the former has so far only appeared in \cite{HorelModel}. Moreover, although we will phrase it for the projective model structure, the first part works the same in the injective case. The difference appears when computing the involved mapping spaces explicitly, see the remark below.

Intuitively, the condition we would like to impose is that the underlying $\infty$-groupoid of invertible morphisms of the Segal space $X_\bullet$ is already encoded by the space $X_0$. To translate this, we first need to understand what the space of (homotopy) invertible morphisms of $X_\bullet$ is.

Let $f$ be an element in $X_1$ with source and target $x$ and $y$, i.e.~its images under the two face maps $X_1\rightrightarrows X_0$ are $x$ and $y$. It  is called {\em invertible} if its image under
$$
\{x\}\underset{X_0}{\times} X_1 \underset{X_0}{\times} \{y\}
\longrightarrow
\{x\}\underset{X_0}{\overset{h}{\times}} X_1 \underset{X_0}{\overset{h}{\times}} \{y\}
\longrightarrow
\pi_0\left(\{x\}\underset{X_0}{\overset{h}{\times}} X_1 \underset{X_0}{\overset{h}{\times}}\{y\}\right)
=\Hom_{\h_1(X)}(x,y)\,,
$$
is an invertible morphism in $h_1(X)$, i.e.~it has a left and right inverse.

To define the space of invertible morphisms, consider the walking isomorphism $I[1]$, which is the category with two objects and one invertible morphism between them,
$$\begin{tikzpicture}
\draw (0,0) node[dot] (A) {}
(1,0) node[dot] (B) {};
\draw[->, bend left=40] (A) to (B);
\draw[->, bend left=40] (B) to (A);
\draw (0.5,0) node {$\cong$};
\end{tikzpicture}
$$
Mapping the walking isomorphism into an arbitrary category $\C$ we get the isomorphisms of $\C$, and therefore the information about its underlying groupoid. Mimicking this procedure for a Segal space $X_\bullet$, we consider the derived mapping space
$$ \mathrm{Map}_{\sSpace_f^{Se}} ( N(I[1]), X) .$$

Moreover, an analog of \cite[Lemma 5.8]{Rezk} shows that if an element in $X_1$ is invertible, any element in the same connected component will also be invertible. Thus we define the {\em space of invertible morphisms in $X_\bullet$} to be the homotopy pullback\footnote{To compare with the definition in \cite{HorelModel}, note that the pullback is a homotopy pullback since the map $X_1\to \pi_0(X_1)$ is a fibration.}
$$
\begin{tikzcd}
X_1^{inv} \arrow{r} \arrow{d}  &  X_1\arrow{d}\\
\pi_0 \mathrm{Map}_{\sSpace_f^{Se}} ( N(I[1]), X)  \arrow{r}& \pi_0 X_1 = \pi_0 \mathrm{Map}_{\sSpace_f^{Se}} ( \Delta^1, X)  \arrow[ul, phantom, "\mathrm{h} \lrcorner", very near end]
\end{tikzcd}
$$
Here, the bottom arrow arises from the obvious functor $[1]\to I[1]$.

Finally, identity morphisms in $X_\bullet$ should be invertible. Indeed, the degeneracy map $s_0:[1] \to [0]$ factors as $[1]\to I[1] \rightarrow [0]$ and induces a map
$$X_0 \to X_1^{inv}.$$

\begin{defn}
A Segal space $X_\bullet$ is {\em complete} if the map $X_0\to X_1^{inv}$ is a weak equivalence. 
We denote the full subcategory of $\SeSp$ whose objects are complete Segal spaces by $\CSSp=\CSSp_1$.
\end{defn}

\begin{ex}
Let $\C$ be a category. Then $N(\C)$ is a complete Segal space if and only if there are no non-identity isomorphisms in $\C$, i.e.~the underlying groupoid of $\C$ is a set (viewed as a category with only identity morphisms).
\end{ex}

In order to compute $X_1^{inv}$ explicitly, we have to be able to describe the (derived) mapping space $\mathrm{Map}_{\sSpace_f^{Se}} ( N(I[1]), X)$. 
\begin{lemma}
We have a homotopy pullback square 
$$
\begin{tikzcd}
\mathrm{Map}_{\sSpace_f^{Se}} ( N(I[1]), X) \arrow{r} \arrow[swap]{d}{\{0,2\}\amalg \{1,3\}}  &  X_3 \arrow{d}{\{0,2\}\amalg \{1,3\}}\\
X_0 \times X_0  \arrow{r}& X_1 \times X_1 \,. \arrow[ul, phantom, "\mathrm{h} \lrcorner", very near end]
\end{tikzcd}
$$
\end{lemma}
\begin{proof}
Note that since $X_\bullet$ was assumed to be a Segal space, it is fibrant, but $N(I[1])$ might not be cofibrant\footnote{Note that for the injective model structure, it is cofibrant and therefore $X_1^{inv}$ is just the subspace of $X_1$ of invertible morphisms.}. So to compute the desired mapping space, we cofibrantly replace $N(I[1])$ and then compute the mapping space in the underlying category,
$$\mathrm{Map}_{\sSpace_f^{Se}} ( N(I[1]), X) \simeq  \mathrm{Map}_{\sSpace} (\mathrm{cof}\left( N(I[1])\right), X).$$

To compute the cofibrant replacement, the crucial observation (originally by \cite{Rezk}, reformulated by \cite{BarwickSP}) is that the nerve of $I[1]$ can be obtained by the pushout of simplicial sets
$$K = \Delta^3 \amalg_{\Delta^{\{0,2\}} \amalg \Delta^{\{1,3\}}} (\Delta^0 \amalg \Delta^0).$$
This can be seen as contracting the edges $\{0,2\}$ and $\{1,3\}$ in the 3-simplex:
$$
\begin{tikzpicture}[scale=1.5]
\path (0,0) node (0) {0} -- (1,0) node (1) {1}  -- (0.6, 0.9) node (3) {3} -- (1.2, 0.5) node (2) {2};
\draw[->, thick] (0) -- (1);
\draw[->] (1) -- (2);
\draw[->] (2) -- (3);
\draw[->, thick] (1) -- (3);
\draw[->, thick] (0) -- (3);
\draw[->, dashed] (0) -- (2);
\end{tikzpicture}
$$

We use an argument similar to that in \cite[Remark 3.4]{JF-S}, which observes the following: $K$ is given by a strict pushout along a diagram of cofibrant objects of which one arrow is an inclusion. By \cite[A.2.4.4]{LurieHTT}, this is a homotopy pushout in the injective model structure and therefore homotopy equivalent to the homotopy pushout in the projective model structure. So a cofibrant replacement of $K$ is given by taking the homotopy pushout of the same diagram,
$$\mathrm{cof}(K) = \Delta^3 \amalg^h_{\Delta^{\{0,2\}} \amalg \Delta^{\{1,3\}}} (\Delta^0 \amalg \Delta^0).$$
Finally, we obtain the space as the wanted homotopy pullback\footnote{This can be compared to Rezk's definition using the zig-zag category 
\begin{tikzpicture}[scale=0.5, inner sep=1pt]
\draw (1,0) node (2) {2}
(3,0) node (3) {3};
\draw (0,0) node {0} edge [->] (2);
\draw (2,0) node {1} edge[->] (2) edge[->] (3);
\end{tikzpicture}
and requiring the morphisms
\begin{tikzpicture}[scale=0.5, inner sep=1pt]
\draw (1,0) node (2) {2};
\draw (0,0) node {0} edge [->] (2);
\end{tikzpicture}
and
\begin{tikzpicture}[scale=0.5, inner sep=1pt]
\draw (3,0) node (3) {3};
\draw (2,0) node {1} edge[->] (3);
\end{tikzpicture}
to be identities.
}
\end{proof}

\subsubsection{Complete Segal spaces as fibrant objects}\label{sec css fibrant}

There is a further model structure on the category of simplicial spaces which implements completeness. It is obtained by a further left Bousfield localization, with respect to the morphism
$$
\Delta^0\longrightarrow N(I[1]).
$$
This provides two Quillen equivalent model categories, denoted $\sSpace_c^{CSe}$ and $\sSpace_f^{CSe}$. 
Fibrant objects in $\sSpace_c^{CSe}$, respectively $\sSpace_f^{CSe}$, are Reedy fibrant complete Segal spaces, respectively complete Segal spaces.

Summarizing, we have the following diagram
$$
\begin{tikzcd}
\sSpace_c \arrow[bend right =10]{r} & \sSpace_f \arrow[bend right =10]{l}\\
\sSpace_c^{Se} \arrow[bend right =10]{r} \arrow{u} & \sSpace_f^{Se} \arrow[bend right =10]{l} \arrow{u}\\
\sSpace_c^{CSe} \arrow[bend right =10]{r} \arrow{u}& \sSpace_f^{CSe} \arrow[bend right =10]{l} \arrow{u}
\end{tikzcd}
$$
where the horizontal arrows are Quillen equivalences induced by the identity and the vertical arrows are localizations.

The following Proposition shows that in the localized model structure Dwyer-Kan equivalences of Segal spaces indeed are weak equivalence, and therefore we have fixed the concern mentioned above. We refer to \cite[Theorem 5.15]{HorelModel} for a proof, which makes substantial use of the analogous result for Reedy fibrant Segal spaces in $\sSpace_c^{CSe}$ from \cite[Theorem 7.7]{Rezk}.
\begin{thm}
Let $X$ and $Y$ be Segal spaces. A morphism $f:X\to Y$ is a weak equivalence in $\sSpace_f^{CSe}$ if and only if 
it is a Dwyer-Kan equivalence. 
\end{thm}

As a consequence the obvious inclusions induce the following equivalences of categories:
$$
\CSSp[\mathpzc{lwe}^{-1}]\longrightarrow \SeSp[\mathpzc{DK}^{-1}]\longrightarrow Ho(\sSpace_f^{CSe})\,,
$$
where $\mathpzc{DK}$ and $\mathpzc{lwe}$ stand for the subcategory of Dwyer-Kan and levelwise weak equivalences, respectively. 

This justifies the following definition.
\begin{defn}
An {\em $(\infty,1)$-category} is a complete Segal space.
\end{defn}

\begin{rem}
We denote the category of Reedy fibrant complete Segal spaces by $\CSSp_c$, that is to say the fibrant objects in $\sSpace_c^{CSe}$. Remember that $\sSpace_c^{CSe}$ and $\sSpace_f^{CSe}$ are Quillen equivalent, 
so that the embedding $\CSSp_c\subset\CSSp$ induces an equivalence 
$\CSSp_c[\mathpzc{lwe}^{-1}]\to \CSSp[\mathpzc{lwe}^{-1}]$
 of which an inverse is given by the Reedy fibrant replacement 
functor $(-)^R$. Sometimes it turns out to be more useful to work in the model category $\sSpace_c^{CSe}$ as every object is cofibrant. Note that the Reedy fibrant replacement functor does not change the homotopy type of the levels.
\end{rem}

\begin{defn}\label{sec completion}
The fibrant replacement functor in the model category $\sSpace_f^{CSe}$ sending a Segal space to its fibrant replacement is called {\em completion}. In \cite{Rezk} Rezk gave a rather explicit construction of the completion of Segal spaces. He showed that there is a completion functor 
which to every Segal space $X$ associates a complete Segal space $\widehat{X}$ together with a map 
$i_X: X\to\widehat{X}$, which is a Dwyer-Kan equivalence. 
\end{defn}

\begin{rem}\label{rem completion}
The completeness condition says that all invertible morphisms essentially are just identities up to the choice of a path. In this sense, one might like to think of complete Segal spaces as a homotopical version of skeletal\footnote{A category is called {\em skeletal} if each isomorphism class contains just one element, see for example \cite{RiehlCat}.}
or {\em reduced} (see also \cite{Joyal}) category, and, since any category is equivalent to a reduced one, assuming this extra condition is harmless.
However, the information on the invertible morphisms is merely encoded in a different way, namely, in the spatial structure. Also, we would like to remark that in the homotopical situation, this intuition might be misleading: indeed, instead of thinking of a complete Segal space as having few invertible morphisms, it is better to think of a complete Segal space as having a ``maximal'' space of objects. This is illustrated by \cite[Corollary 6.6]{Rezk}. A good example to keep in mind is a special case of \cite[Remark 14.1]{Rezk}: given a group $G$, we can view as a category with one object, and consider its nerve. Its completion is the constant simplicial space $BG$. 
\end{rem}

\begin{rem}
It is worth noticing that $\sSpace_f$, $\sSpace_c$, $\sSpace_f^{Se}$, $\sSpace_c^{Se}$, $\sSpace_f^{CSe}$, and $\sSpace_c^{CSe}$ are all Cartesian closed simplicial model categories. 
In particular, for any simplicial space $X$ and any complete Segal space $Y$, the simplicial space $Y^X$ is a complete Segal space. 
\end{rem}

\subsubsection{The classification diagram -- the Rezk or relative nerve}

Many examples of (complete) Segal spaces arise by a construction due to Rezk \cite{Rezk} which produces a (complete) Segal space from a simplicial model category. More generally, several authors \cite{BarwickKan,LM-G, Low} proved that this construction also gives a complete Segal space for far-reaching generalizations of model categories, namely, for relative categories with certain weak conditions. For instance, categories of fibrant objects in the sense of Brown satisfy the conditions to obtain a Segal space; if they additionally are saturated, they lead to complete Segal spaces.
\begin{defn}\label{defn rel cat}
A {\em relative category} is a a pair $(\C, \W)$ consisting of a category $\C$ and a subcategory $\W\subseteq\C$ containing all objects of $\C$. The morphisms in $\W$ are called {\em weak equivalences}. A {\em relative functor} between two relative categories is a functor which preserves weak equivalences. Together they form a category $\mathpzc{RelCat}$.
\end{defn}

\begin{defn}\label{Rezk construction}
Let $(\mathcal C,\mathcal W)$ be a relative category. Consider the simplicial object in categories $\mathcal C_\bullet$ given by $\mathcal C_n:=\Fun\big([n],\mathcal C)$. It has a subobject $\mathcal C^{\mathcal W}_\bullet$, where $\mathcal C^{\mathcal W}_n\subset \mathcal C_n$ is the subcategory which has the same objects and whose morphisms consist only of those composed of those in $\mathcal W$. Taking its nerve we obtain a simplicial space $N(\mathcal C,\mathcal W)_\bullet$ with
$$N(\C, \mathcal W)_n = N(\C^{\mathcal W}_n).$$
It is proved in \cite{LM-G} that this simplicial space satisfies the Segal condition if $(\C, \mathcal W)$ admits a suitable homotopical three-arrow calculus. Moreover, it is complete if it additionally is saturated, i.e.~a morphism is a weak equivalence if and only if it is an isomorphism in the homotopy category. However, it is not level-wise\footnote{Strictly speaking we should call this a (complete) Segal simplicial set, since we defined a space to be fibrant.} fibrant unless we started with an $\infty$-groupoid. Its level-wise fibrant replacement is called the {\em Rezk or relative nerve} or the {\em classification diagram}, which, by abuse of notation, we again denote by $N(\mathcal C,\mathcal W)$.
\end{defn}

\begin{ex}
Let $C$ be a small category. Then it is straightforward to see that $N(\C,\Iso\C)$ is a complete Segal space. Note that the natural morphism $N(\C) \to N(\C, \Iso \C)$ is a Dwyer-Kan equivalence. This exhibits $N(\C,\Iso\C)$ as a completion of $N(\C)$.
\end{ex}

Now we can apply this construction to the model category of complete Segal spaces from the previous section:
\begin{defn}
The {\em $(\infty,1)$-category of $(\infty,1)$-categories} is $N(\CSSp, \mathpzc{lwe})$.
\end{defn}

\begin{rem}
The inclusions of the relative categories of cofibrant-and-fibrant objects and of fibrant objects in a simplicial model category lead to equivalences of the classification diagrams. Thus, the inclusions of relative categories $(\CSSp, \mathpzc{lwe}) \subset (\SeSp,\mathpzc{DK}) \subset (\sSpace_f^{CSe}, \W)$ induce level-wise equivalences of complete Segal spaces, i.e.~equivalences of $(\infty,1)$-categories
$$N(\CSSp, \mathpzc{lwe}) \to N(\SeSp,\mathpzc{DK}) \to N(\sSpace_f^{CSe}, \W).$$
\end{rem}

\begin{rem}
Note that morphisms in the homotopy category of $(\infty,1)$-categories are more subtle: they are zig-zags $X\to X_1 \xleftarrow{\simeq} X_2 \to \cdots \to Y$, where the wrong-way-arrows are weak equivalences and therefore more flexible.
\end{rem}

We finally observe that a Quillen equivalence 
$$\begin{tikzcd}
L: \mathcal M \arrow[shift right=0.5ex]{r}& \mathcal N :R\arrow[shift right = 0.5ex]{l}.
\end{tikzcd}$$
induces a weak equivalence of complete Segal spaces 
$$
X:=N(\mathcal M,\mathpzc{we}_{\mathcal M})\simeq N(\mathcal N,\mathpzc{we}_{\mathcal N})=:Y
$$
between the associated classification diagrams. Indeed, the left derived functor $\mathbb{L}L=L\circ Q$ ($Q$ being a cofibrant replacement functor) preserves weak equivalence and thus induces a morphism $\mathbf{L}:X\to Y$, which can be proven to be a Dwyer-Kan equivalence (this essentially follows from \cite{Dwyer-Kan3}).

\subsection{Some other models for \texorpdfstring{$(\infty,1)$}{(infty,1)}-categories}

We very briefly recall some other models for $(\infty,1)$-categories in this section which were the motivation for some definitions later on. 

\subsubsection{Segal categories}

Let us mention another model for $(\infty,1)$-categories given by certain Segal spaces, which avoids having to require completeness by instead requiring a discrete set of objects. This will be the motivation for the definitions of ``hybrid $n$-fold Segal spaces'' in section \ref{sec complete SS_n}.

\begin{defn}
A Segal ($1$-)category is a Segal space $X=X_\bullet$ such that $X_0$ is discrete, i.e.~constant as a simplicial set. We denote $\SeCat$ the full subcategory of $\SeSp$ consisting of Segal categories. 
\end{defn}
Segal categories also are the fibrant objects in a certain model category that is Quillen equivalent to $\sSpace_f^{CSe}$, see the above mentioned \cite{Bergner10} or \cite{LurieGoodwillie} for more details and references.
In particular, the embedding  $\SeCat\subset \SeSp$ induces an equivalence of complete Segal spaces 
$$
N(\SeCat,\mathpzc{DK})\longrightarrow N(\SeSp,\mathpzc{DK})\,.
$$

\subsubsection{Relative categories}\label{relative cats}

Following \cite{BarwickKanRelCat} a rather weak notion of $(\infty,1)$-category is given by relative categories from Definition \ref{defn rel cat}. One should think of the weak equivalences $\mathcal{W}$ as being ``formally inverted''. We have already implicitly used this to define the $(\infty,1)$-category of $(\infty,1)$-categories.

\begin{ex}
Let $\C=\mathrm{Ch}_R$ be the category of chain complexes over a ring $R$ and let $\W\subseteq\C$ be the subcategory of chain complexes and quasi-isomorphisms.
\end{ex}
$\mathpzc{RelCat}$ admits a model structure exhibiting it as a model for $(\infty, 1)$-categories: in \cite{BarwickKan} the model structure of $\sSpace_c^{CSe}$ is transferred along a slight modification of the relative nerve,
$$N_\xi: \mathpzc{RelCat} \longrightarrow \sSpace_c^{CSe} :K_\xi$$
thus making the above adjunction into a Quillen equivalence.

\subsubsection{Categories internal to simplicial sets}\label{sec internal categories}

Instead of enriching categories in a category of spaces as in Section \ref{sec top cat}, for certain applications it turns out to be useful to also have a space of objects (thus allowing more flexibility than in topological categories), but keeping strict composition (and thus having more rigidity than in Segal spaces). This philosophy is implemented when considering categories internal to spaces. We will use this model to construct examples of Segal spaces.

\begin{defn}
Let $\mathcal{S}$ be a category with finite limits. 
A {\em category internal to $\mathcal{S}$} or for short, an {\em internal category in $\mathcal{S}$} consists of objects $\C_0, \C_1$ together with source and target morphisms $s,t:\C_1\rightrightarrows \C_0$, a degeneracy morphism $d:\C_0\to\C_1$ satisfying $s\circ d=t\circ d=id_{\C_0}$, and a composition morphism $\circ: \C_1\times_{\C_0} \C_1\to \C_1$ satisfying associativity and such that for any $x\in \C_0$, the maps $-\circ d(x)$ and $d(x)\circ -$ are the identity. Let $\mathcal{IC}\!\mathpzc{at}(\mathcal{S})$ denote the category of categories internal to $\mathcal{S}$, where morphisms from $(\C_0,\C_1)$ to $(\mathcal D_0, \mathcal D_1)$ are pairs of morphisms $\C_i\to \mathcal D_i$ for $i=0,1$ which are compatible with the additional structure in the obvious way.

For short, we call an {\em internal category} a category internal to $\mathcal{S}=\sSet$. In this case, we use the notation $\mathcal{IC}\!\mathpzc{at} = \mathcal{IC}\!\mathpzc{at}(\sSet)$.
\end{defn}

Note that there is an equivalence of categories $\mathcal{IC}\!\mathpzc{at} \to \mathcal{C}\!\mathpzc{at}^{\Delta^{op}}.$ Composition with the level-wise nerve and swapping the simplicial directions gives a functor
$$N: \mathcal{IC}\!\mathpzc{at} \longrightarrow \mathcal{C}\!\mathpzc{at}^{\Delta^{op}} \overset{N(-)}{\longrightarrow} \Space^{\Delta^{op}} \overset{swap}{\longrightarrow} \sSpace.$$

In \cite{HorelModel}, similarly to $\mathpzc{RelCat}$, the model structures of $\sSpace_f^{Se}$ and $\sSpace_f^{CSe}$ are transferred along $N$ to endow $\mathcal{IC}\!\mathpzc{at}$ with a model structure, the latter exhibiting it as a model for $(\infty, 1)$-categories. Examples of fibrant objects in the former model category are given by the following strongly Segal internal categories (\cite[Proposition 5.13]{HorelModel}): their nerves are Segal spaces.
\begin{defn}
A {\em strongly Segal internal category} is a category $\mathcal C=(\mathcal C_0,\mathcal C_1)$ internal to $\S= \Space\subset\sSet$ such that the source and target maps $s,t:\mathcal C_1\to \mathcal C_0$ are fibrations of simplicial sets. We denote by $\ICat^{Se}$ the category of strongly Segal internal categories. 
\end{defn}

\begin{rem}
The condition that $s,t$ are fibrations ensures that the pullback along them are homotopy pullbacks, and therefore $N\mathcal C$ is a Segal space. This condition is sufficient, but not necessary for an internal category to be a fibrant object for the transferred model structure. On the other hand, the condition that $\mathcal C_0,\mathcal C_1$ be Kan complexes is necessary.
\end{rem}

Since the model structure was transferred, there is a Quillen equivalence given by the nerve,
$$N:\mathcal{IC}\!\mathpzc{at} \longrightarrow \sSpace_f^{Se}.$$
Moreover, categorical equivalences of strongly Segal internal categories are (by definition) precisely the morphisms that are sent to Dwyer-Kan equivalences by the nerve. Thus, the induced morphism 
$$
N(\ICat_{Se},\mathpzc{cat.eq.})\longrightarrow N(\SeSp,\mathcal{DK})
$$
is an equivalence of complete Segal spaces.

\section{Models for \texorpdfstring{$(\infty,n)$}{(infty,n)}-categories}\label{sec CSSn}

As a model for $(\infty,n)$-categories, we will use complete $n$-fold Segal spaces, which were first introduced by Barwick in his thesis and appeared prominently in Lurie's \cite{Lurie}. Details can be found e.g.~in \cite{LurieGoodwillie, BarwickSP, BergnerRezk}. As mentioned above, $(\infty,n)$-categories are homotopical versions of weak $n$-categories. Recall that $n$-categories are inductively built by taking categories (weakly) enriched in $(n-1)$-categories. For $n=2$ these are known as 2-categories (strict) or bicategories (weak). Alternatively, one could choose to consider categories {\em internal} to $(n-1)$-categories, i.e.~they have a whole $(n-1)$-category of objects. For $n=2$ these were first introduced under the name of double categories by Ehresmann in \cite{EhresmannDoublecat} and have been thoroughly studied in category theory. Therefore we will call the higher versions thereof {\em $n$-uple categories}\footnote{This is non-standard: usually they are called $n$-fold categories. However, by an unfortunate choice of terminology, complete $n$-fold Segal spaces will correspond to $n$-categories. In order to hopefully reduce confusion we will instead be consistent in using ``uple'' for internal versions and reserve ``fold'' for the enriched, globular version.}.
Even though we present our main example as an $n$-fold Segal space in the next part, it actually arises from such an ``$n$-uple'' version as we will see later on.

Moreover, it even comes from a more rigid model, namely from internal $n$-uple categories, which are $n$-uple categories internal to simplicial sets. This model is the easiest to define, which is why we start with it.

\subsection{Internal \texorpdfstring{$n$}{n}-uple categories}\label{sec internal n-uple categories}

Iterating the approach in \cite{HorelModel}, one obtains a model for $(\infty,n)$-uple-categories given by $n$-uple categories internal to simplicial sets, i.e.~categories internal to the category of $(n-1)$-uple categories internal to simplicial sets. Unravelling the definition for $n=2$, there is a space of objects, a space of ``horizontal'' 1-morphisms, a space of ``vertical'' 1-morphisms, and a space of 2-morphisms, together with unit maps and composition maps. For larger $n$, there is a space of objects and suitable spaces of higher morphisms ``in all directions'', again together with unit maps and composition maps. Equivalently, an $n$-uple category internal to simplicial sets is a simplicial object in (strict) $n$-fold categories. This model has been discussed in \cite{CavigliaHorel}.

Our bordism category defined in the next part secretly is such an internal $n$-uple category, however, details on this model were not available at the time of writing this article, so we will present it in a different way here.

\begin{rem}
Note that composition is well-defined on the nose, as opposed to the models we will consider in the next sections.
\end{rem}

\subsection{\texorpdfstring{$n$}{n}-uple and \texorpdfstring{$n$}{n}-fold Segal spaces}

Recall that an $n$-uple\footnote{Again, usually, this is called an $n$-fold simplicial space, but we use this terminology to emphasize the difference.} simplicial space is a functor $X:(\Delta^{op})^{\times n} \to \Space$.
An $n$-uple Segal space is an $n$-uple simplicial space with an extra condition ensuring it is the $\infty$-analog of an $n$-uple category.
\begin{defn}\label{defn n-uple SSp}
An {\em $n$-uple Segal space} is an $n$-uple simplicial space $X=X_{\bullet,\dots,\bullet}$ such that
for every $1\leq i\leq n$, and every $k_1,\ldots, k_{i-1},k_{i+1},\ldots, k_n\geq0$,
$$X_{k_1,\ldots, k_{i-1},\bullet, k_{i+1},\ldots, k_n}$$
is a Segal space.

Defining a {\em map of $n$-uple Segal spaces} to be a map of the underlying $n$-uple simplicial spaces gives a category of $n$-uple Segal spaces, $\SeSp^n$.
\end{defn}

Imposing an extra globularity condition leads to a model for $\infty$-analogs of $n$-categories:
\begin{defn}
An $n$-uple simplicial space $X_{\bullet,\dots,\bullet}$ is {\em essentially 
constant} if the map from the constant $n$-uple simplicial space $X_{0,\ldots,0}$ given by the degeneracy maps
$$ X_{0,\ldots,0} \longrightarrow X$$
is a weak equivalence of $n$-uple simplicial spaces.
\end{defn}

\begin{defn}\label{defn n-fold SSp}
An {\em $n$-fold Segal space} is an $n$-uple Segal space $X=X_{\bullet,\dots,\bullet}$ such that
for every $1\leq i\leq n$, and every $k_1,\ldots, k_{i-1}\geq0$, the $(n-i)$-uple simplicial space
$$X_{k_1,\ldots, k_{i-1},0, \bullet, \ldots, \bullet}$$
is essentially constant.\footnote{To be consistent with our choice of ``uple'' versus ``fold'', we could call an $n$-uple simplicial space which satisfies this extra condition an $n$-fold simplicial space.}

Defining a {\em map of $n$-fold Segal spaces} to be a map of the underlying $n$-uple simplicial spaces gives a category of $n$-fold Segal spaces, $\SeSp_n$.
\end{defn}

\begin{rem}\label{rem iterative Segal}
Alternatively, one can formulate the conditions iteratively. First, an {\em $n$-uple Segal space} is a simplicial object $Y_\bullet$ in $(n-1)$-uple Segal spaces which satisfies the Segal condition. Then, an $n$-fold Segal space is a simplicial object $Y_\bullet$ in $(n-1)$-fold Segal spaces which satisfies the Segal condition and such that $Y_0$ is essentially constant (as an $(n-1)$-fold Segal space). To get back the above definition, the ordering of the indices is crucial: $X_{k_1,\ldots, k_n} = (Y_{k_1})_{k_2,\ldots, k_n}$.
\end{rem}

\subsubsection{Interpretation as higher categories}

An $n$-fold Segal space can be thought of as a higher category in the following way.

The first condition means that this is an $n$-uple category, i.e.~there are $n$ different ``directions'' in which we can ``compose''. An element of $X_{k_1,\ldots, k_n}$ should be thought of as a composition consisting of $k_i$ composable morphisms in the $i$th direction. 

The second condition imposes that we indeed have a higher $n$-category, i.e.~an $n$-morphism has as source and target two $(n-1)$-morphisms which themselves have the ``same'' (in the sense that they are homotopic) source and target.

For $n=2$ one can think of this second condition as ``fattening'' the objects in a bicategory. A 2-morphism in a bicategory can be depicted as
$$
\begin{tikzpicture}
\filldraw (-1,0) circle (1.5pt);
\node (A) at (-1,0) {};
\filldraw (1,0) circle (1.5pt);
\node (B) at (1,0) {};
\node at (0,0) {$\Downarrow$};
\path[->,font=\scriptsize,>=angle 90]
(A) edge [bend left] node[above] {} (B)
edge [bend right] node[below] {} (B);
\end{tikzpicture}
$$
The top and bottom arrows are the source and target, which are 1-morphisms between the same objects.

In a 2-fold Segal space $X_{\bullet, \bullet}$, an element in $X_{1,1}$ can be depicted as
$$
\begin{tikzpicture}
\filldraw (3,0.5) circle (1.5pt) node[anchor= south east] {\tiny \rotatebox{-45}{$X_{0,0}\ni$}};
\node (A') at (3,0.5) {};
\filldraw (5,0.5) circle (1.5pt) node[anchor= south west] {\tiny \rotatebox{45}{$\in X_{0,0}$}};
\node (B') at (5,0.5) {};

\filldraw (3,-0.5) circle (1.5pt) node[anchor= north east] {\tiny \rotatebox{45}{$X_{0,0}\ni$}};
\node (C') at (3,-0.5) {};
\filldraw (5,-0.5) circle (1.5pt) node[anchor= north west] {\tiny \rotatebox{-45}{$\in X_{0,0}$}};
\node (D') at (5,-0.5) {};

\node at (4,0) {$\Downarrow$};
\path[->]
(A') edge node[above] {\tiny $\substack{X_{1,0}\\ \rotatebox{90}{$\in$}}$} (B');
\path[->]
(C') edge  node[below] {\tiny $\substack{ \rotatebox{90}{$\ni$}\\X_{1,0}}$} (D');
\path[->, densely dotted, thick]
(A') edge node[left] {\tiny $X_{0,0}\simeq X_{0,1} \ni$} (C');
\path[->, densely dotted, thick]
(B') edge node[right] {\tiny $\in X_{0,1}\simeq X_{0,0}$} (D');
\end{tikzpicture}
$$
The images under the source and target maps in the first direction $X_{1,1} \rightrightarrows X_{1,0}$ are 1-morphisms which are depicted by the horizontal arrows. The images under the source and target maps in the second direction $X_{1,1} \rightrightarrows X_{0,1}$ are 1-morphisms, depicted by the dashed vertical arrows, which are essentially just identity maps, up to homotopy, since $X_{0,1} \simeq X_{0,0}$. Thus, here the source and target 1-morphisms (the horizontal ones) themselves do not have the same source and target anymore, but up to homotopy they do.

The same idea works with higher morphisms, in particular one can imagine the corresponding diagrams for $n=3$. A 3-morphism in a tricategory can be depicted as 
\begin{center}
\begin{tikzpicture}
\filldraw (-1,0) circle (1.5pt);
\node (A) at (-1,0) {};
\filldraw (1,0) circle (1.5pt);
\node (B) at (1,0) {};

\node at (0,0) {\rotatebox{90}{$\Rrightarrow$}};

\draw[->] (A) to [out=30, in=135] (B);
\draw[->] (A) to [out=-45, in=-150] (B);

\draw[gray] (A) .. controls (-0.7, 1.2) and (0.7, 1.2) .. (B);
\draw[gray] (A) .. controls (-0.7, -1.2) and (0.7, -1.2) .. (B);

\draw[twoarrowlonger] (-0.5, 0.3) .. controls (0,1) .. (0.5,0.6);
\draw[twoarrowlonger] (-0.5, -0.6) .. controls (0,-1) .. (0.5,-0.3);
\end{tikzpicture}
\end{center}
whereas a 3-morphism, i.e.~an element in $X_{1,1,1}$ in a 3-fold Segal space $X$ can be depicted as
\begin{center}
\begin{tikzpicture}[scale=2]
\node (a) at (-1,0) {};
\node (A) at (1,0) {};
\node (b) at (-1,0.5) {};
\node (B) at (1,0.5) {};
\node (c) at (-0.5, 0.3) {};
\node (C) at (1.5,0.3) {};
\node (d) at (-0.5,0.8) {};
\node (D) at (1.5,0.8) {};

\filldraw (a) circle (.5pt);
\filldraw (A) circle (.5pt);
\filldraw (b) circle (.5pt);
\filldraw (B) circle (.5pt);
\filldraw (c) circle (.5pt);
\filldraw (C) circle (.5pt);
\filldraw (d) circle (.5pt);
\filldraw (D) circle (.5pt);

\draw[twoarrowlonger] (0, 0.5) -- (0.5, 0.8);
\draw[twoarrowlonger, densely dashed] (0.5, 0.3) -- (0.5, 0.8);
\draw[twoarrowlonger, densely dotted] (-0.75, 0.15) -- (-0.75, 0.65);

\draw[->] (a) -- (A);
\draw[->] (b) -- (B);
\draw[->] (c) -- (C);
\draw[->] (d) -- (D);

\draw[->, densely dotted, thick]
(a) -- (c);
\draw[->, densely dotted, thick]
(a) -- (b);
\draw[->, densely dotted, thick]
(b) -- (d);
\draw[->, densely dotted, thick]
(c) -- (d);

\draw[->, densely dotted, thick]
(A) -- (C);
\draw[->, densely dotted, thick]
(A) -- (B);
\draw[->, densely dotted, thick]
(B) -- (D);
\draw[->, densely dotted, thick]
(C) -- (D);

\draw[twoarrowlonger] (0, 0) -- (0.5, 0.3);
\draw[twoarrowlonger, densely dashed] (0, 0) -- (0, 0.5);
\draw[twoarrowlonger, densely dotted] (1.25, 0.15) -- (1.25, 0.65);
\node at (0.25,0.4) {\rotatebox{90}{$\Rrightarrow$}};
\end{tikzpicture}
\end{center}
Here the dotted arrows are those in $X_{0,1,1}\simeq X_{0,0,1}\simeq X_{0,0,0}$ and the dashed ones are those in $X_{1,0,1}\simeq X_{1,0,0}$.

Thus, we should think of the set of 0-simplices of the space $X_{0,\ldots,0}$ as the objects of our category, and elements of $X_{1,\ldots,1,0,\ldots,0}$ as $i$-morphisms, where $0<i\leq n$ is the number of 1's. Pictorially, they are the $i$-th ``horizontal" arrows. Moreover, the other ``vertical" arrows are essentially just identities of lower morphisms. Similarly to before, paths in $X_{1,\ldots, 1}$ should be thought of as $(n+1)$-morphisms, which therefore are invertible up to a homotopy, which itself is an $(n+2)$-morphism, and so forth.

\subsubsection{The homotopy bicategory of a 2-fold Segal space}\label{homotopy bicategory}

To any higher category one can intuitively associate a bicategory having the same objects and $1$-morphisms, and with $2$-morphisms being $3$-isomorphism classes 
of the original $2$-morphisms. 

\begin{defn}
The {\em homotopy bicategory} $\h_2(X)$ of a $2$-fold Segal space $X=X_{\bullet,\bullet}$ is defined as follows: 
objects are the points of the space $X_{0,0}$ and
$$
\Hom_{\h_2(X)}(x,y)=\h_1\big(\Hom_{X}(x,y)\big)
=\h_1\left(\{x\} \underset{X_{0,\bullet}}{\overset{h}{\times}} X_{1,\bullet} \underset{X_{0,\bullet}}{\overset{h}{\times}} \{y\}\right)
$$
as Hom categories. Horizontal composition is defined as follows:  
\begin{eqnarray*}
\left(\{x\}\underset{X_{0,\bullet}}{\overset{h}{\times}} X_{1,\bullet} \underset{X_{0,\bullet}}{\overset{h}{\times}} \{y\}\right)\times
\left(\{y\}\underset{X_{0,\bullet}}{\overset{h}{\times}} X_{1,\bullet} \underset{X_{0,\bullet}}{\overset{h}{\times}} \{z\}\right)
& \longrightarrow & \{x\}\underset{X_{0,\bullet}}{\overset{h}{\times}} X_{1,\bullet} \underset{X_{0,\bullet}}{\overset{h}{\times}}X_{1,\bullet} \underset{X_{0,\bullet}}{\overset{h}{\times}} \{z\} \\
& \tilde\longleftarrow & \{x\}\underset{X_{0,\bullet}}{\overset{h}{\times}} X_{2,\bullet} \underset{X_{0,\bullet}}{\overset{h}{\times}} \{z\} \\
& \longrightarrow & \{x\}\underset{X_{0,\bullet}}{\overset{h}{\times}} X_{1,\bullet} \underset{X_{0,\bullet}}{\overset{h}{\times}} \{z\}\,.
\end{eqnarray*}
The second arrow happens to go in the wrong way but it is a weak equivalence. Therefore after taking $\h_1$ it turns out to be an equivalence of categories, 
and thus to have an inverse (assuming the axiom of choice). 
\end{defn}

A proof that this definition indeed gives a bicategory will be the subject of a subsequent article.

\subsection{Complete and hybrid \texorpdfstring{$n$}{n}-fold Segal spaces}\label{sec complete SS_n}

As with (1-fold) Segal spaces, we need to impose an extra condition to ensure that invertible $k$-morphisms are paths in the space of $(k-1)$-morphisms. Again, there are several ways to include its information.
\begin{defn}
Let $X$ be an $n$-fold Segal space and $1\leq i,j \leq n$. It is said to satisfy
\begin{description}
\item[$CSS^i$\label{CSS^i}] if for every $k_1,\ldots, k_{i-1}\geq0$,\
\begin{equation*}
X_{k_1,\ldots, k_{i-1},\bullet, 0,\ldots, 0}
\end{equation*}
is a complete Segal space.
\item[$SC^j$\label{SC^j}] if for every $k_1,\ldots,k_{j-1}\geq0$,\
\begin{equation*}
X_{k_1,\ldots, k_{j-1}, 0,\bullet,\ldots,\bullet}
\end{equation*}
is discrete, i.e.~a discrete space viewed as a constant $(n-j+1)$-fold Segal space.
\end{description}
\end{defn}

\begin{defn} An $n$-fold Segal space is
\begin{enumerate}
\item {\em complete}, if for every $1\leq i\leq n$, $X$ satisfies \eqref{CSS^i}.
\item a {\em Segal $n$-category} if for every $1\leq j\leq n$, $X$ satisfies \eqref{SC^j}.
\item {\em $m$-hybrid} for $m\geq0$ if condition \eqref{CSS^i} is satisfied for $i> m$ and condition \eqref{SC^j} is satisfied for $j\leq m$.
\end{enumerate}
Denote the full subcategory of $\SeSp_n$ of complete $n$-fold Segal spaces by $\CSSp_n$.
\end{defn}

\begin{rem}
Note that an $n$-hybrid $n$-fold Segal space is a Segal $n$-category, while an $n$-fold Segal space is $0$-hybrid if and only if it is complete.
\end{rem}

For our purposes, the model of complete $n$-fold Segal spaces is well-suited, which leeds us to the following definition.
\begin{defn}
An $(\infty,n)$-category is a complete $n$-fold Segal space.
\end{defn}

\subsubsection{The underlying model categories}\label{model cat CSS_n}

Similarly to subsection \ref{sec css fibrant} there are model categories running in the background. We can consider either the injective or projective model structure on the category of $n$-uple simplicial spaces $\sSpace^n$, which we denote by $\sSpace^n_{c}$ respectively $\sSpace^n_{f}$. Bousfield localizations at the analogs of the Segal maps give model structures whose fibrant objects are (Reedy fibrant) $n$-uple Segal spaces, further localizing at maps governing essential constancy, the fibrant objects become (Reedy fibrant) $n$-fold Segal spaces, and a third localization at a map imposing completeness gives model structures $\sSpace_{n,c}^{CSe}$ respectively $\sSpace_{n,f}^{CSe}$ whose fibrant objects are (Reedy fibrant) complete $n$-fold Segal spaces, see \cite{LurieGoodwillie,BarwickSP} and \cite[Appendix]{JF-S}. Note that again, the identity map induces a Quillen equivalence between $\sSpace^n_{c}$ and $\sSpace^n_{f}$ which descends to the localizations.

Alternatively, and by \cite[Appendix, Proposition A.9]{JF-S} equivalently, the construction of complete Segal objects for absolute distributors from \cite{LurieGoodwillie} provides an iterative definition of these model categories by considering simplicial objects in a suitable model category (which is taken to be the appropriate localization of $\sSpace_{n-1,c}$ respectively $\sSpace_{n-1, f}$) and localizing at the maps governing the Segal condition, essential constancy, and/or completeness in the new simplicial direction.

\cite{LurieGoodwillie} also provides a model category whose fibrant objects are Segal category objects in some suitable underlying model category, thus allowing an iteration of the construction of Segal categories as well. Applying this construction $m$ times to the above one for complete $(n-m)$-fold Segal spaces provides a model category whose fibrant objects are $m$-hybrid $n$-fold Segal spaces.

One can show (see e.g.~in \cite{BarwickCSS, LurieGoodwillie, BergnerRezk, BergnerRezkII, Yan}) that equivalences between (possibly non-complete) $n$-fold Segal spaces for this model structure are exactly the {\em Dwyer-Kan equivalences}, which are defined inductively. For this we need the following inductive definition of the homotopy category of an $n$-fold Segal space:
\begin{defn}
The {\em homotopy category} $h_1(X)$ of an $n$-fold Segal space $X_\bullets$ is the following category: its objects are the 0-simplices, i.e.~the points of $X_{0,\ldots, 0}$. For $x,y$ two objects, we let 
$$
\Hom_X(x,y)_{\bullet,\dots,\bullet}:=\{x\} \underset{X_{0,\bullet,\dots,\bullet}}{\overset{h}{\times}} X_{1,\bullet,\dots,\bullet} \underset{X_{0,\bullet,\dots,\bullet}}{\overset{h}{\times}} \{y\}
$$
be the $(n-1)$-fold Segal space of morphisms\footnote{We will revisit this notion in \ref{sec morphisms}.} from $x$ to $y$. 
Now let morphisms in $h_1(X)$ from $x$ to $y$ be the set of isomorphism classes of objects in $h_1(\Hom_X(x,y)_{\bullet,\dots,\bullet})$, which is already defined by induction. Composition is defined using the Segal condition in the first index.
\end{defn}

\begin{defn}
A morphism $f:X\to Y$ of $n$-fold Segal spaces is a {\em Dwyer-Kan equivalence} if
\begin{enumerate}
\item the induced functor $h_1(f):h_1(X)\to h_1(Y)$ is essentially surjective. 
\item for each pair of objects $x,y\in X_{0,\dots,0}$, the induced morphism $\Hom_X(x,y)\to \Hom_Y(f(x),f(y))$ is a Dwyer-Kan equivalence of 
$(n-1)$-fold Segal spaces. 
\end{enumerate}
\end{defn}

Again we obtain equivalences of complete Segal spaces
$$
N(\CSSp_n,\mathpzc{lwe})\longrightarrow N(\SeSp_n,\mathpzc{DK})\longrightarrow N(\sSpace_n,\mathcal W_f^{CSe})\,,
$$
where $\mathcal W_f^{CSe}$ is the subcategory of weak equivalences in the localization $\sSpace_{n,f}^{CSe}$. 

\begin{rem}
Note that $\CSSp_n$ is the subcategory of fibrant objects for a left Bousfield localization of $\sSpace_{n, f}$ and weak equivalences of complete $n$-fold Segal spaces are level-wise weak equivalences. Denoting the category of fibrant objects in $\sSpace_{n,c}^{CSe}$, the Reedy fibrant complete $n$-fold Segal spaces, by $\CSSp_{n,c}$, the Quillen equivalence between $\sSpace_{n, c}$ and $\sSpace_{n, f}$ induces an equivalence
$N(\CSSp_{n,c}, \mathpzc{lwe}) \longrightarrow N(\CSSp_n, \mathpzc{lwe}),$
whose inverse is given by Reedy fibrant replacement $(-)^R$.
\end{rem}

Recall from Remark \ref{rem iterative Segal} that we can think of an $n$-fold Segal space in an iterative way: we can view an $n$-fold Segal space as a Segal object in $(n-1)$-fold Segal spaces, which we in turn can think of a Segal object in Segal objects in $(n-2)$-fold Segal spaces, etc. Then condition ($CSS^i$) above means that the $i$th iteration is a {\em complete} Segal space object. For more on this point of view, see \cite{LurieGoodwillie, Haugseng}
\begin{defn}
Given an $n$-fold Segal space $X_\bullets$, one can apply the completion functor iteratively to obtain a complete $n$-fold Segal space $\widehat{X}_\bullets$, its {\em ($n$-fold) completion}. This yields a map $X\to\widehat{X}$, the {\em completion map}, which is universal among all maps (in the homotopy category) to complete $n$-fold Segal spaces. It is a left adjoint to the embedding of $\CSSp_n[\mathpzc{lwe}^{-1}]$ into $\SeSp_n[\mathpzc{lwe}^{-1}]$.

If an $n$-fold Segal space $X_\bullets$ satisfies ($SC^j$) for $j\leq m$, we can apply the completion functor just to the last $(n-m)$ indices to obtain an $m$-hybrid $n$-fold Segal space $\widehat{X}^m_\bullets$, its {\em $m$-hybrid completion}.
\end{defn}

\subsection{Constructions of \texorpdfstring{$n$}{n}-fold Segal spaces}\label{sec constructions}

We describe several intuitive constructions of $(\infty,n)$-categories in terms of (complete) $n$-fold Segal spaces.

\subsubsection{Truncation}\label{sec truncation}
{\em 
Given an $(\infty,n)$-category, for $k\leq n$ its {\em $(\infty,k)$-truncation, or $k$-truncation,} is the $(\infty,k)$-category obtained by discarding the non-invertible $m$-morphisms for $k<m\leq n$. 
}

In terms of $n$-fold Segal spaces, there is a functor $\tau_k: \SeSp_n\to\SeSp_k$ sending $X=X_{\bullet,\dots,\bullet}$ to its $k$-truncation, the $k$-fold Segal space
$$
\tau_kX=X_{\underbrace{\bullet,\dots,\bullet}_{k\textrm{ times}},\underbrace{0,\dots,0}_{n-k\textrm{ times}}}\,.
$$

If $X$ is $m$-hybrid then so is $\tau_kX$ by the definition of the conditions \eqref{CSS^i}  and \eqref{SC^j}. In particular, if $X$ is complete, then $\tau_kX$ is as well, and thus, the truncation of an $(\infty, n)$-category is an $(\infty,k)$-category.

\begin{warning*}
Truncation does not behave well with respect to completion, i.e.~the truncation of the completion is not the completion of the truncation. However, we get a map in one direction:
$$
\xymatrix{
\tau_k(X) \ar[r] \ar[d] & \tau_k (\widehat{X})\\
\widehat{\tau_k(X)} \ar@{-->}[ur]
}
$$
In general, this map is not an equivalence. So in general one should always complete an $n$-fold Segal space before truncating it. For example, for $n=1$ and a non-complete Segal Space $X$, the truncation $\tau_1(X)=X_0$ is just the zeroth space, but the truncation of the completion will be equivalent to the underlying $\infty$-groupoid $X_1^{inv}$. The map in this case is given by the degeneracy map. In the example $X=N(G)$ from Remark \ref{rem completion}, the former is $N(G)_0 = \{*\}$ and the latter is $BG$, which are not equivalent in general.
\end{warning*}

\begin{rem}
As explained above, the $k$-truncation of an $(\infty,n)$-category $X$ should be the maximal $(\infty,k)$-category contained in $X$. However, the image of the degeneracy
$$X_{\underbrace{\scriptstyle 1,\ldots, 1}_{k},0,\ldots ,0}\hookrightarrow X_{\underbrace{\scriptstyle 1,\ldots, 1}_{m},0,\ldots, 0}$$
consists exactly of the invertible $m$-morphisms for $k<m\leq n$ if and only if $X$ satisfies \eqref{CSS^i} for $k<i\leq n$. For example, if $X=X_\bullet$ is a ($1$-fold) Segal space then $X_0$ is the underlying $\infty$-groupoid of invertible morphisms if and only if $X$ is complete. 
\end{rem}

\subsubsection{Extension}

{\em Any $(\infty,n)$-category can be viewed as an $(\infty,n+1)$-category with only identities as $(n+1)$-morphisms.}

In terms of $n$-fold Segal spaces, any $n$-fold Segal space can be viewed as a constant simplicial object in $n$-fold Segal spaces, i.e.~an $(n+1)$-fold Segal space which is constant in the first index. Explicitly, if $X_{\bullet,\ldots,\bullet}$ is an $n$-fold Segal space, then $\varepsilon(X)_{\bullet,\ldots,\bullet}$ is the constant simplicial object in the category of Segal spaces given by $X$, i.e.~it is the $(n+1)$-fold Segal space such that for every $k\geq0$,
$$\varepsilon(X)_{\bullet,\ldots,\bullet, k}=X_{\bullet,\ldots,\bullet}$$
and the face and degeneracy maps in the last index are identity maps. 

\begin{lemma}
If $X$ is complete, then $\varepsilon(X)$ is complete.
\end{lemma}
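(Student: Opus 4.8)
The plan is to check that $\varepsilon(X)$ satisfies condition \eqref{CSS^i} for every $1\le i\le n+1$, handling the newly added (first) simplicial direction separately from the $n$ directions inherited from $X$. Since $\varepsilon(X)$ is already an $(n+1)$-fold Segal space, only the completeness conditions are at issue.

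For $2\le i\le n+1$ there is nothing to do beyond unwinding the indexing. The first index of $\varepsilon(X)$ is constant, and in the slice $\varepsilon(X)_{k_1,\dots,k_{i-1},\bullet,0,\dots,0}$ occurring in \eqref{CSS^i} it appears as one of the fixed parameters $k_1\ge 0$; deleting it produces exactly the slice $X_{k_2,\dots,k_{i-1},\bullet,0,\dots,0}$ of $X$ that figures in condition \eqref{CSS^i} for $X$ (after relabelling the indices $1,\dots,n$). This is a complete Segal space because $X$ is complete, so the condition holds.

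The real content is the case $i=1$. There the slice $\varepsilon(X)_{\bullet,0,\dots,0}$ is the constant simplicial space $c\big(X_{0,\dots,0}\big)$ (by construction of $\varepsilon$, all of its face and degeneracy maps are identities), so it suffices to prove that for any space $S$ the constant simplicial space $c(S)$ is a complete Segal space. For the Segal condition I would use Remark~\ref{rem Segal g}: the comparison map $c(S)_m\to c(S)_1\times^h_{c(S)_0}\cdots\times^h_{c(S)_0}c(S)_1$ is the canonical map $S\to S\times^h_S\cdots\times^h_S S$ assembled from identity maps; composing it with any projection of the homotopy fibre product recovers $\mathrm{id}_S$, and each such projection has contractible homotopy fibres (iterated path spaces), so the comparison map is a weak equivalence by two-out-of-three. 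For completeness, observe that the degeneracy $c(S)_0\to c(S)_1$ is the identity $S\to S$, hence surjective; since the degeneracy always factors through the subspace $c(S)_1^{inv}$ of invertible arrows, surjectivity forces $c(S)_1^{inv}=c(S)_1$, and then $c(S)_0\to c(S)_1^{inv}$ is again the identity, hence a weak equivalence.

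I do not anticipate a genuine obstacle: the only mathematical input is the observation that constant simplicial spaces are complete Segal spaces, the rest being bookkeeping with multi-indices. The one step worth a moment's care is to make sure that for $i\ge 2$ the slice keeps the (constant) first index as a free parameter rather than setting it to $0$, so that it coincides on the nose with a slice of $X$, while for $i=1$ the remaining indices are genuinely all $0$, so that one really is looking at $c\big(X_{0,\dots,0}\big)$.
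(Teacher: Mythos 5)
Your proposal is correct and follows essentially the same route as the paper: the conditions \eqref{CSS^i} for $i\ge 2$ are inherited directly from the completeness of $X$, and the new direction reduces to showing the constant simplicial space on $X_{0,\dots,0}$ is complete because every arrow in it is invertible. The only (cosmetic) difference is that you deduce invertibility from surjectivity of the degeneracy, whereas the paper identifies $\mathrm{Hom}$-spaces with path spaces; both amount to the observation that all $1$-morphisms of $\varepsilon(X)_{\bullet,0,\dots,0}$ are identities up to homotopy.
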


\begin{proof}
Since $X$ is complete, it satisfies ($CSS^i$) for $i>1$. For $i=0$, we have to show that $\varepsilon(X)_{\bullet,0,\ldots,0}$ is complete. This is satisfied because
$$(\varepsilon(X)_{1,0,\ldots, 0})^{inv}=\varepsilon(X)_{1,0,\ldots, 0}=X_{0,\ldots, 0}=\varepsilon(X)_{0,0,\ldots, 0},$$
since morphisms between two elements $x,y$ in the homotopy category of $\varepsilon(X)_{\bullet, k_2,\ldots, k_n}$ are just connected components of the space of paths in $X_{k_2,\ldots, k_n}$, and thus are always invertible.
\end{proof}

We call $\varepsilon$ the extension functor, which is left adjoint to $\tau_{n}$. Moreover, the unit 
$\mathrm{id}\to\tau_1\circ\varepsilon$ of the adjunction is the identity.

\subsubsection{Inverting}

{\em 
Given an $(\infty,n)$-category, for $k\leq n$ we obtain an $(\infty,k)$-category by inverting the non-invertible $m$-morphisms for $k<m\leq n$. 
}

We saw that the extension functor $\varepsilon$ had a right adjoint $\tau_n$. It also has a left adjoint $\eta$, which formally inverts all $(n+1)$-morphisms. For an $n$-fold Segal space $X$, this is given by realizing the last index,
$$(\eta X)_{k_1,\ldots, k_n} = |X_{k_1,\ldots, k_n, \bullet}|.$$
Here geometric realization amounts to taking the diagonal of the bisimplicial set $X_{k_1,\ldots, k_n, \bullet}$. Since the following diagram of right adjoints commutes, the diagram of left adjoints commutes as well. Therefore, completion and inverting commute.
$$
\begin{tikzcd}[row sep=huge]
\SeSp_{n+1}  \arrow[d, draw=none, "\dashv" description] \arrow[r, draw=none, "{\rotatebox[origin=c]{90}{$\dashv$}}" description] \arrow[bend right, swap, d, "\widehat{(-)}"] \arrow[bend right]{r}{\eta} & \SeSp_n \arrow[d, draw=none, "\dashv" description] \arrow[bend right]{l}{\varepsilon} \arrow[bend right, swap, d, "\widehat{(-)}"] \\
\CSSp_{n+1} \arrow[r, draw=none, "{\rotatebox[origin=c]{90}{$\dashv$}}" description] \arrow[hook, bend right]{u} \arrow[bend right]{r}{\eta} & \CSSp_n \arrow[bend right]{l}{\varepsilon} \arrow[hook, bend right]{u}
\end{tikzcd}
$$

\subsubsection{The higher category of morphisms and loopings}\label{sec morphisms}

{\em Given two objects $x,y$ in an $(\infty,n)$-category, morphisms from $x$ to $y$ should form an $(\infty,n-1)$-category.}

This can be realized for $n$-fold Segal spaces, which is one of the main advantages of this model for $(\infty,n)$-categories.

\begin{defn}
Let $X=X_{\bullet,\cdots,\bullet}$ be an $n$-fold Segal space. As we have seen above one should think of objects as vertices of the space $X_{0,\ldots,0}$. Let $x,y\in X_{0,\ldots,0}$. The {\em $(n-1)$-fold Segal space of morphisms from $x$ to $y$} is 
$$
\Hom_X(x,y)_{\bullet,\cdots,\bullet}=
\{x\} \underset{X_{0,\bullet,\cdots,\bullet}}{\overset{h}{\times}} X_{1,\bullet,\cdots,\bullet} 
\underset{X_{0,\bullet,\cdots,\bullet}}{\overset{h}{\times}} \{y\}\,.
$$
\end{defn}

\begin{rem}\label{Hom (m-1)-hybrid}
Note that if $X$ is $m$-hybrid, then $\Hom_{X}(x,y)$ is $(m-1)$-hybrid.
\end{rem}

\begin{ex}[Compatibility with extension]
Let $X$ be an $(\infty,0)$-category, i.e.~a space, viewed as an
$(\infty,1)$-category, i.e.~a constant (complete) Segal space $\varepsilon(X)_\bullet$, $\varepsilon(X)_k=X$. For any two objects $x,y\in \varepsilon(X)_0=X$ the $(\infty,0)$-category, i.e.~the space, of morphisms from $x$ to $y$ is 
$$
\Hom_{\varepsilon(X)}(x,y)=
\{x\} \underset{\varepsilon(X)_0}{\overset{h}{\times}} \varepsilon(X)_{1}\underset{\varepsilon(X)_0}{\overset{h}{\times}} \{y\}
=\{x\}\underset{X}{\overset{h}{\times}}\{y\}=\mathrm{Path}_X(x,y)\,,
$$
the path space in $X$, which coincides with what one expects by the interpretation of paths, homotopies, homotopies between homotopies, etc.~being higher invertible morphisms. 
\end{ex}

\begin{defn}
Let $X$ be an $n$-fold Segal space, and $x\in X_0$ an object in $X$. Then the {\em looping of $X$ at $x$} is the $(n-1)$-fold Segal space
$$\myloop{X}{x}_{\bullet,\ldots,\bullet} = \Hom_{X}(x,x)_\bullets = \{x\}\times^h_{X_{0,\bullet,\ldots,\bullet}} X_{1,\bullet,\ldots,\bullet} \times^h_{X_{0,\bullet,\ldots,\bullet}} \{x\}.$$
In the following, it will often be clear at which element we are looping, e.g.~if there essentially is only one element, or at a unit for a monoidal structure, which we define in the next section. Then we omit the $x$ from the notation and just write
$$\Omega X=\myloopnop{X}=\myloop{X}{x}.$$
\end{defn}

We can iterate this procedure as follows.
\begin{defn}\label{def iterated looping}
Let $\myloop[0]{X}{x} = X$. For $1\leq k\leq n$, let the {\em $k$-fold iterated looping} be the $(n-k)$-fold Segal space
$$\myloop[k]{X}{x}=\myloop{\myloop[k-1]{X}{x} }{x},$$
where we view $x$ as a trivial $k$-morphism via the degeneracy maps, i.e.~an element in $\myloop[k-1]{X}{x}_{0\ldots, 0} \to X_{\underbrace{\scriptstyle 1,\ldots, 1}_k,0,\ldots,0}$.
\end{defn}

Looping $k$ times commutes with taking the $k$-hybrid completion up to weak equivalence, since completion is taken index by index:

Let $X$ be a $k$-hybrid $n$-fold Segal space. Then for the $k$-hybrid completion $\hat X$, which is the completion in the last $(n-k)$ variables, we have that $\myloopnop[k]{\hat X} \xrightarrow{\simeq} \hat X_{\underbrace{1,\ldots 1}_{k},\bullets}$ is complete, so by the universal property of completion, the horizontal map in the following diagram exists:
$$
\begin{tikzcd}
\widehat{\myloopnop[k]{X}} \arrow[dashed]{r}& \myloopnop[k]{\hat X}\\
\myloopnop[k]{X} \arrow{u}{}\arrow[swap]{ur}{}
\end{tikzcd}
$$

\begin{lemma}\label{lemma looping completion}
Let $X$ be a $k$-hybrid $n$-fold Segal space. Then the induced map
$$\widehat{\myloopnop[k]{X}}  \xrightarrow{\simeq} \myloopnop[k]{\hat X}$$
is a level-wise weak equivalence.
\end{lemma}

\begin{proof}
In the diagram
$$
\begin{tikzcd}
\widehat{\myloopnop[k]{X}} \arrow[dashed]{r}& \myloopnop[k]{\hat X}\\
\myloopnop[k]{X} \arrow{u}{}\arrow[swap]{ur}{}
\end{tikzcd}
$$
we know that the vertical map is a DK-equivalence, since completions are DK-equivalences. 
Moreover, since $X$ is hybrid, we have that $\myloopnop[k]{X} \xrightarrow{\simeq} X_{\underbrace{1,\ldots 1}_{k},\bullets}$ and $\myloopnop[k]{\hat X} \xrightarrow{\simeq} \hat X_{\underbrace{1,\ldots 1}_{k},\bullets}$, and by definition of (hybrid) completion, $X_{\underbrace{1,\ldots 1}_{k},\bullets} \to \hat X_{\underbrace{1,\ldots 1}_{k},\bullets}$ is just a completion, so it is a DK-equivalence. Thus, in the diagram
$$
\begin{tikzcd}
\widehat{X_{\underbrace{1,\ldots 1}_{k},\bullets}} \arrow[dashed]{r}& \hat X_{\underbrace{1,\ldots 1}_{k},\bullets}\\
X_{\underbrace{1,\ldots 1}_{k},\bullets} \arrow{u}{}\arrow[swap]{ur}{}
\end{tikzcd}
$$
by the two-out-of-three property, the horizontal morphism is as well. But since both $\widehat{X_{\underbrace{1,\ldots 1}_{k},\bullets}}$ and $\hat X_{\underbrace{1,\ldots 1}_{k},\bullets}$ are complete, it is a level-wise equivalence.
\end{proof}

\subsubsection{\texorpdfstring{$n$}{n}-fold from \texorpdfstring{$n$}{n}-uple  Segal spaces}\label{sec n-fold from n-uple}

We can extract the maximal $n$-fold Segal space from an $n$-uple one by the following procedure. Let us recall and introduce some notation for various model structures on the category of $n$-uple simplicial spaces. 
\begin{itemize}
\item $\sSpace^{(C)Se}_{n,f}$, where fibrant objects are (complete) $n$-fold Segal spaces. 
\item $\sSpace^{(C)Se}_{n,c}$, where fibrant objects are Reedy fibrant (complete) $n$-fold Segal spaces. 
\item $\sSpace_{Se}^{n,f}$, where fibrant objects are $n$-uple Segal spaces. 
\item $\sSpace_{Se}^{n,c}$, where fibrant objects are Reedy fibrant $n$-uple Segal spaces. 
\end{itemize}
From now, let $*\in\{c,f\}$. There are (two) Quillen adjunctions 
$$
\sSpace^{Se}_{n,*} \overunder[\mathrm{id}]{\mathrm{id}}{\leftrightarrows} \sSpace_{Se}^{n,*}\,.
$$
Let us denote (in a rather unusual way) $\mathbf{L}:=\mathbb{R}\mathrm{id}:N(\sSpace^{Se}_{n,*},w.e.)\to N(\sSpace_{Se}^{n,*},w.e.)$. 
Observe that on fibrant objects, $\mathbf{L}$ is nothing but the inclusion of (possibly Reedy fibrant) $n$-fold Segal spaces into (possibly Reedy fibrant) $n$-uple Segal spaces. 
After \cite[Proposition 4.12]{Haugseng}, we know it has a right adjoint $\mathbf{R}$. For the given (possibly Reedy fibrant) $n$-uple Segal space $X$, we wish to compute $\mathbf{R}(X)$. By adjunction, we know that 
$$
\mathbf{R}(X)_{\bullets}\simeq \mathrm{Map}^h_{\sSpace^{Se}_{n,*}}\big(\Delta^{\vec{\bullet}},\mathbf{R}(X)\big)\simeq \mathrm{Map}^h_{\sSpace_{Se}^{n,*}}\big(\mathbf{L}(\Delta^{\vec{\bullet}}),X\big)\,,
$$
where $\Delta^{\vec{k}}$ for $\vec{k}=(k_1,\ldots, k_n)$ is the n-fold simplicial set represented by $[k_1]\times\cdots\times[k_n]\in \Delta^{\times n}$, and $\mathrm{Map}^h$ denotes the derived mapping space. 

We will now find an explicit way to compute $\mathbf{R}(X)$ by finding cofibrant replacements of  $\mathbf{L}(\Delta^{\vec{k}})$. We start by recalling certain strict $n$-categories of the desired shapes, which are all objects in Joyal's category $\Theta_n$ \cite{JoyalTheta, Re2}.

For $\vec{k}=(k_1,\ldots, k_n)$, let $\Theta^{\vec{\bullet}}$ be the \emph{walking $\vec{k}$-tuple of $n$-morphisms} which is the strict $n$-category from \cite[Definition 5.1]{JF-S}.
We do not want to recall the full definition here, but rather the intuition:
\begin{itemize}
\item For $\vec{k}=(1,0,\ldots, 0)$, the category $\Theta^{\vec{k}} =
\tikz[baseline=(base)]{ \path node[dot] (l) {} +(0,-2.5pt) coordinate (base) +(.5,-0pt) node[dot](r) {}; \draw[arrow] (l) -- (r);} 
$ is the walking 1-morphism.
\item For $\vec{k}=(2,0,\ldots, 0)$, the category $\Theta^{\vec{k}} =
\tikz[baseline=(base)]{ \path node[dot] (l) {} +(0,-2.5pt) coordinate (base) +(.5,-0pt) node[dot](r) {} +(1,-0pt) node[dot](t) {}; \draw[arrow] (l) -- (r); \draw[arrow] (r) -- (t);} 
$ is the walking composable pair of 1-morphisms.
\item For $\vec{k}=(2,1,\ldots, 0)$, the strict 2-category $\Theta^{\vec{k}} =
\tikz[baseline=(base)]{
\path node[dot] (l) {} +(0,-2.5pt) coordinate (base) +(1,-0pt) node[dot](r) {} +(2,-0pt) node[dot](t) {};
\draw[arrow] (l) to[bend left=60] (r) node[midway] {} (A);
\draw[arrow] (l) to[bend right=60] (r) node[midway] {} (B); 
\draw[arrow] (r) to[bend left=60] (t) node[midway] {} (C);
\draw[arrow] (r) to[bend right=60] (t) node[midway] {} (D);
\draw[twoarrow] (0.5, -0.5) -- (0.5,0.5);
\draw[twoarrow] (1.5, -0.5) -- (1.5,0.5);
} $ is the walking horizontally composable pair of 2-morphisms.
\item For $\vec{k}=(3,2,\ldots, 0)$, we have the strict 2-category $\Theta^{\vec{k}} =
\tikz[baseline=(base), scale=1.7]{
\path node[dot] (l) {} +(0,-2.5pt) coordinate (base) +(1,-0pt) node[dot](r) {} +(2,-0pt) node[dot](t) {} +(3,0) node[dot] (s) {};
\draw[arrow] (l) to[bend left=70] (r) ;
\draw[arrow] (l) to[bend left=0] (r) ;
\draw[arrow] (l) to[bend right=70] (r) ; 
\draw[arrow] (r) to[bend left=70] (t) ;
\draw[arrow] (r) to[bend left=0] (t) ;
\draw[arrow] (r) to[bend right=70] (t) ;
\draw[arrow] (t) to[bend left=70] (s) ;
\draw[arrow] (t) to[bend left=0] (s) ;
\draw[arrow] (t) to[bend right=70] (s) ;
\draw[twoarrow] (0.5, -0.15) -- (0.5,0.45);
\draw[twoarrow] (0.5, -0.45) -- (0.5,0.15);
\draw[twoarrow] (1.5, -0.15) -- (1.5,0.45);
\draw[twoarrow] (1.5, -0.45) -- (1.5,0.15);
\draw[twoarrow] (2.5, -0.15) -- (2.5,0.45);
\draw[twoarrow] (2.5, -0.45) -- (2.5,0.15);
} 
$
\item More generally, for $\vec{k}=(k_1, \ldots, k_n)$, the strict $n$-category $\Theta^{\vec{\bullet}}$ has $k_1\cdots k_n$ $n$-morphisms which are composable following the pattern of a grid of dimension $k_1\times\cdots\times k_n$.
\end{itemize}

The elementary building blocks for these categories are $\Theta^{(n)}$, where $(n)=(\underbrace{1,\ldots, 1}_{n}, 0,\ldots, 0)$. All others are built by gluing these in a grid of of dimension $k_1\times\cdots\times k_n$. In \cite{BarwickSP} Barwick--Schommer-Pries use the following definition, which can been easily seen to be equivalent to the one in \cite{JF-S} by induction:
\begin{defn}
Let $C^1$ be the walking 1-morphism, i.e.~the category with two objects and one non-identity morphism from one object to the other,
$C^1 = \{\tikz{ \path node[dot] (l) {} +(.8,0) node[dot](r) {} +(0,-3pt) coordinate (base); \draw[arrow] (l) -- (r);}\}$.
The strict $n$-category $\Theta^{(n)}$ is defined inductively by the pushout square
$$
\begin{tikzcd}
\{0,1\}\times \Theta^{(n-1)} \arrow[hook]{r} \arrow{d} & C^1\times \Theta^{(n-1)} \arrow{d}\\
\{0,1\}\times \{*\} \arrow{r} & \Theta^{(n)} .
\end{tikzcd}
$$
Note that this immediately implies the existence of a surjective ``collapse'' map $c_n: C^n \to \Theta^{(n)}$, where $C^n=(C^1)^{\times n}$ is the walking $n$-morphism as a strict $n$-uple category.
\end{defn}

The $n$-fold nerve of $\Theta^{\vec{k}}$ is 
\begin{itemize}
\item levelwise fibrant (because $\Theta^{\vec{k}}$ is discrete). 
\item a Segal space (because $\Theta^{\vec{k}}$ is a strict $n$-category). 
\item complete (because $\Theta^{\vec{k}}$ is reduced). 
\end{itemize}
Let us thus abuse notation and still write $\Theta^{\vec{k}}$ for this (complete) $n$-fold Segal space. Now we can write the formula for the cofibrant replacement, and therefore the recipe for finding the underlying $n$-fold Segal space.
\begin{thm}\label{thm underlying n-fold}
Given a $n$-uple Segal space $X$, its maximal underlying $n$-fold Segal space has levels, for $\vec{k}=(k_1,\ldots, k_n)\in(\Delta^{op})^n$,
$$\mathbf{R}(X)_{\vec{k}}=\mathrm{Map}^h_{\sSpace_{Se}^{n,*}}(\Theta^{\vec{k}},X).$$
Since $\Theta^{\vec{\bullet}}$ is an $n$-fold cosimplicial object in strict $n$-categories (see \cite{JF-S}), this defines a (complete) $n$-fold Segal space.
\end{thm}

To prove this Theorem, we need to understand what the cofibrant replacement $\mathbf{L}(\Delta^{\vec{\bullet}})$ is. The first step is a tool to compute the right hand expression in the Theorem, namely, an explicit cofibrant replacement of $\Theta^{\vec{k}}$.
\begin{prop}\label{prop cofibrant Theta}
For $n=1$, the category $\Theta^{(1)}$, or rather its nerve, is cofibrant in the projective model structure $\sSpace_{CSe}^{1,f}$. For $n>1$, a cofibrant replacement of $\Theta^{(n)}$ in the projective model structure of $n$-uple Segal spaces $\sSpace_{Se}^{n,f}$ is given inductively by replacing the pushouts in the definition by homotopy pushouts and $\Theta^{(n-1)}$ by its (inductively already defined) cofibrant replacement.
\end{prop}

\begin{proof}
Similarly to Section \ref{complete}, we use an argument similar to that in \cite{JF-S}, Remark 3.4., which observes the following: $\Theta^{(2)}$ is given by a strict pushout along a diagram of cofibrant objects of which one arrow is an inclusion. By \cite[A.2.4.4]{LurieHTT}, this is a homotopy pushout in the injective model structure and therefore homotpy equivalent to the homotopy pushout in the projective model structure. So a cofibrant replacement of $\Theta^{(2)}$ is given by taking the homotopy pushout of the same diagram,
$$
\begin{tikzcd}
\{0,1\}\times C^1 \arrow[hook]{r} \arrow{d} \arrow[dr, phantom, "\mathrm{h} \lrcorner", very near end] & C^2 \arrow{d}{} \\
\{0,1\}\times \{*\} \arrow{r} & \mathrm{cof}(\Theta^{(2)})
\end{tikzcd}
$$

Now we proceed by induction. Assume we have shown the statement for $k<n$ and we have a cofibrant replacement $\mathrm{cof}(\Theta^{(k)})$ given as in the Proposition. Then, since the map $\{0,1\}\hookrightarrow C^1$ is a cofibration in the projective model structure, the map $\{0,1\}\times \mathrm{cof}(\Theta^{(n-1)}) \hookrightarrow C^1 \times \mathrm{cof}(\Theta^{(n-1)})$ is a cofibration. Moreover, $\{0,1\}\times \mathrm{cof}(\Theta^{(n-1)})$,  $C^1 \times \mathrm{cof}(\Theta^{(n-1)})$, and $\{0,1\}\times \{*\}$ are all cofibrant, so we can use the above-mentioned \cite[A.2.4.4]{LurieHTT}, again to see that the strict pushout, which is weakly equivalent to $\Theta^{(n)}$, is a homotopy pushout, and moreover cofibrant. Summarizing, it is a cofibrant replacement of $\Theta^{(n)}$.
\end{proof}

\begin{rem}
Similarly, we can obtain cofibrant replacements for $\Theta^{\vec k}$ as defined in \cite{JF-S} by replacing the pushouts in the definition by homotopy pushouts.
\end{rem}

The remaining ingredient in the proof of the Theorem is the following Lemma.
\begin{lemma}\label{lem Theta Delta}
The natural map $\Delta^{\vec{k}}\to \Theta^{\vec{k}}$ is a weak equivalence in $\sSpace^{Se}_{n,*}$. 
\end{lemma}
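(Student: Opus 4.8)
The plan is to detect the weak equivalence by mapping into fibrant objects. First, since $\sSpace^{CSe}_{n,*}$ is obtained from $\sSpace^{Se}_{n,*}$ by one further left Bousfield localization, every weak equivalence of $\sSpace^{Se}_{n,*}$ is also one of $\sSpace^{CSe}_{n,*}$, so it suffices to prove that $\Delta^{\vec k}\to\Theta^{\vec k}$ is a weak equivalence in $\sSpace^{Se}_{n,*}$; moreover, since $\sSpace^{Se}_{n,c}$ and $\sSpace^{Se}_{n,f}$ have the same weak equivalences (both being the $S$-local equivalences for the same localizing set $S$), we may argue in the injective model structure, where every object is cofibrant. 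By the usual characterization of weak equivalences via mapping spaces into fibrant objects, it is then enough to show that for every Reedy fibrant $n$-fold Segal space $Z$ the map $\mathrm{Map}(\Theta^{\vec k},Z)\to\mathrm{Map}(\Delta^{\vec k},Z)$ is a weak equivalence of spaces, where $\mathrm{Map}$ denotes the simplicial hom (which is the derived mapping space here, as both sources are cofibrant and $Z$ is fibrant). As $\Delta^{\vec k}$ represents the $\vec k$-th level, $\mathrm{Map}(\Delta^{\vec k},Z)=Z_{\vec k}$, so the lemma reduces to: for every such $Z$, the canonical map $\mathrm{Map}(\Theta^{\vec k},Z)\to Z_{\vec k}$ is an equivalence. (The completeness bullet point above is what makes the same $\Theta^{\vec k}$ fibrant in $\sSpace^{CSe}_{n,*}$ as well, but it is not needed here.)

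I would establish this by induction on $n$, writing $\vec k=(k_1,\vec m)$ with $\vec m=(k_2,\dots,k_n)$. For $n=1$ there is nothing to prove: $\Theta^{(k)}$ is the nerve of the poset $[k]$, i.e.~$\Delta^{(k)}$, and the map is the identity. For the inductive step I would use the two ways — both read off from the explicit description of the walking $\vec k$-tuple of $n$-morphisms in \cite[Definition 5.1]{JF-S} — in which $\Theta^{(k_1,\vec m)}$ is built from lower data. First, composition in the first direction: the ``spine'' obtained by gluing $k_1$ copies of $\Theta^{(1,\vec m)}$ end-to-end along the object $\Delta^{(0,\dots,0)}$ maps to $\Theta^{(k_1,\vec m)}$, and this map is a weak equivalence in $\sSpace^{Se}_{n,*}$, being built from cobase changes of the first-direction Segal maps; mapping into $Z$ and using the Segal condition on $Z$ in the first direction gives
$$\mathrm{Map}(\Theta^{(k_1,\vec m)},Z)\;\simeq\;\mathrm{Map}(\Theta^{(1,\vec m)},Z)\times^h_{Z_{0,\dots,0}}\cdots\times^h_{Z_{0,\dots,0}}\mathrm{Map}(\Theta^{(1,\vec m)},Z)$$
with $k_1$ factors. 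Second, $\Theta^{(1,\vec m)}$ is the suspension of $\Theta^{\vec m}$ (two objects $0,1$, with $\mathrm{Hom}(0,1)=\Theta^{\vec m}$ and all other hom-$(n-1)$-categories trivial); using the essential constancy of $Z$ in directions $2,\dots,n$ at first-direction level $0$, one identifies $\mathrm{Map}(\Theta^{(1,\vec m)},Z)$ with $\hocolim_{x,y\in Z_{0,\dots,0}}\mathrm{Map}\big(\Theta^{\vec m},\mathrm{Hom}_Z(x,y)\big)$, and the inductive hypothesis applied to the $(n-1)$-fold Segal space $\mathrm{Hom}_Z(x,y)$ turns this into $\hocolim_{x,y}\mathrm{Hom}_Z(x,y)_{\vec m}\simeq Z_{1,\vec m}$ (a homotopy colimit of the fibres of $(s,t)\colon Z_{1,\vec m}\to Z_{0,\vec m}\times Z_{0,\vec m}$, after identifying $Z_{0,\vec m}\simeq Z_{0,\dots,0}$ by essential constancy). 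Substituting back, $\mathrm{Map}(\Theta^{(k_1,\vec m)},Z)\simeq Z_{1,\vec m}\times^h_{Z_{0,\dots,0}}\cdots\times^h_{Z_{0,\dots,0}}Z_{1,\vec m}\simeq Z_{k_1,\vec m}=Z_{\vec k}$, the last equivalence again by the Segal condition for $Z$; a diagram chase identifies this composite with the map induced by $\Delta^{\vec k}\to\Theta^{\vec k}$.

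The step I expect to be the real obstacle is the combinatorial input: one must extract from \cite[Definition 5.1]{JF-S} that $\Theta^{(1,\vec m)}$ really is the homotopy-correct suspension of $\Theta^{\vec m}$ (so that it corepresents, up to weak equivalence and against $n$-fold Segal spaces, the functor $Z\mapsto\hocolim_{x,y}\mathrm{Map}(\Theta^{\vec m},\mathrm{Hom}_Z(x,y))$), and that the map from the spine to $\Theta^{(k_1,\vec m)}$ is obtained from the Segal maps by cobase changes. Concretely this amounts to unwinding the levelwise discrete $n$-fold simplicial set $\Theta^{\vec k}$ and comparing it, direction by direction, with the iterated colimits of representables that appear in the Segal and essential-constancy localizations; this is bookkeeping but has to be done with care (some of the relevant gluings being strict colimits only after localization). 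Everything else — the reductions of the first paragraph and the manipulations with the Segal and essential-constancy conditions — is formal. (Equivalently, the lemma says that $\Theta^{\vec k}$, being a levelwise discrete and hence Reedy fibrant Segal space, is a fibrant replacement of $\Delta^{\vec k}$ in $\sSpace^{Se}_{n,*}$, which is also the point of view underlying the computation of the right adjoint $\mathbf R$ via \cite{Haugseng} recalled above.)
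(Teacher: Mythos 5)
Your proposal is correct and follows the same route as the paper: both reduce the statement to showing that for every Reedy fibrant $n$-fold Segal space $Y$ the induced map $\mathrm{Map}(\Theta^{\vec{k}},Y)\to\mathrm{Map}(\Delta^{\vec{k}},Y)\cong Y_{k_1,\dots,k_n}$ is a weak equivalence, after using that the projective and injective (Reedy) localizations have the same weak equivalences. The only difference is one of detail: the paper dispatches the final identification $\mathrm{Map}(\Theta^{\vec{k}},Y)\simeq Y_{k_1,\dots,k_n}$ in a single line by citing the essential constancy condition, whereas your induction via spines and suspensions spells it out and correctly observes that for general $\vec{k}$ the Segal condition is needed as well (already for $\vec{k}=(2,1)$ the $n$-fold nerve $\Theta^{\vec{k}}$ contains free composites not in the image of $\Delta^{\vec{k}}$), a point the paper's appeal to condition (ii) alone glosses over.
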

\begin{proof}
We need to show that for any fibrant object $Y$ in $\sSpace^{Se}_{n,*}$ the induced map $\mathrm{Map}^h_{\sSpace^{Se}_{n,*}}(\Theta^{\vec{k}},Y)\to\mathrm{Map}^h_{\sSpace^{Se}_{n,*}}(\Delta^{\vec{k}},Y)$ is a weak equivalence of simplicial sets.

We show the claim for $\vec k=(k)$ proceeding by induction using the explicit cofibrant replacement from the previous Proposition. For $k=1$, this is true, since $\Theta^{(1)} = \Delta^{(1)} =\Delta^1$. Assume we have proven the statement for $l<k$. Then
\begin{align*}
\mathrm{Map}^h(\Theta^{(k)},Y) \overset{\simeq}{\longrightarrow} & \mathrm{Map}^h(C^1\times \Theta^{(k-1)},Y)\overunder[\mathrm{Map}^h(\{0,1\}\times \Theta^{(k-1)},Y)]{h}{\times} \mathrm{Map}^h(\{0,1\},Y)\\
\simeq &\,\, \mathrm{Map}^h(C^1\times \Theta^{(k-1)},Y)\overunder[Y_{0,\bullets}^{\times 2}]{h}{\times} Y_{0,\ldots, 0}^{\times 2}\\
\simeq &\,\, \mathrm{Map}^h(C^1\times \Theta^{(k-1)},Y)\\
\simeq &\,\, \mathrm{Map}^h(\Theta^{(k-1)}, \Hom(C^1,Y)).
\end{align*}
Here the first equivalence uses that the cofibrant replacement of $\Theta^{(k)}$ is the homotopy pushout as described in the previous Proposition, the next equivalence computes the mapping spaces on the right and below the times symbol, the third equivalence uses essential constancy of $Y$, i.e.~condition (ii) in Definition \ref{defn n-fold SSp}, and the last one uses that $n$-fold Segal spaces are Cartesian closed.

By the induction hypothesis, the natural map $\Delta^{(k-1)}\to \Theta^{(k-1)}$ induces an equivalence
$$\mathrm{Map}^h(\Theta^{(k-1)}, \Hom(C^1,Y)) \xrightarrow{\simeq} \mathrm{Map}^h(\Delta^{(k-1)}, \Hom(C^1,Y)) \simeq \mathrm{Map}^h(\Delta^{(k)}, Y) \simeq Y_{(k)}.$$
A similar argument works for general $\vec{k}$.
\end{proof}

\begin{rem}
The above Lemma is equivalent to the observation that the model structure $\sSpace^{Se}_{n,*}$ can be obtained as the left Bousfield localization of $\sSpace_{Se}^{n,*}$ along $\Delta^{\vec{k}}\to\Theta^{\vec{k}}$. 
\end{rem}

\begin{proof}[Proof of Theorem \ref{thm underlying n-fold}]
The following equivalences are compatible with the cosimplicial structure of $\Delta^{\vec{\bullet}}$ and $\Theta^{\vec{\bullet}}$:
$$\mathbf{R}(X)_{\vec{k}}\cong \mathrm{Map}^h \big(\Delta^{\vec{k}},\mathbf{R}(X)\big)\simeq \mathrm{Map}^h\big(\mathbf{L}(\Delta^{\vec{k}}),X\big) \underset{\simeq}{\xleftarrow{Lemma \,\ref{lem Theta Delta}}} \mathrm{Map}^h\big(\mathbf{L}(\Theta^{\vec{k}}),X\big) \simeq \mathrm{Map}^h(\Theta^{\vec{k}},X) \,.
$$
\end{proof}

\section{Symmetric monoidal structures}\label{sec monCSSn}

\subsection{Definition {\em via} \texorpdfstring{$\Gamma$}{Gamma}-objects}\label{Gamma}

Following \cite{toen, ToenVezzosi}, we define a symmetric monoidal $n$-fold Segal space in analogy to Segal's $\Gamma$-spaces from \cite{Segal}. This is a special case of commutative monoid in an arbitrary $(\infty,1)$-category as defined in \cite{LurieHA}.

\begin{defn}\label{def Gamma}
Segal's category $\Gamma$ is the category whose objects are the finite sets
$$\<{m} = \{0,\ldots, m\}, $$
for $m\geq0$ which are pointed at 0. Morphisms are pointed functions, i.e.~for $k,m\geq0$, functions
$$f: \<{m}\longrightarrow \<{k}, \quad f(0)=0.$$
For every $m\geq0$, there are $m$ canonical morphisms
$$\gamma_\beta:\<{m}\longrightarrow \<{1}, \quad j \longmapsto \delta_{\beta j}$$
for $1\leq \beta\leq m$, called the {\em Segal morphisms}.
\end{defn}

\begin{rem}
Note that $\Gamma$ is a skeleton of the category of finite pointed sets $\mathrm{Fin}_*$. In his original paper \cite{Segal}, Segal defined $\Gamma$ to be the opposite category of $\mathrm{Fin}_*$. However, in the literature, $\Gamma$ has often appeared in the above convention.
\end{rem}

Recall from Section \ref{model cat CSS_n} that the $(\infty,1)$-category of $(\infty,n)$-categories is presented by a model category in which the fibrant objects are complete $n$-fold Segal spaces. More precisely, the $(\infty,1)$-category of $(\infty,n)$-categories is defined to be the complete Segal space 
$$
N(\CSSp_n,\mathpzc{lwe})\simeq N(\SeSp_n,\mathpzc{DK}) \simeq N(\sSpace_n,\mathcal W_f^{CSe}).
$$
We would now like to define a symmetric monoidal $(\infty,n)$-category to be an $(\infty,1)$-functor from $\Gamma$, viewed as an $(\infty,1)$-category, e.g.~as $N(\Gamma, \Iso\Gamma)$, to the $(\infty,1)$-category of $(\infty,n)$-categories satisfying certain properties. 

Using the strictification theorem of To\"en-Vezzosi from \cite{ToenVezzosiSegaltopoi} the $(\infty,1)$-category of such functors can be computed using the model category $(\sSpace_{n, f}^{CSe})^\Gamma$ of $\Gamma$-diagrams in $\sSpace_{n, f}^{CSe}$ endowed with the projective model structure,
$$N((\sSpace_{n, f}^{CSe})^\Gamma, \mathcal{W}) \xrightarrow{\simeq} \mathrm{Map}^h_{\sSpace_{n, f}^{CSe}} (N(\Gamma, \Iso\Gamma), N(\sSpace_{n}, \mathcal{W}_f^{CSe})).$$
Fibrant objects in the former are strict functors
from $\Gamma$ to $\CSSp_n$. Thus the following definition suffices.

\begin{defn}\label{defn symm mon Gamma}
A {\em symmetric monoidal complete $n$-fold Segal space} is a (strict) functor from $\Gamma$ to the (strict) category of complete $n$-fold Segal spaces $\CSSp_n$,
$$
A:\Gamma \longrightarrow \CSSp_n
$$
such that for every $m\geq0$, the induced map
$$
A\big(\prod_{1\leq \beta\leq m} \gamma_\beta \big): A\<m\longrightarrow (A\<1)^m
$$
is an equivalence of complete $n$-fold Segal spaces.

The complete $n$-fold Segal space $X = A\<1$ is called the complete $n$-fold Segal space {\em underlying $A$}, and by abuse of language we will sometimes call a complete $n$-fold Segal space $X$ symmetric monoidal, if there is a symmetric monoidal complete $n$-fold Segal space $A$ such that $A\<1 = X$.
\end{defn}

\begin{rem}
Note that in particular, for $m=0$, this implies that $A\<0$ is levelwise equivalent to a point, viewed as a constant $n$-fold Segal space, which we will denote by $\unit$.
\end{rem}

\begin{rem}
We can define symmetric monoidal $n$-fold Segal spaces in a similar way, by replacing $\CSSp_n$ be $\SeSp_n$.
\end{rem}

\begin{defn}
The $(\infty,1)$-category, i.e.~complete Segal space, of functors from $\Gamma$ to $\CSSp_n$, which as mentioned above can be computed using the model category of $\Gamma$-diagrams in $\sSpace_{n, f}^{CSe}$, has a full sub-$(\infty,1)$-category of symmetric monoidal complete $n$-fold Segal spaces. Similarly to Section \ref{model cat CSS_n}, this $(\infty,1)$-category can be realized as the localization of the projective model structure on $(\sSpace_{n, f}^{CSe})^\Gamma$ with respect to the Segal morphisms, see \cite[Example A.11]{JF-S}. A $1$-morphism in this $(\infty,1)$-category is called a {\em symmetric monoidal functor of $(\infty,n)$-categories}.
\end{defn}

The completion map $X\to \widehat{X}$ is a weak equivalence. Moreover, since Dwyer-Kan equivalences are closed under products, completion commutes with finite products of Segal spaces (up to weak equivalence). Therefore we obtain the following Lemma.
\begin{lemma}\label{symm mon completion}
If $A:\Gamma \longrightarrow \SeSp_n$ is a symmetric monoidal $n$-fold Segal space, then
\begin{align*}
\widehat{A}: \Gamma &\longrightarrow \CSSp_n,\\
\<m &\longmapsto \widehat{A\<m}
\end{align*}
is a symmetric monoidal complete $n$-fold Segal space.
\end{lemma}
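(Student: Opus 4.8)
The plan is to verify directly that $\widehat{A}$ satisfies the two requirements of Definition~\ref{defn symm mon Gamma}: first, that it is a functor landing in $\CSSp_n$, and second, that it takes the Segal morphisms to equivalences. For the first requirement, I would note that Rezk's completion (as recalled in Definition~\ref{sec completion}, iterated index-by-index as in the definition of $n$-fold completion) is functorial, so postcomposing the functor $A:\Gamma\to\SeSp_n$ with the completion functor $(-)\mapsto\widehat{(-)}$ produces a functor $\widehat{A}:\Gamma\to\CSSp_n$; by construction $\widehat{A}\<m=\widehat{A\<m}$ is a complete $n$-fold Segal space, so the target is correct. The completion maps $i_{A\<m}:A\<m\to\widehat{A\<m}$ assemble into a natural transformation $A\Rightarrow\widehat{A}$, which will be useful in the second step.

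For the second requirement, fix $m\geq0$ and consider the commuting square
$$
\begin{tikzcd}
A\<m \arrow{r}{A(\prod_\beta\gamma_\beta)} \arrow{d}[swap]{i_{A\<m}} & (A\<1)^m \arrow{d}{(i_{A\<1})^m}\\
\widehat{A\<m} \arrow{r}{\widehat{A}(\prod_\beta\gamma_\beta)} & (\widehat{A\<1})^m
\end{tikzcd}
$$
obtained from naturality of $i$, together with the identification of the right-hand vertical map as a power of a completion map. The top horizontal map is an equivalence since $A$ is a symmetric monoidal $n$-fold Segal space. The left vertical map is the completion map, hence a levelwise weak equivalence (a Dwyer--Kan equivalence which, since the target is fibrant in $\sSpace_{n,f}^{CSe}$, is a weak equivalence in that model structure; concretely, completion does not change the homotopy type of the levels beyond the localization). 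The right vertical map is an $m$-fold product of completion maps. Here I would invoke the hypothesis flagged in the statement: completion preserves finite products of Segal spaces up to weak equivalence, so $(i_{A\<1})^m$ is again a levelwise weak equivalence, i.e.\ an equivalence of complete $n$-fold Segal spaces (using that $\CSSp_n$ is the subcategory of fibrant objects of a left Bousfield localization of $\sSpace_{n,f}$, where weak equivalences are levelwise). By the two-out-of-three property for weak equivalences in the model category $\sSpace_{n,f}^{CSe}$, the bottom horizontal map $\widehat{A}(\prod_\beta\gamma_\beta)$ is a weak equivalence, hence an equivalence of complete $n$-fold Segal spaces. This is exactly the Segal condition required of $\widehat{A}$.

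The main obstacle is the claim that completion commutes with finite products up to weak equivalence; everything else is formal. For the case $m=0$ one checks that $\widehat{A\<0}$ is still levelwise contractible since $A\<0$ is and completion preserves levelwise weak equivalences; the empty-product statement is then that $\widehat{A\<0}\simeq\unit$, consistent with the remark following Definition~\ref{defn symm mon Gamma}. For $m\geq1$ the point is that the completion functor, being built by iterating Rezk's localization in each simplicial direction, is a left adjoint on homotopy categories and in particular is only guaranteed to preserve colimits; preservation of the finite limit $(A\<1)^m$ is the substantive input, which one justifies either by Rezk's explicit formula for the completion or by the fact that in $\sSpace_{n,f}^{CSe}$ finite products are homotopy products and fibrant replacement (a Bousfield localization fibrant replacement) preserves homotopy products of objects that are already Segal spaces. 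I would cite this as the key lemma and not reprove it, since it is exactly the statement invoked in the sentence preceding the lemma.
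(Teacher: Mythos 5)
Your argument is correct and follows exactly the route the paper takes: the paper's entire justification is the sentence preceding the lemma, namely that the completion map is a weak equivalence and that completion preserves finite products of Segal spaces up to weak equivalence, and your naturality square plus two-out-of-three is just the careful spelling-out of that. One small caveat: the completion map $i_{A\<m}$ is a Dwyer--Kan equivalence (equivalently a weak equivalence in $\sSpace_{n,f}^{CSe}$), not in general a levelwise weak equivalence, but since your two-out-of-three argument only uses weak equivalences in the localized model structure this slip does not affect the proof.
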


\begin{ex}\label{symm mon h_1}
Let $A:\Gamma \longrightarrow \SeSp$ be a symmetric monoidal Segal space.
Consider the product of maps $\gamma_1\times \gamma_2$ and the map $\gamma:\<2\to\<1; 1,2\mapsto 1$. They induce a span
$$A\<1\times A\<1 \overunder[\simeq]{A(\gamma_1)\times A(\gamma_2)}{\xleftarrow{\hphantom{A(\gamma_1)\times A(\gamma_2)}}} A\<2 \overset{A(\gamma)}{\longrightarrow} A\<1.$$
Passing to the homotopy category, we obtain a map
$$h_1(A\<1)\times h_1(A\<1)\longrightarrow h_1(A\<1).$$
To\"en and Vezzosi showed in \cite{ToenVezzosi} that this is a symmetric monoidal structure on the category $h_1(A\<1)$. Roughly speaking, this uses functoriality of $A$. Associativity uses the Segal space $A\<3$; $A\<0$ corresponds to the unit; and the map $c:\<2\to\<2; 1\mapsto 2, 2 \mapsto 1$ induces the braiding and commutativity.
\end{ex}

\begin{ex}
Truncations and extensions of symmetric monoidal $(\infty,n$)-categories are again symmetric monoidal. Let $A$ be a symmetric monoidal $n$-fold (complete) Segal space. Since $\tau_k$ and $\varepsilon$ are functorial, and preserve weak equivalences and products (since they are right adjoints), the assignments
$$\tau_k(A)\<m = \tau_k( A\<m), \quad \varepsilon(A)\<m = \varepsilon( A\<m)$$
can be extended to functors $\tau_k(A)$ and $\varepsilon(A)$, and the images of $A\big(\prod_{1\leq \beta\leq m} \gamma_\beta \big)$ are again weak equivalences. Thus, they endow the $k$-truncation and extension with a symmetric monoidal structure.
\end{ex}

\begin{ex}\label{ex symm mon looping}
Given a symmetric monoidal (possibly complete) $n$-fold Segal space $A:\Gamma \longrightarrow \SeSp_n$, recall that $A\<0$ is weakly equivalent to the point $\unit$ viewed as a constant $n$-fold Segal space. For every $m\geq0$ there is a unique map $\<0\to\<m$, which induces a map $\unit\simeq A\<0\to A\<m$ which picks out a distinguished object $\unit_\<m\in A\<m$. The looping of $A$ with respect this object is also symmetric monoidal, with
$$
\myloopnop{A}\<m = \myloop{A\<m}{\unit_\<m}\,,
$$
which extends to a symmetric monoidal structure similarly to in the previous example. Note that since the space of choices for the unit $\unit_\<m$ is contractible, different choices lead to equivalent loopings.
\end{ex}

\begin{ex}
Important examples come from the classification diagram construction. Let $\C$ be a small symmetric monoidal category and let $\W=\Iso\C$. As we saw in Section \ref{Rezk construction}, this gives a complete Segal space $N(\C, \W)$. The symmetric monoidal structure of $\C$ endows $N(\C, \W)$ with the structure of a symmetric monoidal complete Segal space:

First note that $\W^{\times m} = \Iso(\C^{\times m})$ for every $m$. On objects, let $A:\Gamma \longrightarrow \CSSp$ be given by $A\<m= N(\C^{\times m},\W^{\times m})_\bullet$. We explain the image of the map $\<2 \to \<1; 1,2\mapsto 1$, which should be a map $A\<2 \to A\<1$. The image of an arbitrary map $\<m\to\<l$ can be defined similarly.

An $l$-simplex in $A\<2\>_0 = N(\C\times \C,\W\times\W)_0$ is a pair
$$C_0\xrightarrow{w_1}\cdots \xrightarrow{w_l} C_l, \qquad D_0\xrightarrow{w'_1}\cdots \xrightarrow{w'_l} D_l,$$
and is sent to
$$C_0\otimes D_0 \xrightarrow{w_1\otimes w_1'} \ldots \xrightarrow{w_l\otimes w_l'} C_l\otimes D_l.$$
Observe that $w_i\otimes w_i'$ is again in $\W$. More generally, an $l$-simplex in
$$A\<2_k = N(\C\times \C,\W\times\W)_k$$
is a pair of diagrams
\begin{equation*}
\begin{tikzcd}
C_{0,0} \arrow{r}{f_{10}} \arrow{d}{w_{01}} & C_{1,0} \arrow{r}{f_{20}} \arrow{d}{w_{11}} & \ldots \arrow {r}{f_{k0}} & C_{k,0} \arrow{d}{w_{k1}} &  D_{0,0} \arrow{r}{g_{10}} \arrow{d}{v_{01}} & D_{1,0} \arrow{r}{g_{20}} \arrow{d}{v_{11}} & \ldots \arrow {r}{g_{k0}} & D_{k,0} \arrow{d}{v_{k1}} \\
C_{0,1} \arrow{r}{f_{11}} \arrow{d}{w_{02}} & C_{1,1} \arrow{r}{f_{21}} \arrow{d}{w_{21}} & \ldots \arrow {r}{f_{k1}} & C_{k,1} \arrow{d}{w_{k2}} &  D_{0,1} \arrow{r}{g_{11}} \arrow{d}{v_{02}} & D_{1,1} \arrow{r}{g_{21}} \arrow{d}{v_{21}} & \ldots \arrow {r}{g_{k1}} & D_{k,1} \arrow{d}{v_{k2}} \\
\vdots \arrow{d}{w_{0l}} & \vdots \arrow{d}{w_{1l}} & & \vdots \arrow{d}{w_{kl}}  & \vdots \arrow{d}{v_{0l}} & \vdots \arrow{d}{v_{1l}} & & \vdots \arrow{d}{v_{kl}} \\
C_{0,l} \arrow{r}{f_{1l}} & C_{1,l} \arrow{r}{f_{2l}} & \ldots \arrow{r}{f_{k,l}} & C_{k,l}  & D_{0,l} \arrow{r}{g_{1l}} & D_{1,l} \arrow{r}{g_{2l}} & \ldots \arrow{r}{g_{k,l}} & D_{k,l}
\end{tikzcd}
\end{equation*}
which is sent to the diagram
\begin{equation*}
\begin{tikzcd}[column sep=large]
C_{0,0}\otimes D_{0,0} \arrow{r}{f_{10}\otimes g_{10}} \arrow{d}{w_{01}\otimes v_{01}} & C_{1,0} \otimes D_{1,0} \arrow{r}{f_{20}\otimes g_{20}} \arrow{d}{w_{11}\otimes v_{11}} & \ldots \\
C_{0,1}\otimes D_{0,1} \arrow{r}{f_{11}\otimes g_{11}} \arrow{d}{} & C_{1,1}\otimes D_{1,1} \arrow{r}{f_{21}\otimes g_{21}} \arrow{d}{} & \ldots \\
\vdots & \vdots 
\end{tikzcd}
\end{equation*}
of component-wise tensor products.

Finally, we need to check that $A\big(\prod_{1\leq \beta\leq m} \gamma_\beta \big)$ is a weak equivalence. This follows from the fact that
$$
(A\<m)_k = N(\C^{\times m},\W^{\times m})_k = \big(N(\C,\W)_k\big)^{\times m} = \big(A\<1_k\big)^m\,.
$$
\end{ex}

\begin{rem}
More generally, if we start with a symmetric monoidal relative category $(\C,\W)$ (a definition can e.g.~be found in \cite{campbell}), such that all $N(\C^{\times m},\W^{\times m})$ are (complete) Segal spaces, then the above construction for $(\C,\W)$ yields a symmetric monoidal (complete) Segal space $N(\C,\W)$.
\end{rem}

\subsection{Definition {\em via} towers of \texorpdfstring{$(n+i)$}{(n+i)}-fold Segal spaces}\label{tower}

Recall that a monoidal category can be seen as a bicategory with just one object. Similarly, a $k$-monoidal $n$-category should be the same as a 
connected $(k+n)$-category with only one object, one 1-morphism, one 2-morphism, and so on up to one $(k-1)$-morphism.

We will base our definitions for (symmetric) monoidal $(\infty,n)$-categories in this section on this guiding principle, which often goes by the name ``Delooping Hypothesis''.

Moreover, in our simplicial setting this principle turns out to be true almost by definition: we will use that associative monoids in a (higher) category $\C$ can be described as simplicial objects in $\C$ satisfying Segal conditions. This motivates the following definition of a ($k$-)monoidal complete $n$-fold Segal space.

\subsubsection{Monoidal \texorpdfstring{$n$}{n}-fold complete Segal spaces}

To implement the above idea, we first need to explain what ``having (essentially) one object'' means.
\begin{defn}
A {\em connected} or {\em 0-connected} $n$-fold Segal space $X$ is a pointed object in $n$-fold Segal spaces, i.e.~a morphism $*\to X$ from the constant $n$-fold Segal space consisting of a point, to $X$, such that the map
$$*\longrightarrow X_{0,\bullets}$$
is a weak equivalence of $(n-1)$-fold Segal spaces. In particular, a connected $n$-fold Segal space has a contractible space of objects.
\end{defn}

\begin{defn}\label{def monoidal complete segal}
A {\em monoidal} complete $n$-fold Segal space is a $1$-hybrid $(n+1)$-fold Segal space $X^{(1)}$ which is connected. Note that as $X^{(1)}$ is 1-hybrid, $X^{(1)}_{0,\bullets}$ is constant with values a discrete space. Thus, to be connected implies that $X^{(1)}_{0,\bullets}$ is equal to the point viewed as a constant $n$-fold Segal space; we again denote the unique object by $*$. We say that this endows the complete $n$-fold Segal space
$$X= \myloopnop{ X^{(1)} } = \myloop{X^{(1)}}{*}$$
with a monoidal structure and that $X^{(1)}$ is a {\em delooping} of $X$.
\end{defn}

\begin{rem}
Without the completeness condition, we could define a monoidal $n$-fold Segal space to be an $(n+1)$-fold Segal space $X^{(1)}$ which is connected. Then $\myloop{ X^{(1)} }{*} = \Hom_{X^{(1)}}(*,*)$ is independent of the choice of point $*\in X_{0,\ldots,0}$ and we can say that this endows the $n$-fold Segal space $X = \myloopnop{X^{(1)} } = \myloop{ X^{(1)} }{ * }$ with a monoidal structure.
However, for a complete Segal space $X$, the space $X_{0,\ldots,0}$ will not be contractible (unless it is trivial). Thus, we need a model for $(\infty,n+k)$-categories which can have a point as the set of objects, 1-morphisms, et cetera. This motivates our use of hybrid Segal spaces. 
\end{rem}

\begin{rem}\label{rem hybrid easy looping}
Let $X$ be an $m$-hybrid $n$-fold Segal space with $m>0$ which is connected. Then $X_{0,\bullet,\ldots,\bullet} = *$, and therefore the looping just is
$$\myloopnop{X}_\bullets = \{*\} \overunder[\{*\}]{h}{\times} X_{1,\bullets} \overunder[\{*\}]{h}{\times} \{*\} \simeq  X_{1,\bullets}.$$
\end{rem}

A similar definition works for hybrid Segal spaces.
\begin{defn}\label{def monoidal hybrid segal}
A {\em monoidal} $m$-hybrid $n$-fold Segal space is an $(m+1)$-hybrid $(n+1)$-fold Segal space $X^{(1)}$ which is connected. We say that this endows the $m$-hybrid $n$-fold Segal space
$$X=\myloopnop{ X^{(1)} }$$
with a monoidal structure and that $X^{(1)}$ is a {\em delooping} of $X$.
\end{defn}

\begin{rem}\label{rem monoids} 
Definitions \ref{def monoidal complete segal} and \ref{def monoidal hybrid segal} are special cases of the following more general construction of monoids in a model category. 
Let $\mathcal M$ be a left proper cellular model category, and consider the projective model structure on the category $\mathcal M^{\Delta^{op}}$ 
of simplicial objects in $\mathcal M$.
By the strictification theorem by To\"en and Vezzosi from \cite{ToenVezzosiSegaltopoi}, the $(\infty,1)$-category of $(\infty,1)$-functors between the $(\infty,1)$-categories represented by $\Delta^{op}$ and $\mathcal M$ is equivalent to $N\big(\mathcal M^{\Delta^{op}},\mathpzc{lwe}\big)$.  
We say that an object $X_\bullet \in \mathcal M^{\Delta^{op}}$ is a {\em weak monoid} if the Segal maps 
$$
X_n\longrightarrow X_1^n
$$
are weak equivalences. One can show that the $(\infty,1)$-category of monoids in $N(\mathcal M,\mathpzc{we})$, which is, as usual, obtained by a localization of the model structure on $\mathcal M^{\Delta^{op}}$ with respect to the maps governing the Segal morphisms, is equivalent to the relative nerve 
of the relative category of weak monoids in $\mathcal M$ and levelwise weak equivalences. 
Monoidal $m$-hybrid $n$-Segal spaces are exactly the weak monoids in $m$-hybrid $n$-Segal spaces. 
\end{rem}

\begin{ex}
Let $\C$ be a small monoidal category and let $\W=\Iso\C$. As we saw in Section \ref{Rezk construction}, this gives a complete Segal space $N(\C, \W)$. The monoidal structure of $\C$ endows $N(\C, \W)$ with the structure of a monoidal complete Segal space:

Recall that $\mathcal C_\bullet$ was the simplicial object in categories given by $\mathcal C_n:=\Fun\big([n],\mathcal C)$. Let $\C_{m,n} = \C_n^{\otimes m}$ be the category which has objects of the form
$$
\xymatrix{
C_{01} \otimes \cdots \otimes C_{0m} \ar[r]^{c_1\!\!\!\!\!\!} & \cdots\cdots\cdots \ar[r]^{\!\!\!\!\!\!c_n} & C_{n0}\otimes \cdots \otimes C_{nm}
}
$$
for $c_i=c_{i1}\otimes \cdots \otimes c_{im}$ and morphisms of the form 
$$
\xymatrix{
C_{01} \otimes \cdots \otimes C_{0m} \ar[r]^{c_1\!\!\!\!\!\!} \ar[d]^{f^0} & \cdots\cdots\cdots \ar[r]^{\!\!\!\!\!\!c_n} & 
C_{n0}\otimes \cdots \otimes C_{nm} \ar[d]_{f^n} \\
D_{01} \otimes \cdots \otimes D_{0m} \ar[r]^{d_1\!\!\!\!\!\!} & \cdots\cdots\cdots \ar[r]^{\!\!\!\!\!\!d_n} & 
D_{n0}\otimes \cdots \otimes D_{nm},
}
$$
where $c_1,\ldots, c_n, d_1,\ldots, d_n$, and $f^0,\ldots, f^n$ are products of $m$ morphisms in $\C$.

Consider its subcategory $\C_{m,n}^\W \subset \C_{m,n}$ which has the same objects, and vertical morphisms involving only the ones in $\W=\Iso\C$, i.e.~$f^0,\ldots, f^n$ are products of morphisms in $\W$.

Now let
$$\C^{(1)}_{m,n}= N(\C_{m,n}^\W)$$
be the (ordinary) nerve. By a direct verification one sees that the collection $\C^{(1)}_{\bullet,\bullet}$ is a 2-fold Segal space. Moreover,
\begin{enumerate}
\item $\C^{(1)}_{0,n} =N(\C_n^{\otimes 0})=*$, so $\C^{(1)}_{0,\bullet}$ is discrete and equal to the point viewed as a constant Segal space, and 
\item for every $m\geq0$, we get that $\C^{(1)}_{m,\bullet}=N(\C_{m,\bullet}^\W)=N((\C_{\bullet}^{\otimes m})^\W)$ is a complete Segal space.
\end{enumerate}
Summarizing, $\C^{(1)}$ is a 1-hybrid 2-fold Segal space which is connected and endows $\myloopnop{ \C^{(1)} }_\bullet \simeq \C^{(1)}_{1,\bullet} \simeq N(\C, \W)_\bullet$ with the structure of a monoidal complete Segal space.
\end{ex}

\subsubsection{\texorpdfstring{$k$}{k}-monoidal \texorpdfstring{$n$}{n}-fold complete Segal spaces}

To encode braided or symmetric monoidal structures, we can push this definition even further.
\begin{defn}
An $n$-fold Segal space $X$ is called {\em j-connected} if
$$X_{\underbrace{\scriptstyle1,\ldots,1,}_{j}0,\bullets}$$
is weakly equivalent to the point viewed as a constant $n$-fold Segal space.
\end{defn}

\begin{rem}
Note that being $j$-connected implies being $i$-connected for every $0\leq i <j$.
\end{rem}

\begin{defn}
A {\em $k$-monoidal} $m$-hybrid $n$-fold Segal space is an $(m+k)$-hybrid $(n+k)$-fold Segal space $X^{(k)}$ which is $(k-1)$-connected.
\end{defn}

\begin{rem}
Note that as $X^{(k)}$ is $(m+k)$-hybrid, $X^{(k)}_{\underbrace{\scriptstyle1,\ldots,1,}_{i}0,\bullets}$ is discrete for every $0\leq i< k$. Thus, being $(k-1)$-connected implies that $X^{(k)}_{\underbrace{\scriptstyle1,\ldots,1,}_{i}0,\bullets}$ is equal to the point viewed as a constant $(n-i+1)$-fold Segal space for every $0\leq i< k$.
\end{rem}

By the following proposition this definition satisfies the delooping hypothesis. In practice we can use it to define a $k$-monoidal $n$-fold complete Segal space step-by-step by defining a tower of monoidal $i$-hybrid $(n+i)$-fold Segal spaces for $0\leq i<k$.

\begin{prop}\label{prop k-monoidal}
The data of a $k$-monoidal $n$-fold complete Segal space is the same as a tower of monoidal $i$-hybrid $(n+i)$-fold Segal spaces $X^{(i+1)}$ for $0\leq i < k$ together with weak equivalences
$$X^{(j)} \simeq \myloopnop{ X^{(j+1)} }$$
for every $0\leq j<k-1$.
\end{prop}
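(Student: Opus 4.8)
The plan is to prove both implications by unwinding the definitions, relying on three facts about looping established above: that $L$ sends an $m$-hybrid $(n+1)$-fold Segal space to an $(m-1)$-hybrid $n$-fold Segal space (Remark \ref{Hom (m-1)-hybrid}); that on a \emph{pointed} $m$-hybrid Segal space with $m>0$ looping is computed by the easy formula $L(X)_\bullets=X_{1,\bullets}$ (Remark \ref{rem hybrid easy looping}); and that looping is functorial and commutes with ($m$-hybrid) completion (Remark \ref{rem looping completion}). I will also use freely that $j$-connectedness implies $i$-connectedness for $i\le j$, and that a pointed $m$-hybrid Segal space with $m>0$ has $X_{0,\bullets}$ equal (not merely equivalent) to the point.

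First I would treat the direction from a $k$-monoidal $n$-fold complete Segal space to a tower. Such an object is, by definition, a $k$-hybrid $(n+k)$-fold Segal space $X^{(k)}$ which is $(k-1)$-connected; set $X^{(j)}:=L_{k-j}(X^{(k)})$ for $0\le j\le k$ (Definition \ref{def iterated looping}), the choice of basepoint being irrelevant since $X^{(k)}$ is pointed. Because $X^{(k)}$ is $(k-1)$-connected, every intermediate looping $L_l(X^{(k)})$ with $l\le k-1$ is again pointed, so the easy-looping formula applies at each stage and yields $X^{(j)}_\bullets=X^{(k)}_{\underbrace{1,\ldots,1}_{k-j},\bullets}$. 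From this description together with Remark \ref{Hom (m-1)-hybrid} one reads off directly that $X^{(j)}$ is a $j$-hybrid $(n+j)$-fold Segal space, that $X^{(j+1)}$ is pointed and hence a monoidal $j$-hybrid $(n+j)$-fold Segal space, and that $X^{(0)}$ is $0$-hybrid, i.e.~a complete $n$-fold Segal space underlying the structure; the comparison equivalences $X^{(j)}\simeq L(X^{(j+1)})$ hold on the nose by construction.

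For the converse I would take the top of the tower, $X^{(k)}$, as the candidate $k$-monoidal $n$-fold complete Segal space. By hypothesis it is already a $k$-hybrid $(n+k)$-fold Segal space that is pointed, so the only point to verify is $(k-1)$-connectedness. Here I would propagate connectivity up the tower inductively: using that each $X^{(i)}$ with $i\ge 1$ is a pointed hybrid Segal space and that the tower supplies equivalences $X^{(i-1)}\simeq L(X^{(i)})$, the easy-looping formula gives $X^{(k)}_{\underbrace{1,\ldots,1}_{i},\bullets}\simeq X^{(k-i)}_\bullets$ for $1\le i\le k-1$; evaluating the remaining indices at $0$ gives $X^{(k)}_{\underbrace{1,\ldots,1}_{i},0,\bullets}\simeq X^{(k-i)}_{0,\bullets}=*$, which is precisely $(k-1)$-connectedness. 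It then remains to observe that, by functoriality of $L$ and of completion, the two constructions are mutually inverse up to coherent equivalence and preserve weak equivalences, so they identify the relevant $(\infty,1)$-categories of such structures.

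The main obstacle I anticipate is not conceptual but the careful bookkeeping: keeping the three indices — hybrid degree, number of simplicial directions, and connectivity level — correctly aligned through the iterated looping, and checking that each intermediate looping stays pointed so that the easy-looping identity $L(X)_\bullets=X_{1,\bullets}$ (and not a genuine homotopy fibre product) is available at every step. A secondary subtlety worth flagging is that ``being monoidal'' is a property (pointedness) rather than additional structure, so the essential data in a tower are the comparison equivalences $X^{(j)}\simeq L(X^{(j+1)})$; the converse direction uses all of these, including the one comparing $X^{(k-1)}$ with $L(X^{(k)})$, in order to carry the connectivity of the lower levels up to $X^{(k)}$.
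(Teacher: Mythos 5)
Your proposal is correct and follows essentially the same route as the paper: the forward direction iterates the looping using the easy-looping identity $L(X)_\bullets=X_{1,\bullets}$ for pointed hybrid Segal spaces (the content of Lemmas \ref{lemma k-l monoidal} and \ref{Hom monoidal}), and the converse propagates the pointedness of the lower levels of the tower up to $(k-1)$-connectedness of $X^{(k)}$ by the same inductive fiber-product computation. The only cosmetic difference is your closing remark about the constructions being mutually inverse up to coherent equivalence, which the paper does not spell out either.
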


\begin{defn}
We say that these equivalent data endow the complete $n$-fold Segal space
$$X=X^{(0)} \simeq \myloopnop{ X^{(1)} }$$
with a $k$-monoidal structure. The $(n+i+1)$-fold Segal space $X^{(i+1)}$ is called an {\em $i$-fold delooping of $X$}.
\end{defn}

Before proving the proposition, we need the following lemmas.
\begin{lemma}\label{lemma k-l monoidal}
If $X$ is a $k$-monoidal $m$-hybrid $n$-fold Segal space, and $0\leq l \leq k$, then $X$ is an $l$-monoidal $(m+k-l)$-hybrid $(n+k-l)$-fold Segal space.
\end{lemma}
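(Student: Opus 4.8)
The plan is to argue directly from the definitions; no construction is needed, since the $l$-monoidal structure on $X$ will be $X$ itself, with the bookkeeping of the hybrid-, fold-, and monoidal-indices merely redistributed. First I would unwind the hypothesis: by the definition of a $k$-monoidal $m$-hybrid $n$-fold Segal space, to say that $X$ is one means precisely that $X$ is an $(m+k)$-hybrid $(n+k)$-fold Segal space which is $(k-1)$-connected. Then I would unwind the conclusion: by the same definition with the triple $(l,m+k-l,n+k-l)$ in place of $(k,m,n)$, an $l$-monoidal $(m+k-l)$-hybrid $(n+k-l)$-fold Segal space is exactly an $((m+k-l)+l)$-hybrid $((n+k-l)+l)$-fold Segal space which is $(l-1)$-connected.

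The key observation is then the numerical identities $(m+k-l)+l=m+k$ and $(n+k-l)+l=n+k$. Since the notion of a $p$-hybrid $q$-fold Segal space depends only on the integers $p,q$ and the underlying object, these identities show that the ambient object we are asked to exhibit is literally the same kind of object as the one we are given, so nothing is to be done on that count. It therefore only remains to verify the connectivity condition, namely that $X$ is $(l-1)$-connected. For $1\le l\le k$ this is immediate from the remark that a $j$-connected $n$-fold Segal space is $i$-connected for every $0\le i<j$: here $0\le l-1\le k-1$, so $(k-1)$-connectedness of $X$ forces $(l-1)$-connectedness, the case $l=k$ being a tautology. For $l=0$ there is nothing to check, since the condition of being $(-1)$-connected is vacuous; equivalently, a $0$-monoidal $p$-hybrid $q$-fold Segal space is by definition nothing but a $p$-hybrid $q$-fold Segal space.

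Because the whole argument is a definition-chase, I do not expect a genuine obstacle. The only points requiring care are the two index identities above — one must check that both the hybrid-index and the fold-index of the two ambient notions really do coincide — and the degenerate boundary value $l=0$, where the connectivity clause collapses and must be read as empty. It is also worth making explicit at the outset that, in this statement, $X$ is understood as the top delooping datum (the object called $X^{(k)}$ in the definition), which is the reading under which the assertion holds on the nose.
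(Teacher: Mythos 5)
Your proof is correct and follows the same route as the paper's: both unwind the definition to see that a $k$-monoidal $m$-hybrid $n$-fold Segal space is an $(m+k)$-hybrid $(n+k)$-fold Segal space that is $(k-1)$-connected, note that the ambient indices agree since $(m+k-l)+l=m+k$ and $(n+k-l)+l=n+k$, and conclude $(l-1)$-connectedness from the remark that $j$-connected implies $i$-connected for $i<j$. Your treatment is slightly more careful than the paper's (which is two lines), in particular in making the degenerate case $l=0$ explicit.
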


\begin{proof}
Since $X$ is a $k$-monoidal $m$-hybrid $n$-fold Segal space, $X$ is an $(m+k)$-hybrid $(n+k)$-fold Segal space such that
$$X_{\underbrace{\scriptstyle1,\ldots,1,}_{k-1}0,\ldots,0}=*.$$
This implies that $X_{\underbrace{\scriptstyle1,\ldots,1,}_{l-1}0,\ldots,0}=*$.
\end{proof}

\begin{lemma}\label{Hom monoidal}
Let $X$ be a $k$-monoidal $m$-hybrid $n$-fold Segal space. Then $\myloopnop{X}=\myloop{X}{*}$ is a $(k-1)$-monoidal $(m-1)$-hybrid $n$-fold Segal space.
\end{lemma}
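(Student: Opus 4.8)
The plan is to make the looping concrete and then reduce the statement to an index shift in the conditions \eqref{CSS^i} and \eqref{SC^j}. Unwinding the definition of a $k$-monoidal $n$-fold Segal space, $X$ is an $(m+k)$-hybrid $(n+k)$-fold Segal space which is $(k-1)$-connected. Since $k\geq1$, being $(k-1)$-connected implies being $0$-connected, i.e.\ pointed, so $X_{0,\bullets}$ is the one-point space $*$; and since $m+k>0$, Remark~\ref{rem hybrid easy looping} identifies the looping at the unique object with the first level in the first simplicial direction:
$$L(X)_{\bullets}=\{*\}\times^h_{X_{0,\bullets}}X_{1,\bullets}\times^h_{X_{0,\bullets}}\{*\}=X_{1,\bullets}.$$
Thus $L(X)$ is the $(n+k-1)$-fold simplicial space obtained from $X$ by freezing the first coordinate at $1$, and everything reduces to inspecting this restriction.

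I would then check the three properties defining the target object. (i) $L(X)$ is an $(n+k-1)$-fold Segal space: every one-variable slice of a Segal space is a Segal space, and the essential-constancy clause of Definition~\ref{defn n-fold SSp} for $L(X)$ reads ``$X_{1,d_1,\dots,d_{i-1},0,\bullets}$ is essentially constant'', which is an instance of that clause for $X$. (ii) The hybrid level shifts down: plugging $b_1=1$ into the slices appearing in condition \eqref{SC^j} for $X$ shows that $L(X)$ satisfies \eqref{SC^j} with the index lowered by one, and the same substitution in condition \eqref{CSS^i} shows $L(X)$ satisfies \eqref{CSS^i} with the index lowered by one; this is precisely the index bookkeeping underlying Remark~\ref{Hom (m-1)-hybrid} applied to $X$ as an $(n+k)$-fold Segal space. (iii) Connectivity drops by one: $(k-1)$-connectedness of $X$ is the statement $X_{\underbrace{\scriptstyle1,\ldots,1}_{k-1},0,\bullets}=*$, and freezing the first coordinate at $1$ turns this into the $(k-2)$-connectedness of $L(X)$. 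Collecting (i)--(iii) and comparing with the definition of a $(k-1)$-monoidal $(m-1)$-hybrid $n$-fold Segal space finishes the argument; compatibility of the whole picture with the (hybrid) completion functors is Remark~\ref{rem looping completion}. Alternatively, one can phrase the same proof through the delooping tower of Proposition~\ref{prop k-monoidal}: looping the top of the tower of $X$ produces the top of the tower of $L(X)$.

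The one point requiring care is (ii), the behaviour of completeness under looping. A priori $L(X)_\bullets$ is only the homotopy fibre product $\{*\}\times^h_{X_{0,\bullets}}X_{1,\bullets}\times^h_{X_{0,\bullets}}\{*\}$, and one must know that the completeness conditions of $X$ pass through this construction; the decisive fact is that $X$ is pointed, so the base $X_{0,\bullets}$ of the fibre products is a point, which simultaneously collapses the fibre product to $X_{1,\bullets}$ and makes the transfer of \eqref{CSS^i} immediate --- one simply reads off that $X_{1,c_1,\dots,c_{i-1},\bullet,0,\dots,0}$ is a complete Segal space from condition \eqref{CSS^i} for $X$ at the next index. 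In the non-pointed generality of Remark~\ref{Hom (m-1)-hybrid} this transfer has to be carried out on the homotopy fibre product directly, using only that $X_{0,\bullets}$ is discrete; that is where the genuine work in the analogous general statement sits, and it is exactly this simplification by pointedness that makes the present lemma short.
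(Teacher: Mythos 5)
Your identification $L(X)_{\bullets}=X_{1,\bullets}$, justified by pointedness together with discreteness of $X_{0,\bullets}$, is exactly the paper's (one-line) proof, and your points (i) and (iii) are correct. The problem sits in (ii) and in the final comparison: your own (correct) bookkeeping shows that the hybrid index drops by exactly one under looping, so $L(X)$ is an $(m+k-1)$-hybrid $(n+k-1)$-fold Segal space; but a $(k-1)$-monoidal $(m-1)$-hybrid $n$-fold Segal space is, by definition, an $\big((m-1)+(k-1)\big)=(m+k-2)$-hybrid $(n+k-1)$-fold Segal space. So the comparison you invoke to ``finish the argument'' does not close. Concretely, the missing condition is $(CSS^{m+k-1})$ for $X_{1,\bullets}$, i.e.\ that $X_{1,c_1,\dots,c_{m+k-2},\bullet,0,\dots,0}$ is a \emph{complete} Segal space; what you actually get from $(SC^{m+k})$ for $X$ is only that $X_{1,c_1,\dots,c_{m+k-2},0,\bullet,\dots,\bullet}$ is discrete, and a Segal space with discrete objects need not be complete (the nerve of a non-skeletal groupoid is the standard counterexample).

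In fairness, this off-by-one is present in the paper's own statement, and its proof is equally silent about it. What your computation actually establishes --- and what is consistent with Remark \ref{Hom (m-1)-hybrid}, with Definition \ref{def monoidal hybrid segal} (looping a pointed $(m+1)$-hybrid $(n+1)$-fold Segal space yields an $m$-hybrid one), and with how the lemma is used in the proof of Proposition \ref{prop k-monoidal} --- is that $L(X)$ is a $(k-1)$-monoidal $m$-hybrid $n$-fold Segal space. Either prove the statement in that corrected form, in which case your argument is complete as written, or, if you want the literal statement, you must additionally verify $(CSS^{m+k-1})$ for $X_{1,\bullets}$, which your argument does not supply and which is false in general.
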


\begin{proof}
This follows from
$$\myloopnop{X}_\bullets=\Hom_X(*,*)_{\bullet,\ldots,\bullet}= \{*\}\times^h_{X_{0,\bullet,\ldots,\bullet}} X_{1,\bullet,\ldots,\bullet} \times^h_{X_{0,\bullet,\ldots,\bullet}} \{*\} \simeq X_{1,\bullet,\ldots,\bullet},$$
since $X_{0,\bullet,\ldots,\bullet}=\{*\}$.
\end{proof}

\begin{proof}[Proof of Proposition \ref{prop k-monoidal}.]
Let $Y$ be a $k$-monoidal $n$-fold complete Segal space. By Lemma \ref{lemma k-l monoidal} it is a monoidal $(k-1)$-hybrid $(n+k-1)$-fold Segal space and we define the top layer of our tower to be $X^{(k)}=Y$.

Now let $X^{(k-1)}=\myloopnop{ X^{(k)} }$. By Lemmas \ref{Hom monoidal} and \ref{lemma k-l monoidal}, this is a monoidal $(k-2)$-hybrid $(n+k-2)$-fold Segal space.

Inductively, define $X^{(i)}=\myloopnop{ X^{(i+1)} }$ for $1\leq i \leq k-1$. Similarly to above, by Lemmas \ref{Hom monoidal} and \ref{lemma k-l monoidal}, this is a monoidal $(i-1)$-hybrid $(n+i-1)$-fold Segal space.

Conversely, assume we are given a tower $X^{(i)}$ as in the proposition. Since $Y=X^{(k)}$ is a monoidal $(k-1)$-hybrid $(n+k-1)$-fold Segal space,
\begin{equation}\label{eq X_0}
Y_{0,\bullets}=X^{(k)}_{0,\bullets}=*.
\end{equation}
Since $X^{(k-1)}$ is a monoidal $(k-2)$-hybrid $(n+k-2)$-fold Segal space and by \eqref{eq X_0},
\begin{equation}\label{eq X_1}
\begin{split}
Y_{1,0,\bullets}=X^{(k)}_{1,0,\bullets} &= \{*\}\times^h_{X^{(k)}_{0,0,\bullets}} X^{(k)}_{1,0,\bullets} \times^h_{X^{(k)}_{0,0,\bullets}} \{*\} \\
 &=  \myloopnop{ X^{(k)} }_{0,\bullets} \\
 &\simeq X^{(k-1)}_{0,\bullets} = *.
\end{split}
\end{equation}
Since $X^{(k)}$ is $k$-hybrid, $Y_{1,0,\bullets}=X^{(k)}_{1,0,\bullets}$ is discrete and so $Y_{1,0,\bullets}=*$.

Inductively, for $0\leq i<k$, since $X^{(k-i)}$ is a monoidal $(k-i-1)$-hybrid $(n+k-i-1)$-fold Segal space and by \eqref{eq X_0}, \eqref{eq X_1},\dots
\begin{equation*}
\begin{split}
Y_{\underbrace{\scriptstyle1,\ldots,1,}_{i}0,\bullets} &=X^{(k)}_{\underbrace{\scriptstyle1,\ldots,1,}_{i}0,\bullets}\\
 &\simeq \{*\}\times^h_{X^{(k)}_{0,\underbrace{\scriptscriptstyle1,\ldots,1,}_{i-1}0,\bullets}} X^{(k)}_{\underbrace{\scriptstyle1,\ldots,1,}_{i}0,\bullets} \times^h_{X^{(k)}_{0,\underbrace{\scriptscriptstyle1,\ldots,1,}_{i-1}0,\bullets}} \{*\} \\
 &=  \myloopnop{ X^{(k)} }_{\underbrace{\scriptstyle1,\ldots,1,}_{i-1}0,\bullets} \\
 &\simeq X^{(k-1)}_{\underbrace{\scriptstyle1,\ldots,1,}_{i-1}0,\bullets} = \ldots \simeq X^{(k-i)}_{0,\bullets} = *.
\end{split}
\end{equation*}
Again, since $X^{(k)}$ is $k$-hybrid, $Y_{\underbrace{\scriptstyle1,\ldots,1,}_{i}0,\bullets}=X^{(k)}_{\underbrace{\scriptstyle1,\ldots,1,}_{i}0,\bullets}$ is discrete and so $Y_{\underbrace{\scriptstyle1,\ldots,1,}_{i}0,\bullets}=*$.
\end{proof}

Given a bicategory $\C$ and an object $x$ in $\C$, the endomorphism, or loop category $\mathrm{End}_\C(x) = \myloop{\C}{x}$ is monoidal. Its monoidal structure comes from composition of endomorphisms, which is encoded in the full sub-bicategory of $\C$ which has only the object $x$. In analogy with topology, one can call this delooping $B\myloop{\C}{x}$. We now prove that a similar statement holds for $n$-fold Segal spaces. With the definition of ``symmetric monoidal'' appearing in the next section it will become clear that this provides an analog of Example \ref{ex symm mon looping} in this setting.

Recall from Section \ref{sec constructions} the constructions of the truncation of an $n$-fold Segal space to an $(n-1)$-fold Segal space and its left adjoint, extension. Truncation also has a right adjoint, which is taking the 0th coskeleton:
$$(\mathrm{cosk}_0(X) )_{k_1, \bullets} = X_{\bullets}^{(k_1+1)};$$
face and degeneracy maps are given by partial projections and partial diagonals. Given an $n$-fold Segal space $X$ and $1\leq l\leq n$ we can first truncate $l$ times and then take the coskeleton $l$ times to obtain an $n$-fold Segal space which we abbreviate by $\mathbf{cosk}_0^l (X)$.
\begin{defn}
Fix $1\leq l\leq n$. Let $X_\bullets$ be an $n$-fold Segal space and $x\in X_{0,\ldots, 0}$. The object $x$ determines a map $x: * \to \mathbf{cosk}_0^l (X)$ using the degeneracy maps.
We define a new $n$-fold Segal space $pre\deloop[l]{X}{x}$ as the homotopy pullback
$$
\begin{tikzcd}
pre\deloop[l]{X}{x}_{\bullets} \arrow{d} \arrow{r} &X_{\bullets} \arrow{d}{S}\\
*\arrow{r}{x} & \mathbf{cosk}_0^l (X).
\end{tikzcd}
$$
For $pre\deloop[l]{X}{x}$ we have that for $1\leq i\leq l$,
$$pre\deloop[l]{X}{x}_{k_1,\ldots,k_{i-1}, 0, k_{i+1},\ldots,k_n} \simeq  *\cong\{x\} .$$
To obtain an $l$-hybrid Segal space, we discretize these spaces, i.e.~we define
$$\deloop[l]{X}{x}_{k_1,\ldots,k_{i-1}, 0, k_{i+1},\ldots,k_n} = 
\begin{cases}
*\cong\{x\} & \mbox{if } k_i=0 \mbox{ for } 1\leq i\leq l,\\
pre\deloop[l]{X}{x}_{k_1,\ldots,k_n} & else,
\end{cases}
 $$
with the obvious modified face and degeneracy maps.
\end{defn}

\begin{rem}
Unravelling the coskeleton, for $k_1,\ldots, k_l>0$ the $(n-l)$-fold Segal space $\deloop[l]{X}{x}_{k_1,\ldots, k_l,\bullets}$ is a homotopy fiber:
$$
\begin{tikzcd}
\deloop[l]{X}{x}_{k_1,\ldots, k_l,\bullets} \arrow{d} \arrow{r} &X_{k_1,\ldots, k_l, \bullets} \arrow{d}{S}\\
*\arrow{r}{x} & X_{0,\ldots,0, \bullets}^{\times(k_1+1)\cdots(k_n+1)}
\end{tikzcd}
$$
where $S:X_{k_1,\ldots, k_l,\bullets}\to X_{0,\ldots,0, \bullets}^{\times(k_1+1)\cdots(k_n+1)}$ is the product of all maps arising from the maps $f_i:[0]\to[k_i]$.
The remaining face maps send everything to the point $*$, which we identify with $x$, or, more precisely, its image under the appropriate composition of degeneracy maps. The remaining degeneracy maps $d_{\bullets}:\deloop[l]{X}{x}_{k_1,\ldots,k_l,\bullets}\to \deloop[l]{X}{x}_{k_1,\ldots, k_i-1, \ldots, k_l,\bullets}$ satisfy $d_{\bullets}(*)=d_{\bullets}(x)$, where again we identify $x$ with its image under the appropriate composition of degeneracy maps. Since $X$ is an $n$-uple simplicial space, $\deloop[l]{X}{x}$ is well-defined as an $n$-uple simplicial space. The Segal condition is preserved, and, if $X$ satisfied condition \eqref{CSS^i} for some $i> l$, then $\deloop[l]{X}{x}$ does too.
\end{rem}

\begin{lemma}\label{lemma loop monoidal}
Let $1\leq l\leq n$ and let $X_\bullets$ be an $n$-fold Segal space which satisfies \eqref{CSS^i} for $i>l$. Then for any $x\in X_{0,\ldots, 0}$, the $n$-fold Segal space $\deloop[l]{X}{x}$ is an $l$-monoidal complete $(n-l)$-fold Segal space which endows $\myloop[l]{X}{x}$ with an $l$-monoidal structure.
\end{lemma}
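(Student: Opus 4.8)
The plan is to read off from the construction of the level fiber preceding the statement nearly all of the structure we need, supply the few conditions it does not explicitly record, and then identify the $l$-fold looping of $X\underset{(l)}{/}x$ with $L_l(X,x)$. Recall that, by definition, an $l$-monoidal complete $(n-l)$-fold Segal space is an $l$-hybrid $n$-fold Segal space which is $(l-1)$-connected; the construction already provides that $X\underset{(l)}{/}x$ is an $n$-fold simplicial set satisfying the Segal condition and \eqref{CSS^i} for every $i>l$. Conditions \eqref{SC^j} for $j\le l$ and $(l-1)$-connectedness are then immediate: by construction $\big(X\underset{(l)}{/}x\big)_{k_1,\ldots,k_{j-1},0,\bullets}=*\cong\{x\}$ whenever $j\le l$, which is discrete, and in particular $\big(X\underset{(l)}{/}x\big)_{\underbrace{\scriptstyle 1,\ldots,1}_{l-1},0,\bullets}=*$. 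So the only structural point still to check is essential constancy, which is what distinguishes an $n$-fold from an $n$-uple Segal space.

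For this, fix $1\le i\le n$ and $k_1,\ldots,k_{i-1}\ge0$ and consider $\big(X\underset{(l)}{/}x\big)_{k_1,\ldots,k_{i-1},0,\bullets}$. If $i\le l$, or if $i>l$ but $k_j=0$ for some $j\le l$, this level is literally $*$, hence constant. Otherwise $k_1,\ldots,k_l\ge1$ and, by construction, the level is the homotopy fiber over $x$ of a map whose source $X_{k_1,\ldots,k_{i-1},0,\bullets}$ and whose target, a finite product of restrictions of $X_{0,\bullet,\ldots,\bullet}$, are both essentially constant: the source because position $i>l$ carries a $0$ and $X$ is an $n$-fold Segal space, the target because its first index is $0$. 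Since taking homotopy fibers preserves levelwise weak equivalences and a homotopy fiber of a map between constant multisimplicial spaces is constant, this level is essentially constant. Hence $X\underset{(l)}{/}x$ is an $l$-hybrid $n$-fold Segal space and, being $(l-1)$-connected, an $l$-monoidal complete $(n-l)$-fold Segal space.

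It remains to identify the complete $(n-l)$-fold Segal space this endows with an $l$-monoidal structure. By construction $X\underset{(l)}{/}x$ is pointed, and $\big(X\underset{(l)}{/}x\big)_{\underbrace{\scriptstyle 1,\ldots,1}_{i},0,\bullets}=*$ for every $i<l$, so each of its first $l-1$ loopings is again pointed and of hybrid degree $\ge1$; iterating Remark \ref{rem hybrid easy looping} gives
$$
L_l\big(X\underset{(l)}{/}x,*\big)_{\bullets}=\big(X\underset{(l)}{/}x\big)_{\underbrace{\scriptstyle 1,\ldots,1}_{l},\bullets},
$$
which by the definition of the level fiber is the homotopy fiber over $x$ of the total corner-evaluation $X_{\underbrace{\scriptstyle 1,\ldots,1}_{l},\bullets}\to X_{0,\ldots,0,\bullets}^{\times 2^{l}}$. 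On the other hand, unwinding Definition \ref{def iterated looping} — each looping $L$ being realized as a homotopy fiber over $(x,x)$ of the pair of source/target face maps to $X_{0,\bullets}$, and the total corner-evaluation factoring through the partial ones — repeated application of the pasting lemma for homotopy pullbacks identifies $L_l(X,x)_{\bullets}$ with the same homotopy fiber. Hence $L_l\big(X\underset{(l)}{/}x,*\big)\simeq L_l(X,x)$, so by Proposition \ref{prop k-monoidal} and the subsequent definition of the associated $l$-monoidal structure, $X\underset{(l)}{/}x$ indeed endows $L_l(X,x)$ with an $l$-monoidal structure. The structural verifications above are routine; the step demanding genuine care is this last identification, where one must match the face maps of $X$ with the corner-evaluations and invoke the pasting lemma for homotopy pullbacks correctly.
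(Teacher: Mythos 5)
Your structural verifications --- the conditions \eqref{SC^j} for $j\le l$, the $(l-1)$-connectedness, and the preservation of essential constancy by the homotopy-fiber construction --- are correct and in fact more detailed than the paper's proof, which disposes of all of them with ``by construction''. You have also correctly isolated the crux, namely the equivalence $L_l\big(X\underset{(l)}{/}x,x\big)\simeq L_l(X,x)$, which the paper outsources to Lemma \ref{lemma looping of fiber} (itself stated there without proof). So your route is essentially the paper's; the problem is that your argument for this last identification does not go through.

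The gap is the claim that $L_l(X,x)$ is the homotopy fiber of the corner evaluation. Your computation of the left-hand side is fine: iterating Remark \ref{rem hybrid easy looping} does give $L_l\big(X\underset{(l)}{/}x,x\big)=\big(X\underset{(l)}{/}x\big)_{1,\ldots,1,\bullets}=\mathrm{hofib}\big(X_{1,\ldots,1,\bullets}\to X_{0,\ldots,0,\bullets}^{\times 2^l}\big)$. But unwinding Definition \ref{def iterated looping}, $L_l(X,x)$ is the \emph{total} homotopy fiber of the $l$-cube $\big(X_{\epsilon_1,\ldots,\epsilon_l,\bullets}\big)_{\epsilon\in\{0,1\}^l}$, i.e.\ the homotopy fiber of $X_{1,\ldots,1,\bullets}\to E$ with $E=\mathrm{holim}_{\epsilon\neq(1,\ldots,1)}X_{\epsilon,\bullets}$. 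The pasting lemma only rewrites the corner fiber as $X_{1,\ldots,1,\bullets}\times^h_{E}\mathrm{hofib}\big(E\to X_{0,\ldots,0,\bullets}^{\times 2^l}\big)$, and the extra factor is contractible only under additional connectivity assumptions on the off-corner levels; essential constancy of $X_{0,\bullets}$ is not enough. Concretely, take $n=l=2$ and $X$ the constant $2$-fold Segal space on $Y=S^2$ (all hypotheses of the lemma hold, vacuously): then $L_2(X,x)\simeq\Omega_x^2Y$ has $\pi_0\cong\mathbb{Z}$, while the corner fiber $\mathrm{hofib}_{(x,x,x,x)}\big(Y\to Y^{\times4}\big)\simeq(\Omega_xY)^{\times3}$ is connected. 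The two sides genuinely differ. The identification does hold when $X_{\epsilon,\bullets}\simeq *$ for all $\epsilon\neq(1,\ldots,1)$, which is satisfied in every application in the paper ($\PBord_n^l$ for $l>0$, and the $\Gamma$-object comparison), so a correct proof must either add such a hypothesis, or replace the corner-evaluation fiber in the definition of the level fiber by the iterated (total) fiber. As written, neither your pasting argument nor the paper's appeal to Lemma \ref{lemma looping of fiber} closes this step.
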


\begin{proof}
By construction $\deloop[l]{X}{x}$ is $(l-1)$-connected. Since $X$ satisfies \eqref{CSS^i} for $i>l$, it is an $l$-hybrid $n$-fold Segal space. Finally, $\myloop[l]{X}{x}= \myloop[l]{\deloop[l]{X}{x} }{x}$ by the following lemma.
\end{proof}

\begin{lemma}\label{lemma looping of fiber}
For an $n$-fold Segal space $X$, an object $x\in X_{0,\ldots,0}$, and $0\leq l \leq n$, we have an equivalence
$$\myloop[l]{\deloop[l]{X}{x} }{x} \simeq \myloop[l]{X}{x}.$$
\end{lemma}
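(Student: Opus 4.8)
The plan is to induct on $l$. The case $l=0$ is trivial --- then $X\underset{(0)}{/}x=X$ and the statement reads $X\simeq X$ --- so assume $l\geq 1$ and that the lemma holds for $l-1$ in every ambient dimension.

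The first move is to peel off the outermost loop. By construction any level of $X\underset{(l)}{/}x$ having a $0$ among its first $l$ indices is a point, so in particular $\big(X\underset{(l)}{/}x\big)_{0,\bullets}=*$, and Remark~\ref{rem hybrid easy looping} gives
$$
L\big(X\underset{(l)}{/}x,x\big)_\bullets=\big(X\underset{(l)}{/}x\big)_{1,\bullets}.
$$
The geometric heart of the proof is then the identification of $(n-1)$-fold Segal spaces
$$
\big(X\underset{(l)}{/}x\big)_{1,\bullets}\;\simeq\;\big(L(X,x)\big)\underset{(l-1)}{/}x,
$$
which expresses the principle that the homotopy fiber cutting out the level-$0$ content of $X_{1,\bullets}$ along the first $l$ simplicial directions may be formed in two stages: first over the two endpoints of the first direction --- which by definition of looping produces $L(X,x)$ --- and then over the remaining array of $0$-cells, which is precisely the $(l-1)$-st level fiber of $L(X,x)$. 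For $l=1$ this is nothing but the definition of $L(X,x)$ as a homotopy fiber; in general it is a pasting statement for homotopy pullbacks, together with the check that the structure maps $S$ appearing in the level-fiber construction factor compatibly through the corresponding maps for $L(X,x)$, and that the outcome is again an $(n-1)$-fold Segal space (essential constancy being automatic in the collapsed directions, where the levels are a point, and inherited from $X$ otherwise, just as when one checks that $X\underset{(l)}{/}x$ is an $n$-fold Segal space).

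Granting this, and using the identity $L_l=L_{l-1}\circ L$ for iterated looping --- a straightforward induction from Definition~\ref{def iterated looping} --- we obtain
$$
L_l\big(X\underset{(l)}{/}x,x\big)=L_{l-1}\Big(L\big(X\underset{(l)}{/}x,x\big),\,x\Big)\simeq L_{l-1}\Big(\big(L(X,x)\big)\underset{(l-1)}{/}x,\,x\Big).
$$
Since $L(X,x)$ is an $(n-1)$-fold Segal space, the induction hypothesis applies to it with parameter $l-1$ and yields
$$
L_{l-1}\Big(\big(L(X,x)\big)\underset{(l-1)}{/}x,\,x\Big)\simeq L\Big(L_{l-2}(L(X,x),x)/x,\,x\Big)\simeq L_{l-1}\big(L(X,x),x\big).
$$
Since $L_{l-2}(L(X,x),x)=L_{l-1}(X,x)$ and $L_{l-1}(L(X,x),x)=L_l(X,x)$ (again by $L_k=L_{k-1}\circ L$), the composite chain reads
$$
L_l\big(X\underset{(l)}{/}x,x\big)\simeq L\big(L_{l-1}(X,x)/x,\,x\big)\simeq L_l(X,x),
$$
which is exactly the assertion.

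The step I expect to be the main obstacle is the displayed identification $\big(X\underset{(l)}{/}x\big)_{1,\bullets}\simeq\big(L(X,x)\big)\underset{(l-1)}{/}x$: one has to make the structure maps of the level-fiber construction explicit enough to see that its defining homotopy fiber, taken at once over the product array of $0$-cells along the first $l$ directions, may equivalently be computed one direction at a time, and one must simultaneously verify that essential constancy --- hence the full $n$-fold Segal space condition --- is preserved throughout. Once this is in place, everything else is formal bookkeeping with $L_l=L_{l-1}\circ L$ and Remark~\ref{rem hybrid easy looping}.
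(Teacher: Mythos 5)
The paper states this lemma without proof, so there is no argument of the authors to compare yours against; I can only assess the proposal on its own terms. Its formal skeleton is sound: the identity $L_l=L_{l-1}\circ L$, the observation that $(X\underset{(l)}{/}x)_{0,\bullets}=*$ forces $L(X\underset{(l)}{/}x,x)=(X\underset{(l)}{/}x)_{1,\bullets}$, and the bookkeeping that feeds the induction hypothesis for $L(X,x)$ back into the three-term chain are all correct.

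The genuine gap is exactly the step you defer, $(X\underset{(l)}{/}x)_{1,\bullets}\simeq (L(X,x))\underset{(l-1)}{/}x$. This is not a pasting statement for homotopy pullbacks, and under the reading of $S$ suggested by its stated target $X_{0,\ldots,0,\bullets}^{\times(k_1+1)\cdots(k_l+1)}$ (the product of all vertex evaluations) it is false for $l\geq 2$ --- and so is the lemma. The single-stage fiber defining $(X\underset{(l)}{/}x)_{1,k_2,\ldots,k_l,\bullets}$ only trivializes the $2(k_2+1)\cdots(k_l+1)$ vertex images in $X_{0,\ldots,0,\bullets}$, whereas the two-stage fiber also trivializes intermediate faces: for $l=2$ it records a path in $X_{1,0,\bullets}$ from each vertical edge $d^{(2)}_j(m)$ to the degenerate edge $s_0(x)$, not merely paths of its endpoints to $x$. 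Essential constancy identifies $X_{0,k_2,\ldots,k_n}$ with $X_{0,\ldots,0}$ but says nothing about $X_{1,0,\ldots,0}$, so this extra data cannot be discarded. Concretely, let $Z=N(\mathcal{C})$ for a category $\mathcal{C}$ with $\mathrm{End}_{\mathcal{C}}(x)\neq\{\mathrm{id}_x\}$ and set $X_{k_1,k_2}:=Z_{k_1}$; this is a $2$-fold Segal space with $L_2(X,x)\simeq *$, yet $L_2(X\underset{(2)}{/}x,x)=(X\underset{(2)}{/}x)_{1,1}$ is the fiber of $Z_1\to Z_0^4$ over $(x,x,x,x)$, i.e.\ $\mathrm{End}_{\mathcal{C}}(x)$, while $(L(X,x)\underset{(1)}{/}x)_1\simeq *$. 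Your identification (and the lemma) can only hold if $S$ is taken to record all the faces $X_{k_1,\ldots,k_l,\bullets}\to X_{k_1,\ldots,0,\ldots,k_l,\bullets}$ obtained by collapsing one index at a time rather than only the vertices --- equivalently, if the level fiber is defined by iterating the one-direction fiber. With that reading your key step becomes essentially tautological and the rest of your induction goes through; without it, the argument cannot be completed, so the claim needs either that reinterpretation of the definition or an explicit proof of the displayed equivalence, which the present proposal does not supply.
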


\begin{proof}
This can be checked level-wise: exactly the parts of $X$ which involve $x$ remain in $\deloop[l]{X}{x}$, and when looping at $x$ that's the part that is seen.
\end{proof}

This lemma gives a method for finding a $k$-monoidal structure as a tower.
\begin{prop}\label{prop tower without pointed}
Let $Y^{(0)}$ be an $n$-fold Segal space. Assume we are given, for $1\leq l\leq k$, an $(n+l)$-fold Segal space $Y^{(l)}$ together with an object $y_l\in Y^{(l)}$ such that
$$\myloop{Y^{(l)}}{y_l} \simeq Y^{(l-1)}.$$
Then $Y^{(0)}$ has a $k$-monoidal structure. If all $Y^{(l)}$ satisfy \eqref{CSS^i} for $i>l$, then $Y^{(0)}$ is a $k$-monoidal complete Segal space.
\end{prop}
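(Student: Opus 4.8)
The plan is to reduce the statement, by means of Lemma \ref{lemma loop monoidal}, to iterating the given looping equivalences. I would first apply Lemma \ref{lemma loop monoidal} to the $(n+k)$-fold Segal space $Y^{(k)}$ with $l=k$: since by hypothesis $Y^{(k)}$ satisfies \eqref{CSS^i} for $i>k$, the $k$-th level fiber $Y^{(k)}\underset{(k)}{/}y_k$ is a $k$-monoidal complete $n$-fold Segal space, and it endows $L_k(Y^{(k)},y_k)$ with a $k$-monoidal structure. Transporting this structure along an equivalence $L_k(Y^{(k)},y_k)\simeq Y^{(0)}$ then gives the conclusion, so everything comes down to establishing that equivalence.

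To do so I would prove by induction on $l$, for $0\le l\le k$, that $L_l(Y^{(k)},y_k)\simeq Y^{(k-l)}$. The case $l=0$ is the definition and the case $l=1$ is the hypothesis $L(Y^{(k)},y_k)\simeq Y^{(k-1)}$. For the inductive step, Definition \ref{def iterated looping} gives $L_{l}(Y^{(k)},y_k)=L\big(L_{l-1}(Y^{(k)},y_k),\,y_k\big)$, where $y_k$ is now regarded, via the degeneracies, as a distinguished object of $L_{l-1}(Y^{(k)},y_k)$; under the inductive equivalence $L_{l-1}(Y^{(k)},y_k)\simeq Y^{(k-l+1)}$ this distinguished object matches $y_{k-l+1}$, whence $L_{l}(Y^{(k)},y_k)\simeq L(Y^{(k-l+1)},y_{k-l+1})\simeq Y^{(k-l)}$ by hypothesis. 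Taking $l=k$ yields $L_k(Y^{(k)},y_k)\simeq Y^{(0)}$.

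For the first, weaker assertion, where the \eqref{CSS^i} hypotheses are not imposed, I would run the same argument using the evident analogue of Lemma \ref{lemma loop monoidal} without completeness: by its very construction $Y^{(k)}\underset{(k)}{/}y_k$ is $(k-1)$-connected and satisfies \eqref{SC^j} for $j\le k$, which is exactly what is needed to regard $Y^{(0)}$ as $k$-monoidal in the non-complete sense.

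The delicate point is the matching asserted in the inductive step: that under the equivalence $L(Y^{(l)},y_l)\simeq Y^{(l-1)}$ the image of $y_l$ under the degeneracies corresponds to $y_{l-1}$. This is not forced by the bare equivalence $L(Y^{(l)},y_l)\simeq Y^{(l-1)}$, but it holds in the cases of interest — for instance when each $y_l$ is the empty manifold in a tower of bordism categories, whose looping at it is again a bordism category with distinguished object the empty manifold — and should be understood as part of the coherence built into being handed a compatible tower; alternatively one makes this matching an explicit hypothesis, after which the argument above is routine.
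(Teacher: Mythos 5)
Your route is genuinely different from the paper's. The paper works bottom-up, one layer at a time: for each $l$ it forms the first level fiber $Y^{(l)}/y_l$, which is pointed with $L(Y^{(l)}/y_l)\simeq L(Y^{(l)},y_l)\simeq Y^{(l-1)}$ and hence is a delooping of $Y^{(l-1)}$, and then invokes Proposition \ref{prop k-monoidal} to assemble these deloopings into a $k$-monoidal structure; each layer only ever uses its own hypothesis equivalence. You instead work top-down: a single application of Lemma \ref{lemma loop monoidal} to $Y^{(k)}$ produces the candidate $k$-monoidal object $Y^{(k)}\underset{(k)}{/}y_k$ in one step, and the lower stages of the tower are used only to identify $L_k(Y^{(k)},y_k)$ with $Y^{(0)}$. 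What your approach buys is an explicit single object carrying the whole $k$-monoidal structure; what it costs is that all the identifications must be threaded through the iterated looping, which is exactly where your ``delicate point'' lives.

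That delicate point is real, and you are right to isolate it: the bare equivalence $L(Y^{(l)},y_l)\simeq Y^{(l-1)}$ does not record where the degenerate object $s_0(y_l)$ goes, so $L_2(Y^{(k)},y_k)=L\big(L(Y^{(k)},y_k),s_0(y_k)\big)$ is a priori the looping of $Y^{(k-1)}$ at the image of $s_0(y_k)$ rather than at $y_{k-1}$, and without matching these the induction breaks (e.g.\ if $Y^{(k-1)}$ is not connected). You should know, however, that the paper's own two-line proof silently requires the same compatibility: to feed the tower $X^{(l)}=Y^{(l)}/y_l$ into Proposition \ref{prop k-monoidal} one needs equivalences $X^{(j)}\simeq L(X^{(j+1)})$, and $L(X^{(j+1)})\simeq L(Y^{(j+1)},y_{j+1})\simeq Y^{(j)}$ comes pointed at the image of the degenerate object, while $X^{(j)}=Y^{(j)}/y_j$ has been collapsed at $y_j$; these can only be identified if the given equivalence matches the two basepoints. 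So the basepoint compatibility should indeed be read as part of the data of ``being given a tower,'' exactly as you propose, and with that reading your argument is complete and on the same footing as the paper's. (Your treatment of the non-complete assertion is also fine: the level fiber is $(k-1)$-connected and satisfies \eqref{SC^j} for $j\leq k$ by construction, which is all that the completeness-free version of Lemma \ref{lemma loop monoidal} needs.)
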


\begin{proof}
The monoidal $(n+l-1)$-fold Segal space $\deloop{Y^{(l)}}{y_l}$ endows $Y^{(l-1)}$ with a monoidal structure. Proposition \ref{prop k-monoidal} finishes the proof.
\end{proof}

\subsubsection{Symmetric monoidal \texorpdfstring{$n$}{n}-fold complete Segal spaces}

The Stabilization Hypothesis, first formulated in \cite{BaezDolan}, states that an $n$-category which is monoidal of a sufficiently 
high degree cannot be made ``more monoidal'' and moreover is symmetric monoidal.
For Tamasani's weak $n$-categories, a proof was given by Simpson in \cite{SimpsonStabilization}; for general $n$-categories a proof follows from Lurie's proof of Dunn's additivity in \cite{LurieHA}, see \cite{GepnerHaugseng} for details. 

For $(\infty,n)$-categories, we cannot expect stabilization: for instance, $k$-monoidal $(\infty,0)$-categories  are $(\infty, k)$-categories with one object, one morphism, etc.~up to one $(n-1)$-morphism, which, in turn, are $E_k$-algebras (in $\Space$). Note that since there are $E_k$-algebras which are not $E_{k+1}$-algebras, there are $k$-monoidal $(\infty,0)$-categories which are not $(k+1)$-monoidal. However, this motivates the following definition:
\begin{defn}
A {\em symmetric monoidal structure} on a complete $n$-fold Segal space $X$ is a tower of monoidal $i$-hybrid $(n+i)$-fold Segal spaces $X^{(i+1)}$ for $i>0$ such that if we set $X^{(0)}=X$, we have that for every $i\geq0$,
$$X^{(i)} \simeq \myloopnop{ X^{(i+1)} }.$$
\end{defn}

\subsection{Comparing the two definitions}\label{sec monoidal comparison}

In this section we show that every symmetric monoidal $(\infty,n)$-category defined as in Section \ref{Gamma} gives one as defined in Section \ref{tower}. The converse is also true, but we do not go into the details here. Essentially this is a consequence of Dunn's additivity, see \cite{LurieHA}: starting with the definition via a tower of $(\Delta^k)^{op}$-monoids, one can replace them by $E_k$-monoids, which in turn, when letting $k$ go to $\infty$, lead to a commutative monoid. 
See also \cite[Corollary 6.3.13]{GepnerHaugseng}.

We start with a symmetric monoidal $(\infty,n)$-category defined as in Section \ref{Gamma}, a symmetric monoidal complete $n$-fold Segal space $X:\Gamma \to \CSSp_n$. We will precompose it with the functor
\begin{align*}
f:\Delta^{op} &\longrightarrow \Gamma,\\
[m] &\longmapsto \<m\>,
\end{align*}
which sends a map $(f:[n] \to [m] )$ in $\Delta$ to $\tilde f:\<m\>\to\<n\>$, where $\tilde f (0)=0$ and for $j\neq0$,
$$\tilde f (j)= \begin{cases} \min\{i:f(i)=j\}, & \mbox{if it exists},\\
0,& \mbox{otherwise}.
\end{cases}$$

The composition
$$\tilde X^{(1)}: \Delta^{op} \overset{f}{\longrightarrow} \Gamma \overset{X}{\longrightarrow} \CSSP_n$$
is an $(n+1)$-fold simplicial space. Moreover, since $f$ sends the maps $g_\beta$ from Remark \ref{rem Segal g} to the Segal morphisms $\gamma_\beta$ from Definition \ref{def Gamma}, $\tilde X^{(1)}$ is an $(n+1)$-fold Segal space. It satisfies \eqref{CSS^i} for $i> 1$. Moreover, $\tilde X^{(1)}$ is connected.
 However, it does not satisfy \eqref{SC^j} for $j=1$ since $X^{(1)}_{0,\bullets}$ may not be discrete. We can easily remedy this problem: choose an object $x\in X^{(1)}_{0,\ldots,0}$ and consider the $(n+1)$-fold Segal space $X^{(1)} = \deloop{\tilde X^{(1)}}{x}$. Unravelling the definition, we have that
$$X^{(1)}_{k_1,\bullets} = \begin{cases} * & k_1=0\\
\tilde X^{(1)}_{k_1,\bullets} & k_1\neq 0
\end{cases}$$
as complete $n$-fold Segal spaces. Note that choosing different $x$'s leads to equivalent complete $n$-fold Segal spaces. Lemma \ref{lemma loop monoidal} implies that $X^{(1)}$ is a monoidal complete $n$-fold Segal space.

The higher layers of the tower are obtained from the maps $\Gamma^k\to\Gamma$ coming from taking the smash product of finite pointed sets, i.e.~taking their product and identifying anything containing a base point. Then, composing with $f^k$ we obtain
$$\tilde X^{(k)}: (\Delta^{op})^k\overset{f^k}{\longrightarrow} \Gamma^k \longrightarrow \Gamma \longrightarrow \CSSp_n.$$
Similarly, $\tilde X^{(k)}$ is $(k-1)$-connected, but might not satisfy \eqref{SC^j} for $j\leq k$. Choosing any object $x\in X^{(k)}_{0,\ldots,0}$, then $X^{(k)} = \deloop[k]{\tilde X^{(k)} }{x}$ is the desired $k$-monoidal complete $n$-fold Segal space.

 

\part{The \texorpdfstring{$(\infty, n)$}{(infty,n)}-category of bordisms}

To rigorously define fully extended topological field theories we need a suitable $(\infty, n)$-category of bordisms, which, informally speaking, has zero-dimensional manifolds as objects, bordisms between objects as 1-morphisms, bordisms between bordisms as 2-morphisms, etc., and for $k>n$ there are only invertible $k$-morphisms. Finding an explicit model for such a higher category, i.e.~defining a complete $n$-fold Segal space of bordisms, is the main goal of this part and this paper. We endow it with a symmetric monoidal structure and also consider bordism categories with additional structure, e.g.~orientations and framings, which allows us, in Section \ref{TFT}, to rigorously define fully extended topological field theories.

\section{The complete \texorpdfstring{$n$}{n}-fold Segal space of closed intervals}\label{sec Int}
In this section we define a complete Segal space $\Int_\bullet$ of closed intervals in $\R$ which will form the basis of the $n$-fold Segal space of bordisms. It will be a tool to record where (in the time direction) the bordisms can be cut. In particular, there will be a forgetful functor from bordisms to these closed intervals. We start by defining an internal category of closed intervals in $\R$, whose nerve will give a complete Segal space of certain tuples of closed intervals. However, for our model of the bordism category, to avoid having to deal with manifolds with corners, we will instead want to interpret the tuples of intervals as being closed in an open interval of finite length (instead of $\R$). This will be explained in \ref{sec Int2}. Finally, we could have chosen that open interval to always be $(0,1)$ and thus fix the ``length'' in the time direction of the bordism and its collars to be 1. This choice requires rescaling and will be explained in \ref{sec Int (0,1)}.

\subsection{\texorpdfstring{$\Int^c$}{Intc} as an internal category}\label{sec TInt}

We first define a category internal to topological spaces $\TInt_c$ which gives rise to a strongly Segal internal category $\Int^c$ of closed intervals in $\R$.

The topological space of objects of $\TInt_c$ is
\begin{equation}\label{eqn Int0}
\TInt^c_0 =\{(a,b): a<b\} \subset \R^2
\end{equation}
with the standard topology from $\R^2$. We interpret an element $(a,b)\in \TInt^c_0$ as the closed interval $I=[a,b]$. This interpretation gives a bijection from the set of points of the topological space $\TInt^c_0$ to the set of closed bounded intervals:
$$\TInt^c_0\longleftrightarrow \{\mbox{closed bounded intervals }I=[a,b]\mbox{ in }\R \mbox{ with non-empty interior}\}$$
which we use as an identification. In fact, $\TInt_0^c$ is a submanifold of $\R^2$ and to get the desired Kan complex $\Int^c_0$, we take smooth singular simplices (see e.g.~\cite{Lee}), i.e.~for $l\geq 0$, the $l$-simplices are pairs of smooth maps $a, b: \eDelta{l}\to \R$ such that $a(s)< b(s)$ for every $s\in \eDelta{l}$. Faces and degeneracies are the usual ones. We view such an $l$-simplex as a {\em closed interval bundle} and denote it by $[a,b]\to \eDelta{l}$ or $(I(s))_{s\in\eDelta{l}} =(a(s),b(s))_{s\in\eDelta{l}}$.

The topological space of morphisms of $\TInt_c$ is
\begin{equation}\label{eqn Int1}
\TInt^c_1 =\{(a_0, a_1,b_0, b_1): a_j<b_j \mbox{ for }j=0,1, \mbox{ and } a_0\leq a_1, b_0 \leq b_1\} \subset \R^4 ,
\end{equation}
again with the standard topology from $\R^4$. Now we interpret an element $(a_0, a_1,b_0, b_1)\in \TInt^c_1$ as a pair of ordered closed intervals $I_0\leq I_1$, where $I_0=[a_0,b_0]$ and $I_1=[a_1,b_1]$. Here ``ordered'' means that $a_0\leq a_1$ and $b_0\leq b_1$. This gives an identification of the points of the topological space with certain pairs of intervals:
$$\TInt^c_1 \longleftrightarrow \{I_0\leq I_1: I_j=[a_j, b_j] \mbox{ with }a_j<b_j \mbox{ for }j=0,1, \mbox{ and } a_0\leq a_1, b_0 \leq b_1\}.$$
As above $\TInt^c_1$ has the structure of a submanifold of $\R^4$ and by taking smooth singular simplices we obtain a Kan complex $\Int^c_1$ whose $l$-simplices now are quadruples of smooth maps $a_0, a_1, b_0, b_1: \eDelta{l}\to \R$ such that $a_j(s)< b_j(s)$ for $j=0,1$,  $a_0(s)\leq a_1(s)$, and $b_0(s) \leq b_1(s)$ for every $s\in \eDelta{l}$. We view such an $l$-simplex as a {\em closed interval bundle with two closed subintervals} and denote it by $\left([a_0,b_0] \leq [a_1, b_1] \right)\to \eDelta{l}$ or $(I_0(s)\leq I_1(s))_\eDelta{l}$.

The face and degeneracy maps
$$\begin{tikzcd} \TInt^c_0 \arrow{r}[description]{d} & \TInt^c_1 \arrow[bend left=20]{l}{t} \arrow[bend right=20]{l}[swap]{s} \end{tikzcd}$$
arise from forgetting and repeating an interval, respectively:
\begin{align*}
s:[a_0, b_0] \leq [a_1, b_1] & \longmapsto [a_0, b_0], \\
t: [a_0, b_0] \leq [a_1, b_1] & \longmapsto [a_1, b_1],
\intertext{and}
d: [a,b] & \longmapsto [a,b] \leq [a,b].
\end{align*}
Composition is given by remembering the outer intervals:
$$\left([a_0, b_0] \leq [a_1, b_1]\right) \circ \left([a_1, b_1] \leq [a_2, b_2]\right) = \left([a_0, b_0] \leq [a_2, b_2]\right).$$
Here $s,t$, and $d$ are smooth maps, so $\TInt^c$ is a category internal to manifolds. Thus, when taking smooth singular simplices to get $\Int^c$, all above assignments are well-defined for $l$-simplices as well and commute with the faces and degeneracies. 
Moreover, $s$ and $t$ are fibrations since they are restrictions of projections.

\begin{rem}
Note that even though we like to think of the $l$-simplices in $\Int^c_0$ and $\Int^c_1$ as ``closed interval bundles'', we do not treat them as such: face and degeneracy maps are not defined to be pullbacks of the bundles, which would only be defined up to isomorphism; instead, they are defined explicitly at the level of spaces to ensure that simplicial functoriality holds.
\end{rem}

Summarizing, we obtain
\begin{lemma}
$\Int^c$ is a strongly Segal internal category.
\end{lemma}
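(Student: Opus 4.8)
The plan is to read off the three clauses in the definition of a strongly Segal internal category from the data already assembled: first, that $(\Int^c_0,\Int^c_1;\,s,t,d,\circ)$ satisfies the axioms of a category internal to simplicial sets; second, that $\Int^c_0$ is a Kan complex; third, that $s$ and $t$ are fibrations of simplicial sets (which in turn forces $\Int^c_1$ to be a Kan complex, as a composite $\Int^c_1\to\Int^c_0\to *$ of fibrations). The first two points are bookkeeping; the last one is where there is anything to do.

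\emph{Internal category axioms.} Recall that $\Int^c_0$ and $\Int^c_1$ are, by construction, the smooth singular simplicial sets of the subsets $\{a<b\}\subseteq\R^2$ and $\{a_0<b_0,\ a_1<b_1,\ a_0\le a_1,\ b_0\le b_1\}\subseteq\R^4$. The $l$-simplices of the fibre product $\Int^c_1\times_{\Int^c_0}\Int^c_1$ are triples $I_0\le I_1\le I_2$ of closed-interval bundles over $\eDelta l$, and the maps $s$, $t$, $d$ and $\circ$ are all induced by affine-linear maps between these Euclidean subsets (forgetting, duplicating, or relabelling endpoints), hence are morphisms of simplicial sets, automatically commuting with faces and degeneracies. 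The identities $s\circ d=t\circ d=\mathrm{id}_{\Int^c_0}$ follow from $d([a,b])=[a,b]\le[a,b]$; that $\circ$ takes values in $\Int^c_1$ is transitivity of $\le$ on endpoints; its compatibility with $s,t$ and the two unit laws $(I_0\le I_1)\circ(I_1\le I_1)=(I_0\le I_1)=(I_0\le I_0)\circ(I_0\le I_1)$ — and, more than is asked for, strict associativity — are immediate from the formula $(I_0\le I_1)\circ(I_1\le I_2)=(I_0\le I_2)$.

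\emph{Kan condition and fibrations.} Both defining subsets above are \emph{convex} (each is cut out by finitely many affine half-space conditions), and the smooth singular complex of a convex region of $\R^N$ is a Kan complex by the usual smoothing argument — a horn has a continuous, indeed straight-line, filler, and a straight-line filler may be smoothed rel its boundary because the target is convex. This gives clause two (and the same remark makes $\Int^c_1$ Kan). For the fibrations, $s$ and $t$ are the restrictions to $\Int^c_1$ of the two linear projections $\R^4\to\R^2$ onto $(a_0,b_0)$, resp.\ $(a_1,b_1)$; both are surjective since $d$ is a section of each. Substituting $a_1=a_0+u$, $b_1=b_0+v$ exhibits the fibre of $s$ over $[a_0,b_0]$ as $\{(u,v):u\ge0,\ v\ge0,\ u-v<b_0-a_0\}$, which varies in a locally trivial way over $\Int^c_0$ — up to the rescaling $(u,v)\mapsto\lambda(u,v)$ it does not depend on $[a_0,b_0]$ at all — so $s$ is a locally trivial bundle with convex, hence contractible, fibre, and similarly for $t$. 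Applying the smooth singular functor to such a bundle produces a Kan fibration: one lifts a horn by subdividing its underlying simplex finely enough that each piece lands in a trivialising chart and then lifting piece by piece. Hence $s$ and $t$ are fibrations of simplicial sets.

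\emph{Main obstacle.} Everything except the last clause is formal. The only point with genuine content is the passage from the set-theoretic statement ``$s$ and $t$ are restrictions of projections'' to the simplicial statement ``$\mathrm{Sing}^{\mathrm{sm}}(s)$ and $\mathrm{Sing}^{\mathrm{sm}}(t)$ are Kan fibrations''. This is the standard fact that submersions of smooth manifolds induce Kan fibrations on smooth singular complexes, applied here in the manifold-with-corners setting (note that $\Int^c_1$ is \emph{not} open in $\R^4$, because of the non-strict inequalities $a_0\le a_1$, $b_0\le b_1$). The explicit local-triviality computation above is the most self-contained way to establish it and sidesteps having to invoke that general principle.
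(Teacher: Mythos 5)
Your proposal is correct and follows essentially the same route as the paper: unwind the definition of a strongly Segal internal category, observe that the structure maps are induced by affine maps on convex subsets of Euclidean space (so everything is simplicial and the object space is a contractible Kan complex), and check that $s,t$ are fibrations. The only difference is one of care: where the paper simply asserts that $s$ and $t$ are fibrations ``since they are restrictions of projections,'' you justify this with the explicit trivialization $(a_1,b_1)=(a_0+u,b_0+v)$ exhibiting $s$ as a bundle with convex fibre, which is a welcome filling-in of a step the paper glosses over.
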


Moreover, the spaces of objects and morphisms are contractible:
\begin{lemma}\label{lemma contractible}
$\Int^c_0 \simeq \Int^c_1 \simeq *$.
\end{lemma}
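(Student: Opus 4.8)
The plan is to exhibit explicit deformation retractions of $\Int^c_0$ and $\Int^c_1$ onto a point. Recall that $\Int^c_0 = \{(a,b)\in\R^2 : a<b\}$ and $\Int^c_1 = \{(a_0,a_1,b_0,b_1)\in\R^4 : a_j<b_j,\ a_0\le a_1,\ b_0\le b_1\}$, both with the subspace topology and then passing to smooth singular chains. Since the smooth singular complex functor preserves weak homotopy equivalences, it suffices to show that each of these (topological) subsets of Euclidean space is contractible; the claim about the associated Kan complexes follows.

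First I would treat $\Int^c_0$. The set $\{(a,b) : a<b\}\subset\R^2$ is an open half-plane, hence convex, hence contractible via the straight-line homotopy $H((a,b),u) = (1-u)(a,b) + u(0,1)$, which stays inside the region since convex combinations of pairs with $a<b$ again satisfy $a<b$. This gives $\Int^c_0\simeq *$. For $\Int^c_1$, I would check that the region $\{(a_0,a_1,b_0,b_1) : a_j<b_j,\ a_0\le a_1,\ b_0\le b_1\}\subset\R^4$ is also convex: each defining condition is a (strict or non-strict) linear inequality, and an intersection of half-spaces is convex. Hence the straight-line homotopy to the point $(0,0,1,1)$ (which lies in the region) contracts it, giving $\Int^c_1\simeq *$. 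One should double-check that $(0,0,1,1)$ indeed satisfies all constraints: $0<1$, $0\le 0$, $1\le 1$ — yes.

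Alternatively, and perhaps more in keeping with the structure being used elsewhere, one can note that the source map $s:\Int^c_1\to\Int^c_0$ is a fibration (as observed above, a restriction of a projection) with fibers $\{(a_1,b_1): a_0\le a_1,\ a_1<b_1,\ b_0\le b_1\}$ over $(a_0,b_0)$, each of which is convex hence contractible; combined with $\Int^c_0\simeq *$ this again yields $\Int^c_1\simeq *$. Either argument works; I would present the direct convexity argument since it is the shortest.

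There is essentially no obstacle here: the only thing to be careful about is the passage from the topological statement to the statement about Kan complexes of smooth singular chains, i.e.\ that taking smooth singular simplices of a contractible (indeed convex) subspace of $\R^N$ yields a contractible Kan complex — this is standard (smooth singular chains compute the correct homotopy type for manifolds, and a convex set is smoothly contractible via the linear homotopy, which is itself a smooth map and hence induces a contraction of the smooth singular complex). So the "hard part" is merely bookkeeping, and the substance is the one-line observation that both defining regions are convex.
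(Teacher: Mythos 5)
Your proof is correct and matches the paper's argument, which likewise observes that the underlying subspaces of Euclidean space are contractible (they are convex) and that taking smooth singular chains then yields a contractible Kan complex. Your write-up just makes the convexity and the straight-line homotopies explicit.
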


\begin{proof}
The underlying topological space is contractible as a subspace of $\R^{2k}$, so the associated Kan complex given by taking smooth simplices is also contractible.
\end{proof}

\subsection{$\Int^c$ as a complete Segal space}\label{sec Segal space Int}

We defined $\Int^c$ as a strongly Segal internal category in the previous section. Its nerve, constructed in Section \ref{sec internal categories}, is a Segal space $\Int^c_\bullet =N(\Int^c)_\bullet$. Let us spell out this Segal space in more detail to become more familiar with it.

For an integer $k\geq0$, let
\begin{equation}\begin{split}
\TInt^c_k=\{(\ul{a}, \ul{b})=(a_0,\ldots, a_k, b_0,\ldots, b_k) : \,&a_j<b_j \mbox{ for }0\leq j\leq k, \mbox{ and } \\
&a_{j-1} \leq a_j \mbox{ and } b_{j-1} \leq b_j \mbox{ for }1\leq j\leq k\} \subset \R^{2k} 
\end{split}
\label{int}\end{equation}
with the subspace topology. As above, one can extract Kan complexes $\Int^c_k$ by taking smooth simplices. Note that for $k=0,1$ this coincides with \eqref{eqn Int0} and \eqref{eqn Int1} above. As before, we interpret an element $(\ul{a}, \ul{b})$ as an ordered $(k+1)$-tuple of closed intervals $\ul{I}=I_0\leq\cdots\leq I_k$ with left endpoints $a_j$ and right endpoints $b_j$ such that $I_j$ has non-empty interior. 
By ``ordered'', i.e.~$I_j\leq I_{j'}$, we mean that the endpoints are ordered, i.e.~$a_j\leq a_{j'}$ and $b_j\leq b_{j'}$ for $j\leq j'$.

\paragraph{Spatial structure of the levels}
The spatial structure of a level $\Int^c_k$ comes from taking smooth singular simplices of the submanifold of $\R^{2k}$. Thus, an $l$-simplex consists of smooth maps
$$\eDelta{l}\to \R, \quad s\mapsto a_j(s), b_j(s)$$
for $j=0,\ldots , k$ such that for every $s\in \eDelta{l}$, the following inequalities hold:
\begin{align*}
a_i(s)< b_i(s), & \quad\mbox{for }i=0,\ldots,k\\
a_{i-1}(s)\leq a_i(s), & \quad\mbox{and}\\
b_{i-1}(s) \leq b_i(s) & \quad\mbox{for } i=1,\ldots,k.
\end{align*}
We denote an $l$-simplex by $(I_0\leq\cdots\leq I_k) \to \eDelta{l}$ or $(I_0(s) \leq \cdots \leq I_k(s))_{s \in |\Delta^l|}$ and call it a {\em closed interval bundle with $(k+1)$ subintervals}.

For a morphism $f:[m]\to [l]$ in the simplex category $\Delta$, i.e.~a (weakly) order-preserving map, let $|f|: \eDelta{m} \to \eDelta{l}$ be the induced map between standard simplices. Let $f^\Delta$ be the map sending an $l$-simplex in $\Int^c_k$ to the $m$-simplex in $\Int^c_k$ given by precomposing with $|f|$,
$$f^\Delta: \big(I_0(s) \leq \cdots \leq I_k(s)\big)_{s \in \eDelta{l}} \longmapsto \big(I_0(|f|(s)) \leq\ldots\leq I_k(|f|(s)\big)_{s\in \eDelta{m}}.$$

\begin{notation}
We denote the {\em spatial face and degeneracy maps} of $\Int^c_k$ by $d^\Delta_j$ and $s^\Delta_j$ for $0\leq j \leq l$.
\end{notation}

The following Lemma is a straightforward generalization of Lemma \ref{lemma contractible}.
\begin{lemma}\label{lemma contractible level}
Each level $\Int^c_k$ is a contractible Kan complex.
\end{lemma}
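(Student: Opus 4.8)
Each level $\Int^c_k$ is a contractible Kan complex.

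The plan is to mimic the proof of Lemma \ref{lemma contractible}, since $\Int^c_k$ is defined in exactly the same style: it is the simplicial set of smooth singular chains on a subspace of $\R^{2(k+1)}$ cut out by a finite system of (weak and strict) linear inequalities. So the statement reduces to two observations: that the underlying topological space
$$
T_k=\{(a_0,\dots,a_k,b_0,\dots,b_k)\in\R^{2(k+1)}: a_j<b_j,\ a_{j-1}\le a_j,\ b_{j-1}\le b_j\}
$$
is contractible, and that the smooth-singular-chains functor sends a contractible (indeed convex, or at least star-shaped) open subset of Euclidean space to a contractible Kan complex.

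First I would check that $T_k$ is convex. If $(a,b)$ and $(a',b')$ both lie in $T_k$, then for $t\in[0,1]$ the point $(1-t)(a,b)+t(a',b')$ still satisfies all the defining inequalities, since each inequality is linear and the set of points satisfying a single linear inequality (strict or not) is convex, and an intersection of convex sets is convex. Hence $T_k$ is convex, in particular star-shaped with respect to any of its points, hence (smoothly) contractible: a linear straight-line homotopy to a fixed basepoint is smooth in all variables.

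Second, I would invoke that for a smoothly contractible open subset $U\subseteq\R^N$ the simplicial set $\mathrm{Sing}^\infty(U)$ of smooth singular simplices is a Kan complex weakly equivalent to $\mathrm{Sing}(U)$, hence contractible; this is exactly the input already used (without further comment) in the proof of Lemma \ref{lemma contractible}, applied there to $k=0,1$. Concretely: it is Kan because smooth singular sets of manifolds are Kan (smooth horn fillers exist, e.g.\ by smoothing a continuous filler or by a direct convexity argument on $U$), and it is weakly contractible because the smooth and continuous singular complexes of a smooth manifold have the same weak homotopy type and $U$ is contractible. Putting the two points together gives that $\Int^c_k$ is a contractible Kan complex.

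I do not expect a genuine obstacle here — the argument is identical in structure to Lemma \ref{lemma contractible}, only with more variables. The one point worth stating carefully is the convexity verification, since that is what makes the straight-line contraction work and also underlies the Kan property; everything else is a black-box citation to the smooth-singular-chains formalism already in force in this section. In the write-up I would simply say: ``The underlying topological space $T_k$ is convex, being an intersection of half-spaces, hence contractible; taking smooth singular chains of a contractible manifold yields a contractible Kan complex, exactly as in the proof of Lemma \ref{lemma contractible}.''
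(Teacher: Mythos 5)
Your proposal is correct and follows essentially the same route as the paper: the paper's proof likewise observes that the spatial structure arises from smooth singular chains of a topological space (hence is a Kan complex) and that the underlying subspace of $\R^{2(k+1)}$ is contractible. The only difference is that you spell out the convexity verification explicitly, which the paper leaves implicit.
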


\paragraph{Simplicial structure -- the simplicial space $\Int^c_\bullet$}

By construction, since $\Int^c$ was strongly Segal, its nerve is a functor $\Int^c_\bullet: \Delta^{op} \to \Space$. Let us recall that to a morphism $g:[m]\to [k]$ in $\Delta$, it assigns
\begin{eqnarray*}
\mathrm{Int}_{k} & \overset{g^*}{\longrightarrow} &\mathrm{Int}_{m},\\
(I_0(s) \leq \cdots \leq I_k(s))_{s\in\eDelta{l}} & \longmapsto & ( I_{g(0)}(s)\leq\cdots \leq I_{g(m)}(s)))_{s\in\eDelta{l}}.
\end{eqnarray*}

One could alternatively see this directly by observing that the assignment is clearly functorial and $f^\Delta$ and $g^*$ commute for all morphisms $f,g$ in $\Delta$.

\begin{notation}
We denote the {\em simplicial face and degeneracy maps} by $d_j$ and $s_j$ for $0\leq j \leq k$.

Explicitly, they are given by the following formulas. The $j$th degeneracy map is given by doubling the $j$th interval, and the $j$th face map is given by deleting the $j$th interval,
\begin{align*}
\mathrm{Int}_{k} & \overset{s_j}{\longrightarrow} \mathrm{Int}_{k+1}, & \mathrm{Int}_{k} & \overset{d_j}{\longrightarrow} \mathrm{Int}_{k-1},\\
I_0\leq\cdots\leq I_k & \longmapsto I_0\leq\cdots \leq I_j\leq I_j \leq \cdots \leq I_k, & I_0\leq\cdots\leq I_k & \longmapsto  
I_0\leq\cdots \leq \hat{I_j}\leq \cdots \leq I_k.
\end{align*}
\end{notation}

\paragraph{The complete Segal space $\Int^c_\bullet$}

\begin{prop}
$\Int^c_\bullet$ is a complete Segal space. Moreover, the inclusion $* \hookrightarrow \Int^c_\bullet$ given by degeneracies, where $*$ is seen as a constant complete Segal space, is an equivalence of complete Segal spaces.
\end{prop}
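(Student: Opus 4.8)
The plan is to use the two facts already in hand: that $\Int^c$ is a strongly Segal internal category (the preceding lemma) and that every level $\Int^c_k$ is a contractible Kan complex (Lemmas~\ref{lemma contractible} and~\ref{lemma contractible level}). For the Segal condition I would simply invoke \cite[Proposition~5.13]{HorelModel} (see Section~\ref{sec internal categories}): the nerve of a strongly Segal internal category is a Segal space. If one prefers a hands-on argument, note that by the very definition of the nerve the Segal map $\Int^c_k\to\Int^c_1\times_{\Int^c_0}\cdots\times_{\Int^c_0}\Int^c_1$ is an \emph{isomorphism} of simplicial sets, and since the source and target maps $\Int^c_1\to\Int^c_0$ are fibrations this strict fibre product also computes the homotopy fibre product; hence the Segal maps of Remark~\ref{rem Segal g} are weak equivalences. (Even more crudely: both sides are contractible, the right-hand one being an iterated homotopy fibre product of contractible spaces over the contractible space $\Int^c_0$.)

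For completeness, the key step is to identify the homotopy category $h_1(\Int^c_\bullet)$. For $0$-simplices $x,y$ of $\Int^c_0$ the mapping space $\mathrm{Hom}_{\Int^c_\bullet}(x,y)=\{x\}\times^h_{\Int^c_0}\Int^c_1\times^h_{\Int^c_0}\{y\}$ is the homotopy fibre of $(s,t)\colon\Int^c_1\to\Int^c_0\times\Int^c_0$ over $(x,y)$; since $\Int^c_1$ and $\Int^c_0\times\Int^c_0$ are both contractible, this homotopy fibre is contractible, in particular nonempty and connected. I expect this to be the one genuinely non-formal point: the \emph{strict} fibre of $(s,t)$ is a point or empty according to whether $x$ and $y$ are ordered, but the \emph{homotopy} fibre is insensitive to the ordering because $\Int^c_0$ is path-connected. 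Consequently $h_1(\Int^c_\bullet)$ is the chaotic (indiscrete) category on the set of $0$-simplices of $\Int^c_0$, so every morphism is invertible and $(\Int^c_1)^{inv}=\Int^c_1$. The completeness map $\Int^c_0\to(\Int^c_1)^{inv}$ is then just the degeneracy $s_0\colon\Int^c_0\to\Int^c_1$, a map between contractible Kan complexes and hence a weak equivalence; so $\Int^c_\bullet$ is a complete Segal space.

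Finally, for the inclusion $*\hookrightarrow\Int^c_\bullet$: fix a $0$-simplex of $\Int^c_0$, say $[0,1]$; the constant simplicial space $*$ at a point maps into $\Int^c_\bullet$ by sending the point in degree $k$ to the iterated degeneracy $s_0^k([0,1])=([0,1]\leq\cdots\leq[0,1])\in\Int^c_k$, and these assignments are compatible with faces and degeneracies so assemble into a map of simplicial spaces. In each level this is a map from a point into the contractible Kan complex $\Int^c_k$, hence a weak equivalence (two-out-of-three for $*\to\Int^c_k\to *$); thus the map is a levelwise weak equivalence between complete Segal spaces, hence an equivalence of complete Segal spaces. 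In short, once one has contractibility of all the levels together with the (slightly surprising) fact that $(\Int^c_1)^{inv}$ is \emph{all} of $\Int^c_1$, everything reduces to the triviality that maps between contractible spaces are weak equivalences; that latter fact — equivalently, the computation of $h_1(\Int^c_\bullet)$ — is the only real content.
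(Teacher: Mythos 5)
Your proof is correct and takes essentially the same approach as the paper, whose entire argument is that contractibility of every level $\Int^c_k$ simultaneously gives the Segal condition, completeness, and the levelwise equivalence $*\to\Int^c_\bullet$. You usefully make explicit the one point the paper elides, namely why $(\Int^c_1)^{inv}$ is all of $\Int^c_1$ (via the identification of $h_1(\Int^c_\bullet)$ as the chaotic category), so that contractibility of $\Int^c_1$ really does yield completeness.
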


\begin{proof}
We have seen in Lemma \ref{lemma contractible level} that every $\Int^c_k$ is contractible. This ensures the Segal condition, namely that
$$\Int^c_k \overset{\simeq}{\longrightarrow} \Int^c_1 \overunder[\Int^c_0]{h}{\times} \cdots \overunder[\Int^c_0]{h}{\times} \Int^c_1,$$
completeness, and ensures that the given inclusion is a level-wise equivalence.
\end{proof}

\subsection{The internal category or complete Segal space \texorpdfstring{$\Int$}{Int} of ordered closed intervals in an open one}\label{sec Int2}

We now change our interpretation of the spaces \eqref{int}: we do not identify them with the spaces of ordered closed bounded intervals $I_0\leq\cdots\leq I_k$ anymore, but as ordered intervals which are closed in $(a_0, b_k)$, i.e.~we interpret the elements as 
$$\tilde I_0\leq\cdots\leq \tilde I_k,$$
where $\tilde I_j=I_j\cap (a_0, b_k)$ for $0\leq j\leq k$. Thus, in the generic case when $a_j\neq a_0$ for $0<j\leq k$ and $b_j\neq b_k$ for $0\leq j<k$, then $\tilde I_0\leq \cdots \leq \tilde I_k$ are the half-open or closed intervals 
$$(a_0, b_0] \leq [a_1, b_1]\leq \cdots \leq [a_{k-1}, b_{k-1}]\leq [a_k, b_k).$$
If we view the elements in \eqref{int} in this way, we will denote the internal category (or analogously the Segal space) by $\Int$.

Note that the identity gives an isomorphism of complete Segal spaces describing the change of interpretation:
\begin{align*}
\Int^c_k & \longrightarrow \Int_k\\
(I_0\leq \cdots \leq I_k) &\longmapsto (\tilde I_0\leq \cdots \leq \tilde I_k),
\end{align*}
where $\tilde I_j=I_j\cap (a_0, b_k)$ for $j=0,\ldots ,k$. Conversely, $I_j=\mathrm{cl}_{\R}({\tilde{I_j}})$, the closure of $\tilde I_j$ in $\R$.

\begin{defn}
Let
$$\Int^n_\bullets = (\Int_\bullet)^{\times n}.$$
We denote an element in $\Int^n_{k_1,\ldots, k_n}$ by
$$\oul{I}= (\oul{a},\oul{b})= (I^i_0\leq\cdots\leq I^i_{k_i})_{1\leq i\leq n}.$$
\end{defn}

\begin{lemma}
 The $n$-fold simplicial space $\Int^n_\bullets$ is a complete $n$-fold Segal space. Moreover, the inclusion $*\hookrightarrow \Int^n_\bullets$ given by degeneracies, where $*$ is seen as a constant complete Segal space, is an equivalence of complete $n$-fold Segal spaces.
\end{lemma}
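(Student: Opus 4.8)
The plan is to reduce everything to the single observation that \emph{every level of $\Int^n_\bullets$ is contractible}. Indeed, $\Int^n_{k_1,\ldots,k_n}=\Int_{k_1}\times\cdots\times\Int_{k_n}$ is a finite product of the Kan complexes $\Int_{k_i}$, each of which is contractible by Lemma \ref{lemma contractible level} together with the change-of-interpretation isomorphism $\Int^c_\bullet\cong\Int_\bullet$; a finite product of contractible Kan complexes is again a contractible Kan complex. Once this is in place, all the defining conditions of a complete $n$-fold Segal space become essentially automatic.

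First I would check that $\Int^n$ is an $n$-uple Segal space: fixing all simplicial directions but one produces a simplicial space all of whose levels are contractible, and any such simplicial space satisfies the Segal condition because the relevant Segal maps are maps between contractible Kan complexes (a homotopy pullback of contractible spaces over a contractible space is again contractible). Next, to upgrade this to an $n$-fold Segal space, I would verify essential constancy: each slice $\Int^n_{k_1,\ldots,k_{i-1},0,\bullet,\ldots,\bullet}$ again has all levels contractible, so the canonical map into it from the constant $(n-i)$-fold simplicial space on its $(0,\ldots,0)$-level (assembled from iterated degeneracies) is a levelwise weak equivalence, which exhibits it as essentially constant. For completeness I would note that, for each $i$, the slice $X:=\Int^n_{k_1,\ldots,k_{i-1},\bullet,0,\ldots,0}$ is a Segal space with contractible levels; in such a Segal space $X_1^{\mathrm{inv}}$ is a union of path components of the connected space $X_1$ and is nonempty (it contains the image of the degeneracy $X_0\to X_1$), hence equals $X_1$, so $X_0\to X_1^{\mathrm{inv}}=X_1$ is a weak equivalence of contractible spaces. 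This gives \eqref{CSS^i} for every $i$, so $\Int^n$ is a complete $n$-fold Segal space.

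Finally, for the second assertion, I would observe that the $(k_1,\ldots,k_n)$-level of the inclusion $*\hookrightarrow\Int^n_\bullets$ is the map $*\to\Int_{k_1}\times\cdots\times\Int_{k_n}$ from a point into a contractible Kan complex, which is a weak equivalence. Hence the inclusion is a levelwise weak equivalence between complete $n$-fold Segal spaces, and since complete $n$-fold Segal spaces are the fibrant objects of a left Bousfield localization of $\sSpace_{n,f}$, a levelwise weak equivalence between them is an equivalence of complete $n$-fold Segal spaces.

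I do not expect a genuine obstacle: the content of the lemma is almost entirely ``everything is contractible''. The one point worth flagging is that the \emph{$n$-fold} condition (essential constancy) and completeness are not formal consequences of ``$\Int_\bullet$ is a complete Segal space'' — they genuinely use that $\Int_\bullet$ is moreover equivalent to a point, i.e.\ the contractibility of the levels — so the argument should be phrased directly in terms of contractibility rather than by appealing to an abstract ``product of complete Segal spaces'' statement (which in fact fails for general complete Segal spaces in the $n$-fold setting).
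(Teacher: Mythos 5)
Your proposal is correct and follows essentially the same route as the paper, which likewise deduces everything from the contractibility of the levels $\Int_{k}$ (the paper attributes the Segal condition and completeness in each direction to the corresponding properties of $\Int_\bullet$ and uses contractibility for essential constancy and the levelwise equivalence, but this is only a cosmetic difference from your all-from-contractibility argument). Your closing caveat — that essential constancy genuinely requires contractibility and not just ``product of complete Segal spaces'' — is accurate and matches how the paper's proof is structured.
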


\begin{proof}
The Segal condition and completeness follow from the Segal condition and completeness for $\Int_\bullet$. Since every $\Int_k$ is contractible by Lemma \ref{lemma contractible level}, $(\Int_\bullet)^{\times n}$ satisfies essential constancy, so $\Int^n$ is a complete $n$-fold Segal space. It also ensures that the given inclusion is a level-wise equivalence.
\end{proof}

\subsection{The boxing maps}\label{sec boxing}

We will need the following maps for convenience later:
\begin{defn}
Fix $k\geq 0$. The map of spaces
\begin{align*}
B: \Int_k &\longrightarrow \Int_0\\
\ul{I}=(I_0\leq\cdots\leq I_k)\to\eDelta{l} &\longmapsto B(\ul{I})=B(\ul{a}, \ul{b})=(a_0, b_k) \to \eDelta{l}
\end{align*}
\end{defn}
is called the {\em boxing map}.

Its $n$-fold product gives, for every $k_1,\ldots, k_n\geq0$, a map $B:\Int^n_{k_1,\ldots, k_n}\to \Int_0^n$ which sends an $l$-simplex to the (family of) smallest open box(es) containing all intervals,
$$\oul{I}= (I^i_0\leq\cdots\leq I^i_{k_i})_{1\leq i\leq n}\to\eDelta{l} \longmapsto B(\oul{I})=B(\oul{a}, \oul{b}) = (a^1_0, b^1_{k_1})\times\cdots \times (a^n_0, b^n_{k_n})\to\eDelta{l}.$$
We will usually view the total space of $B(\oul{I})\to\eDelta{l}$ as sitting inside $\R^n\times \eDelta{l}$ as $\bigcup_{s\in\eDelta{l}} B(\oul{I}(s)) \times \{s\}$.

We will also require the following rescaling maps.
\begin{defn}\label{defn box rescaling}
For an element $\oul{I}\in \Int^n_{k_1,\ldots, k_n}$, let $\rho(\oul{I}): B(\oul{I}) \to (0,1)^n$ be the restriction of the product of the affine maps $\R\to \R$ sending $a^i_0$ to 0 and $b^i_k$ to 1. We call it the {\em box rescaling map}.
\end{defn}

\subsection{A variant: closed intervals in \texorpdfstring{$(0,1)$}{(0,1)}}\label{sec Int (0,1)}

One might prefer to restrict to intervals which lie in $(0,1)$, modifying the definition to 
\begin{equation*}\begin{split}
\Int^{(0,1)}_k=\{(\ul{a}, \ul{b})=(a_0,\ldots, a_k, b_0,\ldots, b_k) : a_j<b_j \mbox{ for }0\leq j\leq k,  0= \,&a_0\leq a_1\leq\cdots \leq a_k \\ \mbox{ and } &b_0\leq\cdots\leq b_{k-1}\leq b_k=1\} \subset \Int_k
\end{split}\end{equation*}

The simplicial structure now has to be modified to ensure that the outer endpoints always are $0$ and $1$. This is provided by composition with an affine rescaling map: Let $g:[m]\to [k]$ be a morphism in $\Delta$. Then, let
\begin{eqnarray*}
\Int^{(0,1)}_{k} & \overset{g^*}{\longrightarrow} &\Int^{(0,1)}_{m},\\
(I_0\leq\cdots\leq I_k) \to \eDelta{l} & \longmapsto & \rho_g ( I_{g(0)}\leq\cdots \leq I_{g(m)} ) \to \eDelta{l},
\end{eqnarray*}
where the rescaling map $\rho_g=\rho( I_{g(0)} \leq \cdots \leq I_{g(m)} )$ is the unique affine transformation $\R\to\R$ sending $a_{g(0)}$ to 0 and $b_{g(m)}$ to 1.

\begin{lemma}
$\Int^{(0,1)}_\bullet$ is a complete Segal space.
\end{lemma}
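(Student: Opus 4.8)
The plan is to reduce the statement about $\Int^{(0,1)}_\bullet$ to the already-established fact (the final proposition of Section \ref{sec Segal space Int}) that $\Int^c_\bullet$, equivalently $\Int_\bullet$, is a complete Segal space. The key observation is that $\Int^{(0,1)}_\bullet$ is \emph{not} a sub-simplicial-space of $\Int_\bullet$ on the nose, because the simplicial operators have been altered by the affine rescaling maps $\rho_g$; nevertheless, at each level $\Int^{(0,1)}_k$ is a subspace of $\Int_k$, and I claim the inclusion followed by the boxing-and-rescaling data exhibits $\Int^{(0,1)}_\bullet$ as a retract, or better, a levelwise deformation retract, of $\Int_\bullet$ \emph{as simplicial spaces}. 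Concretely, the box rescaling map $\rho(\oul I)$ from Definition \ref{defn box rescaling} (in the $n=1$ case) gives a map $\Int_k\to\Int^{(0,1)}_k$ sending $(I_0\leq\cdots\leq I_k)$ to its image under the affine map carrying $a_0\mapsto 0$, $b_k\mapsto 1$; one checks directly that this commutes with the \emph{modified} simplicial operators $g^*$ on the target and the \emph{ordinary} ones on the source (this is precisely why the $\rho_g$ were inserted into the definition), so it is a map of simplicial spaces, and it is a one-sided inverse to the inclusion.

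First I would verify that each level $\Int^{(0,1)}_k$ is a contractible Kan complex, exactly as in Lemma \ref{lemma contractible level}: it is the space of smooth chains on a convex (hence contractible) subset of $\R^{2k}$, namely the subset cut out by $a_0=0$, $b_k=1$, and the ordering inequalities. Then I would observe that, just as in the proof of the proposition for $\Int^c_\bullet$, contractibility of all levels immediately forces the Segal maps
$$\Int^{(0,1)}_k \longrightarrow \Int^{(0,1)}_1 \overunder[\Int^{(0,1)}_0]{h}{\times}\cdots\overunder[\Int^{(0,1)}_0]{h}{\times}\Int^{(0,1)}_1$$
to be weak equivalences (a map between spaces all of whose relevant homotopy fibers are contractible), and likewise forces completeness, since $\Int^{(0,1)}_0\to(\Int^{(0,1)}_1)^{inv}$ is a map between contractible spaces. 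In fact $\Int^{(0,1)}_0$ is a single point (the interval $[0,1]$), which makes the homotopy pullbacks even more transparent: the right-hand side is just $(\Int^{(0,1)}_1)^{\times_h k}$ over a point, a contractible space, and the Segal map is a map between contractible spaces, hence an equivalence.

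The one genuinely non-formal point — and the step I expect to require the most care — is checking that the formulas in the definition actually assemble into a simplicial space, i.e.\ that $g\mapsto g^*$ is functorial \emph{after} the rescaling is inserted. The subtlety is that $\rho_g$ depends on the chosen interval tuple, so one must check that $(g\circ h)^* = h^*\circ g^*$ by verifying that the composite of two affine rescalings is again the correct affine rescaling for the composite; this is a short but slightly fiddly computation with the endpoints, using that an affine map of $\R$ is determined by where it sends two points and that rescaling an already-rescaled configuration lands one back in $\Int^{(0,1)}$. Once functoriality is in hand, compatibility of $g^*$ with the spatial operators $f^\Delta$ is automatic because $\rho_g$ acts fiberwise over $\eDelta l$ and commutes with precomposition in the simplicial direction. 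I would therefore structure the proof as: (i) the modified operators give a well-defined simplicial space (the computational heart); (ii) each level is a contractible Kan complex; (iii) contractibility of all levels yields the Segal condition and completeness verbatim as before.
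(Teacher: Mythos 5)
Your proposal is correct and follows essentially the same route as the paper: the paper's proof likewise isolates functoriality of the rescaled simplicial operators as the only non-analogous step and settles it by noting that $\rho_{\tilde g\circ g}$ and $\rho_{\tilde g}\circ\rho_g$ are affine maps of $\R$ agreeing on two points, with the Segal condition and completeness then following from levelwise contractibility exactly as for $\Int^c_\bullet$. The additional retraction $\Int_\bullet\to\Int^{(0,1)}_\bullet$ you describe is a correct but inessential embellishment.
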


\begin{proof}
The only thing which is not completely analogous to $\Int^c$ is checking that it is a simplicial space. Given two maps $[m]\overset{g}{\to} [k] \overset{\tilde g}{\to} [p]$, and $I_0\leq\cdots\leq I_p$, the rescaling map $\rho_{\tilde g\circ g}$ and the composition of the rescaling maps $\rho_{\tilde g}\circ \rho_g$ both send $a_{\tilde g\circ g(0)}$ to 0 and $b_{\tilde g\circ g(m)}$ to 1 and, since affine transformations $\R\to\R$ are uniquely determined by the image of two points, this implies that they coincide. Thus, this gives a functor $\Delta^{op}\to \Space$.
\end{proof}

Note that the degeneracy maps are the same ones, given by repeating an interval. However, the face maps need to modified: after deleting an end interval we have to rescale the remaining intervals linearly to $(0,1)$. Explicitly, for $j=0$, the rescaling map is the affine map $\rho_0$ sending $(a_1,1)$ to $(0,1)$, $\rho_0(x)=\frac{x-a_1}{1-a_1}$ and for $j=k$, it is the affine map $\rho_k: (0,b_{k-1}) \to (0,1)$, $\rho_k(x)=\frac{x}{b_{k-1}}$. Then,
\begin{eqnarray*}
\Int^{(0,1)}_{k} & \overset{d_j}{\longrightarrow} &\Int^{(0,1)}_{k-1},\\
I_0\leq\cdots\leq I_k & \longmapsto & 
\begin{cases}I_0\leq\cdots \leq \hat{I_j}\leq \cdots \leq I_k, & j\neq 0, k,\\
(0,\frac{b_1 - a_1}{1 - a_1}] \leq \cdots \leq [\frac{a_k - a_1}{1 - a_1}, 1), & j=0,\\
(0,\frac{b_0}{b_{k-1}}] \leq \cdots \leq [\frac{a_{k-1}}{b_{k-1}}, 1), & j=k.
\end{cases}
\end{eqnarray*}

\begin{rem}
An advantage of this ``reduced'' version is that the space of objects is just a point: for $k=0$, the condition on the endpoints of the intervals becomes $a_0=0$ and $b_0=1$, so the only element is $(0,1)\in\Int_0$. In particular, $\Int_0$ is discrete.
\end{rem}

\begin{rem}\label{rem on box rescaling}
Note that the boxing maps applied to $\Int_k^{(0,1)}$ are trivial: for $\ul{I}=I_0\leq\cdots\leq I_k$, we always have that $B(\ul{I})=(0,1)$. Moreover, $\Int_k^{(0,1)}$ is the preimage of $(0,1)$ under the boxing maps. Finally, note that the simplicial structure is defined exactly as the composition
$$\Int^{(0,1)}_k \xrightarrow{\iota} \Int_k \xrightarrow{g^*} \Int_m \xrightarrow{\rho}  \Int^{(0,1)}_m,$$
where $\rho: \ul{I} \mapsto \left(\rho(\ul{I})\right)(\ul{I})$ consists of applying the box rescaling maps. Moreover, since $\rho\circ \iota = id$, the diagram
$$
\begin{tikzcd}
\Int_k \arrow[shift left = 0.5ex]{r}{\rho} \arrow{d}{g^*}& \Int_k^{(0,1)} \arrow[hook, shift left = 0.5ex]{l}{\iota} \arrow{d}{g^*}\\
\Int_m \arrow{r}{\rho} & \Int_m^{(0,1)}
\end{tikzcd}
$$
commutes and shows that the simplicial structure is defined exactly in a way to ensure that we a natural transformation of simplicial spaces
$$ \rho: \Int \longrightarrow \Int^{(0,1)},$$
which is a weak equivalence of complete Segal spaces.
\end{rem}

\section{The \texorpdfstring{$(\infty,n)$}{(infty,n)}-category of bordisms \texorpdfstring{$\Bord_n$}{Bordn}}\label{sec Bord}

In this section we define an $n$-fold Segal space $\PBord_n$ in several steps. However, it will turn out not to be complete in general. By applying the completion functor we obtain a complete $n$-fold Segal space, the $(\infty,n)$-category of bordisms $\Bord_n$.

Let $V$ be a finite-dimensional vector space. We first define the levels relative to $V$ with elements being certain submanifolds of the (finite-dimensional) vector space $V\times\R^n \cong V\times B$, where $B$ is an open box, i.e.~a product of $n$ bounded open intervals in $\R$. Then we vary $V$, i.e.~we take the limit over all finite-dimensional vector spaces lying in some fixed infinite-dimensional vector space, e.g.~$\R^\infty$. The idea behind this process is that by Whitney's embedding theorem, every manifold can be embedded in some large enough vector space, so in the limit, we include representatives of every $n$-dimensional manifold. We use $V\times B$ instead of $V\times\R^n$ as in this case the spatial structure is easier to write down explicitly.

\subsection{The sets of 0-simplicies of \texorpdfstring{$(\PBord_n^V)_{k_1,\ldots, k_n}$}{PBordn}}

The intuition behind the following definition should be the following. An element (i.e.~0-simplex) in the space $(\PBord_n^V)_{1,\ldots, 1}$ should be an $n$-fold bordism, i.e.~a manifold for which there are $n$ ``time'' directions singled out and whose boundary is decomposed into an incoming and an outgoing part in each of these time directions. This is a picture of a simple example for $n=2$.
$$
\begin{tikzpicture}[scale=0.5]
\draw[->] (-1.5, 4) -- (3.2, 4) node[anchor=west] {time 1};
\draw[->] (-1.5, 4) -- (-2.5, 1.5) node[anchor=north east]{time 2};
\draw[thick] (-1.25, 0.875) -- (-2, -1) arc (-90:90:1.5 and 1) -- (-1, 3.5) -- (3, 3.5) -- (2,1) arc (90:270:1.5 and 1) -- (3,1.5) -- (2.2, 1.5);
\draw[thick] (-0.52,-0.2) .. controls (0, 1.8) and (1, 1.8) .. (0.52,0.2);
\end{tikzpicture}
$$
An element in the space $(\PBord_n^V)_{k_1,\ldots, k_n}$ should be an $n$-fold bordism, which is the composition of $k_1$ bordisms in the first ``time'' direction, $k_2$ bordisms in the second direction, and so on. This is a picture of an example for $n=2$ and $k_1=k_2=2$.
$$
\begin{tikzpicture}[scale=0.5, thick]
\draw[thick] (-1.25, 0.875) -- (-2, -1) arc (-90:90:1.5 and 1) -- (-1, 3.5) -- (3, 3.5) -- (2,1) arc (90:270:1.5 and 1);
\draw[dashed] (2,-1) -- (3,1.5) -- (-1, 1.5) -- (-1.25, 0.875);
\draw[thick] (-0.52,-0.2) .. controls (0, 1.8) and (1, 1.8) .. (0.52,0.2);
\draw (2,-1) arc (-90:90:1.5 and 1);
\draw (3, 3.5) arc (90 : 0: 1.5 and 1);
\draw [dashed] (3,1.5) arc (-90 : -30: 1.5 and 1);
\draw (3.48, -0.2) -- (4.5, 2.5);
\draw (-1, 3.5) -- (0, 6) -- (4,6) -- (3,3.5);
\draw[dashed] (-1, 1.5) -- (0, 4) -- (4,4) -- (3,1.5);

\draw (4,6) arc (90:0: 1.5 and 1);
\draw[dashed] (4,4) arc (-90:-30: 1.5 and 1);
\draw (6, 5) arc (180:90: 2 and 1);
\draw[dashed] (6, 5) arc (-180:-115: 2 and 1);
\draw[thick] (8, 4) arc (-90:-112: 2 and 1);

\begin{scope}[shift={(5,2.5)}]
\draw[thick] (-2, 1) -- (-1, 3.5)
 (3, 3.5) -- (2,1);
\draw[thick] (2,1) arc (90:180:1.5 and 1);
\draw[dashed] (2,1) arc (90:270:1.5 and 1);
\draw[thick] (2,-1) arc (-90: -120: 1.5 and 1);
\draw[thick] (2,-1) -- (3,1.5);
\draw[dashed] (-1, 1.5) -- (-1.25, 0.875);
\draw[thick] (-0.56,-0.18) .. controls (0, 1.8) and (1, 1.8) .. (0.52,0.2);
\draw (0.5, 2.5) .. controls (0.45, 2) and (0.8, 2) .. (1,2.5);

\begin{scope}[shift={(-1, -2.5)}]
\draw[thick] (2,-1) -- (3,1.5);
\draw[thick] (2,1) -- (3,3.5);
\draw [thick] (2,-1) arc (270:90: 1.5 and 1);
\draw [thick] (0.52, 0.17) -- (1.52, 2.67);
\end{scope}
\end{scope}
\end{tikzpicture}
$$
The pictures both depict the bordisms as embedded into $\R$ times the two time directions. We would like to point out that the time directions have a preferred ordering, as we will discuss in more detail later.

More generally, we will choose the bordisms to be equipped with an embedding into some finite dimensional real vector space $V$ times $n$ time directions, which we single out to track where the bordism is allowed to be cut into the individual composed bordisms. Furthermore, to keep track of the ``cuts'', we need to remember the data of the grid in the time directions.  
$$
\begin{tikzpicture}[scale=0.5, thick]

\begin{scope}[black, yshift=-6cm]
\draw (-2, -1) -- (0,4) -- (7.4, 4) -- (5.4, -1) -- cycle;
\draw (2, -1) -- (4,4)
(-1, 1.5) -- (6.4, 1.5);
\end{scope}

\draw[thick] (-1.25, 0.875) -- (-2, -1) arc (-90:90:1.5 and 1) -- (-1, 3.5) -- (3, 3.5) -- (2,1) arc (90:270:1.5 and 1);
\draw[dashed] (2,-1) -- (3,1.5) -- (-1, 1.5) -- (-1.25, 0.875);
\draw[thick] (-0.52,-0.2) .. controls (0, 1.8) and (1, 1.8) .. (0.52,0.2);
\draw (2,-1) arc (-90:90:1.5 and 1);
\draw (3, 3.5) arc (90 : 0: 1.5 and 1);
\draw [dashed] (3,1.5) arc (-90 : -30: 1.5 and 1);
\draw (3.48, -0.2) -- (4.5, 2.5);
\draw (-1, 3.5) -- (0, 6) -- (4,6) -- (3,3.5);
\draw[dashed] (-1, 1.5) -- (0, 4) -- (4,4) -- (3,1.5);

\draw (4,6) arc (90:0: 1.5 and 1);
\draw[dashed] (4,4) arc (-90:-30: 1.5 and 1);
\draw (6, 5) arc (180:90: 2 and 1);
\draw[dashed] (6, 5) arc (-180:-115: 2 and 1);
\draw[thick] (8, 4) arc (-90:-112: 2 and 1);

\begin{scope}[shift={(5,2.5)}]
\draw[thick] (-2, 1) -- (-1, 3.5)
 (3, 3.5) -- (2,1);
\draw[thick] (2,1) arc (90:180:1.5 and 1);
\draw[dashed] (2,1) arc (90:270:1.5 and 1);
\draw[thick] (2,-1) arc (-90: -120: 1.5 and 1);
\draw[thick] (2,-1) -- (3,1.5);
\draw[dashed] (-1, 1.5) -- (-1.25, 0.875);
\draw[thick] (-0.56,-0.18) .. controls (0, 1.8) and (1, 1.8) .. (0.52,0.2);
\draw (0.5, 2.5) .. controls (0.45, 2) and (0.8, 2) .. (1,2.5);

\begin{scope}[shift={(-1, -2.5)}]
\draw[thick] (2,-1) -- (3,1.5);
\draw[thick] (2,1) -- (3,3.5);
\draw [thick] (2,-1) arc (270:90: 1.5 and 1);
\draw [thick] (0.52, 0.17) -- (1.52, 2.67);
\end{scope}
\end{scope}
\end{tikzpicture}
$$
In practice, we will keep track of little intervals surrounding the grid instead of the grid itself. This should be thought of as remembering little collars around the cuts rather than the cuts themselves.
$$
\begin{tikzpicture}[scale=0.5, thick]
\fill[fill= gray!40] (-2, -1) -- (0,4) -- (0.5, 4) -- (-1.5, -1) -- cycle;
\fill[fill= gray!40] (7.4, 4) -- (5.4, -1) -- (4.9, -1) -- (6.9, 4) -- cycle;
\fill[fill= gray!40] (-2, -1) -- (5.4, -1) -- (5.8, 0) -- (-1.6, 0) -- cycle;
\fill[fill= gray!40] (7.4, 4) -- (0,4) -- (-0.4, 3) -- (7, 3) -- cycle;
\fill[fill= gray!40] (2.5, -1) -- (4.5,4) -- (3.5,4) -- (1.5, -1) -- cycle;

\draw[dashed] (-2, -1) -- (0,4) -- (0.5, 4) -- (-1.5, -1) -- cycle;
\draw[dashed] (7.4, 4) -- (5.4, -1) -- (4.9, -1) -- (6.9, 4) -- cycle;
\draw[dashed] (-2, -1) -- (5.4, -1) -- (5.8, 0) -- (-1.6, 0) -- cycle;
\draw[dashed] (7.4, 4) -- (0,4) -- (-0.4, 3) -- (7, 3) -- cycle;
\draw[dashed] (2.5, -1) -- (4.5,4) -- (3.5,4) -- (1.5, -1) -- cycle;

\draw (-2, -1) -- (0,4) -- (7.4, 4) -- (5.4, -1) -- cycle;
\draw (2, -1) -- (4,4)
(-1, 1.5) -- (6.4, 1.5);
\end{tikzpicture}
$$
We will explain how to recover the cuts and how to interpret the following definition in the example and remark right after the definition. Moreover, we will relate it to more classical definitions of (higher) bordisms in section \ref{hocat}.

For $S\subseteq\{1,\ldots,n\}$ denote the projection from $\R^n$ onto the coordinates indexed by $S$ by $\pi_S:\R^n\to\R^S$. We will now define the sets of $0$-simplices of $(\PBord_n^V)_{k_1,\ldots, k_n}$ and denote them by $(\PPBord_n^V)_{k_1,\ldots, k_n}$ to avoid adding an extra index. This notation will only appear in this and the next subsection.

\begin{defn}\label{def PBord}
Let $V$ be a finite-dimensional $\R$-vector space, which we identify with some $\R^r$. For every $n$-tuple $k_1,\ldots, k_n\geq0$, let $(\PPBord_n^V)_{k_1,\ldots, k_n}$ be the collection of tuples $(M,\oul{I}=(I^i_0\leq\cdots\leq I^i_{k_i})_{1\leq i\leq n})$, satisfying the following conditions:
\begin{enumerate}
\item For $1\leq i\leq n$,
$$(I^i_0\leq\cdots\leq I^i_{k_i})\in \Int_{k_i}.$$
\item\label{cond 1} $M$ is a closed and bounded $n$-dimensional submanifold of $V\times B(\oul{I})$ and the composition $\pi: M \hookrightarrow V\times B(\oul{I})\twoheadrightarrow B(\oul{I})$ is a proper map.\footnote{Recall the boxing map from Section \ref{sec boxing}.}
\item\label{cond 3} For every $S\subseteq\{1,\ldots, n\}$, let $p_S:M\xrightarrow{\pi}B(\oul{I})\xrightarrow{\pi_S}\R^S$ be the composition of $\pi$ with the projection $\pi_S$ onto the $S$-coordinates. Then for every $1\leq i\leq n$ and $0\leq j_i\leq k_i$,  at every $x\in p_{\{i\}}^{-1}(I^i_{j_i})$, the map $p_{\{i,\ldots,n\}}$ is submersive.
\end{enumerate}
\end{defn}

\begin{ex}
An example of an element in $(\PPBord_1^\R)$ is depicted below. It represents a composition of three 1-bordisms, the first one of which is ``degenerate'', i.e.~a trivial 1-bordism between two points.
$$
\begin{tikzpicture}
\draw (-5,0) -- (5,0);

\draw (5, -0.25) node [anchor=north] {\tiny $b_3$};
\draw (5,0) arc (0:30:0.5);
\draw (5,0) arc (0:-30:0.5);

\draw (-5, -0.25) node [anchor=north] {\tiny $a_0$};
\draw (-5,0) arc (0:30:-0.5);
\draw (-5,0) arc (0:-30:-0.5);

\draw (-2.7, 0.25) -- (-2.6, 0.25) -- (-2.6, -0.25) node[anchor=north] {\tiny $b_0$} -- (-2.7, -0.25);

\draw (-4.2, 0.25) -- (-4.3, 0.25) -- (-4.3, -0.25) node[anchor=north] {\tiny $a_1$} -- (-4.2, -0.25);

\draw (-1.8, 0.25) -- (-1.7, 0.25) -- (-1.7, -0.25) node[anchor=north] {\tiny $b_1$} -- (-1.8, -0.25);

\draw (0.4, 0.25) -- (0.3, 0.25) -- (0.3, -0.25) node[anchor=north] {\tiny $a_2$} -- (0.4, -0.25);

\draw (1.2, 0.25) -- (1.3, 0.25) -- (1.3, -0.25) node[anchor=north] {\tiny $b_2$} -- (1.2, -0.25);

\draw (3.1, 0.25) -- (3, 0.25) -- (3, -0.25) node[anchor=north] {\tiny $a_3$} -- (3.1, -0.25);

\draw (0.8, 2) ellipse (1.3 and 1);

\draw (-5, 0.6) -- (0.8, 0.6) arc [start angle = -90, end angle = 90, x radius = 1.5, y radius = 1.5] .. controls (0,3.6) and (-1, 2.8) .. (-2, 3) .. controls (-3, 3.2) and (-3.5, 3.5) .. (-4, 3.5) .. controls (-4.3, 3.5) .. (-5, 3.5);

\fill (3.7, 0) node[anchor=north] {\tiny $t_3$} circle (0.15em);
\draw[dotted] (3.7, 0) -- (3.7, 3.6);

\fill (0.8, 0) node[anchor=north] {\tiny $t_2$} circle (0.15em);
\draw[dotted] (0.8, 0) -- (0.8, 3.6);

\fill (0.8, 0.6) circle (0.15em);
\fill (0.8, 1) circle (0.15em);
\fill (0.8, 3) circle (0.15em);
\fill (0.8, 3.6) circle (0.15em);

\fill (-2, 0) node[anchor=north] {\tiny $t_1$} circle (0.15em);
\draw[dotted] (-2,0) -- (-2,3);
\fill (-2, 0.6) circle (0.15em);
\fill (-2, 3) circle (0.15em);

\fill (-4, 0) node[anchor=north] {\tiny $t_0$} circle (0.15em);
\draw[dotted] (-4,0) -- (-4,3.5);
\fill (-4, 0.6) circle (0.15em);
\fill (-4, 3.5) circle (0.15em);

\end{tikzpicture}
$$
\end{ex}

\begin{rem}\label{rem composed bordisms}
For $k_1,\ldots, k_n\geq0$, one should think of an element in $(\PPBord_n^V)_{k_1,\ldots,k_n}$ as a collection of $k_1\cdots k_n$ composed bordisms, with $k_i$ composed bordisms with collars in the $i$th direction. They can be understood as follows.
\begin{itemize}
\item Condition \ref{cond 3} in particular implies that for every $1\leq i\leq n$, at every $x\in p_{\{i\}}^{-1}(I^i_{j})$, the map $p_{\{i\}}$ is submersive. So if we choose $t^i_j\in I^i_j$, it is a regular value of $p_{\{i\}}$, and therefore $p_{\{i\}}^{-1}(t^i_j)$ is an $(n-1)$-dimensional manifold. The embedded manifold $M$ should be thought of as a composition of $n$-bordisms and $p_{\{i\}}^{-1}(t^i_j)$ is one of the $(n-1)$-bordisms (or a composition therof) in the composition.
$$
\begin{tikzpicture}[scale=0.2]
\draw (0,0) circle (6 and 3);
\draw (0,0) circle (1.5 and 0.75);

\node (C1) at ($(135: 6 and 3)$) {};
\path let \p1 = (C1) in node[inner sep=0] (c1) at (\x1,0) {};
\path (C1) arc (93: 130: 0.5 and 0.707*3) node[inner sep=0] (P1) {};
\path (C1) arc (93: 210: 0.5 and 0.707*3) node[inner sep=0] (P3) {};
\path (C1) arc (453: 30: 0.5 and 0.707*3) node[inner sep=0] (P2) {};
\path (C1) arc (453: 310: 0.5 and 0.707*3) node[inner sep=0] (P4) {};

\draw (0, 3) arc (90: 270: 0.3 and 1.125);
\draw[dashed] (0, 3) arc (90: -90: 0.3 and 1.125);
\path (0, 3) arc (90: 160 : 0.3 and 1.125) node[inner sep=0] (pp1) {};
\path (0, 3) arc (90: 340 : 0.3 and 1.125) node[inner sep=0] (pp2) {};

\draw[dashed] (0, -3) arc (-90: 90: 0.3 and 1.125);
\draw (0, -3) arc (270: 90: 0.3 and 1.125);
\path (0, -3) arc (-90: 160 : 0.3 and 1.125) node[inner sep=0] (pp3) {};
\path (0, -3) arc (-90: 340 : 0.3 and 1.125) node[inner sep=0] (pp4) {};

\node (C2) at ($(45: 6 and 3)$) {};
\path let \p1 = (C2) in node[inner sep=0] (c2) at (\x1,0) {};
\path (C2) arc (87: 130: 0.5 and 0.707*3) node[inner sep=0] (p1) {};
\path (C2) arc (87: 210: 0.5 and 0.707*3) node[inner sep=0] (p3) {};
\path (C2) arc (447: 30: 0.5 and 0.707*3) node[inner sep=0] (p2) {};
\path (C2) arc (447: 310: 0.5 and 0.707*3) node[inner sep=0] (p4) {};

\draw (-8, -5) -- (8,-5);
\draw (-0.8, -5.5) -- (-1, -5.5) -- (-1, -4.5) -- (-0.8, -4.5);
\draw (0.8, -5.5) -- (1, -5.5) -- (1, -4.5) -- (0.8, -4.5);

\draw (0, -4.5)  -- (0, -6) node[anchor=north] {$\scriptstyle t_j^i$};
\end{tikzpicture}
$$

\item For any $t^{n-1}_j\in I^{n-1}_{j}$ and $t^{n-1}_l \in I^{n-1}_l$, there is an inclusion of the preimages
$$p_{\{n-1,n\}}^{-1} \Big( (t^{n-1}_j, t^n_l)\Big) \subset p_{\{n-1\}}^{-1}(I^{n-1}_{j}),$$
and by condition \ref{cond 3} the map $p_{\{n-1,n\}}$ is submersive there. Therefore $p_{\{n-1,n\}}^{-1} \Big( (t^{n-1}_j, t^n_l)\Big)$ is an $(n-2)$-dimensional manifold, which should be thought of as one of the $(n-2)$-bordisms which are connected by the composition of $n$-bordisms $M$. Moreover, again since $p_{\{n-1,n\}}$ is submersive everywhere in $p_{\{n-1\}}^{-1} (I^{n-1}_j)$, a variant of Ehresmann's fibration theorem shows that the preimage $p_{\{n-1\}}^{-1} (t^{n-1}_j)$ is a trivial fibration and thus a trivial $(n-1)$-bordism between the $(n-2)$-bordisms it connects.

$$\begin{tikzpicture}[scale=2.5]
\begin{scope}[yscale=1,xscale=1]

\draw[densely dotted, ->] (1.8, 3.2) -- (2.9, 3.2) node[anchor=west] {$\scriptstyle \R^{\{n-1\}}$};
\draw[densely dotted, ->] (1.8, 3.2) -- (1.8, 2.4) node[anchor=north] {$\scriptstyle \R^{\{n\}}$};

\draw (1.8, 3.2) arc (0:30:-0.2);
\draw (1.8, 3.2) arc (0:-30:-0.2);

\draw (1.95, 3.1) -- (2, 3.1) -- (2, 3.3) -- (1.95, 3.3);
\draw[blue, thick] (1.86, 3.1) -- (1.86, 3.35) node[anchor=south] {$\scriptstyle t_j^{n-1}$};

\fill[very nearly transparent] (2,2) arc (-90: -48.2: 0.3cm and 0.1cm) --(2.2,2.8) -- (2, 2.8) -- (2,2);
\fill[very nearly transparent] (1.9, 2.2) -- (2, 2.2) arc (90 : 70:0.3cm and 0.1cm) -- (2.1, 3) -- (1.9, 3) -- cycle;

\draw[blue, thick] (1.96, 2.2) -- (1.96, 3);
\draw[blue, thick] (2.06, 2.004) -- (2.06, 2.8);

\draw (2,2) arc (-90: 0: 0.3cm and 0.1cm);
\draw [densely dashed] (2.3, 2.1) arc (0:90: 0.3cm and 0.1cm);
\draw (2, 2.2) -- (1.9, 2.2) -- (1.9, 3) -- (3,3);
\draw (2,2) -- (2, 2.8) -- (3.1, 2.8) 
	 (3.1, 2) -- (3,2);
\draw (3,2) arc (270: 180: 0.3cm and 0.1cm);
\draw [densely dashed]  (3, 2.2) arc (90: 180: 0.3cm and 0.1cm) (3, 2.2);

\draw (2.3, 2.1) arc (180: 0: 0.2cm and 0.3cm);

\draw (2.7, 2.1) arc (180:270: 0.3cm and 0.1cm) --+(0.1,0);
\draw [densely dashed] (2.7, 2.1) arc (180: 90: 0.3cm and 0.1cm);

\draw (3.1, 2.8) -- (3.1, 2);
\draw (3,3)--(3,2.8);
\draw[densely dashed] (3, 2.8) -- (3, 2.2);
\end{scope}
\end{tikzpicture}
$$

\item Similarly, for $(t^k_{j_k}, \ldots, t^n_{j_n})\in I^k_{j_k}\times \cdots \times I^n_{j_n}$, the preimage
$$p_{\{k,\ldots, n\}}^{-1}\Big( (t^k_{j_k}, \ldots, t^n_{j_n}) \Big)$$
is a $(k-1)$-dimensional manifold, which should be thought of as one of the $(k-1)$-bordisms which is connected by the composition of $n$-bordisms $M$.
\item Moreover, the following proposition shows that different choices of ``cutting points'' $t^i_j\in I^i_j$ lead to diffeomorphic bordisms. In  the case when $b_j^i < a_{j+1}^i$ one should thus think of the $n$-bordisms we compose as $\pi^{-1}(\prod_{i=1}^n[b^i_j, a^i_{j+1}])$, and the preimages of the specified intervals as collars of the bordisms along which they are composed. Otherwise, one should think of that $n$-bordism in the composition as being ``degenerate'', i.e.~of being a trivial $n$-bordism.
\end{itemize}
We will come back to this interpretation in Section \ref{hocat} when we compute homotopy categories.
\end{rem}

\begin{prop}
Let $(M, \oul{I})\in (\PPBord_n^V)_{k_1,\ldots, k_n}$. Fix $1\leq i\leq n$ and $0\leq j\leq j'\leq k_i$. Then for any $u^i_j, v^i_j \in I^i_j$ and $u^i_{j'}, v^i_{j'} \in I^i_{j'}$ such that $u^i_j< u^i_{j'}$ and $v^i_j <v^i_{j'}$ there is a diffeomorphism
$$p_{\{i\}}^{-1}([u^i_j, u^i_{j'}])\longrightarrow p_{\{i\}}^{-1}([v^i_j, v^i_{j'}]).$$
\end{prop}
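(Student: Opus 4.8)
The plan is to build the diffeomorphism from a sequence of ``cutting-point slides'', each one realised as the flow of a horizontal vector field, the completeness of which is guaranteed by properness of $\pi$.

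First I would reduce to moving one endpoint at a time. Given $u^i_j,v^i_j\in I^i_j$ and $u^i_{j'},v^i_{j'}\in I^i_{j'}$ with $u^i_j<u^i_{j'}$ and $v^i_j<v^i_{j'}$, at least one of the two paths
$(u^i_j,u^i_{j'})\to(v^i_j,u^i_{j'})\to(v^i_j,v^i_{j'})$ and $(u^i_j,u^i_{j'})\to(u^i_j,v^i_{j'})\to(v^i_j,v^i_{j'})$
passes only through pairs whose left entry is strictly below the right entry: if both failed we would get $v^i_j\ge u^i_{j'}>u^i_j\ge v^i_{j'}>v^i_j$, a contradiction. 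Since ``being diffeomorphic'' is an equivalence relation, it therefore suffices to treat a single slide, say of the left endpoint from $u$ to $v$ with $u,v\in I^i_j$ and the right endpoint $w\in I^i_{j'}$ fixed (the right-endpoint case is symmetric), and we may assume $u<v$.

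The local input is condition \ref{cond 3}: $p_{\{i\}}$ is a submersion at every point of $p_{\{i\}}^{-1}(I^i_j)$ and of $p_{\{i\}}^{-1}(I^i_{j'})$. Let $\Omega\subseteq M$ be the open locus where $p_{\{i\}}$ is submersive, and, using a partition of unity, choose on $\Omega$ a smooth vector field $\xi$ lifting the standard field on the $i$th coordinate, i.e.\ $dp_{\{i\}}(\xi)=\partial/\partial x_i$. Its flow is \emph{horizontal}: along a flow line only the $i$th coordinate of $\pi$ changes, at unit speed. Hence a flow line staying over a compact interval $[c,d]\subset(a^i_0,b^i_{k_i})$ remains inside $\pi^{-1}(L)$, where $L\subset B(\oul I)$ is the compact segment obtained by freezing all coordinates but the $i$th and letting it range over $[c,d]$; by properness of $\pi$ (condition \ref{cond 1}) $\pi^{-1}(L)$ is a compact subset of $\Omega$, so the flow line cannot escape and the flow is defined for all the required times. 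Flowing over $[u,v]\subseteq I^i_j$ thus produces a diffeomorphism $p_{\{i\}}^{-1}(v)\times[u,v]\xrightarrow{\ \sim\ }p_{\{i\}}^{-1}([u,v])$ that restricts to the identity on $p_{\{i\}}^{-1}(v)$; this is the expected Ehresmann-type triviality, with properness of $\pi$ substituting for the non-compactness of the slab.

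Finally, since $v\in I^i_j$ and $w\in I^i_{j'}$ are regular values of $p_{\{i\}}$, both $p_{\{i\}}^{-1}([v,w])$ and $p_{\{i\}}^{-1}([u,w])$ are smooth manifolds with boundary and $p_{\{i\}}^{-1}([u,w])=p_{\{i\}}^{-1}([u,v])\cup_{p_{\{i\}}^{-1}(v)}p_{\{i\}}^{-1}([v,w])$. By the previous step $p_{\{i\}}^{-1}([u,v])$ is, rel $p_{\{i\}}^{-1}(v)$, a product collar on the boundary component $p_{\{i\}}^{-1}(v)$ of $p_{\{i\}}^{-1}([v,w])$, and attaching an external collar to a union of boundary components does not change the diffeomorphism type (collar neighbourhood theorem); hence $p_{\{i\}}^{-1}([u,w])\cong p_{\{i\}}^{-1}([v,w])$. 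Composing the two slides from the reduction step yields $p_{\{i\}}^{-1}([u^i_j,u^i_{j'}])\cong p_{\{i\}}^{-1}([v^i_j,v^i_{j'}])$. The main obstacle is exactly the non-compactness of the slabs $p_{\{i\}}^{-1}([a,b])$ (the submanifold can run out to the walls of $V\times B(\oul I)$ transversally to $x_i$), so that naive Ehresmann does not apply and one must check that the horizontal flows remain complete over compact base intervals; this is precisely what properness of $\pi\colon M\to B(\oul I)$ provides.
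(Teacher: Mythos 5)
Your overall strategy is the same as the paper's: the paper proves this by invoking its Morse lemma (Theorem \ref{Morse lemma}) twice, i.e.\ by flowing along a gradient-like vector field, which is exactly your ``cutting-point slide''. Your reduction step is in fact \emph{more} careful than the paper's, which silently passes through the intermediate slab $p_{\{i\}}^{-1}([v^i_j,u^i_{j'}])$ without checking $v^i_j<u^i_{j'}$ (which can fail when $I^i_j$ and $I^i_{j'}$ overlap); your observation that at least one of the two orders of moves is admissible fixes this. You also correctly identify that the crux is completeness of the flow, and that it must come from properness of $\pi$ rather than of $p_{\{i\}}$ (the paper's Morse lemma is stated for \emph{proper} real-valued functions, and $p_{\{i\}}$ is not proper, so the paper's citation is itself slightly too quick).

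However, the step you lean on to get completeness does not hold as stated. You choose $\xi$ with $dp_{\{i\}}(\xi)=\partial/\partial x_i$ and then assert that ``along a flow line only the $i$th coordinate of $\pi$ changes''. The condition $dp_{\{i\}}(\xi)=\partial/\partial x_i$ controls only the $i$th coordinate; the remaining coordinates $p_{\{\alpha\}}$, $\alpha\neq i$, change at rate $dp_{\{\alpha\}}(\xi)$, which is not zero for an arbitrary lift. To freeze a coordinate $\alpha$ you would need to lift $\partial/\partial x_i$ through a map that is submersive and records $x_\alpha$; condition \ref{cond 3} gives submersivity of $p_{\{i,\ldots,n\}}$ over $p_{\{i\}}^{-1}(I^i_j)$, so you can arrange $dp_{\{\alpha\}}(\xi)=0$ for $\alpha>i$ (and for $i=1$ your argument is complete as written, since $p_{\{1,\ldots,n\}}=\pi$). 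But for $i>1$ the coordinates $\alpha<i$ can drift, so the flow line's image under $\pi$ is only known to lie in $\prod_{\alpha<i}(a^\alpha_0,b^\alpha_{k_\alpha})\times[c,d]\times\{\mathrm{pt}\}$, which is \emph{not} relatively compact in $B(\oul{I})$; a flow line could a priori reach a wall $x_\alpha\to a^\alpha_0$ in finite time, and properness of $\pi$ then gives no compact set containing it. Closing this gap needs an extra input, e.g.\ that over $p_{\{\alpha\}}^{-1}(I^\alpha_0\cup I^\alpha_{k_\alpha})$ (which contains neighbourhoods of the walls in direction $\alpha$) condition \ref{cond 3} for the index $\alpha$ provides enough submersivity to choose the lift with vanishing $x_\alpha$-component there, so that the drift in $x_\alpha$ is confined to a compact gap region and is therefore bounded. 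With that modification your argument goes through; without it, the load-bearing sentence is false for a general lift.
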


\begin{proof}
Since the map $p_{\{i\}}$ is submersive in $I^i_j$ and $I^i_{j'}$, we can apply the Morse lemma, which we recall in \ref{Morse lemma}, to $p_{\{i\}}$ twice to obtain diffeomorphisms
$$p_{\{i\}}^{-1}([u^i_j, u^i_{j'}])\longrightarrow p_{\{i\}}^{-1}([v^i_j, u^i_{j'}])\longrightarrow p_{\{i\}}^{-1}([v^i_j, v^i_{j'}]).$$
\end{proof}

Applying the proposition successively for $i=1,\ldots, n$ yields
\begin{cor}\label{cor composed bordisms}
Let $(M, \oul{I})\in (\PPBord_n^V)_{k_1,\ldots, k_n}$ and let $B_1, B_2 \subseteq \R^n$ be products of non-empty closed bounded intervals with endpoints lying in the same specified intervals, i.e.~$B_1=\prod_i [u^i_j, u^i_{j'}]$ and $B_2=\prod_i [v^i_j, v^i_{j'}]$, where $0\leq j\leq j'\leq k_i$ and $u^i_j, v^i_j \in I^i_j$ and $u^i_{j'}, v^i_{j'} \in I^i_{j'}$ such that $u^i_j< u^i_{j'}$ and $v^i_j <v^i_{j'}$ for every $1\leq i\leq n$. Then there is a diffeomorphism
$$\pi^{-1}(B_1) \longrightarrow \pi^{-1}(B_2).$$
\end{cor}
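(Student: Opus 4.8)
The plan is to deform the box $B_1$ into $B_2$ one coordinate at a time. Write $B_1=\prod_{i=1}^n[u^i_j,u^i_{j'}]$ and $B_2=\prod_{i=1}^n[v^i_j,v^i_{j'}]$, and for $0\le m\le n$ let $B^{(m)}$ be the box whose $i$-th factor is $[v^i_j,v^i_{j'}]$ for $i\le m$ and $[u^i_j,u^i_{j'}]$ for $i>m$, so that $B^{(0)}=B_1$ and $B^{(n)}=B_2$. It suffices to construct, for each $1\le m\le n$, a diffeomorphism $\Phi_m\colon\pi^{-1}(B^{(m-1)})\to\pi^{-1}(B^{(m)})$; the composite $\Phi_n\circ\cdots\circ\Phi_1$ then proves the corollary. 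Each $\pi^{-1}(B^{(m)})$ is a manifold with corners, being cut out of $M$ by inequalities of the form $p_{\{i\}}\ge\ast$ and $p_{\{i\}}\le\ast$ whose boundary loci lie in the $p_{\{i\}}^{-1}(I^i_{j_i})$, along which Condition \ref{cond 3} makes the relevant projections (jointly) submersive.

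Fix $m$, and for $l\ne m$ let $[w^l,w^{l\prime}]$ denote the $l$-th factor of $B^{(m-1)}$ (equivalently of $B^{(m)}$), so $w^l\in I^l_{j_l}$ and $w^{l\prime}\in I^l_{j'_l}$. As in the proof of the previous Proposition we move the collar in the $m$-th direction by flowing along a suitable vector field, following the Morse lemma recalled in \ref{Morse lemma}; the new point is that Condition \ref{cond 3} allows us to choose this vector field \emph{tangent to the walls} $p_{\{l\}}^{-1}(I^l_{j_l})$ and $p_{\{l\}}^{-1}(I^l_{j'_l})$ for every $l\ne m$, so that its flow preserves the constraints in all coordinates $\ne m$. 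Indeed, given $x\in p_{\{m\}}^{-1}(I^m_j)$, let $L\subseteq\{1,\dots,n\}\setminus\{m\}$ be the set of $l$ with $p_{\{l\}}(x)\in I^l_{j_l}\cup I^l_{j'_l}$, and set $l_0=\min(L\cup\{m\})$. Since $\{m\}\cup L\subseteq\{l_0,\dots,n\}$ and $p_{\{l_0,\dots,n\}}$ is submersive at $x$ by Condition \ref{cond 3}, there is a vector field near $x$ with $dp_{\{m\}}=1$ and $dp_{\{l\}}=0$ for all $l\in L$. Patching such local vector fields by a partition of unity over a neighbourhood of $p_{\{m\}}^{-1}(I^m_j)$ and multiplying by a cut-off depending only on the coordinate $p_{\{m\}}$ that equals $1$ near $I^m_j$, we obtain a global vector field on $M$ with $dp_{\{m\}}=1$ along $p_{\{m\}}^{-1}(I^m_j)$ that is tangent to all walls $p_{\{l\}}^{-1}(I^l_{j_l}),\,p_{\{l\}}^{-1}(I^l_{j'_l})$ with $l\ne m$. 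Its flow is confined to $\pi^{-1}$ of a fixed compact box inside $B(\oul{I})$ (the walls being preserved), hence is complete by properness of $\pi$. Doing the same near $p_{\{m\}}^{-1}(I^m_{j'})$ and interpolating (as in the Proposition, or equivalently moving both endpoints along linear paths inside $I^m_j$ and $I^m_{j'}$ simultaneously) yields a diffeomorphism $\Phi_m$ of $M$ carrying $p_{\{m\}}^{-1}([u^m_j,u^m_{j'}])$ onto $p_{\{m\}}^{-1}([v^m_j,v^m_{j'}])$ while fixing every leaf $p_{\{l\}}^{-1}(c)$ with $l\ne m$ and $c\in I^l_{j_l}\cup I^l_{j'_l}$; in particular $\Phi_m$ restricts to a diffeomorphism $\pi^{-1}(B^{(m-1)})\to\pi^{-1}(B^{(m)})$.

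The main obstacle is exactly this tangency requirement: moving the $m$-th collar must not perturb the constraints in the other coordinates, and the bare submersivity of $p_{\{m\}}$ used in the Proposition is not enough for this. What makes it work is the stronger assertion in Condition \ref{cond 3} that $p_{\{i,\dots,n\}}$ is submersive along $p_{\{i\}}^{-1}(I^i_{j_i})$; applied with $i=\min(L\cup\{m\})$ this records, inside a single submersive map, both the $m$-th coordinate and all other coordinates whose walls pass near the point in question, which is precisely the linear-algebraic freedom needed to make the vector field tangent to those walls while still translating in the $m$-th direction. The remaining ingredients---partitions of unity, completeness of the flows from properness of $\pi$, and checking smoothness as maps of manifolds with corners---are routine.
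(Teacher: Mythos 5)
Your argument is correct, and at the top level it follows the same route as the paper: deform one coordinate direction at a time and realize each step as the flow of a Morse-type vector field. But you have correctly put your finger on the step that the paper's one-sentence deduction (``applying the proposition successively for $i=1,\ldots,n$'') elides. The diffeomorphism $p_{\{i\}}^{-1}([u^i_j,u^i_{j'}])\to p_{\{i\}}^{-1}([v^i_j,v^i_{j'}])$ produced by the preceding Proposition comes from flowing along a cut-off of $\nabla p_{\{i\}}/|\nabla p_{\{i\}}|^2$, and this flow has no reason to preserve the level sets of $p_{\{l\}}$ for $l\neq i$ (the induced metric on an embedded $M$ does not make the restricted coordinate gradients orthogonal), so the successive composites do not automatically carry $\pi^{-1}(B_1)$ onto $\pi^{-1}(B_2)$. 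Your fix --- invoking submersivity of $p_{\{l_0,\dots,n\}}$ with $l_0=\min(L\cup\{m\})$ from Condition \ref{cond 3} to choose a field with $dp_{\{m\}}=1$ that is tangent to the walls in the other active directions --- is exactly the extra linear-algebraic input needed, and it is the static analogue of what the authors do in Step 4 of the proof of Theorem \ref{thm time Morse lemma}, where they need $\tfrac{\nabla p_i}{|\nabla p_i|^2}\,p_j=\delta_{ij}$ to combine flows in several directions; your route via joint submersivity is the more robust way to obtain this. Two small points to tidy: the set $L(x)$ is upper semicontinuous because the wall sets $p_{\{l\}}^{-1}(I^l_{j_l})$ are closed in $M$, which is what legitimizes the partition-of-unity patching; and completeness of the flow should be claimed only on $\pi^{-1}$ of the relevant compact box (which, as you note, is all that is used), since outside the walls the field may move the other coordinates and $p_{\{m\}}^{-1}(K)$ is not itself compact.
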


\subsection{The spaces \texorpdfstring{$(\PBord_n)_{k_1\ldots, k_n}$}{PBordn}}\label{sec space PBord}

The level sets $(\PPBord_n^V)_{k_1,\ldots, k_n}$ form the underlying sets of 0-simplices of spaces which we construct in this subsection. Ultimately, we want the space to encode the diffeomorphisms of $n$-fold bordisms which are the composition of $k_i$ bordisms in the $i$th direction. More precisely, it should be the disjoint union of classifying spaces thereof. It will only become apparent that the space we define is the desired one in \ref{sec Morse}, in particular Proposition \ref{prop BDiff}.

\subsubsection{The topological space \texorpdfstring{$(\TPBord_n^V)_{k_1,\ldots, k_n}$}{TPBord}}\label{def PBord top}

We endow the set $(\PPBord_n^V)_{k_1,\ldots, k_n}$ with the following topology coming from modifications of the Whitney $C^\infty$-topology on $\mathrm{Emb}(M, V\times (0,1)^n)$.

In \cite{galatius06}, spelled out in more detail in \cite{GRW}, a topology is constructed\footnote{\cite{galatius06, GRW} use the notation $\Psi\big(V\times (0,1)^n\big)= \mathrm{Sub}(V\times(0,1)^n)$.} on the set of closed (not necessarily compact) $n$-dimensional submanifolds $M \subseteq V\times (0,1)^n$, which we identify with the quotient
$$\mathrm{Sub}(V\times(0,1)^n) \overset{\simeq}{\longleftarrow} \bigsqcup_{[M]}\mathrm{Emb}(M, V\times (0,1)^n)/\mathrm{Diff}(M),$$
where the coproduct is taken over diffeomorphism classes of $n$-manifolds. It is given by defining the neighborhood basis at $M$ to be
$$\{N\subset V\times(0,1)^n : N\cap K = j(M) \cap K, j\in W\},$$
where $K\subset V\times(0,1)^n$ is compact and $W\subseteq \mathrm{Emb}(M,V\times(0,1)^n)$ is a neighborhood of the inclusion $M \hookrightarrow V\times(0,1)^n$ in the Whitney $C^\infty$-topology. Thus we obtain a topology on
$$\mathrm{Sub}(V\times(0,1)^n)\times \TInt^n_{k_1,\ldots, k_n},$$
where we view $\TInt^n_{k_1,\ldots, k_n}$ as a (topological) subspace of $\R^{2k}$ as in \ref{sec TInt}.

For an element $\oul{I}\in \TInt^n_{k_1,\ldots, k_n}$, recall from Definition \ref{defn box rescaling} the box rescaling map $\rho(\oul{I}): B(\oul{I}) \to (0,1)^n$. Then we identify an element $(M, \oul{I})\in (\TPBord_n^V)_{k_1,\ldots, k_n}$ whose underlying submanifold is the image of an embedding $\iota: M\hookrightarrow V\times B(\oul{I})$ with the element $\left([\rho(\oul{I})\circ\iota], \rho(\oul{I})\right)$ in the above space. This identification gives an inclusion
$$(\TPBord_n^V)_{k_1,\ldots, k_n} \subseteq \mathrm{Sub}(V\times(0,1)^n)\times \TInt^n_{k_1,\ldots, k_n},$$
which we use to topologize the left-hand side.

\subsubsection{The space \texorpdfstring{$(\PBord_n^V)_{k_1,\ldots, k_n}$}{PBordn}}

 To model the levels of the bordism category as spaces, i.e.~as Kan complexes, we can start with the above version as a topological space and take singular simplices of this topological space. However, {\em smooth maps} from a smooth manifold $X$ to $\mathrm{Sub}(V\times(0,1)^n)$ as defined in \cite[Definition 2.16, Lemma 2.17]{GRW} are easier to handle. By Lemma 2.18 in the same paper, every continuous map from a smooth manifold, in particular from $\eDelta{l}$, to $(\TPBord_n^V)_{k_1,\ldots, k_n}$ can be perturbed to a smooth one, so the homotopy type when considering smooth singular simplices does not change.

We could directly define the space $(\PBord_n^V)_{k_1,\ldots, k_n}$ to be the smooth singular space of $(\TPBord_n^V)_{k_1,\ldots, k_n}$. However, we will first give a very explicit description of it.
\begin{defn}\label{def PBord simplices}
An $l$-simplex of $(\PBord_n^V)_{k_1,\ldots, k_n}$ consists of tuples $(M, \oul{I}(s)= (I^i_0(s) \leq\cdots\leq I^i_{k_i}(s))_{s\in\eDelta{l}}$ such that
\begin{enumerate}
\item $\oul{I}=(I^i_0 \leq\cdots\leq I^i_{k_i})_{1\leq i \leq n} \to \eDelta{l}$ is an $l$-simplex  in $\Int^n_{k_1,\ldots, k_n}$,
\item $M$ is a closed and bounded $(n+l)$-dimensional submanifold of $V\times B(\oul{I}(s))_{s\in\eDelta{l}}\subset V\times \R^n\times \eDelta{l}$ such that\footnote{Recall that we view the total space of $B(\oul{I})\to\eDelta{l}$ as sitting inside $\R^n\times \eDelta{l}$ as $\bigcup_{s\in\eDelta{l}} B(\oul{I}(s)) \times \{s\}$.}
\begin{enumerate}
\item\label{cond simplex pi} the composition $\pi: M \hookrightarrow V\times B(\oul{I}(s))_{s\in\eDelta{l}} \twoheadrightarrow B(\oul{I}(s))_{s\in\eDelta{l}}$ of the inclusion with the projection is proper,
\item\label{cond simplex eDelta} its composition with the projection onto $\eDelta{l}$ is a submersion $M\to \eDelta{l}$ which is trivial outside $|\Delta^l|\subset \eDelta{l}$, and
\end{enumerate}
\item\label{cond simplex 3} for every $S\subseteq\{1,\ldots, n\}$, let $p_S:M\xrightarrow{\pi}B(\oul{I}(s))_{s\in\eDelta{l}} \subset \R^n \times \eDelta{l} \xrightarrow{\pi_S}\R^S \times \eDelta{l}$ be the composition of $\pi$ with the projection $\pi_S$ onto the $S$-coordinates. Then for every $1\leq i\leq n$ and $0\leq j_i\leq k_i$,  at every $x\in p_{\{i\}}^{-1}(\bigcup_{s\in\eDelta{l}} I^i_{j_i}(s) \times \{s\} )$, the map $p_{\{i,\ldots,n\}}$ is submersive.
\end{enumerate}
\end{defn}

From the Definition of smooth map in \cite[Definition 2.16, Lemma 2.17]{GRW} we immediately get:
\begin{lemma}
An $l$-simplex of $(\PBord_n^V)_{k_1,\ldots, k_n}$ is exactly a smooth $l$-simplex of $(\TPBord_n^V)_{k_1,\ldots, k_n}$.
\end{lemma}

\begin{rem}
Note that for $l=0$ we recover Definition \ref{def PBord}. Moreover, for every $s\in\eDelta{l}$ the fiber $M_s$ of $M\to\eDelta{l}$ determines an element in $(\PBord_n^V)_{k_1,\ldots, k_n}$
$$(M_s)=(M_s\subset V\times B(\oul{I}(s)), \oul{I}(s)).$$
We will use the notation $\pi_s: M_s\to B(\oul{I}(s))$ for the composition of the embedding and the projection.
\end{rem}

\begin{rem}\label{rem simplex trivial fiber bundle}
The conditions \eqref{cond simplex pi}, \eqref{cond simplex eDelta}, and \eqref{cond simplex 3} imply that $M\to \eDelta{l}$ is a smooth fiber bundle, and, since $\eDelta{l}$ is contractible, even a trivial fiber bundle. The proof is a more elaborate version of the argument after Definition 2.6 in \cite{GMTW}.
\end{rem}

We now use the simplicial maps of the space $\Int^{n}_{k_1,\ldots, k_n}$ to explain those of $(\PBord_n^V)_{k_1,\ldots, k_n}$.
\begin{defn}
Fix $k\geq 0$ and let $f:[m]\to [l]$ be a morphism in the simplex category $\Delta$, i.e.~a (weakly) order-preserving map. Then let $|f|: \eDelta{m} \to \eDelta{l}$ be the induced map between standard simplices.

Let $f^\Delta$ be the map sending an $l$-simplex in $(\PBord_n^V)_{k_1,\ldots, k_n}$ to the $m$-simplex which consists of
\begin{enumerate}
\item for $1\leq i\leq n$, the $m$-simplex in $\Int_{k_i}$ obtained by applying $f^\Delta$,
$$f^\Delta \Big( (I^i_0(s) \leq\cdots\leq I^i_{k_i}(s) )_{s\in \eDelta{l}} \Big) = \big(I_0(|f|(s)) \leq\ldots\leq I_k(|f|(s)\big)_{s\in \eDelta{m}};$$
\item The $(n+m)$-dimensional submanifold $f^\Delta M \subseteq V\times B(\oul{I}(s))_{s\in\eDelta{m}}$ obtained by the pullback of $M\to \eDelta{l}$ along $|f|$. Note that its fiber at $s\in \eDelta{m}$ is $(f^\Delta M)_s=M_{|f|(s)}$ and
$$f^\Delta M= \bigcup_{s\in\eDelta{m} } M_{|f|(s)}\times \{s\}.$$
\end{enumerate}
\end{defn}
The above assignment is indeed well-defined since the underlying assignment for the underlying intervals is well-defined and since the map $|f|$ is a submersion, the pullback of $M\to \eDelta{l}$ along $|f|$ is also a submersion. Moreover, the assignment is functorial, since pullback commutes contravariantly with composition, and thus $(\PBord_n^V)_{k_1,\ldots, k_n}$ is a simplicial set.

\begin{prop}\label{prop PBord Kan}
$(\PBord_n^V)_{k_1,\ldots, k_n}$ is the smooth singular space of $(\TPBord_n^V)_{k_1,\ldots, k_n}$. In particular, it is a space.
\end{prop}

\begin{proof}
By definition the simplicial maps $f^\Delta$ are induced precisely by the maps~{$|f|:\eDelta{m} \to \eDelta{l}$}.
\end{proof}

\begin{notation}
We denote the {\em spatial face and degeneracy maps} of $(\PBord_n^V)_{k_1,\ldots, k_n}$ by $d^\Delta_j$ and $s^\Delta_j$ for~$0\leq j \leq l$.
\end{notation}

\begin{ex}\label{ex cutoff path}
We now construct an example of a path. It shows that cutting off part of the collar of a bordism yields an element which is connected to the original one by a path.

Let $(M) = (M, \oul{I}=(I^i_0\leq\cdots\leq I^i_{k_i})_{i=1,\ldots, n})\in (\PBord_n^V)_{k_1,\ldots, k_n}$ and fix $1\leq i\leq n$.
We show that cutting off a short enough piece in the $i$th direction at an end of an element of $(\PBord_n^V)_{k_1,\ldots, k_n}$ leads to an element which is connected by a path to the original one. Fix $1\leq i \leq n$ and let $\varepsilon < b^i_0-a^i_0$.

Choose a smooth, increasing, bijective function $[0,1]\to [0,\varepsilon], s\mapsto \varepsilon(s)$ with vanishing derivative at the endpoints.

For $0\leq j\leq k_i$ and $s\in [0,1]\subset \eDelta{1}$ let
$$I^i_j(s) = (a^i_0+\varepsilon(s), b^i_{k_i}) \cap I^i_j,$$
and then $B(\oul{I}(s)) = (a^i_0+\varepsilon(s), b^i_{k_i}) \subset B(\oul{I})$. For $s\leq 0$ and $s\geq 1$ let the family be constant.
Then let $M(\varepsilon)$ be the preimage of the subset $\bigcup_{s\in\eDelta{1}}B(\oul{I}(s))\times\{s\} \subseteq B(\oul{I})\times \eDelta{1}$ of $M\times \eDelta{1} \to B(\oul{I})\times \eDelta{1}$, i.e.~the submanifold
$$
\begin{tikzcd}
M(\varepsilon) \arrow[hook]{r}\arrow{d} & M\times \eDelta{1}\arrow{d} \\ 
\bigcup_{s\in\eDelta{1}}B(\oul{I}(s))\times\{s\} \arrow[hook]{r}&B(\oul{I})\times \eDelta{1}
\end{tikzcd}
$$

Then $(M(\varepsilon), \oul{I}(s) )$ is a 1-simplex in $(\PBord_n^V)_{k_1,\ldots, k_n}$ with fibers $M(\varepsilon)_s = p_{\{i\}}^{-1}\big((a^i_0+\varepsilon(s), b^i_k)\big)$.
\end{ex}

\begin{rem}\label{rem cutoff}
In the above example we constructed a path from an element in $(\PBord_n^V)_{k_1,\ldots, k_n}$ to its {\em cutoff}, where we cut off the preimage of $p_i^{-1}((a^i_0,\varepsilon])$ for suitably small $\varepsilon$. Note that the same argument holds for cutting off the preimage of $p_i^{-1}([b^i_{k_i}-\delta, b^i_{k_i}))$ for suitably small $\delta$. Moreover, we can iterate the process and cut off $\varepsilon_i, \delta_i$ strips in all $i$ directions. Choosing $\varepsilon_i = \frac{b^i_0-a^i_0}{2}, \delta_i=\frac{b^i_{k_i}-a^i_{k_i}}{2}$ yields a path to its {\em cutoff} with underlying submanifold
$$cut(M) = \pi^{-1}\Big(\prod_{i=1}^n (\frac{a^i_0+b^i_0}{2}, \frac{a^i_{k_i}+b^i_{k_i}}{2})\Big).$$
\end{rem}

\subsection{The \texorpdfstring{$n$}{n}-fold simplicial space \texorpdfstring{$(\PBord_n)_{\bullet,\cdots,\bullet}$}{PBordn}}\label{sec simp space PBord}

We make the collection of spaces $(\PBord_n^V)_{\bullet,\ldots,\bullet}$ into an $n$-fold simplicial space by lifting the simplicial structure of $\Int^{\times n}_\bullets$.
We first need to extend the assignment
$$([k_1], \ldots , [k_n]) \longmapsto (\PBord_n^V)_{k_1,\ldots,k_n}$$
to a functor from $(\Delta^{op})^n$.

\begin{defn}
For every $1\leq i\leq n$, let $g_i:[m_i]\to [k_i]$ be a morphism in $\Delta$, and denote by $g=( g_i )_i$ their product in $\Delta^n$. Then
$$ (\PBord_n^V)_{k_1,\ldots ,k_n}  \overset{g^*}{\longrightarrow} (\PBord_n^V)_{m_1,\ldots ,m_n}.
$$
applies $g_i^*$ to the $i$th tuple of intervals and perhaps cuts the manifold. Explicitly, on $l$-simplices, $g^*$ sends an element
$$(M\subset V\times B(\oul{I}(s))_{s\in\eDelta{l}}, \oul{I}(s)= (I^i_0(s)\leq\dots\leq I^i_{k_i}(s))_{i=1}^n)$$
to
$$
\big(g^*M=\pi^{-1}\big(B(g^*\oul{I}(s))_{s\in\eDelta{l}})\big)\subset V \times B(\oul{I}(s))_{s\in\eDelta{l}},
g^*(\oul{I})(s) = (I^i_{g(0)}(s)\leq\dots\leq I^i_{g(m_i)}(s))_{i=1}^n) \big),
$$
where $\pi: M\subset V\times B(\oul{I}(s))_{s\in\eDelta{l}} \twoheadrightarrow B(\oul{I}(s))_{s\in\eDelta{l}}$. Note that $(g^*M)_s = g^*M_s$.

\end{defn}
Note that as the manifold $g^*M$ is the preimage of the new box, we just cut off the part of the manifold outside the new box. This is functorial, as it is functorial on the intervals, and, if $\tilde g_i:[k_i]\to [\tilde k_i]$ and $\tilde g=( \tilde g_i )_i$, the following diagram commutes by construction:
$$\begin{tikzcd}
M \arrow{d}{\pi} \arrow[phantom]{r}{\supseteq} &  g^* M\arrow{d}{\pi} \arrow[phantom]{r}{\supseteq} &  \tilde g^*g^* M\arrow{d}{\pi}\\
B(\oul{I}(s))_{s\in\eDelta{l}} \arrow[phantom]{r}{\supseteq}& B(g^*(\oul{I}(s))_{s\in\eDelta{l}} \arrow[phantom]{r}{\supseteq}& B(\tilde g^*g^*(\oul{I}(s))_{s\in\eDelta{l}}
\end{tikzcd}$$

\begin{notation}
We denote the {\em (simplicial) face and degeneracy maps} by $d^i_j:(\PBord_n^V)_{k_1,\ldots,k_n}\to (\PBord_n^V)_{k_1,\ldots, k_i-1,\ldots, k_n}$ and $s^i_j:(\PBord_n^V)_{k_1,\ldots,k_n}\to (\PBord_n^V)_{k_1,\ldots, k_i+1,\ldots, k_n}$ for $0\leq j \leq k_i$.
\end{notation}

\begin{notation}\label{rem composed bordisms 3}
Recall from remark \ref{rem composed bordisms} that for $k_1,\ldots, k_n\geq0$, one should think a 0-simplex in $(\PBord_n^V)_{k_1,\ldots,k_n}$ as a collection of $k_1\cdots k_n$ composed bordisms with $k_i$ composed bordisms with collars in the $i$th direction. These composed collared bordisms are the images under the maps
$$D(j_1,\ldots, j_k): (\PBord_n^V)_{k_1,\ldots,k_n} \longrightarrow (\PBord_n^V)_{1,\ldots,1}$$
for $(1\leq j_i \leq k_i)_{1\leq i\leq n}$ arising as compositions of inert face maps, i.e.~$D(j_1,\ldots, j_k)$ is the map determined by the maps
$$d(j_i): [1]\to [k_i], \quad (0<1)\mapsto (j_i-1<j_i)$$
in the category $\Delta$. This should be thought of as sending an element to the $(j_1,\ldots, j_k)$-th collared bordism in the composition. Moreover, we will later use the notation
$$D^i(j_i): (\PBord_n^V)_{k_1,\ldots,k_n} \longrightarrow (\PBord_n^V)_{k_1,\ldots,1,\ldots, k_n}$$
for the maps induced by just $d(j_i)$. By abuse of notation, we will denote the submanifold $d(j_i)^*M$ by $D^i(j_i)(M)$.
\end{notation}

\begin{prop}
The spatial and simplicial structures of $(\PBord_n^V)_\bullets$ are compatible, i.e.~for $f:[l]\to[p]$, $g_i:[m_i]\to [k_i]$ for $1\leq i \leq n$, the induced maps
$$f^\Delta\mbox{ and } g^*$$
commute. We thus obtain an $n$-fold simplicial space $(\PBord_n^V)_{\bullet,\cdots,\bullet}$.
\end{prop}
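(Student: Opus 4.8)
The plan is to verify the identity $f^\Delta\circ g^*=g^*\circ f^\Delta$ by splitting an element $(M,\oul{I})\in(\PBord_n)_\bullets$ into its two constituents — the tuple $\oul{I}$ of interval bundles and the embedded manifold $M$ — and checking compatibility on each separately; the remaining ingredients of the $n$-fold simplicial identities (functoriality of $f\mapsto f^\Delta$, of $g\mapsto g^*$, and commutation of $g_i^*$ with $g_{i'}^*$ for $i\neq i'$) are either already recorded above or immediate.

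On the interval data there is nothing new to prove: $\Int^n$ is an $n$-fold simplicial space, and on it $f^\Delta$ is precomposition in the $\eDelta{\bullet}$-variable while $g^*$ is the reindexing $(I^i_0\le\cdots\le I^i_{k_i})\mapsto(I^i_{g_i(0)}\le\cdots\le I^i_{g_i(m_i)})$, so these act on independent variables and commute. The same observation also pins down the two ambient box bundles: the boxing map $B$ commutes with precomposition $f^\Delta$, and it carries $g^*\oul{I}$ to the sub-box bundle whose fibers are $(a^i_{g_i(0)},b^i_{g_i(m_i)})\subseteq B(\oul{I})$, so $B(f^\Delta g^*\oul{I})$ and $B(g^* f^\Delta\oul{I})$ are literally the same sub-bundle of $B(\oul{I})\to\eDelta{m}$. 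Getting this matching right matters, since it is what lets the next step compare the two candidate manifolds inside a single ambient space.

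For $M$ I would compare fibers over $\eDelta{m}$, using the two facts already noted in the definitions of $f^\Delta$ and $g^*$ on $l$-simplices: $(f^\Delta N)_s=N_{|f|(s)}$ (because $f^\Delta$ is the pullback along $|f|\colon\eDelta{m}\to\eDelta{l}$) and $(g^*N)_s=g^*(N_s)=\pi_s^{-1}\big(B(g^*\oul{I}(s))\big)$ (because $g^*$ passes to the preimage under $\pi$ of the sub-box bundle). Both composites then have the same fiber $g^*(M_{|f|(s)})=\pi_{|f|(s)}^{-1}\big(B(g^*\oul{I}(|f|(s)))\big)$ over each $s$, sitting inside the ambient bundle identified in the previous paragraph; since such a manifold is, by Remark \ref{rem simplex trivial fiber bundle}, a trivial bundle over $\eDelta{m}$ and hence determined by its fibers, the two composites coincide. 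More abstractly, this is just the statement that pullback along $|f|$ commutes with restriction to the preimage of a sub-bundle, both operations being pullbacks.

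Finally, $g_i^*$ and $g_{i'}^*$ for $i\neq i'$ commute because on intervals they act on distinct tensor factors of $\Int^n=\Int^{\times n}$ and on $M$ they take preimages of sub-boxes in distinct coordinate directions, and such preimage operations commute. Combined with the functoriality of $f\mapsto f^\Delta$ established in Proposition \ref{prop PBord Kan} and the functoriality of $g\mapsto g^*$, this assembles $(\PBord_n)_{\bullet,\ldots,\bullet}$ into an $n$-fold simplicial space. I do not expect a genuine obstacle here: the whole argument is bookkeeping, and the only delicate point is tracking the boxing map and the ambient box bundles carefully enough that the fiberwise comparison of manifolds takes place inside one and the same ambient space.
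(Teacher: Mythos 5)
Your proposal is correct and matches the paper's argument: the paper likewise reduces to the manifold part (since $\Int^n$ is already an $n$-fold simplicial space) and then identifies $g^*f^\Delta M\cong f^\Delta g^* M$ via a commuting cube of pullback squares, which is exactly your closing observation that pullback along $|f|$ commutes with restriction to the preimage of a sub-box bundle. One cosmetic remark: your appeal to the trivial-fiber-bundle remark is unnecessary for the fiberwise comparison --- both composites are subsets of the same ambient space $V\times B(g^*\oul{I}(|f|(s)))_{s\in\eDelta{m}}\times\eDelta{m}$ and hence are literally the unions of their fibers, so agreeing fiberwise already forces them to coincide.
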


\begin{proof}
Since $\Int^n$ is a simplicial space, it is enough to show that the maps commute on the manifold part, i.e.~
$$g^*f^\Delta M= f^\Delta g^* M.$$
This follows from the commuting of the following diagram, in which all sides arise from taking preimages. The preimages are taken over $B(g^*\oul{I}(s))_{s\in\eDelta{m}}\subset B(\oul{I}(s))_{s\in\eDelta{m}}$ and $|f|:\eDelta{m}\to \eDelta{l}$, respectively, which affect different components of $V\times \bigcup_{s\in\eDelta{m}} (B(\oul{I}(s)) \times\{s\} ) \subset V\times \R^n \times \eDelta{m}$, so they commute.
$$\begin{tikzcd}[row sep=scriptsize, column sep=scriptsize]
& V\times B(\oul{I}(s))_{s\in\eDelta{m}} \arrow{dd}[near start]{id\times|f|} \arrow[hookleftarrow]{rr} && V\times B(g^*\oul{I}(s))_{s\in\eDelta{m}} \arrow{dd}[near start]{id\times|f|}\\
f^\Delta M \arrow[hook, dashed, green!70!black]{ur} \arrow[dashed, green!70!black]{dd} & & g^*f^\Delta M = f^\Delta g^*M \arrow[densely dashdotdotted, hook, red]{ur} \arrow[densely dashdotdotted, crossing over, red]{ll} &\\
& V\times B(\oul{I}(s))_{s\in\eDelta{l}} \arrow[hookleftarrow]{rr}&& V\times B(g^*\oul{I}(s))_{s\in\eDelta{l}} \\
M \arrow[hook]{ur} && g^*M \arrow[hook, dashdotted, blue]{ur} \arrow[dashdotted, blue]{ll} \arrow[leftarrow, densely dashdotdotted, crossing over, red]{uu} &
\end{tikzcd}$$
\end{proof}

\subsection{The complete \texorpdfstring{$n$}{n}-fold Segal space \texorpdfstring{$\Bord_n$}{Bordn}}

We will now prove that $\PBord_n^V$ leads to an $(\infty,n)$-category, i.e.~a complete $n$-fold Segal space of bordisms. 
\begin{prop}\label{prop PBord Ssp}
$(\PBord_n^V)_{\bullet,\ldots,\bullet}$ is an $n$-fold Segal space.
\end{prop}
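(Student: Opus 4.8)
The plan is to verify the two defining conditions of an $n$-fold Segal space (Definition \ref{defn n-fold SSp}): first the Segal condition in each simplicial direction, and then essential constancy. I would work one direction at a time; by symmetry of the construction it suffices to treat, say, the first direction, keeping the remaining indices $k_2,\dots,k_n$ (and their intervals) fixed and suppressed from the notation, exactly as in the proof of Lemma \ref{lemma source fibration}.

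\emph{Segal condition.} Fix $m,p\ge 0$ and consider the map
$$(\PBord_n)_{m+p,\bullet,\dots,\bullet}\longrightarrow (\PBord_n)_{m,\bullet,\dots,\bullet}\underset{(\PBord_n)_{0,\bullet,\dots,\bullet}}{\overset{h}{\times}}(\PBord_n)_{p,\bullet,\dots,\bullet}.$$
Since by Lemma \ref{lemma source fibration} the relevant face maps are fibrations, the homotopy pullback may be replaced by the strict pullback, and it suffices to show the strict comparison map is an equivalence. The point is that an element of the right-hand side is a pair of manifolds agreeing (after the appropriate face maps) over the common interval datum in degree $0$; because the underlying interval data already glue (the Segal condition for $\Int^n$, established in the lemma following Definition \ref{defn box rescaling}, applies levelwise), the only content is to glue the two submanifolds along the overlap $\pi^{-1}$ of the common box. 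Here condition \ref{cond 3} of Definition \ref{def PBord} (submersivity of $p_{\{1,\dots,n\}}$ over the specified intervals $I^1_{j_1}$, in particular over the shared interval) is what makes this gluing canonical and smooth: over a collar where the projection to the first coordinate is a submersion the two pieces are literally equal as submanifolds, so union produces a well-defined closed bounded submanifold satisfying all three conditions, and this is inverse to the restriction map up to the natural homotopies coming from the fibration replacement. One runs the same argument on $l$-simplices using Remark \ref{rem simplex trivial fiber bundle} (each $M\to\eDelta{l}$ is a trivial fiber bundle) so the gluing is compatible with the spatial structure; I would also invoke Remark \ref{rem Segal g} to reduce, if convenient, to the iterated fiber product of copies of $(\PBord_n)_{1,\bullet,\dots,\bullet}$.

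\emph{Essential constancy.} I must show that for each $i$ and each $k_1,\dots,k_{i-1}\ge 0$ the $(n-i+1)$-fold simplicial space $(\PBord_n)_{k_1,\dots,k_{i-1},0,\bullet,\dots,\bullet}$ is essentially constant. When the $i$th index is $0$, the corresponding tuple of intervals is a single interval $I^i_0\in\Int_0$, the box $B(\oul I)$ has its $i$th factor equal to $(a^i_0,b^i_0)$, and condition \ref{cond 3} forces $p_{\{i,\dots,n\}}$ to be submersive over all of $I^i_0=(a^i_0,b^i_0)$, i.e.\ over the whole $i$th direction. This is exactly the situation where a manifold carries no genuine bordism data in the $i$th direction; the cutoff path of Example \ref{ex cutoff path} / Remark \ref{rem cutoff} shrinks the $i$th interval and, combined with the trivialization of $M\to\eDelta{l}$ over the submersive direction (Morse-lemma/flow argument, as in Corollary \ref{cor composed bordisms} and Lemma \ref{lemma source fibration}), exhibits a deformation retraction identifying $(\PBord_n)_{k_1,\dots,k_{i-1},0,\bullet,\dots,\bullet}$ with the constant simplicial object on its value at all-zero indices. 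Concretely I would produce an explicit weak equivalence $Y\to (\PBord_n)_{k_1,\dots,k_{i-1},0,\bullet,\dots,\bullet}$ from the constant object $Y$ on $(\PBord_n)_{k_1,\dots,k_{i-1},0,0,\dots,0}$, using the flow along the vector field lifting $\partial/\partial x_i$ to spread a single fiber out over any tuple of nested intervals, and the cutoff homotopies to see this is a homotopy inverse to the evident (degeneracy-induced) map in the other direction.

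\emph{Main obstacle.} The Segal condition itself is essentially formal once the fibration replacement (Lemma \ref{lemma source fibration}) and the smooth gluing over submersive collars are in hand; the real work is the essential constancy statement, specifically checking that the flow construction is natural in the simplicial variables $\bullet,\dots,\bullet$ and compatible with the spatial ($l$-simplex) structure, so that one genuinely gets a map of $(n-i+1)$-fold simplicial spaces and not merely a levelwise equivalence. Making the choices of vector fields / partitions of unity sufficiently canonical — or arguing that any two choices give homotopic maps, which suffices for a weak equivalence — and passing to the colimit over $V\subset\R^\infty$ without disturbing these homotopies, is where I expect the bookkeeping to be delicate.
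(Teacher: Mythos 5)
Your treatment of the Segal condition matches the paper's: reduce to the strict fiber product via Lemma \ref{lemma source fibration}, observe that a point of the strict pullback is a pair of submanifolds that literally coincide over the shared interval, and glue (the paper even notes the strict Segal map is a homeomorphism); the extension to $l$-simplices is as you describe. So that half is fine.

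For essential constancy you have the right key observation --- when the $i$th index is $0$, condition \eqref{cond 3} forces $p_{\{i,\ldots,n\}}$ to be submersive on \emph{all} of $M=p_{\{i\}}^{-1}(I^i_0)$ --- but you then route the argument through cutoff paths, Morse-lemma flows, and a vector field lifting $\partial/\partial x_i$, and you correctly flag that making such a flow natural in the remaining simplicial indices, compatible with the spatial structure, and stable under the colimit over $V$ is delicate. That difficulty is real for the route you propose, but it is entirely avoidable: the paper's deformation retraction never touches the manifold at all. It only rescales the interval data, sending $I^\alpha_j(s)$ for $\alpha>i$ linearly toward the full box via
\begin{align*}
a^\alpha_j(s,t)&=(1-t)\,a^\alpha_j(s)+t\,a^\alpha_0(s),\\
b^\alpha_j(s,t)&=(1-t)\,b^\alpha_j(s)+t\,b^\alpha_{k_\alpha}(s),
\end{align*}
so that at $t=1$ every $I^\alpha_j$ equals $(a^\alpha_0(s),b^\alpha_{k_\alpha}(s))$ and the element visibly lies in the image of the iterated degeneracies from $(\PBord_n)_{k_1,\ldots,k_{i-1},0,0,\ldots,0}$. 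The only thing to check is that the enlarged intervals still satisfy condition \eqref{cond simplex 3}, and this is exactly where your key observation is used: since $p_{\{i,\ldots,n\}}$ is already submersive on all of $M$, so is $p_{\{\alpha,\ldots,n\}}$ for every $\alpha>i$, over any interval whatsoever. Because the homotopy is given by an explicit affine formula on endpoints and is the identity on $M$, compatibility with the spatial ($l$-simplex) structure, with the other simplicial directions, and with the colimit over $V$ is immediate --- all the bookkeeping you identify as the main obstacle simply disappears. I would therefore replace your flow-based sketch for essential constancy with this interval-widening homotopy; as written, that part of your plan is not yet a proof.
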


\begin{proof} We need to prove that the Segal condition is satisfied and globularity.
\paragraph{The Segal condition is satisfied.} 
Fix fixed $k_1,\ldots, k_n\geq 0$. We need to show that for every $1\leq i\leq n$, and $k_i=m+l$, the Segal map
$$\gamma_{m,l}: (\PBord_n^V)_{k_1,\ldots, k_i,\ldots ,k_{n}}\overset{}{\longrightarrow} (\PBord_n^V)_{k_1,\ldots, m,\ldots, k_n}\overset{h}{\underset{(\PBord_n^V)_{k_1,\ldots, 0,\ldots, k_n}}{\times}} (\PBord_n^V)_{k_1,\ldots, l,\ldots, k_n}$$
is a weak equivalence. From now on we will often omit writing out the indices for $\alpha\neq i$ for clarity.

Since every level set $(\PBord_n^V)_{k_1,\ldots, k_n}$ is a Kan complex by proposition \ref{prop PBord Kan}, i.e.~fibrant, the homotopy fiber product on the right hand side can be chosen to be the space of triples consisting of two points and a path between their target and source, respectively.

Note that an element in this space is given by a triple consisting of
$$\begin{array}{l}
(M, \oul{I}) = (\iota : M \subset V\times B(\oul{I}), \oul{I}=\left(I^i_0\leq \cdots\leq I^i_m, I^j_0\leq \cdots\leq I^j_{k_j}\right)_{1\leq j\leq n, j\neq i}),\\
(N, \oul{J}) = (\kappa : N \subset V\times B(\oul{J}), \oul{J}=\left(J^i_0\leq \cdots\leq J^i_l, J^j_0\leq \cdots\leq J^j_{k_j}\right)_{1\leq j\leq n, j\neq i}),
\end{array}
$$
together with a path $h$ from the target $D^i(m)(M, \oul{I}) =\left(D^i(m)(M), I^i_m, (I^j_0\leq \cdots\leq I^j_{k_j})_{1\leq j\leq n, j\neq i}\right)$ of $(M,\oul{I})$ in the $i$th direction to the source $D^i(1)(N,\oul{J}) = \left(D^i(1)(N), J^i_0, (J^j_0\leq \cdots\leq J^j_{k_j})_{1\leq j\leq n, j\neq i}\right)$ of $(N,\oul{J})$ in the $i$th direction (using Notation \ref{rem composed bordisms 3}).

The Segal map $\gamma_{m,l}$ factors as a composition
\begin{equation}\label{eqn Segal factors}
\begin{tikzcd}
(\PBord_n^V)_{k_i} \arrow{r}{\gamma_{m,l}} \arrow{d} & (\PBord_n^V)_{m}\overset{h}{\underset{(\PBord_n^V)_{0}}{\times}} (\PBord_n^V)_{l} \\
(\PBord_n^V)^{m,l} \arrow[hook]{r} & \mathrm{P}^{m,l}_{k_i}, \arrow[hook]{u}
\end{tikzcd}
\end{equation}
as follows: Informally, the lower right hand corner is the subspace of triples for which, for the directions besides the $i$th, the tuples of intervals agree and the path of intervals is constant. The lower left hand corner is the subspace thereof, for which in addition in the $i$th direction $I^i_m = J^i_0$, and along the path this interval stays constant.
We will define these spaces below. Our strategy to prove that $\gamma_{m,l}$ is a weak equivalence is to show that all three maps are weak equivalences. Here the left vertical map is the main step of the proof -- this is where ``composing'' the bordisms happens, as we will see below. That the bottom and right vertical map are weak equivalences follows from a rescaling procedure.
Let us first define the two spaces in question.

For the lower right hand corner, for $1\leq j\leq n$ and $j\neq i$, consider the $j$th forgetful map
$$\PBord_n^V \longrightarrow \Int, \quad (M,\oul{I})\longmapsto \ul{I}^j.$$
The canonical maps from the pullback to the homotopy pullback $\Int_\bullet \cong \Int_\bullet \overunder[\Int_\bullet]{}{\times} \Int_\bullet \to \Int_\bullet \overunder[\Int_\bullet]{h}{\times} \Int_\bullet$ (which is a weak equivalence since a deformation retract is straightforward to write down and rescales the second tuple of intervals) for varying $j$ induce a (strict) pullback square
$$
\begin{tikzcd}
(\PBord_n^V)_{\bullets,m,\bullets}\overset{h}{\underset{(\PBord_n^V)_{\bullets,0,\bullets}}{\times}} (\PBord_n^V)_{\bullets,l,\bullets}  \arrow{r} & \Int_\bullets^{\times (n-1)} \overunder[\Int_\bullets^{\times (n-1)}]{h}{\times} \Int_\bullets^{\times (n-1)} \\
P^{m,l}_\bullets \arrow{r} \arrow{u} & \Int_\bullets^{\times (n-1)}. \arrow{u}{\simeq}
\end{tikzcd}
$$
The strict pullback of this diagram consists of exactly those pairs whose $j$th tuples of intervals agree for every $j\neq i$, and is constant along the path (but the embedded manifold can still vary).\footnote{Note that since the right vertical map is a weak equivalence, if the diagram were also a homotopy pullback diagram, we would immediately see that the left vertical map is a weak equivalence as well. However, neither map in the diagram is a fibration (or not even a ``sharp map'' \`a la Rezk \cite{RezkSharp}), so we need to find a different strategy.}

For the lower left hand corner, consider the canonical map $\Int_m\overunder[\Int_0]{}{\times} \Int_l \to \Int_m\overunder[\Int_0]{h}{\times} \Int_l$ (which is a weak equivalence since both sides are contractible). Now form the (strict) pullback
$$
\begin{tikzcd}
P^{m,l}_{\bullets}  \arrow{r} & \Int_m\overunder[\Int_0]{h}{\times} \Int_l \\
(\PBord_n^V)_\bullets^{m,l}  \arrow{r} \arrow{u} & \Int_m\overunder[\Int_0]{}{\times} \Int_l. \arrow{u}{\simeq}
\end{tikzcd}
$$
It consists of exactly those pairs whose $j$th tuples of intervals agree for every $j\neq i$ and is constant along the path (but the embedded manifold can still vary), and, in addition, in the $i$th direction, the last interval of the first element is the first interval of the second element.\footnote{Again, the right vertical map is a weak equivalence, and it would be more convenient to take the homotopy pullback. However, the same problem appears as in the previous situation.}

{\em The left vertical map in \eqref{eqn Segal factors} is a weak equivalence:} We first fix once and for all a ``smoothed diagonal'' $D\subset [0,1]^2$: it is the graph of a map $\varsigma:[0,1]\to[0,1]$, which has vanishing derivative in $[0,\frac13]$ and $[\frac23, 1]$ (we could also chose fixed shorter intervals) and is bijective with smooth inverse in $[\frac13, \frac23]$, for example
$$\begin{tikzpicture}[scale=3, inner sep = 0]
\draw (0,0) node (A) {} rectangle (1,1) node (D) {};
\draw[dashed] (0.333, 0) node (B) {} node[anchor=north] {$\frac13$} -- (0.333, 1)
(0.666, 0) node[anchor=north] {$\frac23$} -- (0.666, 1) node (C) {};
\draw[line width=2pt,color=red] (0,0) -- (0.333,0) .. controls (0.55,0) and (0.45,1) .. node[anchor=west] {$D$} (0.666,1) -- (1,1);
\end{tikzpicture}
$$

We will use this to define a deformation retract of $\gamma^{m,l}$ which we suggestively call $glue$. The homotopy exhibiting the deformation retract will use the following two modified functions for $\tau\in[0,1]$. Let
$$\varsigma^s_\tau = \tau \cdot \varsigma \quad \text{and} \quad \varsigma^t_\tau =  1 + \tau \cdot (\varsigma -1).$$
Then for $\tau=1$ we have that $\varsigma = \varsigma^s_1 = \varsigma^t_1$, and for $\tau =0$ we have $\varsigma^s_0 = 0$ and $\varsigma^t_0 = 1$. Moreover, for every $\tau$, both $\varsigma^s_\tau$ and $\varsigma^t_\tau$ are smooth and bijective onto its image. These give ``flatter'' diagonals $D_{s,\tau}$ and $D_{t, \tau}$.

$$\begin{tikzpicture}[scale=3, inner sep = 0]
\draw (0,0) node (A) {} rectangle (1,1) node (D) {};
\draw[dashed] (0.333, 0) node (B) {} node[anchor=north] {$\frac13$} -- (0.333, 1)
(0.666, 0) node[anchor=north] {$\frac23$} -- (0.666, 1) node (C) {};
\draw[line width=2pt,color=red] (0,0) -- (0.333,0) .. controls (0.55,0) and (0.45,0.666) .. node[anchor=west] {$D_{s,\tau}$} (0.666,0.666) -- (1,0.666);
\draw[line width=2pt,color=red] (0,0.333) -- (0.333,0.333) .. controls (0.55,0.333) and (0.45,1) .. node[anchor=east] {$D_{t, \tau}$} (0.666,1) -- (1,1);
\draw (0.5, -0.3) node {$\tau = \frac13$};

\begin{scope}[xshift = 2cm]
\draw (0,0) node (A) {} rectangle (1,1) node (D) {};
\draw[dashed] (0.333, 0) node (B) {} node[anchor=north] {$\frac13$} -- (0.333, 1)
(0.666, 0) node[anchor=north] {$\frac23$} -- (0.666, 1) node (C) {};
\draw[line width=2pt,color=red] (0,0) -- (0.333,0) .. controls (0.55,0) and (0.45,0.333) .. node[anchor=west] {$D_{s,\tau}$} (0.666,0.333) -- (1,0.333);
\draw[line width=2pt,color=red] (0,0.666) -- (0.333,0.666) .. controls (0.55,0.666) and (0.45,1) .. node[anchor=east] {$D_{t, \tau}$} (0.666,1) -- (1,1);
\draw (0.5, -0.3) node {$\tau = \frac23$};
\end{scope}
\end{tikzpicture}
$$

Recall from above that an element in $(\PBord_n^V)_\bullets^{m,l}$ is given by a pair $(M, \oul{I})$ and $(N,\oul{J})$ and a path $h$ from the target of the former to the source of the latter, along which the interval is constant.
We will use this path $h$ to glue the embedded manifolds $M$ and $N$. A similar argument works for $l$-simplices in $(\PBord_n^V)_\bullets^{m,l}$.

The 1-simplex $h$ by definition is a submanifold $P$ of\footnote{Actually, of $V\times(c,b)\times B((I^j_0\leq \cdots\leq I^j_{k_j})_{1\leq j\leq n, j\neq i})\times \eDelta{1}.$} $V\times(c,b)\times \eDelta{1}$ such that the composition with the projection $\pi_{\{i\}}: P\to (c,b)\times \eDelta{1}$ is a submersion. We rescale the fixed smoothed diagonal $D$ linearly to obtain a smooth diagonal $D^{c,b}$ in $(c,b)\times \eDelta{1}$.

Consider the preimage $P_{diag}$ of $\pi_{\{i\}}$ of $D^{c,b}$.
Since the projection $\pi_{\{i\}}: P \to (c,b)\times \eDelta{1}$ is submersive, a Morse lemma style argument shows that this preimage $P_{diag}$ is diffeomorphic to both $D(m)(M)$ and $D(1)(N)$. Thus we glue the manifolds $M$ and $N$ over $P_{diag}$ to obtain $M\cup_{P_{diag}} N$. We realize it as a submanifold of $V\times \R \times (a,d)$ by using
\begin{itemize}
\item $M \cong M\times \{0\} \subset V\times \{0\} \times (a,b) \subset V\times \R \times (a,d)$,
\item $N \cong N\times \{1\} \subset V\times \{1\} \times (c,d) \subset V\times \R \times (a,d)$,
\end{itemize}
and, using the coordinate in $\eDelta{1}\cong \R$,
\begin{itemize}
\item $P_{diag} \subset V\times \R \times (c,b) \subset V\times \R \times (a,d)$.
\end{itemize}
However, note that the extra copy of $\R$ introduced above is not necessary: let
$$\bar{D} = (\{0\} \times (a,c]) \cup D^{c,b} \cup (\{1\} \times [b,d) )\subset \R\times (a,d).$$
Then the projection onto the second coordinate induces a diffeomorphism $\bar{D}\cong (a,d)$. Thus, composing the embedding of the submanifold into $V\times \R \times (a,d)$ with the projection onto $V\times (a,d)$ still is an embedding:
$$M\cup_{P_{diag}} N  \hookrightarrow  V \times (a,d).$$

The same construction works for $l$-simplices: the same argument goes through with $(M, \oul{I})$ and $(N, \oul{J})$ now being $l$-simplices, and thus submanifolds of $V\times (a,b)\times \eDelta{l}$ and $V\times (c,d)\times \eDelta{l}$, respectively, and $P$ a submanifold of $V\times (c,b)\times \eDelta{l+1}$.
Moreover, since the shape $D$ was chosen once and for all, this construction commutes with the spatial structure and indeed gives a map of spaces
$$glue:  (\PBord_n^V)_\bullets^{m,l} \longrightarrow (\PBord_n^V)_{\bullets, k_i, \bullets}.$$
We claim that this is a deformation retract of $\gamma^{m,l}$: Indeed, $glue \circ \gamma^{m,l}$ is the identity, since the path between the source and target in the image of $\gamma^{m,l}$ is constant. As for the other composition $\gamma^{m,l}\circ glue$, this sends a pair of elements (or $l$-simplices) $(M, \oul{I})$ and $(N,\oul{J})$ together with a path $h$ from the target to the source to a pair $(\tilde M, \oul{I})$ and $(\tilde N,\oul{J})$ which is {\em not} the original one (In fact, the latter pair has a constant path $\tilde h$). However, there is a homotopy from $\gamma^{m,l}\circ glue$ to the identity as follows: for $\tau\in[0,1]$, send $(M, \oul{I}), (N,\oul{J}), h$ to the the following pair: modify the above construction by using $D_{s, \tau}$ and $D_{t, \tau}$ instead to obtain $P_{diag}^{s,\tau}$ and $P_{diag}^{t,\tau}$. Now one can glue $M$ with $P_{diag}^{s,\tau}$ and $N$ with $P_{diag}^{t,\tau}$ and embed each as above to obtain $(M_\tau, \oul{I})$ and $(N_\tau, \oul{J})$. A path $h_\tau$ between their target and source is given by the restriction of $P$ to (i.e.~the preimage of) the part between $D_{s, \tau}$ and $D_{t, \tau}$. For $\tau = 0$ this is the identity map, and for $\tau = 1$, this is exactly $\gamma^{m,l}\circ glue$.

{\em ``Rescaling'' -- the bottom and right vertical maps in \eqref{eqn Segal factors} are weak equivalences:} Both maps are part of a deformation retraction. Let us describe the right vertical map first.

The idea of ``rescaling'' is illustrated in the following picture for $n=2$, $i=1$, $l=m=1$, and $k_2=2$. Note that we just depict the cutting lines, not the intervals around them. The rescaling is performed on the right hand piece.
$$
\begin{tikzpicture}[scale=0.5, thick, baseline=(current bounding box.center)]
\draw[] (2,-1) -- (3,1.5) -- (-1, 1.5);
\draw[] (-1, 1.5) -- (0, 4) -- (4,4) -- (3,1.5);
\fill[color=white] (2,1) -- (4,6) -- (0,6) -- (-2,1) -- cycle;
\draw[thick] (-1.25, 0.875) -- (-2, -1) arc (-90:90:1.5 and 1) -- (-1, 3.5) -- (3, 3.5) -- (2,1) arc (90:270:1.5 and 1);

\draw[thick] (-0.52,-0.2) .. controls (0, 1.8) and (1, 1.8) .. (0.52,0.2);
\draw (-1, 3.5) -- (0, 6) -- (4,6) -- (3,3.5);

\draw[dashed] (2,-1) -- (3,1.5) -- (-1, 1.5) -- (-1.25, 0.875);
\draw[dashed] (-1, 1.5) -- (0, 4) -- (4,4) -- (3,1.5);

\begin{scope}[xshift=1.5cm]
\draw (2,-1) -- (3,1.5);
\filldraw[color= white] (3.5,0) arc (0:90:1.5 and 1)  -- (3, 3.5) arc (90 : -10: 1.5 and 1) -- cycle;
\draw[thick]  (3, 3.5) -- (2,1);
\draw[dashed] (2,-1) -- (3,1.5);

\draw (2,-1) arc (-90:90:1.5 and 1);
\draw (2.5, 2.25) arc (90 : 0: 1.5 and 1);
\draw [dashed] (2.5,0.25) arc (-90 : -30: 1.5 and 1);
\draw (3.48, -0.2) -- (4.5, 2.5);
\draw (4,6) -- (3,3.5);
\draw[dashed]  (4,4) -- (3,1.5);
\draw (4,6) arc (90:0: 1.5 and 1);
\draw[dashed] (4,4) arc (-90:-30: 1.5 and 1);
\draw[thick]  (4.5, 2.5) -- (5.5, 5);
\end{scope}
\end{tikzpicture}
\hspace{1cm}\rightsquigarrow\hspace{1cm}
\begin{tikzpicture}[scale=0.5, thick, baseline=(current bounding box.center)]
\draw[] (2,-1) -- (3,1.5) -- (-1, 1.5);
\draw[] (-1, 1.5) -- (0, 4) -- (4,4) -- (3,1.5);
\fill[color=white] (2,1) -- (4,6) -- (0,6) -- (-2,1) -- cycle;
\draw[thick] (-1.25, 0.875) -- (-2, -1) arc (-90:90:1.5 and 1) -- (-1, 3.5) -- (3, 3.5) -- (2,1) arc (90:270:1.5 and 1);

\draw[thick] (-0.52,-0.2) .. controls (0, 1.8) and (1, 1.8) .. (0.52,0.2);
\draw (-1, 3.5) -- (0, 6) -- (4,6) -- (3,3.5);

\draw[dashed] (2,-1) -- (3,1.5) -- (-1, 1.5) -- (-1.25, 0.875);
\draw[dashed] (-1, 1.5) -- (0, 4) -- (4,4) -- (3,1.5);

\begin{scope}[xshift=1.5cm]
\draw (2,-1) -- (3,1.5);
\filldraw[color= white] (3.5,0) arc (0:90:1.5 and 1)  -- (3, 3.5) arc (90 : -10: 1.5 and 1) -- cycle;
\draw[thick]  (3, 3.5) -- (2,1);
\draw[dashed] (2,-1) -- (3,1.5);

\draw (2,-1) arc (-90:90:1.5 and 1);
\draw (3, 3.5) arc (90 : 0: 1.5 and 1);
\draw [dashed] (3,1.5) arc (-90 : -30: 1.5 and 1);
\draw (3.48, -0.2) -- (4.5, 2.5);
\draw (4,6) -- (3,3.5);
\draw[dashed]  (4,4) -- (3,1.5);
\draw (4,6) arc (90:0: 1.5 and 1);
\draw[dashed] (4,4) arc (-90:-30: 1.5 and 1);
\draw[thick]  (4.5, 2.5) -- (5.5, 5);
\end{scope}
\end{tikzpicture}
$$
The deformation retract is given as follows: we observed above that the canonical map
$$\Int_\bullet \cong \Int_\bullet \overunder[\Int_\bullet]{}{\times} \Int_\bullet \longrightarrow \Int_\bullet \overunder[\Int_\bullet]{h}{\times} \Int_\bullet$$
level-wise has a deformation retraction. We will lift this to the desired deformation retraction. 

An element (or $l$-simplex) in the right hand side is given by a triple $(\ul{I},\ul{J}, h)$, where $h$ is a 1-simplex (or $(l+1)$-simplex) from $\ul{I}$ to $\ul{J}$, which we denote by $\ul{I}\to \eDelta{1}$. The later determines a family of diffeomorphisms $B(\ul{J}) \to B(\ul{I}(s))$ and we send 
a triple $\left( (M,\oul{I}), (N,\oul{J}), h\right)$ to a triple $\left( (M,\oul{I}), (N_s,\oul{J}_s), h_s\right)$, where $(N_s,\oul{J}_s)$ is given by the composition
$$N\subset V\times B(\oul{J}) \to V\times B(\oul{I}(s)).$$
We need the family of diffeomorphisms to have the following property: if for every $s\in [s,1]$, the cardinality $|I_j(s)\cap I_{j+1}(s) |$ is 0 or 1, then $b_j(1)\mapsto b_j(s)$ and $a_{j+1}(1)\mapsto a_{j+1}(s)$. 
Such maps are easily defined in a piece-wise linear way. However, we need them to be diffeomorphisms and vary smoothly in the parameter $s$, which requires smoothing. One explicit way of doing this smoothing uses flows along vector fields as in the proof of 1.~in Theorem \ref{thm time Morse lemma}.

As for the horizontal map, the rescaling in the $i$th direction, let $B(\ul{I^i})=(a,b)$ and $a^i_j$ and $b^i_j$ the left and right endpoints of $I^i_j$; and $B(\ul{J^i})=(c,d)$ and $c^i_j$ and $d^i_j$ the left and right endpoints of $J^i_j$. Similarly to above, by rescaling $(N, \oul{J})$, we can assume that we have rescaled the embeddings and intervals such that
$I^i_m = J^i_0 = (a^i_m, b) = (c, d^i_0)$, and along the path this interval stays constant. This assumption implies the the intervals can be ``glued'' (or rather, concatenated) to obtain an element in $\Int_{k_i}$.
$$\begin{tikzpicture}
\draw (-5,0) -- (5,0);

\draw (5, -0.25) node [anchor=north] {\tiny $d$};
\draw (5,0) arc (0:30:0.5);
\draw (5,0) arc (0:-30:0.5);

\draw (-5, -0.25) node [anchor=north] {\tiny $a_0^i=a$};
\draw (-5,0) arc (0:30:-0.5);
\draw (-5,0) arc (0:-30:-0.5);

 (-2,6,0)
\draw (-3.7, 0.25) -- (-3.6, 0.25) -- (-3.6, -0.25) node[anchor=north] {\tiny $b_0^i$} -- (-3.7, -0.25);

\draw (-1.7, -0.25) node[anchor=north] {$\dots$};

\draw (0.4, 0.25) -- (0.3, 0.25) -- (0.3, -0.25) node[anchor=north] {\tiny $a_m^i=c$} -- (0.4, -0.25);

\draw (1.2, 0.25) -- (1.3, 0.25) -- (1.3, -0.25) node[anchor=north] {\tiny $b_m^i=b$} -- (1.2, -0.25);

\draw (2.4, -0.25) node[anchor=north] {$\dots$};

\draw (3.6, 0.25) -- (3.5, 0.25) -- (3.5, -0.25) node[anchor=north] {\tiny $c_l^i$} -- (3.6, -0.25);
\end{tikzpicture}$$

Similarly to above, this can be implemented using a deformation retraction of $\Int_m\overunder[\Int_0]{}{\times} \Int_l \to \Int_m\overunder[\Int_0]{h}{\times} \Int_l$, which is lifted to one of the inclusion.

\paragraph{For every $i$ and every $k_1,\ldots,k_{i-1}$, the $(n-i)$-fold Segal space $(\PBord_n^V)_{k_1,\ldots,k_{i-1},0,\bullet,\cdots,\bullet}$
is essentially constant.}

We show that the degeneracy inclusion map
$$(\PBord_n^V)_{k_1,\ldots,k_{i-1},0,0,\ldots,0}\longhookrightarrow (\PBord_n^V)_{k_1,\ldots,k_{i-1},0,k_{i+1},\ldots,k_{n}}$$
admits a deformation retraction and thus is a weak equivalence.

Consider the assignment sending a pair consisting of $t\in [0,1]$ and an $l$-simplex
$$
\Big(M\subset V\times B(\oul{I}(s)), \big((\ul{I}^\beta(s))_{1\leq\beta< i}, (a^i_0(s),b^i_0(s)), (\ul{I}^\alpha(s))_{i<\alpha\leq n}\big)_{s\in \eDelta{l}}\Big),
$$
in $(\PBord_n^V)_{k_1,\ldots,k_{i-1},0,k_{i+1},\ldots,k_{n}}$ to
$$
\Big(M \subset V\times B(\oul{I}(s)), \big((\ul{I}^\beta(s))_{1\leq\beta< i}, (a^i_0(s),b^i_0(s)),
(\ul{I}^\alpha(s,t))_{i<\alpha\leq n}\big)_{(s,t)\in \eDelta{l}\times [0,1]}\Big),
$$
where for $\alpha>i$ and every $0\leq j\leq k_\alpha$,
\begin{align*}
a^\alpha_j(s,t)&=(1-\varepsilon(t))a^\alpha_j(s) +\varepsilon(t)a^\alpha_0(s),\\
b^\alpha_j(s,t)&=(1-\varepsilon(t))b^\alpha_j(s)+\varepsilon(t)b^\alpha_{k_\alpha}(s).
\end{align*}
for a smooth, increasing, bijective $\varepsilon:[0,1]\to[0,1]$ with vanishing derivative at the endpoints.
This is a homotopy $H: [0,1] \times (\PBord^V_n)_{k_1,\ldots,k_{i-1},0,k_{i+1},\ldots,k_{n}} \to (\PBord^V_n)_{k_1,\ldots,k_{i-1},0,k_{i+1},\ldots,k_{n}}$ exhibiting the deformation retract\footnote{To be precise, we take $t\in \eDelta{1}$ and extend the assignment so that it is constant outside $[0,1]$.}. Note that $B(\oul{I}(s,t))=B(\oul{I}(s))$ for every $t\in[0,1]$. Moreover, for $t=0$ we have that $I^\alpha_j(s,0)=I^\alpha_j(s)$ and the $l$-simplex is sent to itself. For $t=1$ we have $I^\alpha_j(s,1)=(a^\alpha_0(s), b^\alpha_{k_\alpha}(s))$, so the image lies in $(\PBord^V_n)_{k_1,\ldots,k_{i-1},0,0,\ldots,0}$.

It suffices to check that for every $t\in [0,1]$ the image indeed is an $l$-simplex in $(\PBord^V_n)_{k_1,\ldots,k_{i-1},0,k_{i+1},\ldots,k_{n}}$.
Since $(M,\oul{I}(s))\in (\PBord^V_n)_{k_1,\ldots,k_{i-1},0,k_{i+1},\ldots,k_{n}}$, this reduces to checking
\begin{quote}
{\em For every $i< \alpha\leq n$ and $0\leq j\leq k_\alpha$, at every $x\in p_{\{\alpha\}}^{-1}(I^\alpha_j(s,t)_{s\in\eDelta{l}})$, the map $p_{\{\alpha,\ldots,n\}}$ is submersive.}
\end{quote}

Since in the $i$th direction we only have one interval, we have that $p_{\{i\}}^{-1}((a^i_0(s), b^i_0(s))_{s\in\eDelta{l}})=M$, so in particular, $p_{\{i\}}^{-1}((a^i_0(s), b^i_0(s))_{s\in\eDelta{l}})\supset p_{\{\alpha\}}^{-1}(I^\alpha_j(s,t)_{s\in\eDelta{l}})$. Therefore, condition \eqref{cond simplex 3} in \ref{def PBord simplices} on $(M)$ for $i$ implies, that $p_{\{i,\ldots,n\}}$ is a submersion in $p_{\{i\}}^{-1}((a^i_0(s), b^i_0(s))_{s\in\eDelta{l}})=M\supset p_{\{\alpha\}}^{-1}(I^\alpha_j(s,t)_{s\in\eDelta{l}})$, so $p_{\{\alpha,\ldots,n\}}$ is submersive there as well. 
\end{proof}

\begin{rem}
It is much easier to see that the ``strict'' Segal condition also holds, i.e.~that 
$$(\PBord_n^V)_{k_1,\ldots, k_i+k_i',\ldots ,k_{n}}\overset{\sim}{\longrightarrow} (\PBord_n^V)_{k_1,\ldots, k_i,\ldots ,k_n}\underset{(\PBord_n^V)_{k_1,\ldots, 0,\ldots ,k_n}}{\times} (\PBord_n^V)_{k_1,\ldots, k_i',\ldots ,k_n}.$$
An element in the right hand side is a pair of submanifolds $M\subset V \times (a_0, b_{k_i})$ and $N \subset V\times (\tilde{a}_0, \tilde{b}_{k_i'})$ which coincide on the intersection $V \times (a_{k_i}, b_{k_i})$ together with intervals $I_0\leq \cdots \leq I_{k_i}$ and $\tilde{I}_0\leq\cdots\leq \tilde{I}_{k_i'}$ such that $I_{k_i}=\tilde{I}_0$. So we can glue them together to form a submanifold $M\cup N \subset V \times (a_0, \tilde{b}_{k_i'})$, and concatenate the intervals $I_0\leq \cdots \leq I_{k_i}\leq \tilde{I}_1 \leq \cdots \leq \tilde{I}_{k_i'}$. Thus the above strict Segal map even is a homeomorphism.
\begin{center}
\begin{tikzpicture}[scale=0.8]
\draw (-7,0) -- (7,0);
\draw (7, 0.25) node [anchor=south] {\tiny $\tilde{b}_{k_i'}$};
\draw (7,0) arc (0:30:0.5);
\draw (7,0) arc (0:-30:0.5);

\draw (-7, 0.25) node [anchor=south] {\tiny $a_0$};
\draw (-7,0) arc (0:30:-0.5);
\draw (-7,0) arc (0:-30:-0.5);

\draw (-5.7, 0.25) -- (-5.6, 0.25) node[anchor=south] {\tiny $b_0$} -- (-5.6, -0.25) -- (-5.7, -0.25);
\draw (-4.5, 0.25) -- (-4.6, 0.25) node[anchor=south] {\tiny $a_1$} -- (-4.6, -0.25) -- (-4.5, -0.25);
\draw (-3.7, 0.25) -- (-3.6, 0.25) node[anchor=south] {\tiny $b_1$} -- (-3.6, -0.25) -- (-3.7, -0.25);

\draw (-2.8, 0.25) node[anchor=south] {\tiny $\ldots$};

\draw[red] (-1.9, 0.25) -- (-2, 0.25) node[anchor=south] {\tiny $\tilde{a}_0$} -- (-2, -0.25) -- (-1.9, -0.25);

\draw[red] (-0.6, 0.25) -- (-0.5, 0.25) node[anchor=south] {\tiny $\tilde{b}_0$} -- (-0.5, -0.25) -- (-0.6, -0.25);

\draw[red] (-2, 0) -- (-0.5,0);

\draw(0.6, 0.25) -- (0.5, 0.25) node[anchor=south] {\tiny $\tilde{a}_1$} -- (0.5, -0.25) -- (0.6, -0.25);

\draw (1.4, 0.25) -- (1.5, 0.25) node[anchor=south] {\tiny $\tilde{b}_1$} -- (1.5, -0.25) -- (1.4, -0.25);
\draw (3.1, 0.25) -- (3, 0.25) node[anchor=south] {\tiny $\tilde{a}_2$} -- (3, -0.25) -- (3.1, -0.25);

\draw(3.6, 0.25) -- (3.7, 0.25) node[anchor=south] {\tiny $\tilde{b}_2$} -- (3.7, -0.25) -- (3.6, -0.25);

\draw (4.6, 0.25) node[anchor=south] {\tiny $\ldots$};

\draw(5.6, 0.25) -- (5.5, 0.25) node[anchor=south] {\tiny $\tilde{a}_{k_i'}$} -- (5.5, -0.25) -- (5.6, -0.25);

\draw (7, 1) -- (-0.5, 1)
	(-2, 1) -- (-3, 1) .. controls (-5.5, 1) and (-5.5, 4) .. (-3, 4) -- (-2, 4)
	(-0.5, 4) -- (7, 4);
\draw[red] (-2, 1) -- (-0.5 , 1)
	(-2, 4) -- (-0.5, 4);

\draw (-5.6, 2.5) node {$M$};
\draw (3, 2.5) node {$N$};
\draw[red] (-1.25, 2.5) node {$M\cap N$};

\draw[red, dotted] (-2, 0) -- (-2, 4.5)
(-0.5, 0) -- (-0.5, 4.5);

\draw
(-0.5, 1.5) ..controls +(15: 1) and +(-90:0.5) .. (-0.2, 2.5) .. controls +(90: 0.5) and +(-15 :1) .. (-0.5, 3.5);
\draw[red] (-0.5, 3.5) .. controls +(165: 0.5) and +(15: 0.5) .. (-2, 3.5)
(-0.5, 1.5) .. controls +(-165: 0.5) and +(-15: 0.5) .. (-2, 1.5);
\draw (-2, 1.5) ..controls +(165: 1) and +(-165: 1) .. (-2, 3.5);

\draw (3, 2.5) ellipse (1 and 0.5);
\end{tikzpicture}
\end{center}
Note that this construction also extends to $l$-simplices: we glue submanifolds of $V \times (a_0(s), b_{k_i}(s))_{s\in\eDelta{l}}$ and $V \times (\tilde{a}_0(s), \tilde{b}_{k_i'}(s))_{s\in \eDelta{l}}$ to form one of $V \times (a_0(s), \tilde{b}_{k_i'}(s))_{s\in\eDelta{l}}$.
\end{rem}

\begin{rem}
It can be checked that the maps $(\PBord_n^V)_{k_1,\ldots, k_i,\ldots ,k_n}\to(\PBord_n^V)_{k_1,\ldots, 0,\ldots ,k_n}$ are not fibrations unless $k_1=\cdots =k_n=0$. If this were true, this together with the previous Remark would have simplified the proof of the previous Proposition. However, we can still conclude that $\PBord_n^V$ fits into a stricter model for $(\infty,n)$-categories; for $n=1$, it is a fibrant object in the model category of internal objects from \cite{HorelModel} and \ref{sec internal categories} (even though it is not strongly Segal). For $n>1$, it is a fibrant object in the model category of internal $n$-uple categories from \cite{CavigliaHorel} and \ref{sec internal n-uple categories}, although it seems not to be a fibrant internal $n$-fold category, since the identities are not strict.
\end{rem}

So far the definition of $\PBord_n^V$ depended on the choice of the vector space $V$. However, in the bordism category we would like to consider all (not necessarily compact) $n$-dimensional manifolds. By Whitney's embedding theorem any such manifold can be embedded into some finite-dimensional vector space $V$, so we need to allow big enough vector spaces. 
\begin{defn}
Fix some countably infinite-dimensional vector space\footnote{Note that the definition does not depend on the choice of the countably infinite-dimensional vector space; any such is the colimit over all finite dimensional vector spaces.}, e.g.~$\R^{\infty}$. Then we define $\PBord_n$ to be the homotopy colimit of $n$-fold Segal spaces\footnote{Note that the identity map from the model category of $n$-fold simplicial spaces to the model category of $n$-fold Segal spaces is a left adjoint (since it is a localization) and therefore preserves homotopy colimits. Thus, the homotopy colimit can be computed in $n$-fold simplicial spaces.}
$$\PBord_n=\varinjlim_{V\subset \R^{\infty}} \PBord_n^V = \operatorname{hocolim}_{V\subset \R^{\infty}} \PBord_n^V.$$
\end{defn}

\begin{rem}
If the vector space $V$ is $r$-dimensional, the $n$-fold Segal space $\PBord_n^V$ is also interesting in its own right. It describes codimension $r$ {\em tangles}, see also \cite[Section 4.4]{Lurie}. For example, if $n=1$ and $r=2$, we obtain a Segal space of 1-dimensional tangles in $\R^3$. We will elaborate on this more in the next section. Moreover, we have not used that $V$ is a vector space. Instead, one could take $V$ to be some fixed manifold (as in e.g.~\cite{GRW}). This requires some extra care which we will not pursue here.
\end{rem}

The last condition necessary to be a good model for the $(\infty,n)$-category of bordisms is completeness, which $\PBord_n$ in general does not satisfy. However, we can pass to its completion $\Bord_n$.
\begin{defn}
The {\em $(\infty, n)$-category of bordisms $\Bord_n$} is the $n$-fold completion $\widehat{\PBord_n}$ of $\PBord_n$, which is a complete $n$-fold Segal space.
\end{defn}

\begin{rem}\label{rem PBord incomplete}
For $n \geq 6$, $\PBord_n$ is not complete, see the full explanation in \cite{Lurie}, 2.2.8. For $n=1$ and $n=2$, by the classification theorems of one- and two-dimensional manifolds, $\PBord_n$ is complete, and therefore $\Bord_n=\PBord_n$.
\end{rem}

\section{Variants of \texorpdfstring{$\Bord_n$}{Bordn} and comparison with Lurie's definition}\label{sec variants}

\subsection{The \texorpdfstring{$(\infty, d)$}{(infty,d)}-category of \texorpdfstring{$n$}{n}-bordisms and tangles for any \texorpdfstring{$d\geq 0$}{d}}\label{sec PBord_n^l}

For $d\geq 0$ we define a $d$-fold Segal space whose top, i.e.~$d$-morphisms are $n$-dimensional bordisms. For $d < n$ this amounts to extending the category of $n$-dimensional bordisms only down to $(n-d)$-dimensional objects.

\begin{defn}\label{def PBord_n^l} Let $V$ be a finite-dimensional $\R$-vector space, which we identify with some $\R^r$. Let $n\geq0, d=n+l\geq 0$. For every $d$-tuple $k_1,\ldots, k_d\geq0$, we let $(\PBord_n^{l,V})_{k_1,\ldots, k_d}$ be the collection of tuples $(M, \oul{I}=(I^i_0\leq\ldots\leq I^i_{k_i})_{1\leq i \leq d})$ satisfying conditions analogous to (1)-(3) in Definition \ref{def PBord}, i.e.
\begin{enumerate}
\item For $1\leq i\leq d$,
$$(I^i_0\leq\cdots\leq I^i_{k_i})\in \Int_{k_i}.$$
\item $M$ is a closed and bounded $n$-dimensional submanifold of $V\times B(\oul{I})$ and the composition $\pi: M \hookrightarrow V\times B(\oul{I})\twoheadrightarrow B(\oul{I})$ is a proper map.
\item For every $S\subseteq\{1,\ldots, d\}$, let $p_S:M\xrightarrow{\pi}B(\oul{I})\xrightarrow{\pi_S}\R^S$ be the composition of $\pi$ with the projection $\pi_S$ onto the $S$-coordinates. Then for every $1\leq i\leq d$ and $0\leq j_i\leq k_i$,  at every $x\in p_{\{i\}}^{-1}(I^i_{j_i})$, the map $p_{\{i,\ldots,d\}}$ is submersive.
\end{enumerate}
We make $(\PBord_n^{l,V})_{k_1,\ldots, k_d}$ into a space similarly to $(\PBord_n^V)_{k_1,\ldots, k_n}$.
\end{defn}

\begin{prop}
$(\PBord_n^{l,V})_{\bullet,\cdots,\bullet}$ is a $d=(n+l)$-fold Segal space.
\end{prop}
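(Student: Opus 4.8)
The plan is to reduce this proposition entirely to the corresponding statements already proved for $\PBord_n$. Observe first that $\PBord_n^l$ is defined by exactly the same kind of data as $\PBord_n$: tuples $(M,\oul I)$ where now $\oul I$ ranges over $d$-tuples of elements of $\Int$ rather than $n$-tuples, and where the submersivity condition is imposed with respect to the $d$ coordinate directions. The only difference between the two setups is that the dimension of the embedded manifold $M$ is decoupled from the number of cutting directions: here $\dim M = n$ but there are $d = n+l$ directions. Crucially, nothing in the proofs of Proposition \ref{prop PBord Kan} (that the levels are Kan complexes), the compatibility of spatial and simplicial structures, Lemma \ref{lemma source fibration} (that the face maps $(\PBord_n)_{\bullets,1,\bullets}\to(\PBord_n)_{\bullets,0,\bullets}$ are fibrations), or Proposition \ref{prop PBord Ssp} (the Segal and essential constancy conditions) ever used the coincidence $\dim M = n$; all arguments are about the maps $p_S$, the properness of $\pi$, and the flows supplied by the Morse lemma / vector field constructions, which go through verbatim for any fixed $\dim M$. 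So the strategy is simply to re-run those four arguments in the present context and point out where the index $n$ must be replaced by $d$.

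Concretely, I would proceed in the following steps. First, establish that each level $(\PBord_n^l)_{k_1,\ldots,k_d}$ is a space (a Kan complex): this follows as in Proposition \ref{prop PBord Kan}, since the topology on the set of $l$-simplices again comes from the space of submanifolds of $V\times(0,1)^d$ together with the $\Int^d$-coordinates, and smooth singular chains of a topological space form a Kan complex; the pullback-along-$|f|$ description of the spatial face and degeneracy maps is unchanged. Second, check that the $d$-fold simplicial structure (cutting the manifold to the preimage of the smaller box, with the affine rescalings of $\Int$) is well-defined and compatible with the spatial structure — this is the same pullback-square diagram argument. Third, prove the analogue of Lemma \ref{lemma source fibration}: that for each $1\le i\le d$ the face map $(\PBord_n^l)_{\ldots,1,\ldots}\to(\PBord_n^l)_{\ldots,0,\ldots}$ is a levelwise fibration. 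The proof is identical: take the closure $\bar M$, which is compact since $M$ is bounded; the projection to the lifting parameter $[0,1]$ (or the horn) is submersive, giving a nowhere-vanishing vector field; extend it using a partition of unity; and flow the closure of the given object along it. This is the step I expect to be the main obstacle, in the sense that it is the only place where one must be slightly careful — one should double-check that with $\dim M = n < d$ the relevant projection is still a submersion onto the simplex parameter and that the flow stays inside $V\times B\times\eDelta{l}$; but the argument of Lemma \ref{lemma source fibration} never used $\dim M$, only that $\pi$ composed with the projection to the parameter is a proper submersion, so it carries over.

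Finally, with the fibration lemma in hand, I would verify the two defining conditions of a $d$-fold Segal space exactly as in Proposition \ref{prop PBord Ssp}: (1) the Segal condition, where by the fibration lemma it suffices to check the strict pullback, and a pair of manifolds agreeing over the common slab $V\times(a_{k_i},b_{k_i})\times\eDelta{l}$ glues to a single manifold, the map being even a homeomorphism; and (2) essential constancy of $(\PBord_n^l)_{k_1,\ldots,k_{i-1},0,\bullets}$, via the same explicit deformation retraction onto the fully degenerate locus, where the homotopy interpolates the endpoints $a^\alpha_j,b^\alpha_j$ linearly toward $a^\alpha_0,b^\alpha_{k_\alpha}$ for $\alpha>i$, and submersivity is preserved because condition \eqref{cond simplex 3} for the $i$-th direction forces $p_{\{i,\ldots,d\}}$ to be submersive on all of $p_{\{i\}}^{-1}\bigl((a^i_0(s),b^i_0(s))\times\eDelta{l}\bigr) = M$, which contains every $p_{\{\alpha\}}^{-1}(I^\alpha_j(s,t)\times\eDelta{l})$. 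Putting these together shows $(\PBord_n^l)_{\bullets}$ is a $d$-fold Segal space. I would present the proof largely by citing the arguments above and indicating that they apply mutatis mutandis with $n$ replaced by $d$ in the indexing of the cutting directions, while the manifold dimension stays $n$.
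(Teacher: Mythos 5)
Your proposal is correct and follows exactly the route the paper takes: the paper's own proof consists of the single sentence that the argument is ``completely analogous to the proof of Proposition \ref{prop PBord Ssp}'', and your write-up is precisely a careful justification of that claim, correctly identifying that no step in the Kan-complex, fibration, Segal-condition, or essential-constancy arguments ever used the coincidence $\dim M = n$.
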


\begin{proof}
The proof is completely analogous to the proof of Proposition \ref{prop PBord Ssp}.
\end{proof}

Again we take the homotopy colimit of $n$-fold Segal spaces, i.e.~in the model category $\sSpace_{n,f}^{Se}$ over all finite-dimensional vector spaces in a given infinite-dimensional vector space, say $\R^\infty$:
$$\PBord_n^l=\operatorname{colim}_{V\subset\R^\infty} \PBord_n^{l,V}.$$

\begin{defn}
For $l \leq 0$ let $d=n+l\leq n$ and let $\Bord_n^l$, which we will also denote by $\Bord_n^{(\infty,d)}$, be the $d$-fold completion of $\PBord_n^l$, the {\em $(\infty, d)$-category of $n$-bordisms}.
\end{defn}

\begin{notation}
For $l \leq 0$ let $d=n+l\leq n$ and let $\Bord_n^{(\infty,d),V}$ be the $d$-fold completion of $\PBord_n^{l, V}$.
If $V$ is $r$-dimensional and $l=0$, this is the unframed version of what Lurie calls the {\em $(\infty,n)$-category of $n$-tangles $\mathrm{Tang}^V_{n, n+r}$} in \cite{Lurie}.
\end{notation}

\begin{rem}\label{rem hybrid}
For $l>0$, the underlying submanifold of objects of $\PBord_n^l$, i.e.~elements in $(\PBord_n^l)_{0,\ldots,0}$, are $n$-dimensional manifolds $M$ which have a submersion onto $\R^{n+l}$. This implies that $M=\emptyset$. Thus, the only objects are $(\emptyset, I^1_0,\ldots, I^{n+l}_0)$ and $(\PBord_n^l)_{0,\ldots,0} \cong \Int^n_{0,\ldots,0} \simeq *$. Similarly, $(\PBord_n^l)_{0,k_2,\ldots, k_{n+l}} \cong \Int^n_{0,k_2,\ldots, k_{n+l}} \simeq *$. Thus, $(\PBord_n^l)_{0,\bullets}$ is equivalent to the point viewed as a constant $(n-1)$-fold Segal space. Similarly, $(\PBord_n^l)_{1,\ldots, 1, 0,\bullets}$, with $(l-1)$ 1's, is equivalent to the point viewed as a constant $(n-l)$-fold Segal space. These will appear again later in section \ref{tower Bord}.
\end{rem}

\subsection{Unbounded submanifolds, \texorpdfstring{$(0,1)$}{(0,1)} as a parameter space, and cutting points}

\subsubsection{Unbounded submanifolds}\label{sec unbounded bordisms}

We could have omitted the condition that $M$ be bounded in condition \eqref{cond 1} in Definitions \ref{def PBord} and \ref{def PBord simplices}, requiring it only to be closed. This modification leads to an $n$-fold simplicial space $\PBord_n^{unb}$ together with a level-wise inclusion
$$\PBord_n \longhookrightarrow \PBord_n^{unb}.$$
Recall from \ref{rem cutoff} that for every element in $(\PBord_n)_{k_1,\ldots, k_n}$, we constructed a path to its cutoff. There is a similar cutoff path for every element in $(\PBord_n^{unb})_{k_1,\ldots, k_n}$ to an element whose underlying submanifold
$$cut(M) = \pi^{-1}\Big(\prod_{i=1}^n (\frac{b^i_0}{2}, \frac{a^i_{k_i}}{2})\Big)$$
is bounded in the $V$-direction. Moreover, this construction extends to $l$-simplices. Altogether, it shows that the inclusion is a level-wise equivalence of $n$-fold simplicial spaces.
Finally, since $\PBord_n$ is an $n$-fold Segal space, $\PBord_n^{unb}$ is as well.

\subsubsection{Restricting the boxing \texorpdfstring{to  $(0,1)^n$}{}}\label{sec Bord (0,1)}

Instead of basing $\PBord_n$ and $\PBord_n^l$ on $\Int$, we could instead use $\Int^{(0,1)}$ from Section \ref{sec Int (0,1)}. This approach leads to $(n+l)$-fold Segal spaces $\PBord_n^{l, (0,1)}$ using the box rescaling maps $\rho(\oul{I}): B(\oul{I}) \to (0,1)^{n+l}$ and the functor $\rho$ from Definition \ref{defn box rescaling} and Remark \ref{rem on box rescaling}. It fits exactly into a commuting diagram
$$
\begin{tikzcd}
\PBord_n^l \arrow{r}{\rho} \arrow{d} & \PBord_n^{l, (0,1)} \arrow{d}\\
\Int^{n+l} \arrow{r}{\rho} & (\Int^{(0,1)})^{ \times (n+l)}
\end{tikzcd}
$$
where the vertical maps are the forgetful maps. Any simplex
$$\left(M\subset V\times B(\oul{I}), \oul{I}= (I^i_0\leq\cdots\leq I^i_{k_i})_{1\leq i\leq n+l}\right)$$
is sent to
$$\left(M\subset V\times B(\oul{I}) \xrightarrow{\rho(\oul{I})} V\times (0,1)^{n+l}, \big(\rho(\oul{I})(I^i_0)\leq\cdots\leq \rho(\oul{I})(I^i_{k_i})\big)_{1\leq i\leq n+l}\right).$$
On a fixed level, i.e.~for fixed $k_1,\ldots, k_{n+l}$, there is an inclusion of spaces $\iota:(\PBord_n^{l, (0,1)})_{k_1,\ldots, k_{n+l}} \hookrightarrow (\PBord_n^l)_{k_1,\ldots, k_{n+l}}$ and the above map is a retract of the inclusion. 
The $(n+l)$-fold simplicial structure needs to be modified by rescaling maps to ensure that the boxing stays $(0,1)^{n+l}$: For a morphism $g=\prod g_i$ in $\Delta^{n+l}$, the associated morphism of spaces is $\rho\circ g\circ\iota$. Since $\rho$ only involves rescalings, $\PBord_n^l \overset{\rho}{\to} \PBord_n^{l, (0,1)}$ is a level-wise weak equivalence, so in particular also a DK-equivalence of $n$-fold Segal spaces. We leave it to the reader to fill in the details.

\subsubsection{Cutting points}\label{sec PBord^t} Another variant of an $n$-fold Segal space of bordisms can be obtained by replacing the intervals $I^i_j$ in Definition \ref{def PBord} of $\PBord_n$ by specified ``cutting points'' $t^i_j$, which correspond to where we cut our composition of bordisms. Equivalently, we can say that in this case the intervals are replaced by intervals consisting of just one point, i.e.~$a^i_j=b^i_j=:t^i_j$. The levels of this $n$-fold Segal space $\PBord_n^t$ can be made into spaces as we did for $\PBord_n$, but we now need to add paths between 0-simplices which coincide inside the boxing of $t$'s, i.e.~over $[t^1_0, t^1_{k_1}]\times\cdots\times[t^n_0,t^n_{k_n}]$. However, for $\PBord_n^t$ the Segal condition is more difficult to prove, as in this case we do not specify the collar along which we glue. Since the space of collars is contractible, sending an interval $I$ with endpoints $a$ and $b$ to its midpoint $t=\frac12(a+b)$ induces an equivalence of $n$-fold Segal spaces from $\PBord_n$ to $\PBord_n^t$. We will not elaborate more on this variant and leave details to the interested reader.

\subsection{Comparison with Lurie's definition of bordisms}\label{Lurie's vs}

In \cite{Lurie}, Lurie defined the $n$-fold Segal space of bordisms as follows:

\begin{defn}\label{def PBord^L}
Let $V$ be a finite-dimensional vector space. For every $n$-tuple $k_1,\ldots, k_n\geq0$, let $(\PBord_n^{V,L})_{k_1,\ldots, k_n}$ be the collection of tuples $(M,(t^i_0\leq\ldots\leq t^i_{k_i})_{i=1,\ldots n})$, where
\begin{enumerate}
\item[$1.$] For $1 \leq i \leq n$,
$$t^i_0 \leq \dots \leq t^i_{k_i}$$
is an ordered $(k_i+1)$-tuple of elements in $\R$.
\item[$2.$] $M$ is a closed $n$-dimensional submanifold of $V\times\R^n$ and the composition $\pi: M \hookrightarrow V\times \R^n \twoheadrightarrow \R^n$ is a proper map.
\item[$\tilde 3.$] For every $S\subseteq\{1,\ldots, n\}$ and for every collection $\{j_i\}_{i\in S}$, where $0\leq j_i \leq k_i$, the composition $p_S:M\overset{\pi}{\to}\R^n\to\R^S$ does not have $(t_{j_i})_{i\in S}$ as a critical value.
\item[$\tilde 4.$] For every $x\in M$ such that $p_{\{i\}}(x)\in\{ t^i_0,\ldots, t^i_{k_i}\}$, the map $p_{\{i+1,\ldots,n\}}$ is submersive at $x$.
\end{enumerate}
It is endowed with a topology coming from the Whitney topology similar to what we described in Section \ref{def PBord top}. Similarly to before, let
$$\PBord_n^L=\varinjlim_{V\subset \R^{\infty}} \PBord_n^{V,L}$$
\end{defn}

Comparing this definition with Definition \ref{def PBord} and $\PBord_n^t$ from \ref{sec PBord^t} above, our condition \eqref{cond 3} on $\PBord_n^t$ is replaced by the two strictly weaker conditions $(\tilde 3)$ and $(\tilde 4)$ on $\PBord_n^L$, which are both implied by \eqref{cond 3}:

\begin{lemma}\label{lemma ours implies Lurie}
Let $M$ be a closed $n$-dimensional manifold and $\pi:M\to \R^n$. Moreover, for $1\leq i\leq n$ let $(t^i_0\leq\ldots\leq t^i_{k_i})$ be an ordered $(k_i+1)$-tuple of elements in $\R$. Denote for $S\subseteq\{1,\ldots, n\}$ the composition $M\overset{\pi}{\to} \R^n \twoheadrightarrow \R^S$ by $p_S$. Assume that condition \eqref{cond 3} from Definition \ref{def PBord} holds, i.e.~for every $1\leq i\leq n$ and $0\leq j_i\leq k_i$ for $x\in M$ such that $p_{\{i\}}(x)=t^i_{j_i}$ the map $p_{\{i,\ldots, n\}}$ is submersive at $x$. Then,
\begin{enumerate}
\item [$\tilde 3.$] For every $S\subseteq\{1,\ldots, n\}$ and for every collection $\{j_i\}_{i\in S}$, where $0\leq j_i \leq k_i$, the composition $p_S:M\overset{\pi}{\to}\R^n\to\R^S$ does not have $(t_{j_i})_{i\in S}$ as a critical value.
\item [$\tilde 4.$] For every $x\in M$ such that $p_{\{i\}}(x)\in\{ t^i_0,\ldots, t^i_{k_i}\}$, the map $p_{\{i+1,\ldots,n\}}$ is submersive at $x$.
\end{enumerate} 
\end{lemma}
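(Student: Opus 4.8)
The plan is to deduce conditions $(\tilde 3)$ and $(\tilde 4)$ from condition \eqref{cond 3} of Definition \ref{def PBord} by exploiting the fact that all the maps $p_S$ are obtained from one another by coordinate projections, together with the elementary observation that a coordinate projection $\pi\colon\R^T\to\R^{T'}$ (for $T'\subseteq T$) is a surjective linear map, hence a submersion, so that whenever $g\colon M\to\R^T$ is submersive at a point $x$, so is $\pi\circ g$ at $x$. With this in hand the whole argument is just index bookkeeping.

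I would first dispatch $(\tilde 4)$, which is essentially immediate. Given $x\in M$ with $p_{\{i\}}(x)=t^i_{j_i}$ for some $0\le j_i\le k_i$, condition \eqref{cond 3} applied with this $i$ and $j_i$ says $p_{\{i,\dots,n\}}$ is submersive at $x$; composing with the coordinate projection $\R^{\{i,\dots,n\}}\to\R^{\{i+1,\dots,n\}}$ that forgets the $i$-th coordinate, the observation gives that $p_{\{i+1,\dots,n\}}$ is submersive at $x$, which is exactly $(\tilde 4)$ (the edge case $i=n$ being vacuous).

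For $(\tilde 3)$ I would first unwind the statement: saying ``$(t_{j_i})_{i\in S}$ is not a critical value of $p_S$'' means that every $x$ in the fibre $p_S^{-1}\bigl((t^i_{j_i})_{i\in S}\bigr)$ is a regular point of $p_S$. So fix such an $x$ and set $i_0=\min S$. The $i_0$-component of $p_S(x)$ is $t^{i_0}_{j_{i_0}}$, i.e.\ $p_{\{i_0\}}(x)=t^{i_0}_{j_{i_0}}$, so \eqref{cond 3} applied with $i=i_0$ tells us $p_{\{i_0,\dots,n\}}$ is submersive at $x$. Since $i_0=\min S$, every element of $S$ is $\ge i_0$, hence $S\subseteq\{i_0,i_0+1,\dots,n\}$, and therefore $p_S$ is the composite of $p_{\{i_0,\dots,n\}}$ with the coordinate projection $\R^{\{i_0,\dots,n\}}\to\R^{S}$; by the observation $p_S$ is submersive at $x$, so $x$ is a regular point, as required.

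The argument has no real obstacle; the one point that deserves care is the containment $S\subseteq\{\min S,\dots,n\}$, which is precisely why condition \eqref{cond 3}, although stated only for the ``upper tails'' $\{i,\dots,n\}$, nonetheless controls $p_S$ for an arbitrary subset $S$. As the paper remarks, the reverse implications fail, so $(\tilde 3)$ and $(\tilde 4)$ together are genuinely weaker than \eqref{cond 3}.
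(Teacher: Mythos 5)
Your proposal is correct and follows essentially the same route as the paper's proof: for $(\tilde 3)$ you set $i_0=\min S$, apply condition \eqref{cond 3} at $i_0$, and factor $p_S$ through the coordinate projection $\R^{\{i_0,\dots,n\}}\to\R^S$, while $(\tilde 4)$ follows by composing with the projection forgetting the $i$-th coordinate. Nothing is missing.
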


\begin{proof}
Let $i_0=\min S$. Consider the following diagram
$$
\begin{tikzcd}[column sep = large]
M \arrow{r}{p_{\{i_0,\ldots, n\}}} \arrow[swap]{dr}{p_S} & \R^{\{i_0,\ldots,n\}} \arrow{d}{proj}\\
& \R^S
\end{tikzcd}
$$
For $\tilde 3.$ let $x\in p_S^{-1}\left((t_{j_i})_{i\in S}\right)$. Then $p_{\{i_0\}}(x)=t^{i_0}_{j_{i_0}}$, so by assumption the map $p_{\{i_0,\ldots, n\}}$ is submersive at $x$. Since $proj$ is submersive, $p_S = proj \circ p_{\{i_0,\ldots, n\}}$ is also submersive at $x$.

For $\tilde 4.$ note that if $p_{\{i,\ldots, n\}}$ is submersive at $x$ then $p_{\{i+1,\ldots,n\}}$ is submersive at $x$.
\end{proof}

However, Lurie's $n$-fold simplicial space $\PBord^L_n$ is not an $n$-fold Segal space as we will see in the example below. Thus, our $\PBord_n^t$ is a corrigendum of Lurie's $\PBord^L_n$ from \cite{Lurie}.

\begin{example}\label{ex: Lurie torus false}
\hspace{1em}
\newline
\begin{minipage}{0.1\textwidth}
\begin{tikzpicture}
\draw[style=densely dashed] (0,0) circle (10pt);
\draw[style=densely dashed] (0,0) circle (20pt);
\draw (-5pt, 30pt) node[anchor=east]{\tiny $t^1_0$} -- (-5pt, -30pt);
\end{tikzpicture}
\end{minipage}
\hfill
\begin{minipage}{0.85\textwidth}
Consider the 2-dimensional torus $T$ in $\R\times\R^2$, embedded such that the projection onto $\R^2$ is an annulus, and consider the tuple $(T\subset\R\times\R^2, t^1_0, t^2_0\leq\ldots\leq t^2_{k_2})$, where $t^1_0$ is indicated in the picture of the projection plane $\R^2$ on the left.
Then, because of condition $(\tilde 3)$, $t^2_0\leq\ldots\leq t^2_{k_2}$ can be chosen everywhere such that any $(t^1_0,t^2_j)$ is not a point where the vertical ($t^1_0$-)line intersects the two circles in the picture. Thus, if $t^2_j$ and $t^2_{\tilde \jmath}$ are in two different connected components of this line minus these forbidden points, there is no path connecting this point to an element in the image of the degeneracy map. However, it satisfies the conditions 1., 2., $\tilde 3.$, and $\tilde 4.$ in the definition of $(\PBord^L_2)_{0,k_2}$, so $(\PBord^L_2)_{0,\bullet}$ is not essentially constant.
\end{minipage}
\end{example}

\subsection{The \texorpdfstring{$n$}{n}-uple Segal space \texorpdfstring{$\PBord_n^{uple}$}{of bordisms}}\label{uple Bord}

Consider the following interval version of condition $\tilde 3.$ in Definition \ref{def PBord^L}:
\begin{enumerate}
\item[$\tilde 3.$] For every $S\subseteq\{1,\ldots, n\}$ and for every collection $\{j_i\}_{i\in S}$, where $0\leq j_i \leq k_i$, the composition $p_S:M\overset{\pi}{\to}\R^n\to\R^S$ does not have any critical value in $(I_{j_i})_{i\in S}$.
\end{enumerate}
It ensures that the fibers $p_S^{-1}\left( (t^i_{j_i})_{i\in S} \right)$ for $t^i_{j_i}\in I^i_{j_i}$ are $(n-|S|)$-dimensional smooth manifolds. This can be interpreted similarly to in Remark \ref{rem composed bordisms} as follows: Choosing $t^i_{j_i} < t^i_{j_i'}$, we picture the composed bordism as
$$\pi^{-1}(\prod [t^i_{j_i} , t^i_{j_i'}]),$$
which, by condition $\tilde 3.$ is a ``cubical'' $n$-dimensional bordism whose sources and targets in each direction are themselves ``cubical'' $(n-1)$-dimensional bordisms, and furthermore, its diffeomorphism class does not depend on the choice of cutting points $t^i_{j_i}$, see Section \ref{sec (cubical) bordisms} for more details.
Our condition \eqref{cond 3}, which we saw in Lemma \ref{lemma ours implies Lurie} to imply $\tilde 3.$, ensures in addition the globularity condition, namely, that the source and target of the $n$-dimensional bordism are $(n-1)$-dimensional bordisms which themselves have the same source and target. This is reflected in the essential constancy condition, namely that we have an ``$n$-category'' instead of an ``$n$-uple category''. Thus, relaxing condition \eqref{cond 3} in Definition \ref{def PBord} to $(\tilde 3)$, we obtain:
\begin{defn}
The $n$-uple Segal space $\PBord_n^{uple}$ has levels whose elements are tuples of $(M,\oul{I}=(I^i_0\leq\cdots\leq I^i_{k_i})_{1\leq i\leq n})$, satisfying conditions 1., 2., and $\tilde 3$.
\end{defn}

Using the construction in Section \ref{sec n-fold from n-uple} one can see that $\PBord_n$ indeed is the maximal $n$-fold Segal space underlying the $n$-uple Segal space $\PBord_n^{uple}$.

\begin{prop}
The maximal underlying $n$-fold Segal space of $\PBord_n^{uple}$ is equivalent to $\PBord_n$.
\end{prop}

\begin{proof}
Recall from Section \ref{sec n-fold from n-uple} that the levels of the maximal underlying $n$-fold Segal space of a $n$-uple Segal space $X$ are given by
$$\mathbf{R}(X)_{\vec{k}}=\mathrm{Map}^h(\Theta^{\vec{k}},X).$$
The remaining structure arises from that of $X$.

The level-wise inclusion $\iota: \PBord_n \hookrightarrow \PBord_n^{uple}$ and the weak equivalence from Lemma \ref{lem Theta Delta} give a map
$$\PBord_n \simeq  \mathrm{Map}^h(\Delta^{\vec{k}}, \PBord_n) \simeq  \mathrm{Map}^h(\Theta^{\vec{k}}, \PBord_n) \longrightarrow \mathrm{Map}^h(\Theta^{\vec{k}}, \PBord_n^{uple}) = \mathbf{R}(\PBord_n^{uple}).$$

By the Segal condition, it suffices to show that this map is an equivalence at every $(i)=(\underbrace{1,\ldots, 1}_{i}, 0,\ldots, 0) \in (\Delta^{op})^k$.
To compute the correct mapping spaces, we use the cofibrant replacement of $\Theta^{(i)}$ from Proposition \ref{prop cofibrant Theta}.

Let $C^i$ be the $n$-uple category, or rather its nerve, the n-fold simplicial space which is the image of the Yoneda embedding of $([1],\ldots, [1])\in \Delta^{\times i}$ and thus is cofibrant for the projective model structure. Then
$$\mathrm{Map}^h(C^i,\PBord_n^{uple}) = (\PBord_n^{uple})_{(i)}.$$
Now we proceed by induction. The first index for which there is something to show is $i=2$. Using the explicit cofibrant replacement from Proposition \ref{prop cofibrant Theta}, we compute that
$$\mathbf{R}(\PBord_n^{uple})_{(2)} = \mathrm{Map}^h(\Theta^{(2)},\PBord_n^{uple})$$
is the homotopy pullback of the diagram
$$
\begin{tikzcd}
\mbox{} & (\PBord_n^{uple})_{(2)} \arrow{d} \\
(\PBord_n^{uple})_{0} \times (\PBord_n^{uple})_{0} \arrow{r} & (\PBord_n^{uple})_{(1)}\times (\PBord_n^{uple})_{(1)} .
\end{tikzcd}$$
From this, we see that an element in $\mathbf{R}(\PBord_n^{uple})_{(2)}$ is an element in $(\PBord_n^{uple})_{(2)}$ together with a path from its source and target to elements in $(\PBord_n^{uple})_{0}$. By Theorem \ref{thm time Morse lemma} such a path determines a diffeomorphism between the composed bordisms associated to the start and the end (the source and target, respectively) of the path, which in turn are elements in $(\PBord_n^{uple})_{0}$. Thus they satisfy condition $\tilde 3.$ everywhere, i.e.~the projection $\pi$ is a submersion. This property is preserved by diffeomorphisms, which implies that the source and target of the element in $(\PBord_n^{uple})_{(2)}$ also satisfy this condition. So the original element must satisfy condition $3.$ and thus lies in $(\PBord_n)_{(2)}$.

For $i>2$, again using the explicit cofibrant replacement, there is a homotopy pullback diagram
$$\begin{tikzcd}
\mathbf{R}(\PBord_n^{uple})_{(i)} \arrow{r}\arrow{d} \arrow[dr, phantom, "\ulcorner \mathrm{h}", very near start]& \mathrm{Map}(C^1\times \mathrm{cof}(\Theta^{(i-1)}),\PBord_n^{uple}) \arrow{d} \\
(\PBord_n)^{uple}_{0} \times (\PBord_n)^{uple}_{0} \arrow{r} & (\PBord_n)^{uple}_{(i-1)}\times (\PBord_n)^{uple}_{(i-1)} .
\end{tikzcd}$$
Since there is a surjection $C^i\twoheadrightarrow C^1\times \Theta^{(i-1)}$, elements in the top right corner arise from elements in $\mathrm{Map}^h(C^i,\PBord_n^{uple}) = (\PBord_n^{uple})_{(i)}$. Moreover,
$$\mathrm{Map}(C^1\times \mathrm{cof}(\Theta^{(i-1)}),\PBord_n^{uple}) = \mathbf{R}((\PBord_n^{uple})_{1,\bullets})_{(i-1)}.$$
The induction hypothesis implies that elements therein are elements in $(\PBord_n^{uple})_{(i)}$ which satisfy condition \eqref{cond 3} in Definition \ref{def PBord} or $i >1$.

Thus elements in $\mathbf{R}(\PBord_n^{uple})_{(i)}$ are such elements together with a path from their source and target to elements in $(\PBord_n^{uple})_{0}$. Unravelling this condition as in the base case, we see that this implies condition \eqref{cond 3} for $i=1$.
\end{proof}

\begin{ex}[The torus as a composition]

The difference between the $n$-fold and the $n$-uple Segal spaces can be seen when decomposing the torus, viewed as a 2-morphism in the respective $n$-(uple) categories. We will omit drawing the intervals outside of the torus and just draw the ``cutting lines'', which should be understood as actually extending to small closed intervals around them. 

The torus as a 2-morphism in $\Bord_2^{uple}$ can be decomposed simultaneously in both directions. One possible decomposition into in some sense elementary pieces is the following:
$$
\begin{tikzpicture}[scale=0.8]
\draw (0,0) circle (6 and 3);
\draw (0,0) circle (1.5 and 0.75);

\node (C1) at ($(135: 6 and 3)$) {};
\node (D1) at ($(145: 6 and 3)$) {};

\draw (D1) arc (177 : 363 : 0.819*6 and 0.3); 
\draw[dashed] (D1) arc (177 : 3 : 0.819*6 and 0.3);

\node (D2) at ($(215: 6 and 3)$) {};

\draw (D2) arc (183 : 357 : 0.819*6 and 0.3); 
\draw[dashed] (D2) arc (183 : -3 : 0.819*6 and 0.3);

\draw (C1) arc (93: 267: 0.5 and 0.707*3);
\draw[dashed] (C1) arc (453: 267: 0.5 and 0.707*3);
\path (C1) arc (93: 130: 0.5 and 0.707*3) node[inner sep=0] (P1) {};
\path (C1) arc (93: 210: 0.5 and 0.707*3) node[inner sep=0] (P3) {};
\path (C1) arc (453: 30: 0.5 and 0.707*3) node[inner sep=0] (P2) {};
\path (C1) arc (453: 310: 0.5 and 0.707*3) node[inner sep=0] (P4) {};

\draw (0, 3) arc (90: 270: 0.3 and 1.125);
\draw[dashed] (0, 3) arc (90: -90: 0.3 and 1.125);
\path (0, 3) arc (90: 160 : 0.3 and 1.125) node[inner sep=0] (pp1) {};
\path (0, 3) arc (90: 340 : 0.3 and 1.125) node[inner sep=0] (pp2) {};

\draw[dashed] (0, -3) arc (-90: 90: 0.3 and 1.125);
\draw (0, -3) arc (270: 90: 0.3 and 1.125);
\path (0, -3) arc (-90: 160 : 0.3 and 1.125) node[inner sep=0] (pp3) {};
\path (0, -3) arc (-90: 340 : 0.3 and 1.125) node[inner sep=0] (pp4) {};

\node (C2) at ($(45: 6 and 3)$) {};
\path let \p1 = (C2) in node[inner sep=0] (c2) at (\x1,0) {};
\draw (C2) arc (87: 273: 0.5 and 0.707*3);
\draw[dashed] (C2) arc (447: 273: 0.5 and 0.707*3);
\path (C2) arc (87: 130: 0.5 and 0.707*3) node[inner sep=0] (p1) {};
\path (C2) arc (87: 210: 0.5 and 0.707*3) node[inner sep=0] (p3) {};
\path (C2) arc (447: 30: 0.5 and 0.707*3) node[inner sep=0] (p2) {};
\path (C2) arc (447: 310: 0.5 and 0.707*3) node[inner sep=0] (p4) {};
\end{tikzpicture}
$$
However, similar to the argument in Example \ref{ex: Lurie torus false}, this decomposition is not a valid decomposition in $\Bord_2$, as condition \ref{cond 3} in Definition \ref{def PBord} is violated.

The torus as a 2-morphism in $\Bord_2$ can only be decomposed ``successively'', so we first decompose it in the first direction, i.e.~the first coordinate, e.g.~as
$$
\begin{tikzpicture}[scale=0.5]
\draw (0,0) circle (6 and 3);
\draw (0,0) circle (1.5 and 0.75);

\node (C1) at ($(135: 6 and 3)$) {};
\path let \p1 = (C1) in node[inner sep=0] (c1) at (\x1,0) {};
\draw (C1) arc (93: 267: 0.5 and 0.707*3);
\draw[dashed] (C1) arc (453: 267: 0.5 and 0.707*3);
\path (C1) arc (93: 130: 0.5 and 0.707*3) node[inner sep=0] (P1) {};
\path (C1) arc (93: 210: 0.5 and 0.707*3) node[inner sep=0] (P3) {};
\path (C1) arc (453: 30: 0.5 and 0.707*3) node[inner sep=0] (P2) {};
\path (C1) arc (453: 310: 0.5 and 0.707*3) node[inner sep=0] (P4) {};

\draw (0, 3) arc (90: 270: 0.3 and 1.125);
\draw[dashed] (0, 3) arc (90: -90: 0.3 and 1.125);
\path (0, 3) arc (90: 160 : 0.3 and 1.125) node[inner sep=0] (pp1) {};
\path (0, 3) arc (90: 340 : 0.3 and 1.125) node[inner sep=0] (pp2) {};

\draw[dashed] (0, -3) arc (-90: 90: 0.3 and 1.125);
\draw (0, -3) arc (270: 90: 0.3 and 1.125);
\path (0, -3) arc (-90: 160 : 0.3 and 1.125) node[inner sep=0] (pp3) {};
\path (0, -3) arc (-90: 340 : 0.3 and 1.125) node[inner sep=0] (pp4) {};

\node (C2) at ($(45: 6 and 3)$) {};
\path let \p1 = (C2) in node[inner sep=0] (c2) at (\x1,0) {};
\draw (C2) arc (87: 273: 0.5 and 0.707*3);
\draw[dashed] (C2) arc (447: 273: 0.5 and 0.707*3);
\path (C2) arc (87: 130: 0.5 and 0.707*3) node[inner sep=0] (p1) {};
\path (C2) arc (87: 210: 0.5 and 0.707*3) node[inner sep=0] (p3) {};
\path (C2) arc (447: 30: 0.5 and 0.707*3) node[inner sep=0] (p2) {};
\path (C2) arc (447: 310: 0.5 and 0.707*3) node[inner sep=0] (p4) {};
\end{tikzpicture}
$$
which is an element in $(\Bord_2)_{4, 1}$ and then decompose the two middle pieces, which are the images under the compositions of face maps $D^1(2), D^1(3): (\Bord_2)_{4, 1} \rightrightarrows (\Bord_2)_{1,1}$, as
$$
\begin{tikzpicture}[scale=0.5, baseline]
\draw (0,0.75) arc (90:270:1.5 and 0.75);

\node (C1) at ($(135: 6 and 3)$) {};
\draw (C1) arc (135: 90: 6 and 3);

\node (C3) at ($(225: 6 and 3)$) {};
\draw (C3) arc (225: 270: 6 and 3);

\path let \p1 = (C1) in node[inner sep=0] (c1) at (\x1,0) {};
\draw (C1) arc (93: 267: 0.5 and 0.707*3);
\draw[dashed] (C1) arc (453: 267: 0.5 and 0.707*3);
\path (C1) arc (93: 130: 0.5 and 0.707*3) node[inner sep=0] (P1) {};
\path (C1) arc (93: 210: 0.5 and 0.707*3) node[inner sep=0] (P3) {};
\path (C1) arc (453: 30: 0.5 and 0.707*3) node[inner sep=0] (P2) {};
\path (C1) arc (453: 310: 0.5 and 0.707*3) node[inner sep=0] (P4) {};

\draw (0, 3) arc (90: 270: 0.3 and 1.125);
\draw (0, 3) arc (90: -90: 0.3 and 1.125);
\path (0, 3) arc (90: 160 : 0.3 and 1.125) node[inner sep=0] (pp1) {};
\path (0, 3) arc (90: 340 : 0.3 and 1.125) node[inner sep=0] (pp2) {};

\draw (0, -3) arc (-90: 90: 0.3 and 1.125);
\draw (0, -3) arc (270: 90: 0.3 and 1.125);
\path (0, -3) arc (-90: 160 : 0.3 and 1.125) node[inner sep=0] (pp3) {};
\path (0, -3) arc (-90: 340 : 0.3 and 1.125) node[inner sep=0] (pp4) {};

\draw (P1) parabola[bend at end] (pp1); 
\draw [dashed] (P2) parabola[bend at end] (pp2); 

\draw (P3) parabola[bend at end] (pp3); 
\draw[dashed] (P4) parabola[bend at end] (pp4); 
\end{tikzpicture}
\quad \text{and} \quad
\begin{tikzpicture}[scale=0.5, baseline]
\draw (0,0.75) arc (90:-90: 1.5 and 0.75);
\draw (0, 3) arc (90: 270: 0.3 and 1.125);
\draw[dashed] (0, 3) arc (90: -90: 0.3 and 1.125);
\path (0, 3) arc (90: 160 : 0.3 and 1.125) node[inner sep=0] (pp1) {};
\path (0, 3) arc (90: 340 : 0.3 and 1.125) node[inner sep=0] (pp2) {};

\draw[dashed] (0, -3) arc (-90: 90: 0.3 and 1.125);
\draw (0, -3) arc (270: 90: 0.3 and 1.125);
\path (0, -3) arc (-90: 160 : 0.3 and 1.125) node[inner sep=0] (pp3) {};
\path (0, -3) arc (-90: 340 : 0.3 and 1.125) node[inner sep=0] (pp4) {};

\node (C2) at ($(45: 6 and 3)$) {};
\draw (C2) arc (45: 90: 6 and 3);
\draw (0,-3) arc (-90:-45: 6 and 3);
\path let \p1 = (C2) in node[inner sep=0] (c2) at (\x1,0) {};
\draw (C2) arc (87: 273: 0.5 and 0.707*3);
\draw (C2) arc (447: 273: 0.5 and 0.707*3);
\path (C2) arc (87: 130: 0.5 and 0.707*3) node[inner sep=0] (p1) {};
\path (C2) arc (87: 210: 0.5 and 0.707*3) node[inner sep=0] (p3) {};
\path (C2) arc (447: 30: 0.5 and 0.707*3) node[inner sep=0] (p2) {};
\path (C2) arc (447: 310: 0.5 and 0.707*3) node[inner sep=0] (p4) {};

\draw (pp1) parabola (p1); 
\draw [dashed] (pp2) parabola (p2); 

\draw (pp3) parabola (p3); 
\draw[dashed] (pp4) parabola (p4); 
\end{tikzpicture}
$$

Altogether a possible decomposition of the torus into elementary pieces in $\Bord_2$ is
$$
\begin{tikzpicture}
\draw (0,0) circle (6 and 3);
\draw (0,0) circle (1.5 and 0.75);

\node (C1) at ($(135: 6 and 3)$) {};
\path let \p1 = (C1) in node[inner sep=0] (c1) at (\x1,0) {};
\draw (C1) arc (93: 267: 0.5 and 0.707*3);
\draw[dashed] (C1) arc (453: 267: 0.5 and 0.707*3);
\path (C1) arc (93: 130: 0.5 and 0.707*3) node[inner sep=0] (P1) {};
\path (C1) arc (93: 210: 0.5 and 0.707*3) node[inner sep=0] (P3) {};
\path (C1) arc (453: 30: 0.5 and 0.707*3) node[inner sep=0] (P2) {};
\path (C1) arc (453: 310: 0.5 and 0.707*3) node[inner sep=0] (P4) {};

\draw (0, 3) arc (90: 270: 0.3 and 1.125);
\draw[dashed] (0, 3) arc (90: -90: 0.3 and 1.125);
\path (0, 3) arc (90: 160 : 0.3 and 1.125) node[inner sep=0] (pp1) {};
\path (0, 3) arc (90: 340 : 0.3 and 1.125) node[inner sep=0] (pp2) {};

\draw[dashed] (0, -3) arc (-90: 90: 0.3 and 1.125);
\draw (0, -3) arc (270: 90: 0.3 and 1.125);
\path (0, -3) arc (-90: 160 : 0.3 and 1.125) node[inner sep=0] (pp3) {};
\path (0, -3) arc (-90: 340 : 0.3 and 1.125) node[inner sep=0] (pp4) {};

\node (C2) at ($(45: 6 and 3)$) {};
\path let \p1 = (C2) in node[inner sep=0] (c2) at (\x1,0) {};
\draw (C2) arc (87: 273: 0.5 and 0.707*3);
\draw[dashed] (C2) arc (447: 273: 0.5 and 0.707*3);
\path (C2) arc (87: 130: 0.5 and 0.707*3) node[inner sep=0] (p1) {};
\path (C2) arc (87: 210: 0.5 and 0.707*3) node[inner sep=0] (p3) {};
\path (C2) arc (447: 30: 0.5 and 0.707*3) node[inner sep=0] (p2) {};
\path (C2) arc (447: 310: 0.5 and 0.707*3) node[inner sep=0] (p4) {};

\draw (P1) parabola[bend at end] (pp1) parabola (p1); 
\draw [dashed] (P2) parabola[bend at end] (pp2) parabola (p2); 

\draw (P3) parabola[bend at end] (pp3) parabola (p3); 
\draw[dashed] (P4) parabola[bend at end] (pp4) parabola (p4); 
\end{tikzpicture}
$$
This of course is also a valid decomposition in the $2$-fold category $\Bord_2^{uple}$.
\end{ex}

\section{The symmetric monoidal structure on bordisms and tangles}\label{sec Bord symm}

The $(\infty,n)$-category $\Bord_n$ is symmetric monoidal with its symmetric monoidal structure essentially arising from taking disjoint unions.
In this section we endow $\Bord_n$ with a symmetric monoidal structure in two ways. In Section \ref{Gamma Bord} the symmetric monoidal structure arises from a $\Gamma$-object. In Section \ref{tower Bord} a symmetric monoidal structure is defined using a tower of monoidal $i$-hybrid $(n+i)$-fold Segal spaces.

\subsection{The symmetric monoidal structure arising as a \texorpdfstring{$\Gamma$}{Gamma}-object}\label{Gamma Bord}

We construct a sequence of $n$-fold Segal spaces $(\PBord_n^V\langle m\rangle)_\bullets$ which form a $\Gamma$-object in $n$-fold Segal spaces which in turn endows $\Bord_n$ with a symmetric monoidal structure as defined in Section \ref{Gamma}.

\begin{defn}
Let $V$ be a finite-dimensional vector space.  For every $k_1,\ldots, k_n$, let $(\PBord_n^V\langle m\rangle)_{k_1,\ldots, k_n}$ be the collection of tuples
$$(M_1,\ldots, M_m, (I^i_0\leq\ldots\leq I^i_{k_i})_{i=1,\ldots, n}),$$
where each $(M_\beta, (I^i_0\leq\ldots\leq I^i_{k_i})_{i=1,\ldots n})$ is an element of $(\PBord_n^V)_{k_1,\ldots, k_n}$ and $M_1,\ldots , M_m$ are disjoint. It can be made into an $n$-fold simplicial space similarly to $\PBord_n^V$. Moreover, similarly to the definition of $\Bord_n$, we take the homotopy colimit over all $V\subset\R^\infty$ and complete to get an $n$-fold complete Segal space $\Bord_n[m]$.
\end{defn}

\begin{prop}
Let
\begin{align*}
\Gamma & \longrightarrow \SeSp_n,\\
\langle m\rangle & \longmapsto \PBord_n\langle m\rangle,
\end{align*}
where to a morphism $f:\langle m\rangle\to\langle k\rangle$ in $\Gamma$ we assign the map
\begin{align*}
\PBord_n\langle m\rangle & \longrightarrow \PBord_n\langle k\rangle,\\
(M_1,\ldots, M_m, I's) & \longmapsto (\coprod_{\beta\in f^{-1}(1)} M_\beta,\ldots, \coprod_{\beta\in f^{-1}(k)} M_\beta, I's).
\end{align*}
This assignment is functorial and endows $\Bord_n$ with a symmetric monoidal structure.
\end{prop}

\begin{proof}
By Lemma \ref{symm mon completion} it is enough to show that the assignment is a functor $\Gamma \to \SeSp_n$ with the property that for every $m\geq 0$ the map
$$\prod_{1\leq \beta\leq n} \gamma_\beta: \PBord_n\langle m\rangle \longrightarrow (\PBord_n\langle 1\rangle)^m$$
is an equivalence of $n$-fold Segal spaces.

Functoriality follows from the functoriality of taking preimages. For $m=0$ both sides are contractible. For $m>0$ the map $\prod_{1\leq \beta\leq n} \gamma_\beta$ is a level-wise inclusion  and we show that level-wise it is a weak equivalence.

First, we can show that for every $k_1,\ldots, k_n$, the space $(\PBord_n\langle 1\rangle)^m_{k_1,\ldots, k_n}$ is weakly equivalent to its pullback, which we will denote by $P^m$, along the diagonal map $\Int^n_{k_1,\ldots, k_n} \to (\Int^n_{k_1,\ldots, k_n})^m$. The argument is analogous to the proofs of the rescaling steps in the proof of the Segal condition in Proposition \ref{prop PBord Ssp}. Note that $P^m$ is the subspace of those elements for which the intervals coincide, and $\prod_{1\leq \beta\leq n} \gamma_\beta$ factors through $P^m$. 

Now, we will exhibit an explicit deformation retraction of $\prod_{1\leq \beta\leq n} \gamma_\beta: \PBord_n\langle m\rangle_{k_1,\ldots, k_n} \to P^m$, which shows that the two spaces are equivalent.

Consider the family of embeddings $\iota_s: V\to \R\otimes V$, $v\mapsto (s\alpha,v)$, parametrized by $s\in [0,1]$. Note that this also induces a family of embeddings $\R^\infty \to \R\oplus\R^\infty\cong\R^\infty$. 

Let $\Big( (M_1), \ldots, (M_m)\Big)$ be any $k$-simplex in the target $P^m$. We construct a $k$-simplex in $\PBord_n\langle m\rangle_{k_1,\ldots, k_n}$ together with a map $F: \Delta^k\times[0,1]\to P^m$ restricting to the original one at 1 and the new $k$-simplex at 0. The map $F$ is defined as follows: for fixed $s$, it consists of the $k$-simplex which is defined by composing the embedding $M_\alpha \hookrightarrow V \times B(\oul{I})$ with the embedding $\iota_s$. This depends smoothly on $s$ (and nothing else). For $s>0$, this indeed lands in $\PBord_n\langle m\rangle_{k_1,\ldots, k_n}$, since the images of $M_i$ are disjoint. The construction is compatible with the simplicial structure, since $\iota_s$ did not affect the copy of $B(\oul{I})$. Altogether this induces a strong homotopy equivalence between the above spaces.
\end{proof}

\begin{rem}\label{rem Gamma for d-cat Bord}
More generally, the same construction works for $\Bord_n^{(\infty,d)}$ for $d\leq n$ using a sequence of $d$-fold Segal spaces $\PBord_n^l\langle m\rangle$ for $l=n-d$.
\end{rem}

\subsection{Looping, the monoidal structure and the tower}\label{tower Bord}

Our goal for this section is to endow $\Bord_n$ and $\Bord_n^{(\infty,d)}$ with symmetric monoidal structures arising from a tower of monoidal $l$-hybrid $(n+l)$-fold Segal spaces $\Bord_n^{(l)}$ for $l\geq0$. We will prove a stronger statement first, namely, that the tangle categories $\Bord_n^{(\infty,d), V}$ are $r$-monoidal if $V$ is $r$-dimensional.

Recall from Section \ref{sec PBord_n^l} the $(n+l)$-fold Segal spaces $\PBord_n^l$ of $n$-dimensional bordisms. We saw in Remark \ref{rem hybrid} that $\PBord_n^l$ is $(l-1)$-connected if $l>0$. However, it does not have a discrete space of objects, 1-morphisms, \ldots, $(l-1)$-morphisms. For $l\leq0$ the situation is even worse as $\PBord_n^l$ is not even connected. However, in any $\PBord_n^l$, there is the distinguished object
$$\emptyset=\left(\emptyset, (0,1), \ldots, (0,1)\right),$$
and by Proposition \ref{prop tower without pointed} it suffices to prove the following theorem.

\begin{thm}\label{thm looping}
For $n+l \geq k\geq 0$, and an $(r-1)$-dimensional vector space $V$ there is a weak equivalence
\begin{equation*}
\begin{tikzpicture}
\node (a) at (0,0) {};
\draw (0,0) node[anchor=east] {$\myloop[k]{\PBord_n^{l, V} }{\emptyset}$};
\node (b) at (2,0) {};
\draw (2,0) node[anchor=west] {$\PBord_n^{l-k, V\oplus\R}$};

\draw[->] (a) edge node[above] {$u_r$} (b);
\end{tikzpicture}
\end{equation*}
which induces a weak equivalence $u: \myloop[k]{ \PBord_n^{l} }{\emptyset} \to \PBord_n^{l-k}$.
\end{thm}

Since looping and hybrid completion commute by Lemma \ref{lemma looping completion} the following corollary is immediate.
\begin{cor} Let $n\geq 0$, $d\leq n$, and $V$ be an $r$-dimensional vector space.
\begin{itemize}
\item The tangle categories $\Bord_n^V$ and $\Bord_n^{(\infty,d), V}$ are $r$-monoidal.
\item The bordism categories $\Bord_n$ and $\Bord_n^{(\infty,d)}$ are symmetric monoidal.
\end{itemize}
\end{cor}

We can extract the $k$-monoidal $(n+l)$-fold complete Segal spaces which form the tower for the symmetric monoidal structure:
\begin{defn}\label{defn Bord_n^{(l)}}
For $k>0$ and $d\geq 0$, the $(n+l)$-fold Segal space
$$\deloop[k]{ \PBord_n^{k-(n-d)} }{ \emptyset}$$
is $(k-1)$-connected and satisfies
$$\myloop[k]{ \deloop[k]{ \PBord_n^{k-(n-d)} }{ \emptyset}  }{\emptyset} \simeq \myloop[k]{ \PBord_n^{k-(n-d)} }{\emptyset} \simeq \PBord_n^{-(n-d)}$$
by the above Theorem. Its $k$-hybrid completion thus is a $k$-monoidal complete $(d+k)$-fold Segal space. 
The collection thereof, for $k\geq 0$, endows the complete $d$-fold Segal space $\Bord_n^{(\infty,d)} = \Bord_n^l$ for $d=n+l$ with a symmetric monoidal structure. For $d=n$, we obtain the symmetric monoidal structure on the complete $n$-fold Segal space $\Bord_n$.
\end{defn}

Since $\myloopnop[n-d]{\Bord_n}$ is complete, the universal property for the completion $\Bord_n^{(\infty,d)}$ of $\PBord_n^{d-n}$ applied to the map $\PBord_n^{d-n} \simeq \myloopnop[n-d]{\PBord_n} \to \myloopnop[n-d]{\Bord_n}$ gives the following corollary.
\begin{cor}\label{cor looping is again a Bord}
There is a morphism of symmetric monoidal $(\infty, d)$-categories
$$\Bord_n^{(\infty,d)} \longrightarrow \myloopnop[n-d]{\Bord_n}.$$
\end{cor}

\begin{rem}
Since completion and looping do not necessarily commute, this map is not necessariy an equivalence.
\end{rem}

\begin{proof}[Proof of Theorem \ref{thm looping}]
It is enough to show that 
$$\myloop{\PBord_n^{l, \R^{r-1}} }{ \emptyset }=\Hom_{\PBord_n^{l, \R^{r-1}}}(\emptyset,\emptyset) \simeq \PBord_n^{l-1, \R^r}.$$ The statement for general $k$ follows by induction.

We define a map
$$u_r: \myloopnop{\PBord_n^{l, \R^{r-1}}} \longrightarrow \PBord_n^{l-1, \R^r}$$
by sending an element in $\Hom_{\PBord_n^{l, \R^{r-1}}}(\emptyset,\emptyset)_{k_2,\ldots, k_{n+l}}$
$$(M_l)=\left(M\subseteq \R^{r-1}\times (a^1_0, b^1_1)\times B(\oul{I}), I^1_0 \leq I^1_1, \oul{I}= (I^i_0\leq\cdots\leq I^i_{k_i})_{i=2}^{n+l}\right)\in (\PBord_n^{l})_{1, k_2,\ldots, k_{n+l}}$$
to
$$(M_{l-1})=(M\subseteq (\underbrace{\R^{r-1}\times(a^1_0, b^1_1)}_{=:\tilde{V}})\times B(\oul{I}), \oul{I}=(I^i_0\leq\cdots\leq I^i_{k_i})_{i=2}^{n+l}),$$
so it ``forgets'' the first specified intervals. In the above, we view $\tilde V=\R^{r-1}\times (a^1_0, b^1_1)$ as a vector space using a diffeomorphism $(a^1_0, b^1_1) \cong \R$, and thus get an isomorphism $\tilde V\cong \R^r$.

First of all, we need to check that this map is well-defined, that is, that $(M_{l-1})\in (\PBord_n^{l-1, \R^r})_{k_2,\ldots, k_{n+l}}$.  The condition we need to check is the second one, i.e.~we need to check that $M\subset \tilde{V}\times B(\oul{I})$ is a bounded submanifold and $M\hookrightarrow \tilde{V}\times B(\oul{I}) \twoheadrightarrow B(\oul{I})$ is proper. Since $p_1^{-1}(I^1_0))= p_1^{-1}(I^1_1) =\emptyset$, we know that $M$ is bounded in the direction of the first coordinate, since $M = p_1^{-1}([b^1_0, a^1_1])$, and moreover, we know that $M\to B(I^1_0 \leq I^1_1, \oul{I})$ is proper. Together this implies the statements.

We claim that the homotopy fibers of this map are contractible. The homotopy fiber at a point 
$$(M_1)= (M_1\subseteq (\R^r \times B(\oul{I}), \oul{I}=(I^i_0\leq\cdots\leq I^i_{k_i})_{i=2}^{n+l}),$$
 in the target consists of pairs of:
\begin{itemize}
\item a 1-simplex $(M)$ in $(\PBord_n^{l-1, \R^r})_{k_2,\ldots, k_{n+l}}$ with endpoint $(M_1)$ together with,
\item for the source vertex $(M_0)$, a choice of pair of intervals which ``bound'' the manifold the the last coordinate in $\R^r$, i.e.~an element $I^1_0\leq I^1_1$ in $\Int_1$ such that
$$(M_0\subseteq \R^r \times B(\oul{I}(0), I^1_0\leq I^1_1, \oul{I}(0)=(I^i_0(0)\leq\cdots\leq I^i_{k_i}(0))_{i=2}^{n+l}) \in \myloopnop{\PBord_n^{l, \R^{r-1}}}_{k_2,\ldots, k_{n+l}}.$$
\end{itemize}
In presence of the 1-simplex, the choice of the intervals is equivalent to the choice of such intervals for the original element $(M_1)$, since they can be transported back and forth along the 1-simplex. Thus, the homotopy fiber is a product of the space of paths to the chosen $(M_1)$ (which is contractible) with the space of choices of pairs of intervals as in the second item, but for the original $(M_1)$.

We claim that this latter space is contractible as well: the fixed submanifold $M_1\subseteq \R^r \times B(\oul{I})$ is a bounded and closed submanifold. Therefore, it is bounded in the $\R^r$-direction. Therefore the image of the projection $p_{\{r\}}: M_1\to \R^{\{r\}}$ is bounded and the intervals can be chosen to be any intervals which lie of either side of the convex hull of this image. The complement of the convex hull has two connected components and each interval can be chosen arbitrarily in one of the connected components. The space of  subintervals of a given interval is contractible, and therefore the homotopy fibers are contractible.
\end{proof}

\subsection{Comparison of the symmetric monoidal structures}

Starting with the $\Gamma$-object $\PBord_n\langle -\rangle :\Gamma \to \SeSp_n$ we can extract, as explained in section \ref{sec monoidal comparison}, the $l$th layer of a tower for this symmetric monoidal structure on $\PBord_n$. It is the $(n+l)$-fold Segal space $\PBord_n\langle -\rangle_{\leq l}$ given by precomposing $\PBord_n\langle -\rangle$ with the map $(\Delta^{op})^l \overset{f^l}{\longrightarrow} \Gamma^l \overset{\wedge}{\longrightarrow} \Gamma$. We now show that it is equivalent to the layers of the tower constructed in the previous section.

\begin{prop}
The $(n+l)$-fold Segal spaces
$$\PBord_n\langle -\rangle_{\leq l}
\quad\mbox{and}\quad
\deloop[l]{ \PBord_n^{l} }{ \emptyset }
$$
are weakly equivalent.
\end{prop}

\begin{proof}
Recall from Section \ref{sec Bord (0,1)} the variant of bordisms which are constrained to the box $(0,1)^n$. The rescaling maps determine a weak equivalence
$$\PBord_n^l \overset{\rho}{\longrightarrow}  \PBord_n^{l, (0,1)}.$$
Similarly, rescaling determines a weak equivalence
$$\PBord_n\langle -\rangle \overset{\rho}{\longrightarrow} \PBord_n^{(0,1)}\langle -\rangle$$
to a $\Gamma$-object in $n$-fold Segal spaces, where $\PBord_n^{(0,1)}\langle m \rangle$ is the obvious modification constraining the bordisms to the box $(0,1)^n$ and using rescaling maps as in \ref{sec Bord (0,1)}.
We will construct a weak equivalence between $\PBord_n^{(0,1)}\langle -\rangle_{\leq l}$ and $\deloop[l]{ \PBord_n^{l, (0,1)} }{ \emptyset }$  as the colimit of a zig-zag of maps below. We need this intermediate step of constraining bordisms to the box to ensure that the maps in this zig-zag are indeed functorial.

To shorten notation, for an ascending sequence $V_0 \subset V_1\subset V_2 \subset \cdots$ of $r$-dimensional vector spaces $V_r$, let
$$Y_r=\PBord_n^{^{(0,1)}, V_r}\langle -\rangle_{\leq l}
\quad\mbox{and}\quad
X_r= \deloop[l]{ \PBord_n^{l, (0,1), V_r} }{ \emptyset }.
$$
We will work with $V_r=\R^r$ and use the standard inclusions $\R^r \cong \R^r\oplus \{0\} \hookrightarrow \R^{r+1}$. They induce level-wise inclusions $\iota_X: X_r \hookrightarrow X_{r+1}$ and $\iota_Y: Y_r\hookrightarrow Y_{r+1}$.

We will construct a sequence of maps
$$
\begin{tikzcd}[column sep=small]
\dots \arrow{dr}& & X_r \arrow{dr}{f_r} & & X_{r+l} \arrow{dr}{f_{r+l}} & & \dots\\
& Y_r \arrow{ur}{g_r} && Y_{r+l} \arrow{ur}{g_{r+l}} && Y_{r+2l} \arrow{ur}
\end{tikzcd}
$$
such that $f_r\circ g_r \sim \iota_Y$ and $g_{r+l}\circ f_r \sim \iota_X$. This will induce the weak equivalence on the homotopy colimits.

{\em The first map $f_r$ -- forgetting certain intervals:}
To define $f_r$, for $k_1,\ldots, k_n, j_1,\ldots, j_l\geq0$ consider a general element in $(X_{r+l})_{k_1,\ldots, k_n, j_1,\ldots, j_l}$ given by
$$\left( M \subset  V_r\times B(\oul{I})\times B(\oul{J}), \oul{I}, \oul{J}\right),$$
where $\oul{I}\in \Int^{(0,1),n}_{k_1,\ldots, k_n}$ and $\oul{J}\in \Int^{(0,1),l}_{j_1,\ldots, j_l}$. Note that $B(\oul{I})\times B(\oul{J}) =(0,1)^n\times (0,1)^l \subset \R^n\times \R^l$. The condition that the preimage of the projection map over the intervals in $\oul{J}$ be empty implies that $M$ is the disjoint union of the preimages
$$M_{m_1,\ldots, m_l} = D(m_1,\ldots, m_l)(M)$$
for $1\leq i \leq l$ and $1\leq m_i \leq j_i$. In brief, the image under $f_r$ will be same embedding, but tracking these disjoint manifolds, together with just the intervals $\oul{I}$.

To implement this, we order the $M_{m_1,\ldots, m_l}$'s in lexicographic ordering. Finally we relabel them from 1 to $j_1\cdot\cdots \cdot j_l$ according to this ordering, which amounts to setting $M_s = M_{m_1,\ldots, m_l}$ for $s=m_1+ \sum_{i=2}^l (m_i-1)\cdot j_1\cdots j_{i-1}$.

 The image under $f_r$ has to be an element in
$$(Y_{r+l})_{k_1,\ldots, k_n, j_1,\ldots, j_l} = (\PBord_n^{V_{r+l}}\langle j_1\cdots j_l\rangle)_{k_1,\ldots, k_n}.$$ 
For this, we use a fixed diffeomorphism $(0,1) \cong \R$ which induces a diffeomorphism $B(\oul{J}) \cong V_l$ to transfer the vector space structure. This in turn induces an isomorphism $V_{r+l}\cong V_r\times B(\oul{J})$. Then we let the image of the general element above be
$$ \left( M_1, \ldots,  M_{j_1\cdots j_l}, \oul{I}\right), \quad\mbox{where } M_1\amalg\cdots\amalg M_{j_1\cdots j_l} \subset  V_{r+l}\times B(\oul{I}).$$

We remark that this assignment is functorial in $k_1,\ldots, k_n, j_1,\ldots, j_l$, as the diffeomorphism is fixed once and for all and the map just forgets certain intervals and orders the manifolds in the disjoint union in a functorial way.

{\em The second map $g_r$ -- adding certain intervals back in:}
Conversely, to define $g_r: Y_r \to X_r$, start with a general element in $(Y_{r})_{k_1,\ldots, k_n, j_1,\ldots, j_l}$ given by
$$ \left( M_1, \ldots,  M_{j_1\cdots j_l}, \oul{I}\right), \quad\mbox{where } M_1\amalg\cdots\amalg M_{j_1\cdots j_l} \subset  V_{r}\times B(\oul{I})$$
Set
$$M = M_1\amalg\cdots\amalg M_{j_1\cdots j_l}, $$
and for $1\leq i \leq l$ and $0\leq m_i \leq j_i$, set
$$J^i_{m_i}= [\frac{2m_i}{ 2j_i+1}, \frac{2m_i+1}{2j_i+1}]\cap (0, 1),$$
so the intervals are equidistributed among $(0,1)$.
Now we realize $M$ as a submanifold of $V_{r}\times B(\oul{I})\times B(\oul{J}) = V_{r}\times (0,1)^n \times (0,1)^l$ in the following way:

For $1\leq s \leq j_1 \cdots j_l$ find the relabelling in the converse direction: find $m_1,\ldots, m_l$ such that $s=m_1+ \sum_{i=2}^l (m_i-1)\cdot j_1\cdots j_{i-1}$. For example, $m_l= \lfloor\frac{s}{j_1\cdots j_{l-1}}\rfloor$, etc. Then, we use the inclusion of midpoints of the $m_i$th interval $\{(\frac{2m_1 +\frac12}{ 2j_1+1},\ldots, \frac{2m_l +\frac12}{ 2j_l+1})\}\subset B(\oul{J})$ and set
$$M_s \cong M_s \times \{(\frac{2m_1 +\frac12}{ 2j_1+1},\ldots, \frac{2m_l +\frac12}{ 2j_l+1})\} \subset  V_{r}\times B(\oul{I}) \times \{(\frac{2m_1 +\frac12}{ 2j_1+1},\ldots, \frac{2m_l +\frac12}{ 2j_l+1})\} \subset  V_{r}\times B(\oul{I}) \times B(\oul{J}).$$
The images of the $M_s$ by construction will be disjoint, so together, we get
$$M \subset  V_{r}\times (0,1)^n\times (0,1)^l = V_{r}\times B(\oul{I}) \times B(\oul{J}).$$
Together with the intervals $\oul{I}$, $\oul{J}$, this gives an element in $X_r$.

Again, it is straightforward to check functoriality: the only choice was that of the extra intervals $\oul{J}$, and this choice was canonical because we chose them to be equidistributed in $(0,1)$. Moreover, they are sent to each other under the face and degeneracy maps of $\Int^{(0,1)}$ because of the extra rescaling step. This is the reason for constraining the bordisms to the box in this proof.

{\em The homotopies:}
It is easy to see that $f_r\circ g_r\sim \iota_Y$: For an element in the composition, the manifolds in the disjoint union are ``spread out'' over different points in $(0,1)^l=B(\oul{J})\subset \R^l$:
\begin{center}
$ \left( M_1, \ldots,  M_{j_1\cdots j_l}, \oul{I}\right), \quad\mbox{where } M_1\amalg\cdots\amalg M_{j_1\cdots j_l} \subset  V_{r}\times B(\oul{I})$\\
\rotatebox{-90}{$\mapsto$}\\
$\left(M, \oul{I}, \oul{J}\right), \quad\mbox{where } M= M_1\amalg\cdots\amalg M_{j_1\cdots j_l} \subset  V_{r}\times (0,1)^n\times (0,1)^l =V_{r}\times B(\oul{I}) \times B(\oul{J})$\\
\rotatebox{-90}{$\mapsto$}\\
$ \left( M_1, \ldots,  M_{j_1\cdots j_l}, \oul{I}\right), \quad\mbox{where } M_1\amalg\cdots\amalg M_{j_1\cdots j_l} \subset  V_{r+l}\times B(\oul{I})$
\end{center}
However, for the latter element, consider the homotopy $$H: [0,1]\times \R^{r+l} \to \R^{r+l}, (t, x_1,\ldots, x_r, y_1,\ldots, y_l)\mapsto (x_1,\ldots, x_r, t\cdot y_1,\ldots, t\cdot y_l).$$
For any $t$, 
$$\left( (H_t\times id_{(0,1)^n})(M) \subset \R^{r+l} \times (0,1)^n = \R^{r+l} \times B(\oul{I}), \oul{I} \right)$$
still is an element in $(Y_{r+l})_{k_1,\ldots, k_n, j_1,\ldots, j_l}$, and for $t=0$ we get back the element we started with.

A homotopy from the other composition $g_{r+l}\circ f_r$ to $\iota_X$ is also straightforward to construct.
\end{proof}

\begin{cor}
The two symmetric monoidal structures on $\Bord_n$ constructed in the last two sections are equivalent.
\end{cor}

\begin{rem}
It is straightforward to get a similar result for $\Bord_n^{(\infty,d)}$.
\end{rem}

\section{Interpretation of bordisms as manifolds with corners and the homotopy category}\label{hocat}

In this section we compare our definitions to the definitions of higher bordisms from \cite{Laures, LaudaPfeiffer, Schommer, BoekstedtMadsen} which are certain manifolds with corners. We first recall the definition and show that every bordism in that sense leads to an element in our space of bordisms from the previous section and vice versa. Then we prove that every path in our space of bordisms leads to diffeomorphic bordisms as manifolds with corners, and explain that the spaces indeed are the disjoint union of classifying spaces of diffeomorphisms of bordisms, as suggested in \ref{sec space PBord}. Finally, we show that the homotopy category of our symmetric monoidal Segal space of bordisms recovers the usual bordism category.

\subsection{Bordisms as manifolds with corners and embeddings thereof}\label{sec (cubical) bordisms}

For the definition and notation for $\<k\>$-manifolds used in this section we refer to \cite{Laures}. In brief, a $\<k\>$-manifold is a manifold $M$ with ``faces''\footnote{For an $n$-dimensional manifold $M$ with corners, for any $x\in M$ the number of zeros $c(x)$ in the coordinates of $x$ in any chart is independent of the chart (by a chart we mean a diffeomorphism $x\ni U\to V\subset \mathbb{R}_{\geq0}^n$). A connected face of $M$ is the closure of a component of $\{x\in M: c(x)=1\}$, i.e.~of the $(n-1)$-dimensional part of the boundary. An $n$-dimensional manifold with faces is an $n$-dimensional manifold $M$ with corners such that any $x\in M$ is in exactly $c(x)$ faces, i.e.~if $x$ is in the $(n-k)$-dimensional part of the boundary, then it lies in $k$ different faces.}
together with an ordered $n$-tuple $(\partial_1M,\ldots, \partial_k M)$ satisfying
\begin{enumerate}
\item $\partial_1M \cup \ldots\cup \partial_k M = \partial M$, and
\item $\partial_iM \cap \partial_jM$ is a face of $\partial_i M$ and $\partial_j M$ for every $i\neq j$.
\end{enumerate}
The number $k$ indicates that the manifold has corners of codimension $k$, which are exactly the components of $\partial_1M \cap \ldots\cap \partial_k M$.
\begin{ex}
Consider the biangle as a manifold with corners:
$$
\begin{tikzpicture}
\node[inner sep=0] (A) at (-1,0) {};
\node[inner sep=0] (B) at (1,0) {};
\path[font=\scriptsize,>=angle 90]
(-1,0) edge [bend left=45] node[anchor=south] {$\partial_2 M$} node[above] {} (1,0)
(-1,0) edge [bend right=45] node[anchor=north] {$\partial_1 M$} node[below] {} (1,0);
\draw (0,0) node {$M$};
\end{tikzpicture}
$$
It is a manifold with faces: every point $x$ in the interior has $c(x)=0$, every point in $\partial_1M\cap\partial_2M$ has $c(x)=2$ and is a face of both $\partial_1M$ and $\partial_2M$, and every other point has $c(x)=1$ and lies either in $\partial_1M$ or in $\partial_2M$. Moreover, the ordered pair $(\partial_1M, \partial_2M)$ gives $M$ the structure of a $\<2\>$-manifold.
\end{ex}

An $k$-bordism is a $\<k\>$-manifold such that for each $\partial_iM$ we distinguish between an ``incoming'' and an ``outgoing'' part. We will see later that we can think of it as having $k$ ``directions'' in which there is an ``incoming'' and an ``outgoing'' part of the boundary.
\begin{defn}\label{defn k-bordism}\
\begin{itemize}
\item A {\em (cubical) 0-bordism} is a closed manifold.
\item An $n$-dimensional {\em cubical $k$-bordism} is an $n$-dimensional $\<k\>$-manifold whose $k$-tuple of faces is denoted by $(\partial_1M,\ldots, \partial_k M)$ together with decompositions
$$\partial_i M= \partial_{i,in} M \amalg \partial_{i, out} M,$$
such that $\partial_{i,in} M$ and $\partial_{i,out} M$ are $(n-1)$-dimensional cubical $(k-1)$-bordisms.
\item An $n$-dimensional {\em $k$-bordism} is an $n$-dimensional cubical $k$-bordism such that $\partial_{i,in} M$ and $\partial_{i,out} M$ are trivial in the sense that there are $(n-k-1+i)$-dimensional $(i-1)$-bordisms $M_{i, in}$ and $M_{i, out}$ such that there are diffeomorphisms
$$\partial_{i,in} M \cong M_{i, in} \times[0,1]^{k-i} \quad\mbox{and}\quad \partial_{i,out} M \cong M_{i, out} \times[0,1]^{k-i}$$
for $1\leq i\leq n-1$.
\end{itemize}
\end{defn}

\begin{rem}
For $k=2$ our definition of 2-bordism agrees with that in \cite{Schommer}. One should think of $M_{i, in}$ and $M_{i, out}$ as the $i$-source and $i$-target of $M$.
\end{rem}

\begin{ex}
An example of a 2-dimensional 2-bordism is illustrated in the following picture.
$$\begin{tikzpicture}[scale=2.5]
\begin{scope}[yscale=1,xscale=-1]

\draw (2,2) arc (-90: 0: 0.3cm and 0.1cm);
\draw [densely dashed] (2.3, 2.1) arc (0:90: 0.3cm and 0.1cm);
\draw (2, 2.2) -- (1.9, 2.2) -- (1.9, 3) -- (3,3);
\draw (2,2) -- (2, 2.8) -- (3.1, 2.8) 
	 (3.1, 2) -- (3,2);
\draw (3,2) arc (270: 180: 0.3cm and 0.1cm);
\draw [densely dashed]  (3, 2.2) arc (90: 180: 0.3cm and 0.1cm) (3, 2.2);

\draw (2.3, 2.1) arc (180: 0: 0.2cm and 0.3cm);
\draw (2.7, 2.1) arc (180:270: 0.3cm and 0.1cm) --+(0.1,0);
\draw [densely dashed] (2.7, 2.1) arc (180: 90: 0.3cm and 0.1cm);

\draw (3.1, 2.8) -- (3.1, 2);
\draw (3,3)--(3,2.8);
\draw[densely dashed] (3, 2.8) -- (3, 2.2);
\end{scope}
\end{tikzpicture}
$$
Its tuple $(\partial_1 M, \partial_2 M)$ of faces is given by the vertical and the horizontal faces, respectively.
\end{ex}

\begin{ex}\label{ex cube}
Let $M=[0,1]^k$. It is a $k$-bordism with
$$\partial_{i, in} M = [0,1]^{i-1}\times\{0\} \times [0,1]^{k-i} \quad\mbox{and}\quad \partial_{i, out} M = [0,1]^{i-1}\times\{1\} \times [0,1]^{k-i}.$$
\end{ex}

A $\<k\>$-manifold $M$ determines a functor
$$M: [1]^k \to \mathcal{T}\mathpzc{op}, \quad a=(a_1,\ldots, a_k) \longmapsto \begin{cases} M(a)=\bigcap_{\{i:\, a_i=0\}} \partial_iM,\quad \mbox{if}\, a\neq (1,\ldots, 1),\\ M(1,\ldots, 1) = M. \end{cases}$$

Recall the following embedding theorem via ``neat'' embeddings for $\<k\>$-manifolds. 
\begin{thm}[Proposition 2.1.7 in \cite{Laures}]\label{thm Laures embedding}
Any compact $\<k\>$-manifold admits a neat embedding in $\R_+^k \times \R^m$ for some $m$, i.e.~a natural transformation $\iota: M\to \R^k_+\times \R^m$ such that
\begin{enumerate}
\item $\iota(a)$ is an inclusion of a submanifold for all $a\in [1]^k$
\item the intersections $M(a) \cap (\R^m \times\R^k_+(b)) = M(b)$ are perpendicular for all $b<a$.
\end{enumerate}
\end{thm}

We adapt the definition of a neat embedding for $\<k\>$-manifolds for bordisms.
\begin{defn}
A {\em neat embedding} $\iota$ of a (cubical) k-bordism $M$ is a natural transformation of $M$ to $\R^m \times[0,1]^k$ for some $m$, both viewed as functors $[1]^k \to \mathcal{T}\mathpzc{op}$ such that
\begin{enumerate}
\item $\iota(a)$ is an inclusion of a submanifold for all $a\in [1]^k$ respecting the prescribed decomposition of the faces of the bordism
\item the intersections $M(a) \cap (\R^m \times[0,1]^k(b)) = M(b)$ are transverse for all $b<a$.
\end{enumerate}
\end{defn}

To prove that any $k$-bordism admits a neat embedding we use that any $\<k\>$-manifold admits a compatible collaring:
\begin{lemma}[Lemma 2.1.6 in \cite{Laures}]
For $a\in [1]^k$ we write $1-a=(1,\ldots, 1)-a$. Any $\<k\>$-manifold $M$ admits a $\<k\>$-diagram $C$ of embeddings
$$C(a<b): \R_+^k(1-a) \times M(a) \hookrightarrow \R_+^k(1-b) \times M(b)$$
with the property that $C(a<b)$ restricted to $\R_+^k(1-b) \times M(a)$ is the inclusion map $id\times M(a<b)$.
\end{lemma}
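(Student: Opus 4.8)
The statement is Lemma~2.1.6 of \cite{Laures}, so I will only outline the strategy. Recall the notation: writing $M=M(1,\dots,1)$, for $a\in\ul 2^k$ the corner stratum $M(a)$ is the intersection of the faces $\partial_iM$ over those $i$ with $a_i=0$, and $\R_+^k(1-a)=\prod_{i\,:\,a_i=0}\R_+$ is the ``normal model'' of a neighbourhood of $M(a)$ in $M$. For $a\le b$ in the product order one has a canonical inclusion $M(a<b)\colon M(a)\hookrightarrow M(b)$ and an inclusion $\R_+^k(1-b)\hookrightarrow\R_+^k(1-a)$ (as the locus where the extra coordinates vanish); the content of the lemma is that these can be thickened to a coherent diagram of collar embeddings. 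The plan is to argue by induction on $k$, using the classical collar neighbourhood theorem together with its relative (extension) and uniqueness-up-to-isotopy versions.

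For $k=0$ there is nothing to do, and for $k=1$ the claim is precisely the existence of a collar $c\colon\partial_1M\times\R_+\hookrightarrow M$ of $M(0)=\partial_1M$ in $M(1)=M$; taking $C(0<1)=c$ and $C(a<a)=\mathrm{id}$, the required restriction property $C(0<1)|_{\{0\}\times M(0)}=\mathrm{incl}$ is just that a collar is the identity on the boundary. For the inductive step, assume the lemma for $\langle k-1\rangle$-manifolds. First I would observe that each face $\partial_kM$ is a $\langle k-1\rangle$-manifold, with faces $\partial_iM\cap\partial_kM$ for $i<k$, so by the inductive hypothesis it carries a $\langle k-1\rangle$-diagram of collars. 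I would then (i)~extend this diagram to a collar $c_k\colon\partial_kM\times\R_+\hookrightarrow M$ that is itself a morphism of diagrams, i.e. over each corner $\partial_kM\cap\bigcap_{i\in S}\partial_iM$ it restricts to the product of the lower collar with $\R_+$ --- this uses the relative collar theorem, building the collar of $\partial_kM$ outward from the collars already fixed on its own faces; (ii)~repeat for $\partial_1M,\dots,\partial_{k-1}M$ one at a time, each time isotoping the newly chosen collar, rel the corner-collars, so that it agrees with the previously treated ones along the overlaps $\partial_iM\cap\partial_jM$ (here one invokes uniqueness of collars up to isotopy in relative form, à la Cerf--Douady); and (iii)~assemble the $c_1,\dots,c_k$ and the lower-depth data into the diagram $C$ indexed by $\ul 2^k$, defining $C(a<b)$ as the composite of $M(a<b)$ with the appropriate product of $\R_+$-collar directions, and checking the functoriality $C(b<c)\circ C(a<b)=C(a<c)$, which is now guaranteed by the compatibilities arranged in (i)--(ii).

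The main obstacle is step~(ii): making all $k$ collars simultaneously compatible along every corner, so that $a\mapsto\R_+^k(1-a)\times M(a)$ becomes a genuine functor on $\ul 2^k$ rather than an incoherent collection of collars. This is exactly the point at which the axioms of a $\langle k\rangle$-manifold (that $\partial_iM\cap\partial_jM$ be a face of each $\partial_iM$, with the strata organised by $\ul 2^k$) are needed, and it is handled by the inductive bookkeeping together with relative collar uniqueness. Alternatively, one can largely bypass this: by the neat embedding theorem (Proposition~2.1.7 of \cite{Laures}, quoted above) embed $M\hookrightarrow\R_+^k\times\R^m$ neatly, choose a tubular neighbourhood of the image that is compatible with the coordinate hyperplanes $\{x_i=0\}$, and pull back the evident product collar diagram of $\R_+^k\times\R^m$; neatness (transversality of the embedding to each face) then forces the pulled-back diagram to have the stated restriction property, and functoriality is inherited from that of the standard diagram on $\R_+^k\times\R^m$.
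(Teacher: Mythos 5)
First, a point of comparison: the paper does not prove this statement at all --- it is quoted as Lemma~2.1.6 of Laures' paper on cobordism of manifolds with corners, so there is no internal proof to measure you against. Your main line of argument (induction on $k$, using the collar neighbourhood theorem together with its relative and uniqueness-up-to-isotopy versions to make the collars of the faces $\partial_1M,\dots,\partial_kM$ mutually compatible along the corner strata, then assembling them into a functor on $\underline{2}^k$) is the standard proof of compatible collarings for $\langle k\rangle$-manifolds and is, in outline, the argument behind the cited result. You also correctly isolate the genuine difficulty, namely step~(ii): arranging that the $k$ collars commute near every corner so that the assignment $a\mapsto\R_+^k(1-a)\times M(a)$ is actually functorial. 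As a sketch this is acceptable.

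The ``alternative'' route you offer at the end, however, is circular in this context and should be dropped. In Laures' paper the neat embedding theorem (Proposition~2.1.7) is proved \emph{after}, and \emph{using}, Lemma~2.1.6: one needs the compatible collars to construct the component of the embedding mapping to $\R_+^k$ in a way that respects the corner structure --- exactly as in the proof of Theorem~\ref{thm embedding k-bordism} in the present paper, which invokes both quoted results together. So the embedding theorem cannot be used to produce the collaring. Moreover, even granting a neat embedding, the instruction to ``choose a tubular neighbourhood compatible with the coordinate hyperplanes'' is itself a compatible-collars statement for the normal data near the corners, so the difficulty is relocated rather than removed. Keep the inductive argument; discard the shortcut.
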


Now the embedding theorem for $\<k\>$-manifolds leads to an embedding theorem for (cubical) $k$-bordisms. Such embedded cubical $k$-bordisms appear in \cite{BoekstedtMadsen}.
\begin{thm}\label{thm embedding k-bordism}
Any $n$-dimensional (cubical) $k$-bordism $M$ admits a neat embedding into $\R^m \times[0,1]^k$.
\end{thm}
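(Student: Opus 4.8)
The statement is a relative/boundary-respecting version of the embedding theorem for $\<k\>$-manifolds (Proposition 2.1.7 in \cite{Laures}), and the natural approach is to deduce it from that result by a careful induction, using the collaring lemma (Lemma 2.1.6 in \cite{Laures}) to glue the embeddings of the faces coherently. First I would recall that a (cubical) $k$-bordism $M$ is in particular an $n$-dimensional $\<k\>$-manifold, so by the cited proposition it admits a neat embedding $j\colon M \hookrightarrow \R^k_+\times\R^m$ for some $m$. The point is to upgrade this to an embedding into $\R^m\times[0,1]^k$ that additionally respects the prescribed decompositions $\partial_i M=\partial_{i,in}M\amalg\partial_{i,out}M$: the two halves of each face must land in the two opposite faces $\{x_i=0\}$ and $\{x_i=1\}$ of the cube (after a suitable identification of $\R^k_+$ with a neighborhood of a corner of $[0,1]^k$, or rather of its doubling).

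\textbf{Key steps.} (1) Induct on $k$. For $k=0$ a cubical $0$-bordism is a closed manifold and the statement is Whitney's embedding theorem, so $M\hookrightarrow\R^m$ for some $m$. (2) For the inductive step, each face $\partial_i M$ is a $\<k-1\>$-manifold which is itself (the disjoint union of) a cubical $(k-1)$-bordism; by induction each $\partial_{i,\varepsilon}M$ ($\varepsilon\in\{in,out\}$) neatly embeds in $\R^{m_i}\times[0,1]^{k-1}$. Take $m'$ large enough to accommodate all $2k$ of these simultaneously, as well as an embedding of the interior. (3) Use the $\<k\>$-diagram of collar embeddings $C(a<b)$ from Lemma 2.1.6 in \cite{Laures} to identify a neighborhood of $\partial_i M$ in $M$ with $\partial_i M\times[0,\epsilon)$; over this collar, define the embedding to be (face embedding)$\times$(linear inclusion $[0,\epsilon)\hookrightarrow[0,1]$ near the $i$th coordinate $0$ resp.\ $1$, according to $in$ vs.\ $out$). (4) Patch the collar embeddings to the embedding of the interior $M\smallsetminus(\text{collars})$ obtained by combining $j$ (suitably rescaled so that its image lies in $\R^m\times(0,1)^k$) with a partition of unity subordinate to the cover by collar neighborhoods and the interior; transversality of $M(a)\cap(\R^m\times[0,1]^k(b))$ is exactly the neatness built into $j$ and preserved by the collar construction. (5) Finally, for the genuinely ``$k$-bordism'' case (as opposed to merely cubical), the diffeomorphisms $\partial_{i,\varepsilon}M\cong M_{i,\varepsilon}\times[0,1]^{k-i}$ let one reduce the data of the face embeddings to embeddings of the lower $M_{i,\varepsilon}$, but for the statement as written this is not needed — the cubical case already implies it.

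\textbf{Main obstacle.} The delicate part is the coherence of the gluing along the corners: the collars of adjacent faces $\partial_i M$ and $\partial_j M$ overlap along the codimension-$2$ corner $\partial_i\partial_j M$, and the chosen collar identifications must be compatible there so that the resulting map is a well-defined embedding respecting \emph{all} the face decompositions at once. This is precisely what the $\<k\>$-diagram structure of $C$ in Lemma 2.1.6 in \cite{Laures} is designed to handle — the compatibility condition ``$C(a<b)$ restricted to $\R^k_+(1-b)\times M(a)$ is $id\times M(a<b)$'' gives exactly the commuting squares needed — so I would lean on it heavily rather than reproving it. A secondary technical nuisance is ensuring the map remains an \emph{embedding} (injective immersion, proper onto its image) after combining several local pieces via a partition of unity; this is handled by the standard trick of also recording the partition-of-unity functions as extra coordinates, enlarging $m$ if necessary, so that injectivity is automatic. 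I expect no conceptual difficulty beyond bookkeeping once the collar $\<k\>$-diagram is in hand.
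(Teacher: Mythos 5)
Your proposal is correct in outline and relies on the same two inputs as the paper (Laures' embedding theorem for $\langle k\rangle$-manifolds and the $\langle k\rangle$-diagram of collars), but the mechanics are genuinely different, and the paper's route dissolves precisely the two difficulties you flag. The paper does \emph{not} patch together embeddings of faces and interior: it keeps the single global neat embedding $\iota\colon M\hookrightarrow\R_+^k\times\R^{m'}\subset\R^m$ and only constructs an auxiliary smooth map $h\colon M\to[0,1]^k$ whose sole job is to send each corner component $M(a,\alpha)$ to the correct face of the cube, transversally. The one-line observation that the product of an embedding with \emph{any} smooth map is still an embedding then makes $\iota\times h$ a neat embedding into $\R^m\times[0,1]^k$ with no injectivity or immersion issues to check; your ``main obstacle'' (coherence of glued embeddings at corners) and ``secondary nuisance'' (partition of unity destroying injectivity, remedied by extra coordinates) simply never arise, because $h$ need not be injective and a partition of unity is only used to interpolate between collar-coordinate projections, not between embeddings. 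Correspondingly, the induction in the paper is not on the dimension $k$ of the bordism but on the depth of the corner strata $a\in\ul{2}^k$, building $h$ outward from the deepest corners using the collar diagram; your induction on $k$ with separately chosen face embeddings adds bookkeeping (you would still need the patched map to restrict to something neat on each face, not to the inductively chosen embeddings) without buying anything. Your version can be pushed through — the extra-coordinate trick does yield an injective immersion, and arranging the partition of unity to be constant near the boundary preserves neatness — but it is strictly more delicate than separating the roles of ``being an embedding'' (handled once and for all by $\iota$) and ``hitting the right cube faces'' (handled by $h$).
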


\begin{proof}
Let $M$ be an $n$-dimensional $k$-bordism. By the above theorem, there is a (neat) embedding $\iota: M \hookrightarrow \R_+^k \times \R^{m'} \subset \R^{k+m'} =\R^m$ for some $m'$ and $m=m'+k$. We will use that the product of an embedding with any smooth map still is an embedding.
For this, we will construct a smooth map $h: M\to [0,1]^k$ such that its product with $\iota$ is a neat embedding.

The idea for $h$ is that the decomposition into disjoint unions of the boundary components of the $k$-bordism determine a decomposition of the collars as well. Starting with the lowest dimensional corners $M(0)$, we use this decomposition to define $h$ on each component using either the collar coordinate or one minus the collar coordinate in each coordinate direction. Then we proceed by induction on the dimension of the corner and define $h$ successively on $M(a)$.

Recall from Example \ref{ex cube} that $[0,1]^k$ is a $k$-bordism. We fix a collaring, e.g.~the one given by the embeddings determined by diffeomorphisms
$$\R_+^\alpha \times [0,1]^{k-\alpha} \cong [0,\frac16)^\alpha \times [0,1]^{k-\alpha}$$
$$
\begin{tikzpicture}
\draw (0,0) -- (0,3) -- (3,3) -- (3,0) -- cycle;
\fill[pattern= vertical lines] (0,0) -- (0,0.5) -- (3,0.5) -- (3,0);
\fill[pattern= vertical lines] (0,2.5) -- (0,3) -- (3,3) -- (3,2.5);
\fill[pattern= horizontal lines] (0,0) -- (0.5,0) -- (0.5,3) -- (0,3);
\fill[pattern= horizontal lines] (2.5,0) -- (3,0) -- (3,3) -- (2.5,3);
\end{tikzpicture}
$$

Let $a=(a_i) \in[1]^k$. Denote by $|a|=\sum a_i$ and $S(a)=\{i: a_i =0\}\subset \{1,\ldots, k\}$. Note that $|S(a)|=k-|a|$.

By the above lemma, there is a collaring of the $\<k\>$-manifold $M$.
The collaring gives an embedding $C(a<1): \R_+^{S(a)} \times M(a) \hookrightarrow M(1-0)=M$ whose image is a neighborhood $U(a)$ of the corner $M(a)$.
The decompositions $\partial_i M=\partial_{i,in} M \amalg \partial_{i, out} M$ give a decomposition of $M(a)$ into $2^{|S(a)|}$ disjoint components: 
$$M(a) = \bigcap_{\{i: a_i\neq 0\}} \partial_i M = \bigcap_{\{i: a_i\neq 0\}} \partial_{i, in} M \amalg \partial_{i, out} M = \bigsqcup_{\alpha \in [1]^{|S(a)|}} M(a,\alpha),$$
i.e.~an element $c\in M(a)$ lies in $M(a,\alpha)$ if and only if
$$ \alpha_i = \begin{cases} 0, & c \in \partial_{i,in} M\\
1, & c \in \partial_{i,out} M.
\end{cases}$$
This decomposition also determines a decomposition of $U(a)$ into into $2^{k-|a|}$ disjoint components $U(a,\alpha)$ for $\alpha\in [1]^{S(a)}$ such that $U(a,\alpha)$ is the image of $\R_+^{S(a)} \times M(a,\alpha)$ under $C(a<1)$.
The chosen collaring of $[0,1]^k$ induces one on $[0,1]^{S(a)}$, which in turn determines an embedding $\iota_\alpha:\R_+^{S(a)}\hookrightarrow [0,1]^{S(a)}$ for any particular corner $\alpha\in[1]^{S(a)}\subset[0,1]^{S(a)}$. Note that the images of these embeddings for varying corners are disjoint.
We define $h_a$ on $U(a, \alpha)$ to be the composition
$$U(a,\alpha) \cong \R_+^{S(a)} \times M(a, \alpha) \overset{\pr}{\longrightarrow} \R_+^{S(a)} \overset{\iota_{\alpha}}{\longrightarrow} [0,1]^{S(a)}.$$

For $a=0\in [1]^k$, i.e~the lowest (=$(n-k)$-)dimensional corners of $M$, the function $h_0: U(0)\to[0,1]^k$ is the restriction $h|_{U(0)}$ to $U(0)$ of the desired function $h$.

For $a>0$, assume $h$ is already defined on $U(b)$ for $b<a$ with $|b|=|a|-1$. Fix $\alpha\in [1]^{S(a)}$ and let $\beta \in [1]^{S(b)}$ such that $\beta_i=\alpha_i$ for every $i\in S(a)\subset S(b)$. Then $U(b,\beta) \subset U(a,\alpha)$. Since $U(b,\beta)$ are disjoint for varying $\beta$ and the collarings restrict compatibly, we can choose a smooth function 
$$h_{\beta,\alpha}: U(a,\alpha) \to [0,1]^{\{1,\ldots,k\}\setminus S(a)}$$ such that the product with $h_a$ agrees with $h$ on $U(b, \beta)$ for all such $\beta$. This defines a smooth map $h:M\to [0,1]^k$.

We claim that the product $\iota\times h:M \hookrightarrow \R^m \times [0,1]^k$ is a neat embedding of $k$-bordisms.
The first condition is fulfilled by construction, as $h$ is defined so that $M(a,\alpha)$ is sent to $\alpha(c)\in[0,1]^{S(a)}$.
For the second condition note that by construction, $M(a,\alpha) = h_a^{-1}(\alpha)$, $M(b, \beta) = h_b^{-1}(\beta)$, and $h_b=h_a\times h_{\beta, \alpha}$  on $U(b,\beta)$. But on $U(b,\beta)$ the function $h_{\beta, \alpha}$ is a projection onto the extra collar coordinate, with $M(b,\beta)$ the preimage of $0\in\R_+$. Thus the intersection is transversal.
\end{proof}

Conversely, 
\begin{prop}\label{prop composed bordisms}
For $l\geq -n$ and $d=n+l$ any element in $(\PBord_n^l)_{k_1,\ldots, k_n}$ leads to a $(k_1,\ldots,k_d)$-fold composition of $n$-dimensional $d$-bordisms.
\end{prop}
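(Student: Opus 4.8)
The plan is to extract the individual $d$-bordisms from an element $(M,\oul{I})\in(\PBord_n^l)_{k_1,\dots,k_d}$ by slicing $M$ at chosen cutting points, to check that the pieces so obtained are $d$-bordisms in the sense of the definitions above, and to verify that they glue along their in- and out-faces into a $(k_1,\dots,k_d)$-fold composition. First I would choose, for each $1\le i\le d$ and each $0\le j\le k_i$, a cutting point $t^i_j$ in the interior of $I^i_j$ with $t^i_0<\dots<t^i_{k_i}$, which is possible because the $I^i_j$ are ordered. For a multi-index $\ul{j}=(j_1,\dots,j_d)$ with $1\le j_i\le k_i$ I then set $B(\ul{j})=\prod_{i=1}^d[t^i_{j_i-1},t^i_{j_i}]$ and
$$M(\ul{j})=\pi^{-1}\big(B(\ul{j})\big)\subseteq V\times B(\ul{j}).$$

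The second step is to observe that condition~(3) in Definition~\ref{def PBord_n^l} turns $M(\ul{j})$ into a manifold with corners. Since $p_{\{i\}}$ is a component of the submersion $p_{\{i,\dots,d\}}$, each endpoint $t^i_{j_i-1},t^i_{j_i}$ is a regular value of $p_{\{i\}}$, and submersivity of $p_{\{i,\dots,d\}}$ over the intervals $I^i_{j_i-1}$ and $I^i_{j_i}$ forces the cutting hyperplanes $\{x_i=t^i_{j_i-1}\}$ and $\{x_i=t^i_{j_i}\}$ to meet $M$ in mutually transverse submanifolds. Hence $M(\ul{j})$ is an $n$-dimensional $\<d\>$-manifold in the sense of \cite{Laures}, with $i$-th face $p_{\{i\}}^{-1}\big(\{t^i_{j_i-1},t^i_{j_i}\}\big)$ intersected with the box in the remaining directions. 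Equipping $\partial_i M(\ul{j})$ with the decomposition $\partial_{i,in}=p_{\{i\}}^{-1}(t^i_{j_i-1})$ and $\partial_{i,out}=p_{\{i\}}^{-1}(t^i_{j_i})$, and iterating this transversality observation, shows each of these faces to be a cubical $(d-1)$-bordism, so $M(\ul{j})$ is a cubical $d$-bordism.

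The third step is to upgrade $M(\ul{j})$ to a genuine $d$-bordism by producing the required trivializations $\partial_{i,in}M(\ul{j})\cong M_{i,in}\times[0,1]^{d-i}$, and similarly for $\partial_{i,out}$. Here the submersivity condition is used in full strength: since $p_{\{i,\dots,d\}}$ is submersive over $I^i_{j_i-1}$, the flow argument used twice in the proof of the proposition preceding Corollary~\ref{cor composed bordisms} — an application of the Morse lemma recalled in~\ref{Morse lemma} — produces, near the slab $p_{\{i\}}^{-1}(I^i_{j_i-1})$, a product neighborhood identifying the directions $i,i+1,\dots,d$ with cube coordinates; restricting to $x_i=t^i_{j_i-1}$ and rescaling the remaining closed boxes to $[0,1]$ isolates the factor $[0,1]^{d-i}$, with $M_{i,in}$ the residual $(n-d-1+i)$-dimensional manifold, which inherits an $(i-1)$-bordism structure from the corner data in directions $1,\dots,i-1$. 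As in the proof of Theorem~\ref{thm embedding k-bordism}, the delicate point is that all these trivializations must be chosen coherently across the nested corner strata; I would handle this exactly as there, by fixing once and for all a collaring of $M$ subordinate to the $\<d\>$-structure and reading off the product neighborhoods from it.

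Finally, the composition pattern is automatic: the out-face $p_{\{i\}}^{-1}(t^i_{j_i})$ of one piece in direction $i$ is literally the in-face of the neighboring piece with $j_i$ increased by one, so the $\prod_i k_i$ manifolds $M(\ul{j})$ assemble along shared faces into a $(k_1,\dots,k_d)$-fold composition of $n$-dimensional $d$-bordisms; by Corollary~\ref{cor composed bordisms} the outcome is independent, up to diffeomorphism, of the chosen cutting points. I expect the main obstacle to be this third step — constructing the face trivializations compatibly with all the nested corner data — which is essentially the technical content already met in the bookkeeping of Theorem~\ref{thm embedding k-bordism}.
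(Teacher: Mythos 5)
Your proposal is correct and follows essentially the same route as the paper: restrict $M$ to preimages of boxes with corners at chosen points inside the specified intervals, read off the face decomposition from the faces of the cube, and deduce the triviality of the faces (and hence the genuine $d$-bordism structure) from condition~(3) of Definition~\ref{def PBord_n^l}. The only cosmetic difference is that the paper takes two cutting points per interval (at the $1/3$ and $2/3$ marks) so that adjacent pieces overlap in collar regions and are glued along those collars, whereas you take a single point per interval so that adjacent pieces abut along a shared face; both work, and your appeal to Corollary~\ref{cor composed bordisms} for independence of the choices is a sensible addition.
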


\begin{proof}
Let $(M\subset V\times B(\oul{I}), \oul{I})$ be an element in $(\PBord_n^l)_{k_1,\ldots, k_d}$. As usual, we use the notation $\pi: M\hookrightarrow V\times B(\oul{I}) \twoheadrightarrow B(\oul{I})$. Then for $(1\leq j_i \leq k_i)_{1\leq i\leq d}$ define
$$M_{j_1,\ldots, j_d}  = \pi^{-1} \left( \prod_i [\frac{2a^i_{j-1} + b^i_{j-1}}{3} , \frac{a^i_j+2b^i_j}{3}] \right).$$
They are $n$-dimensional cubical $d$-bordisms since they are manifolds with corners with a decomposition of the boundary given by the preimages of the faces of the cube, similarly to Example \ref{ex cube}:
$$\partial_{i_0,in} M_{j_1,\ldots, j_d} = \pi^{-1} \left( \prod_{i<i_0} [\frac{2a^i_{j-1} + b^i_{j-1}}{3} , \frac{a^i_j+2b^i_j}{3}] \times \{\frac{2a^i_{j-1} + b^i_{j-1}}{3}\} \times  \prod_{i>i_0} [\frac{2a^i_{j-1} + b^i_{j-1}}{3} , \frac{a^i_j+2b^i_j}{3}] \right)$$
and
$$\partial_{i_0,out} M_{j_1,\ldots, j_d} = \pi^{-1} \left( \prod_{i<i_0} [\frac{2a^i_{j-1} + b^i_{j-1}}{3} , \frac{a^i_j+2b^i_j}{3}] \times \{\frac{a^i_j+2b^i_j}{3}\} \times  \prod_{i>i_0} [\frac{2a^i_{j-1} + b^i_{j-1}}{3} , \frac{a^i_j+2b^i_j}{3}] \right)$$
The triviality condition to be a $d$-bordism follows from condition \eqref{cond 3} in Definition \ref{def PBord_n^l}. Note that we essentially extracted the underlying $k$-bordisms from Remark \ref{rem composed bordisms} and Notation \ref{rem composed bordisms 3}.

Moreover, they are composable along the faces in the sense that $\partial_{i, out} M_{j_1,\ldots, j_i-1,\ldots, j_d}$ and $\partial_{i, in} M_{j_1,\ldots, j_d}$ can be glued along their collar to form a new $k$-bordism given by
$$\pi^{-1} \left( \prod_{i'<i} [\frac{2a^i_{j-1} + b^i_{j-1}}{3} , \frac{a^i_j+2b^i_j}{3}] \times [\frac{2a^i_{j-2} + b^i_{j-2}}{3} , \frac{a^i_j+2b^i_j}{3}] \times \prod_{i'>i} [\frac{2a^i_{j-1} + b^i_{j-1}}{3} , \frac{a^i_j+2b^i_j}{3}]\right).$$
\end{proof}

\subsection{A time-dependent Morse lemma and spaces of bordisms}\label{sec Morse}

We have already seen in Remark \ref{rem composed bordisms} and Notation \ref{rem composed bordisms 3}, and in Corollary \ref{cor composed bordisms} and Proposition \ref{prop composed bordisms}, that the Morse lemma allows interpreting an element in $(\PBord_n^l)_{k_1,\ldots, k_n}$ as a composition of $k_1\cdots k_n$ bordisms. In this section we will see that paths in that space lead to diffeomorphisms of the composed bordisms and remark on why this space should be thought of as the classifying space of these diffeomorphisms.

The following theorem is the classical Morse lemma, as can be found e.g.~in \cite{Milnor}.
\begin{thm}[Morse lemma]\label{Morse lemma}
Let $f$ be a smooth proper real-valued function on a manifold $M$. Let $a<b$ and suppose that the interval $[a,b]$ contains no critical values of $f$. Then $M^a=f^{-1}((-\infty, a])$ is diffeomorphic to $M^b=f^{-1}((-\infty, b])$.
\end{thm}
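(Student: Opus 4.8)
This is the classical Morse lemma, and the plan is to reproduce the standard argument via a normalized gradient flow, as in \cite[Theorem~3.1]{Milnor}. First I would equip $M$ with a Riemannian metric and consider the gradient vector field $\nabla f$, writing $|\nabla f|^2$ for its squared norm. Since $f$ is proper, $f^{-1}([a,b])$ is compact; and since $[a,b]$ contains no critical values, $\nabla f$ is nowhere zero on $f^{-1}([a,b])$. Hence there is an open set $U$ with compact closure and $f^{-1}([a,b]) \subseteq U$ on which $\nabla f \neq 0$, and on $U$ the vector field $Y := \nabla f / |\nabla f|^2$ is smooth and satisfies $Y(f) \equiv 1$.

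Next I would choose a smooth function $\rho : M \to [0,1]$ supported in $U$ and identically equal to $1$ on $f^{-1}([a-\delta, b+\delta])$ for some small $\delta > 0$, and set $X := \rho\, Y$, extended by $0$ outside $U$. Then $X$ is a smooth, compactly supported vector field on $M$, so its flow $(\varphi_t)_{t\in\R}$ is complete and each $\varphi_t$ is a diffeomorphism of $M$. Along any integral curve $t \mapsto \varphi_t(x)$ one has $\tfrac{d}{dt} f(\varphi_t(x)) = (Xf)(\varphi_t(x)) = \rho(\varphi_t(x)) \geq 0$, which moreover equals $1$ whenever $\varphi_t(x) \in f^{-1}([a-\delta, b+\delta])$. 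In particular $f$ is non-decreasing along integral curves of $X$ and increases at unit rate on $f^{-1}([a,b])$.

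Finally I would verify by an elementary ODE estimate that $\varphi_{b-a}$ maps $M^a$ into $M^b$ and that $\varphi_{-(b-a)}$ maps $M^b$ into $M^a$. Indeed, if $f(x) \leq a$ then $t \mapsto f(\varphi_t(x))$ is non-decreasing and, from the first time $t_1 \geq 0$ at which it reaches the value $a$ (if it ever does), equals $a + (t - t_1)$ as long as it stays in $[a,b]$; hence it cannot exceed $b$ for $t \leq b - a$, so $\varphi_{b-a}(x) \in M^b$. The symmetric estimate for the backward flow shows $\varphi_{-(b-a)}(M^b) \subseteq M^a$. Since $\varphi_{-(b-a)} = (\varphi_{b-a})^{-1}$, the map $\varphi_{b-a}$ restricts to a diffeomorphism $M^a \to M^b$ (one can upgrade this to a deformation retraction, but only the diffeomorphism is needed here). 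The only genuinely delicate point is the completeness of the flow of $X$: it relies on $X$ having compact support, which uses precisely that $f^{-1}([a,b])$ is compact, i.e.\ that $f$ is proper --- this is where the hypothesis is essential, and it is the step I would be most careful about.
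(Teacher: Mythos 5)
Your proof is correct and follows essentially the same route as the paper's (which is Milnor's classical argument): a Riemannian metric, the normalized gradient field $\nabla f/|\nabla f|^2$ cut off to have compact support using properness, and the resulting complete flow carrying $M^a$ to $M^b$ in time $b-a$. The only cosmetic differences are that the paper takes the cutoff of the form $g\circ f$ with $g$ compactly supported in $(a-\epsilon,b+\epsilon)$ and concludes by noting the flow preserves $\{(t,y):f(y)=a+t\}$, whereas you use a bump function on $M$ and a monotonicity estimate for both inclusions; both are fine.
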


We repeat the proof here since later on in this section we will adapt it to the situation we need.

\begin{proof}
Choose a metric on $M$, and consider the vector field
$$V= \frac{\nabla_y f}{|\nabla_y f|^2},$$
where $\nabla_y$ is the gradient on $M$. Since $f$ has no critical value in $[a,b]$, $V$ is defined in $f^{-1}((a-\epsilon, b+\epsilon))$, for suitable $\epsilon$. Choose a smooth function $\tilde g:\R \to \R$ which is 1 on $(a-\frac{\epsilon}{2}, b+\frac{\epsilon}{2})$ and compactly supported in $(a-\epsilon, b+\epsilon)$. Lift $\tilde g$ to a function $g:M\to\R$ by setting $g(y)=\tilde g(f(y))$. Then
$$\mathcal{V} = g \frac{\nabla_y f}{|\nabla_y f|^2}$$
is a compactly supported vector field on $M$ and hence generates a 1-parameter group of diffeomorphisms
$$\psi_t: M \longrightarrow M.$$
Viewing $f-(a+t)$ as a function on $\R\times M$, $(t,y)\mapsto f(y)-(a+t)$, we find that in $f^{-1}((a-\frac{\epsilon}{2}, b+\frac{\epsilon}{2}))$,
$$\partial_t (f-(a+t)) = 1 = \frac{\nabla_y f}{|\nabla_y f|^2}\cdot (f-(a+t)) = V\cdot (f-(a+t)),$$
and so the flow preserves the set
$$\{(t, y): f(y)=a+t \}.$$
Thus, the diffeomorphism $\psi_{b-a}$ restricts to a diffeomorphism
$$\psi_{b-a}|_{M^a}:M^a \longrightarrow M^b.$$
\end{proof}

In Lemma 3.1 in \cite{gaywehrheimwoodward} Gay, Wehrheim, and Woodward prove a time-dependent Morse lemma which shows that a smooth family of composed bordisms in their (ordinary) category of (connected) bordisms gives rise to a diffeomorphism which intertwines with the bordisms. We adapt this lemma to a variant which will be suitable for our situation in the higher categorical setting.

We start by defining some rescaling data to compare bordisms with different families of underlying intervals.
\begin{defn}\label{defn comp intervals intertwine}
Let $(I_0(s) \leq \cdots \leq I_k(s))\to \eDelta{l}$ be an $l$-simplex in $\Int_k$. A smooth family of strictly monotonically increasing diffeomorphisms
$$\big(\varphi_{s,t}:(a_0(s), b_{k}(s) )\to (a_0(t), b_{k}(t) )\big)_{s,t\in|\Delta^l|}$$
is said to {\em intertwine with the composed intervals} if the following condition is satisfied for every morphism $f:[m]\to[l]$ in the simplex category $\Delta$: Let $|f|: |\Delta^m| \to |\Delta^l|$ be the induced map between standard simplices. For every $0\leq j <k$ such that
\begin{itemize}
\item either for every $s\in |f|(|\Delta^m|)$ the intersection $I_j(s) \cap I_{j+1}(s)$ is empty
\item or for every $s\in |f|(|\Delta^m|)$ the intersection $I_j(s) \cap I_{j+1}(s)$ contains only one element,
\end{itemize}
we require that for every $s\in |f|(|\Delta^m|)$,
$$b_j(s) \xmapsto{\varphi_{s,t}} b_j(t), \qquad a_{j+1}(s) \xmapsto{\varphi_{s,t}} a_{j+1}(t);$$
\begin{center}
\begin{tikzpicture}[scale=0.67]
\draw (-5,0) -- (5,0);
\draw (-5,-5) -- (5,-5);

\draw[densely dotted]
(-5,-1.2) node {\tiny $s$} edge (5,-1.2)
(-5,-3) node {\tiny $t$} edge (5,-3);

\draw (5, -5.25) node [anchor=north] {\tiny $b_3(1)$};
\draw (5,-5) arc (0:30:0.5);
\draw (5,-5) arc (0:-30:0.5);

\draw (-5, -5.25) node [anchor=north] {\tiny $a_0(1)$};
\draw (-5,-5) arc (0:30:-0.5);
\draw (-5,-5) arc (0:-30:-0.5);

\draw[->] (-6,-0.65) -- (-6,-4.5);
\draw (-6,-2.5) node [anchor=east] {\tiny $\varphi_{0,1}$};

\draw[->, densely dotted] (-4,-1.5) -- (-4,-2.8);
\draw (-4,-2.1) node [anchor=east] {\tiny $\varphi_{s,t}$};

\draw (5, 0.25) node [anchor=south] {\tiny $b_3(0)$};
\draw (5,0) arc (0:30:0.5);
\draw (5,0) arc (0:-30:0.5);

\draw (-5, 0.25) node [anchor=south] {\tiny $a_0(0)$};
\draw (-5,0) arc (0:30:-0.5);
\draw (-5,0) arc (0:-30:-0.5);

\draw (-2.7, 0.25) -- (-2.6, 0.25) node[anchor=south] {\tiny $b_0(0)$} -- (-2.6, -0.25) -- (-2.7, -0.25);
\draw (-3.35, -4.75) -- (-3.25, -4.75) -- (-3.25, -5.25) node[anchor=north] {\tiny$b_0(1)$} -- (-3.35, -5.25);

\draw (-4, 0.25) -- (-4.1, 0.25) node[anchor=south] {\tiny $a_1(0)$} -- (-4.1, -0.25) -- (-4, -0.25);
\draw (-2.3, -4.75) -- (-2.4, -4.75) -- (-2.4, -5.25) node[anchor=north] {\tiny$a_1(1)$} -- (-2.3, -5.25);

\draw (-1.8, 0.25) -- (-1.7, 0.25) node[anchor=south] {\tiny $b_1(0)$} -- (-1.7, -0.25) -- (-1.8, -0.25);
\draw (-1.35, -4.75) -- (-1.25, -4.75) -- (-1.25, -5.25) node[anchor=north] {\tiny$b_1(1)$} -- (-1.35, -5.25);
\draw[dashed] (-1.7,0) .. controls (-2.7, -3) and (-1, -2) .. (-1.25,-5);

\draw (0.4, 0.25) -- (0.3, 0.25) node[anchor=south] {\tiny $a_2(0)$} -- (0.3, -0.25) -- (0.4, -0.25);
\draw (-0.1, -4.75) -- (-0.2, -4.75) -- (-0.2, -5.25) node[anchor=north] {\tiny$a_2(1)$} -- (-0.1, -5.25);
\draw[dashed] (0.3,0) .. controls (-1.7, -3) and (-1, -2) .. (-0.2,-5);

\draw (1.2, 0.25) -- (1.3, 0.25) node[anchor=south] {\tiny $b_2(0)$} -- (1.3, -0.25) -- (1.2, -0.25);
\draw (1.1, -4.75) -- (1.2, -4.75) -- (1.2, -5.25) node[anchor=north] {\tiny$b_2(1)$} -- (1.1, -5.25);
\draw[dashed] (1.3,0) .. controls (2, -3) and (0, -2) .. (1.2,-5);

\draw (3.1, 0.25) -- (3, 0.25) node[anchor=south] {\tiny $a_3(0)$} -- (3, -0.25) -- (3.1, -0.25);
\draw (3.6, -4.75) -- (3.5, -4.75) -- (3.5, -5.25) node[anchor=north] {\tiny$a_3(1)$} -- (3.6, -5.25);
\draw[dashed] (3,0) .. controls (6, -2) and (2, -2) .. (3.5,-5);
\end{tikzpicture}
\end{center}
\end{defn}

\begin{rem}
Note that it is enough to check this condition for $m\leq l$.
\end{rem}

\begin{defn}
Let $(I_0(s) \leq \cdots \leq I_k(s))\to [0,1]$ be a 1-simplex in $\Int_k$. A {\em rescaling datum for $\ul{I}$} is a is a smooth family of strictly monotonically increasing diffeomorphisms $\varphi_{s,t}: (a_0(s), b_{k}(s) )\to (a_0(t), b_{k}(t) )$ for $s,t\in[0,1]$ such that
\begin{enumerate}
\item $\varphi_{s,s} = id \quad \text{for every }s\in [0,1],$
\item $\varphi_{s,t} = \varphi_{t,s}^{-1} \quad \text{for every }s,t\in [0,1],$
\item $(\varphi_{s,t})_{s,t\in\eDelta{1}}$ intertwines with the composed intervals.
\end{enumerate}

\end{defn}

\begin{thm}\label{thm time Morse lemma}
Let $(M\subset \R^r\times B(\oul{I})\times\eDelta{1}, \oul{I})$ be a 1-simplex in $(\PBord_n^l)_{k_1,\ldots, k_n}$. Then,
\begin{enumerate}
\item for every $1\leq i\leq n$, there is a rescaling datum $\varphi^i_{s,t}$ for $\ul{I}^i$, and 
\item there is a smooth family of diffeomorphisms
$$(\psi_{s,t}:M_s\longrightarrow M_t)_{s,t\in[0,1]},$$
such that $\psi_{s,s}=id_{M_s}$ and $\psi_{s,t} = \psi_{t,s}^{-1}$, which {\em intertwine with the composed bordisms} with respect to the product of the rescaling data $\varphi_{s,t}= (\varphi^i_{s,t})_{i=1}^n : B(\oul{I}(s)) \to B(\oul{I}(t))$. By this we mean the following: denoting by $\pi_s$ the composition $M_s \hookrightarrow V\times B(\oul{I}(s)) \twoheadrightarrow B(\oul{I}(s))$, for $1\leq i\leq n$ and $0\leq j_i,l_i\leq k_i$ let
$$
\begin{array}{rlc}
t^i_{j_i}\in I^i_{j_i}(s) \mbox{ such that} & \varphi_{s,t}^i(t^i_{j_i})\in I^i_{j_i}(t), &\mbox{and} \\
t^i_{l_i}\in I^i_{l_i}(s) \mbox{ such that} & \varphi_{s,t}^i(t^i_{l_i})\in I^i_{l_i}(t).
\end{array}
$$
Then $\psi_{s,t}$ restricts to a diffeomorphism
$$\pi_s^{-1}\left( \prod_{i=1}^n [t^i_{j_i}, t^i_{l_i}] \right) \xrightarrow{\psi_{s,t}} \pi_s^{-1}\left( \prod_{i=1}^n [\varphi_{s,t}(t^i_{j_i}), \varphi_{s,t}(t^i_{l_i})] \right),$$
i.e.~denoting $B=\prod_{i=1}^n [t^i_{j_i}, t^i_{l_i}]$,
\begin{equation*}
\begin{tikzcd}[column sep=large]
&M_t \arrow[hookleftarrow] {r} & \pi_t^{-1}(\varphi_{s,t}(B)) \arrow {d} {\pi_t}\\
M_s \arrow {ru} [description] {\psi_{s,t}} \arrow{d} {\pi_s} \arrow[hookleftarrow] {r} & \pi_s^{-1}(B) \arrow{d}{\pi_s} \arrow[dashed] {ru} [description] {\psi_{s,t}} & \varphi_{s,t}(B)\\
B(\oul{I}(s)) \arrow[hookleftarrow] {r} & B \arrow {ru} [description] {\varphi_{s,t}}
\end{tikzcd}
\end{equation*}
\end{enumerate}
\end{thm}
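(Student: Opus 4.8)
The plan is to prove this by adapting the classical Morse lemma proof (Theorem \ref{Morse lemma}) and the time-dependent Morse lemma of Gay--Wehrheim--Woodward to the parametrized, higher-categorical setting, proceeding one coordinate direction at a time.

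First I would construct the rescaling data $\varphi^i_{s,t}$. For a fixed $i$, consider the $1$-simplex $(I^i_0(s)\leq\cdots\leq I^i_{k_i}(s))\to[0,1]$ in $\Int_{k_i}$. I would pick, for each $0\leq j\leq k_i$, a smooth section $s\mapsto c^i_j(s)$ of "distinguished points" inside $I^i_j(s)$: for instance $c^i_j(s)=\tfrac12(a^i_j(s)+b^i_j(s))$, and also the endpoints $a^i_0(s)$, $b^i_{k_i}(s)$. Then $\varphi^i_{s,t}$ is defined to be the unique piecewise-affine (or smoothed piecewise-affine) increasing diffeomorphism $(a^i_0(s),b^i_{k_i}(s))\to(a^i_0(t),b^i_{k_i}(t))$ sending these chosen points at time $s$ to the corresponding ones at time $t$. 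Properties (a) and (b) are immediate from uniqueness. For property (c), the intertwining with composed intervals, the key observation is that whenever $I^i_j(s)\cap I^i_{j+1}(s)$ is empty (resp.\ a single point) for all $s$ in the image $|f|(|\Delta^m|)$, I can arrange the chosen points to additionally track $b^i_j(s)$ and $a^i_{j+1}(s)$ (or, in the single-point case, their common value) so that $\varphi^i_{s,t}$ sends $b^i_j(s)\mapsto b^i_j(t)$ and $a^i_{j+1}(s)\mapsto a^i_{j+1}(t)$; since the relevant configuration of intersections is a closed/open condition on $s$, a partition-of-unity argument over $[0,1]$ lets me glue these local prescriptions into one smooth family. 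I would need to be slightly careful here to make the family genuinely smooth rather than merely continuous, which is where the smoothing of the piecewise-affine maps enters.

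Next I would construct the diffeomorphisms $\psi_{s,t}$. Here I use Remark \ref{rem simplex trivial fiber bundle}: $M\to\eDelta{1}=[0,1]$ is a trivial smooth fiber bundle, so there is already \emph{some} smooth family of diffeomorphisms $M_s\cong M_t$. However, such an arbitrary trivialization need not respect the bordism decomposition. To fix this, I would run the Morse-lemma flow argument $n$ times, once per coordinate $i=n,n-1,\ldots,1$, each time using condition \eqref{cond simplex 3} that $p_{\{i,\ldots,n\}}$ is submersive over the specified intervals. Concretely, following the proof of Theorem \ref{Morse lemma} but with a time parameter: on $\R^r\times B(\oul I)\times[0,1]$ I consider the function $(x,s)\mapsto p_{\{i\}}(x)-\varphi^i_{0,s}(\text{something})$, build the gradient-like vector field $\mathcal V_i$ supported near the fiber, cut it off away from the critical locus using the submersivity hypothesis, and flow; the flow can be chosen to additionally move in the $[0,1]$-direction at unit speed and to preserve the loci where $p_{\{i\}}$ equals $\varphi^i_{s,t}$ of a level, exactly as in the displayed computation $\partial_t(f-(a+t))=1=V\cdot(f-(a+t))$ in the proof of the Morse lemma. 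Composing the resulting flows for $i=n$ down to $i=1$, and arranging each to fix the previously-adjusted coordinates, yields $\psi_{s,t}$. The symmetry $\psi_{s,t}=\psi_{t,s}^{-1}$ and $\psi_{s,s}=\mathrm{id}$ follow because each flow is generated by a genuine (time-dependent) vector field. The intertwining property is then read off from the fact that each flow preserves the sets $\{p_{\{i\}}=\varphi^i_{s,t}(t^i_{j_i})\}$ by construction, so the commuting diagram in the statement holds.

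The main obstacle I anticipate is the compatibility bookkeeping: making sure the $n$ successive Morse flows can be chosen to commute with one another in the right way—i.e.\ the $i$-th flow must not disturb the preimages of the $(i{+}1),\ldots,n$ coordinates that were already straightened—and that the cutoff functions are chosen compatibly with the rescaling data $\varphi^i_{s,t}$ so that the final diffeomorphism genuinely sends $\pi_s^{-1}(B)$ to $\pi_t^{-1}(\varphi_{s,t}(B))$ for \emph{all} valid choices of $t^i_{j_i}$ simultaneously, not just one. This is precisely the point where the time-dependent Morse lemma of \cite{gaywehrheimwoodward} does the work in the $1$-categorical case; the iteration over the $n$ directions, using that submersivity in direction $i$ is preserved under the earlier flows (because those flows move only in directions $\geq i$ and in $[0,1]$), is the new ingredient that needs to be checked carefully.
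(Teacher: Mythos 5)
Your overall strategy --- adapting the Morse-lemma flow to the parametrized setting --- is the same one the paper uses, but as written there are genuine gaps. First, you never treat the case where consecutive intervals $I^i_j(s)$ and $I^i_{j+1}(s)$ overlap in a set with non-empty interior, possibly only for some $s\in[0,1]$. This is allowed by the definition of $\Int_{k_i}$ (only $a^i_j\leq a^i_{j+1}$ and $b^i_j\leq b^i_{j+1}$ are required), and it breaks your construction in two places: the cutoff functions used to localize the gradient-like vector field near each interval can no longer be chosen with disjoint supports, and your ``distinguished points'' prescription for $\varphi^i_{s,t}$ must degenerate smoothly as an overlap appears or disappears along the path. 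The paper devotes an entire step to this (replacing overlapping intervals by their union and interpolating, via a partition of unity in the $s$-variable, between the disjoint and the merged constructions), and it cannot be skipped. Relatedly, your vector field is built from $\nabla p_{\{i\}}/|\nabla p_{\{i\}}|^2$, which is only defined where $p_{\{i\}}$ is submersive, i.e.\ over the specified intervals; you need an explicit prescription for the flow over the gaps $[b^i_j(s), a^i_{j+1}(s)]$, where critical values may occur, together with a properness argument guaranteeing the flow exists for all $s\in[0,1]$.

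Second, you define $\varphi^i_{s,t}$ first (by smoothed piecewise-affine interpolation of midpoints) and then try to force the flow to track it; you correctly flag the resulting compatibility problem as ``the main obstacle'' but do not resolve it. The paper resolves it by reversing the logic: the curves $c_x(s)$ are obtained as solutions of an ODE whose coefficients are the very cutoff functions entering the vector field, one \emph{defines} $\varphi_{0,s}(x):=c_x(s)$, and the vector field is tangent by construction to the loci $\{(s,y):(p_{\{i\}})_s(y)=c_x(s)\}$, so the intertwining holds for all cutting points simultaneously without further work. Finally, your plan to compose $n$ successive flows, one per direction, requires that the flow in direction $i$ preserve the level sets already arranged in the other directions; your justification (``those flows move only in directions $\geq i$'') is precisely the assertion that $\nabla_y(p_i)_s$ and $\nabla_y(p_j)_s$ are orthogonal for $i\neq j$, which must be proved (it follows from the fact that the metric on the fibers is induced from the ambient product metric and this orthogonality persists along the path). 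The paper avoids the composition altogether by summing the $n$ vertical components into a single vector field and using the relation $\frac{\nabla_y(p_i)_s}{|\nabla_y(p_i)_s|^2}\,p_j=\delta_{ij}$ to verify tangency to all the relevant level sets at once; if you insist on iterating, you must additionally check that each later flow does not destroy the conclusion of the earlier ones, which amounts to the same orthogonality statement.
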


\begin{proof}
The main strategy of the proof is the same as for the classical Morse lemma. Namely, we will construct a suitable vector field whose flow gives the desired diffeomorphisms. First, we fix the metric on $M$ induced by the restriction of the standard metric on $\R^r\times B(\oul{I})\times\eDelta{1}$. Recall from Remark \ref{rem simplex trivial fiber bundle} that the map $M\to \eDelta{1}$ exhibits $M$ as a trivial fiber bundle, so there is a diffeomorphism $M\cong \eDelta{1}\times N$ as abstract manifolds. For every $s\in[0,1]$, the fiber $M_s$ is diffeomorphic to $N$ as abstract manifolds. We fix the metric on $N$ induced by the diffeomorphism $N\cong M_0$, and use the notation $f_s: N \cong M_s\hookrightarrow V\times B(\oul{I}(s)) \twoheadrightarrow B(\oul{I}(s))$.

For steps 1-3 assume that $l=-(n-1)$. The general case applies these arguments in each direction separately.
\paragraph{Step 1: disjoint intervals}
First assume that for all $0\leq j\leq k$ and for every $s\in[0,1]$ we have
$$I_j(s)\cap I_{j+1}(s) =\emptyset.$$

We first define suitable vector fields $V_j$ and $W_j$ in neighborhoods of the preimage under $f$ of $\bigcup_{s\in[0,1]} \{s\}\times I_j(s)$, such that their flows will preserve the preimages of the left and right endpoints of the intervals, respectively. Then we use a partition of unity to obtain a vector field $\mathcal V$ defined on $[0,1] \times N$ which gives rise to the desired diffeomorphisms.

Let 
$$A_j = \bigcup_{s\in [0,1]} \{s\}\times f_s^{-1}(a_j(s))\subset [0,1]\times N, \qquad B_j= \bigcup_{s\in [0,1]} \{s\}\times f_s^{-1}(b_j(s))\subset [0,1]\times N.$$
Now for $0\leq j\leq k$ consider the vector fields
$$V_j=\left(\partial_s, \partial_s(a_j(s)-f_s)\frac{\nabla_y f_s}{|\nabla_y f_s|^2}\right), \qquad W_j=\left(\partial_s, \partial_s(b_j(s)-f_s)\frac{\nabla_y f_s}{|\nabla_y f_s|^2}\right),$$
where $\nabla_y$ is the gradient on $N$. Since $f_s$ has no critical value in $I_j(s)$, the vector fields $V_j$ and $W_j$ are defined on $f^{-1}(U_j)$, where $U_j$ is a neighborhood of $\bigcup_{s\in[0,1]} \{s\}\times I_j(s)$. Moreover, viewing $a_j: (s,y)\mapsto a_j(s)$ as a function on $[0,1]\times N$,
$$V_j(f-a_j) = \partial_s(f - a_j) + \partial_s (a_j - f)\frac{\nabla_y f}{|\nabla_y f|^2}(f-a_j) = \partial_s(f - a_j) + \partial_s (a_j - f) = 0,$$
So the vector field $V_j$ is tangent to $A_j$ and similarly, $W_j$ is tangent to $B_j$.

We would now like to construct a vector field $\mathcal{V}$ on $[0,1] \times N$ which for every $0\leq j\leq k$, at $A_j$ restricts to $V_j$ and at $B_j$ restricts to $W_j$, and such that there exists a family of functions $\big( c_x: [0,1]\to \R \big)_{x\in I_j(0)}$ such that
\begin{itemize}
\item[-] $c_x(0)=x$, $c_x(s)\in I_j(s)$,
\item[-] the graphs of $c_x$ for varying $x$ partition $\bigcup_{s\in[0,1]} \{s\} \times [a_j(s), b_j(s)]$, and
\item[-] $\mathcal{V}$ is tangent to $C_x=\bigcup_{s\in[0,1]} \{s\}\times f_s^{-1}(c_x(s))$.
\end{itemize}
We will use $c_x$ to define $\varphi_{0,s}(x)=c_x(s)$ and $\varphi_{s,t} = \varphi_{0,t}\circ\varphi_{0,s}^{-1}$. Moreover, the diffeomorphisms $\psi_{s,t}$ will arise as the flow along $\mathcal{V}$.

Fix smooth functions $\tilde g_j, \tilde h_j:\bigcup_{s\in[0,1]} \{s\}\times B(\ul{I}(s)) \to\R_{\geq0}$ which satisfy the following conditions:
\begin{enumerate}
\item $\tilde g_j, \tilde h_j$ are compactly supported in $U_j$,
\item $\tilde g_j=1$ in a neighborhood of $\mathrm{graph}\,a_j = \{(s, a_j(s)): s\in[0,1]\}$,
\\$\tilde h_j=1$ in a neighborhood of $\mathrm{graph}\,b_j$
\item $\tilde g_j+\tilde h_j=1$ in $\bigcup_{s\in[0,1]} \{s\}\times I_j(s)$, and the supports of the $\tilde g_j+\tilde h_j$ are disjoint.
\end{enumerate}
Lift the functions $\tilde g_j, \tilde h_j$ to functions $g_j, h_j: [0,1]\times N\to \R$ by setting $g_j(s,y):=\tilde g_j(s,f_s(y))$ and $h_j(s,y):=\tilde h_j(s,f_s(y))$. Then consider the following vector field on $f^{-1}(U_j)$:
$$\mathcal{V}_j = \left( \partial_s, \left( g_j \partial_s (a_j)+ h_j \partial_s (b_j)-\partial_s (f) \right) \frac{\nabla_y f}{|\nabla_y f|^2} \right)$$
This vector field is supported on the support of $g_j+h_j$ and thus extends to a vector field on $N$. Note that for $(s,y)\in A_j$, $\mathcal{V}_j(s,y)=V_j(s,y)$, and for $(s,y)\in B_j$, $\mathcal{V}_j(s,y)=W_j(s,y)$.

Now let $\mathcal{V}$ be the vector field on $[0,1]\times N$ constructed by combining the above vector fields as follows:
\begin{equation}\label{eqn formula vector field}
\mathcal{V} = \left( \partial_s, \sum_{0\leq j\leq k} \left( g_j \partial_s (a_j)+ h_j \partial_s (b_j)-\partial_s (f) \right) \frac{\nabla_y f_s}{|\nabla_y f_s|^2} \right).
\end{equation}
Note that in $\bigcup_{s\in[0,1]} \{s\}\times f_s^{-1}(I_j(s))$, it restricts to $\mathcal{V}_j$. 

In order for $\mathcal{V}$ to be tangent to $C_x$, the functions $c_x$ must satisfy $\mathcal{V}_j(f-c_x)=0$ at points in $C_x$. Expanding the left hand side as
\begin{eqnarray*}
\mathcal{V}_j(f-c_x) & = & \partial_s(f-c_x) + \left( g_j \partial_s (a_j)+ h_j \partial_s (b_j)-\partial_s (f) \right) \frac{\nabla f}{|\nabla f|^2}(f-c_x)\\
& = & -\partial_s(c_x) + g_j\partial_s (a_j)+ h_j \partial_s (b_j)
\end{eqnarray*}
leads to the ordinary differential equation with smooth coefficients on $[0,1]$
\begin{eqnarray*}
\partial_s(c_x)(s) & = & g_j(s, c_x(s))\partial_s (a_j)(s)+ h_j(s, c_x(s)) \partial_s (b_j)(s),\\
c_x(0) & = & x.
\end{eqnarray*}
By Picard-Lindel\"of, it has a unique, a priori local, solution. To see that it extends to every $s\in[0,1]$, consider the smooth function $F: [0,1]\times N\to \bigcup_{s\in[0,1]} \{s\}\times B(\ul{I}(s))$ defined to be $\pi$ under the diffeomorphism $M\cong [0,1]\times N$, so $F(s,y)=(s, f(s,y))=(s, f_s(y))$. Since $\pi$ is proper, so is $F$. Moreover, $C_x= F^{-1}(\mathrm{graph}\,c_x)$. For fixed $x$, $\mathrm{graph}\,c_x$ sits inside the support of $\tilde g_j+\tilde h_j$, for some $j$, and 
thus is compact in $\bigcup_{s\in[0,1]} \{s\}\times B(\ul{I}(s))$. Therefore $C_x$  lies in a compact part of $[0,1] \times N$ 
and thus the local solution exists for all $s\in [0,1]$.

We now define our rescaling data essentially by following the curve $c_x$. Explicitly, let $\varphi_{0,s}:B(\ul{I}(0)) \to B(\ul{I}(s))$ be defined on $[a_j(0), b_j(0)]$ by sending $x_0$ to $c_{x_0}(s)$. Note that by construction, it sends $a_j(0), b_j(0)$ to $a_j(s), b_j(s)$. Since the solution $c_x$ of the ODE varies smoothly with respect to the initial value $x$ this map is a diffeomorphism. So we can define $\varphi_{s,t}:B(\ul{I}(s))\to B(\ul{I}(t))$ on $[a_j(s), b_j(s)]$ by sending $x_s=c_{x_0}(s)$ to $c_{x_0}(t)$. We extend $\varphi_{s,t}$ to a diffeomorphism in between these intervals in the following way. Let $\tilde{\tilde{g}}_j,\tilde{\tilde h}_j: [b_j(0), a_{j+1}(0)] \to \R$ be a partition of unity such that $\tilde{\tilde g}_j$ is strictly decreasing, $\tilde{\tilde g}_j(b_j(s))=1$, and $\tilde{\tilde h}_j(a_{j+1}(s))=1$. Then, for $x_0\in [b_j(0), a_{j+1}(0)]$ set
$$c_{x_0}(s) = \tilde{\tilde g}_j(x_0)c_{b_j(0)}(s) + \tilde{\tilde h}_j(x_0) c_{a_{j+1}(0)}(s) \quad\mbox{and} \quad \varphi_{s,t}(c_{x_0}(s))= c_{x_0}(t).$$

As mentioned above, we obtain the diffeomorphisms $\psi_{s,t}$ by flowing along the vector field $\mathcal{V}$. Since $\mathcal{V}$ is tangent to the sets $C_x = \bigcup_{s\in[0,1]} \{s\}\times f_s^{-1}(c_x(s))$ for $x\in I_0(0)\cup\cdots\cup I_k(0)$, the flow preserves $C_x$, and $\bigcup_{s\in[0,1]} \{s\} \times f_s^{-1}([b_j(s), a_{j+1}(s)])$ in between. Again, this implies that the flow exists for all $s\in [0,1]$. It is of the form $\Psi(t-s, (s,y)) = (t, \psi_{s,t}(y))$ for $0\leq s\leq t\leq 1$, where $(\psi_{s,t})_{s,t\in [0,1]}$ is a family of diffeomorphisms on $N$. We transport them under the diffeomorphism $M\cong [0,1]\times N$ to diffeomorphisms $(\psi_{s,t}:M_s\to M_t)_{s,t\in [0,1]}$, which by construction intertwine with the composed bordisms with respect to the rescaling data $\varphi_{s,t}$. 

\paragraph{Step 2: common endpoints}

Now consider the case that for $0\leq j\leq k$ we have that either for every $s\in[0,1]$, $I_j(s)\cap I_{j+1}(s) =\emptyset$ as in the previous case or for every $s\in[0,1]$ we have
$$|I_j(s)\cap I_{j+1}(s)| = 1.$$

In this case, one can modify the above argument. We explain this for the case of two intervals with one common endpoint, i.e.~$b_j(s)=a_{j+1}(s)$.

Instead of choosing smooth functions $\tilde g_j, \tilde h_j, \tilde g_{j+1}, \tilde h_{j+1}:\bigcup_{s\in[0,1]} \{s\}\times B(\ul{I}(s)) \to\R$ such that the supports of $\tilde g_j+\tilde h_j$ and $\tilde g_{j+1}+\tilde h_{j+1}$ are disjoint (which now is not possible), we fix three smooth functions $\tilde f_j, \tilde g_j, \tilde h_j:\bigcup_{s\in[0,1]} \{s\}\times B(\ul{I}(s)) \to\R$ which satisfy the following conditions:
\begin{enumerate}
\item $\tilde f_j, \tilde g_j, \tilde h_j$ are compactly supported in $U_j \cup U_{j+1}$,
\item $\tilde f_j=1$ in a neighborhood of $\mathrm{graph}\,a_j = \{(s, a_j(s)): s\in[0,1]\}$,
\\
$\tilde g_j=1$ in a neighborhood of $\mathrm{graph}\,b_j=\mathrm{graph}\,a_{j+1}$,\\
$\tilde h_j=1$ in a neighborhood of $\mathrm{graph}\,b_{j+1}$,
\item $\tilde f_j+\tilde g_j+\tilde h_j=1$ in $\bigcup_{s\in[0,1]} \{s\}\times (I_j(s)\cup I_{j+1}(s))$, and the support of the $\tilde f_j+\tilde g_j+\tilde h_j$ is disjoint from the sums associated to the other intervals.
\end{enumerate}
Now continue the proof similarly to above.

\paragraph{overlapping intervals with non-empty interior}

If for some $0\leq j\leq k$ the intersection $I_j(s) \cap I_{j+1}(s)$ has non-empty interior for every $s\in[0,1]$, then one can do the above construction with the intervals $I_j(s), I_{j+1}(s)$ replaced by the interval $I_j(s) \cup I_{j+1}(s)$.

\paragraph{Step 4: partial overlaps -- mixed cases}

When the above cases are mixed, we can combine the cases treated so far. We will illustrate this in the case this in the case when the intervals first are disjoint and then start overlapping. The other cases are treated similarly.

Let us assume that there is an $\tilde{s}$ such that for $s<\tilde{s}$, $I_j(s) \cap I_{j+1}(s) = \emptyset$ and for $s\geq \tilde{s}$, $I_j(s) \cap I_{j+1}(s)\neq \emptyset$. In this case, $\tilde{x} = b_j(\tilde{s}) = a_{j+1}(\tilde{s})$, which is a regular value of $f_{\tilde{s}}$. Since $f$ is smooth, there is an open ball $U_j$ centered at $(\tilde{s}, \tilde{x})$ in $\bigcup_{s\in[0,1]} \{s\}\times B(\ul{I}(s))$ such that for $(s, x) \in U$, $x$ is a regular value of $f_s$.  Let $\bar{s}< \tilde{s}$ be such that for every $\bar{s} \leq s\leq\tilde{s}$, the set $\{s\}\times [a_j(s), b_{j+1}(s)]$ is covered by $U \cup \big(\{s\}\times (I_j(s)\cup I_{j+1}(s))\big)$. Choose $s_0$ and $t_0$ such that $\bar{s} \leq s_0 < t_0$.
\begin{center}
\begin{tikzpicture}[scale=0.67]
\draw (-6,0) -- (6,0);
\draw (-6,-5) -- (6,-5);

\draw[densely dotted]
(-6,-2.1) node {\tiny $s_0$} edge (6,-2.1)
(-6,-2.4) node {\tiny $t_0$} edge (6,-2.4);

\draw (-1.8, 0.25) -- (-1.7, 0.25) node[anchor=south] {\tiny $b_j(0)$} -- (-1.7, -0.25) -- (-1.8, -0.25);
\draw (-4.9, -4.75) -- (-5, -4.75) -- (-5, -5.25) node[anchor=north] {\tiny $a_{j+1}(1)$} -- (-4.9, -5.25);
\draw[dashed] (-1.7,0) .. controls (-2.7, -3) and (-1, -2) .. (5,-5);

\draw (0.4, 0.25) -- (0.3, 0.25) node[anchor=south] {\tiny $a_{j+1}(0)$} -- (0.3, -0.25) -- (0.4, -0.25);
\draw (4.9, -4.75) -- (5, -4.75) -- (5, -5.25) node[anchor=north] {\tiny $b_j(1)$} -- (4.9, -5.25);
\draw[dashed] (0.3,0) .. controls (-1.7, -3) and (1, -2) .. (-5,-5);

\draw[dotted] (-0.9, -2.55) circle (1);

\end{tikzpicture}
\end{center}

In $[0,t_0]$, we are in the situation of disjoint intervals and can use the first construction to obtain $c_x^{(1)}(s)$ and $\mathcal{V}^{(1)}(s,y)$ for $s\leq t_0$.

In $[s_0, 1]$, we apply the construction from step 1 to the intervals $I_j(s)$ and $I_{j+1}(s)$ replaced by the interval $[a_j(s), b_{j+1}(s)]$ to obtain $c_x^{(2)}(s)$ and $\mathcal{V}^{(2)}(s,y)$ for $s\geq s_0$.

Now choose a partition of unity $G,H: [0,1] \to \R$ such that $G|_{[0,s_0]}=1, H|_{[t_0,1]}=1$, and $G$ is strictly decreasing on $[s_0,t_0]$. For $s<t$ define
$$c_x(s) = G(s) c_x^{(1)}(s) + H(s) c_x^{(2)}(s), \qquad \mathcal{V}(s,y) = G(s) \mathcal{V}^{(1)}(s,y) + H(s) \mathcal{V}^{(2)}(s,y).$$
Then define $\varphi_{s,t}$ and $\psi_{s,t}$ as before.

\paragraph{Step 5: several directions}

Assume now that $l>-(n-1)$. Let
$$\pi_s: N \cong M_s\hookrightarrow V\times B(\oul{I}(s)) \twoheadrightarrow B(\oul{I}(s))$$
and for $1\leq i \leq n$ denote by $(p_i)_s:N\to B(\ul{I^i}(s))$ the composition of $\pi_s$ with the projection to the $i$th coordinate. Note that by condition \ref{cond 3} in Definition \ref{def PBord}, the function $(p_i)_s$ does not have a critical point in $I^i_0(s)\cup\ldots\cup I^i_{k_i}(s)$.

By steps 1-3 for each $i$ we got a vector field
$$
\mathcal{V}^i = \left(\partial_s, \Pi_i(s,y)\frac{\nabla_y (p_i)_s}{|\nabla_y (p_i)_s|^2}\right),
$$
e.g.~see \eqref{eqn formula vector field}. We combine them to obtain a new vector field on $[0,1]\times N$ given by
$$\tilde{\mathcal{V}}= \left(\partial_s, \sum_{i=1}^n \Pi_i(s,y)\frac{\nabla_y (p_i)_s}{|\nabla_y (p_i)_s|^2}\right).$$
For $i\neq j$ the projections $(p_i)_0$ and $(p_j)_0$ are orthogonal with respect to the metric on $N$ and moreover, $(p_i)_s$, $(p_j)_s$ stay orthogonal along the path, because the change of metric on $N\cong M_s$ induced by the embedding of $M_s$ respects orthogonality on $B(\oul{I})$. This implies that
$$\frac{\nabla_y (p_i)_s}{|\nabla_y (p_i)_s|^2} p_j = \delta_{ij},$$
and so $\tilde{\mathcal{V}}$ still is tangent to the respective $C^i_x$ in each direction and thus its flow, if it exists globally, will give rise to the desired diffeomorphisms and rescaling data.

The global existence follows from the special form of the vector field. Given a point $(t,y_t)\in N$, the flow will preserve a set of the form
$$\{ (s,y): \pi_s(y_s) = (c_{x_0}^1(s), \ldots, c_{x_0}^n(s)) = (\xi_1(s), \ldots, \xi_n(s)) \},$$
where the right hand side is in the notation of Example \ref{ex cutoff path}, and $\vec c_{x_0}(t) = \vec\xi(t) = y_t$. One can show, as in the example, that this set lies in a compact part of $N$ and thus the flow exists globally.
\end{proof}

We can now relate the spaces of bordisms to diffeomorphisms of bordisms in a more classical sense.
\begin{defn}
Building upon the previous section, in particular Proposition \ref{prop composed bordisms}, we define a {\em diffeomorphism of a $(k_1,\ldots, k_d)$-fold composition of $n$-dimensional $d$-bordisms} to be a diffeomorphism of the composition which ``intertwines with'', i.e.~restricts to, the composed bordisms. 
\end{defn}

The above theorem shows that a path in $(\PBord_n^l)_{k_1,\ldots, k_d}$ leads to such an intertwining diffeomorphism of the compositions at the start and at the end of the path. Actually, much more is true.
\begin{prop}\label{prop BDiff}
For fixed $k_1,\ldots, k_d$ and a $(k_1,\ldots, k_d)$-fold composition $M$ of $n$-dimensional $d$-bordisms $M_{j_1,\ldots, j_d}$ which we denote by $(M, (M_{j_1,\ldots, j_d}))$, consider the group of such intertwining diffeomorphisms $\mathrm{Diff}(M, (M_{j_1,\ldots, j_d}))$. Then $(\PBord_n^l)_{k_1,\ldots, k_n}$ is the disjoint union of classifying spaces of $\mathrm{Diff}(M, (M_{j_1,\ldots, j_d}))$, where the disjoint union is taken over diffeomorphism classes.
\end{prop}

\begin{proof}
We sketch the argument, essentially following the one for showing that $\mathrm{Sub}(M, \R^\infty)$ is a classifying space for the group of diffeomorphisms of $M$, and its modifications in \cite{GMTW} and \cite{Lurie}:
Consider the space $\mathrm{Emb}((M, (M_{j_1,\ldots, j_d})), \R^\infty\times [0,1]^d)$ of neat embeddings of the composition which restricts to neat embeddings of the composed bordisms. It is non-empty by the embedding theorem for $d$-bordisms (Theorem \ref{thm embedding k-bordism}) and contractible, which can be seen similarly to $\mathrm{Emb}(M,\R^\infty)$ being contractible. We get a principal $\mathrm{Diff}(M, (M_{j_1,\ldots, j_d}))$-bundle
$$\mathrm{Emb}((M, (M_{j_1,\ldots, j_d})), \R^\infty\times [0,1]^d) \to \mathrm{Emb}((M, (M_{j_1,\ldots, j_d})), \R^\infty\times [0,1]^d) / \mathrm{Diff}(M, (M_{j_1,\ldots, j_d})).$$
The disjoint union over all diffeomorphism classes of the right hand side is equivalent to~$(\PBord_n^l)_{k_1,\ldots, k_n}$.
\end{proof}

\subsection{The homotopy category \texorpdfstring{$h_1(\Bord_n^{(\infty,1)})$}{}}

The goal of the Section is to show that there is a equivalence of symmetric monoidal categories between the homotopy category of the $(\infty,1)$-category $\Bord_n^{(\infty,1)}$ and the usual unoriented bordism category $n\Cob$. In fact, even more is true: one can show that there is an equivalence of symmetric monoidal bicategories between the homotopy bicategory of the $(\infty,2)$-category $\Bord_n^{(\infty,2)})$ and the unoriented bordism bicategory defined in \cite{Schommer}. This will be proven in a subsequent article.

\subsubsection{The symmetric monoidal structure \texorpdfstring{on $h_1(\Bord_n^{(\infty,1)})$}{}}\label{sec symm mon h_1(Bord)}

$\Bord_n^{(\infty,1)} =\Bord_n^{-(n-1)} \simeq \myloopnop[n-1]{ \Bord_n }$ is an $(\infty,1)$-category with a symmetric monoidal structure defined in two ways similarly to that of $\Bord_1$. Both induce a symmetric monoidal structure on the homotopy category $h_1(\Bord_n^{(\infty,1)})$. We now make this symmetric monoidal structure more explicit for later purposes.

\paragraph{...coming from a $\Gamma$-object}
We can either obtain the symmetric monoidal structure as a $\Gamma$-object on $\Bord_n^{(\infty,1)} \simeq \myloopnop[n-1]{\Bord_n}$ by iterating the construction of the symmetric monoidal structure on the looping from Example \ref{ex symm mon looping} or by constructing a functor from an assignment $[m] \mapsto \Bord_n^{-(n-1)}[m]$ as mentioned in Remark \ref{rem Gamma for d-cat Bord}. In the second case, $\Bord_n^{-(n-1)}[m]$ arises, similarly to $\Bord_n[m]$, from the spaces $(\PBord_n^{V, -(n-1)}[m])_{k_1,\ldots, k_n}$, whose set of 0-simplices is the collection of tuples
$$(M_1,\ldots, M_m, (I_0\leq\ldots\leq I_k)),$$
where $M_1,\ldots , M_m$ are disjoint $n$-dimensional submanifolds of $V\times B(\ul{I})=(a_0, b_k)$, and each $(M_\beta, (I_0\leq\ldots\leq I_k))\in (\PBord_n^{V,-(n-1)})_{k_1,\ldots, k_n}$.

We saw in Example \ref{symm mon h_1} that a $\Gamma$-object endows the homotopy category of its underlying Segal space with a symmetric monoidal structure. Explicitly, in the second case, it comes from the following maps.
$$\begin{array}{ccccc}
\Bord_n^{-(n-1)}\langle 1\rangle \times \Bord_n^{-(n-1)}\langle 1\rangle&\overunder[\gamma_1\times\gamma_2]{\simeq}{\xleftarrow{\hphantom{\gamma_1\times\gamma_2}}} & \Bord_n^{-(n-1)}\langle 2\rangle & \overset{\gamma}{\longrightarrow} & \Bord_n^{-(n-1)}\langle 1\rangle,\\
(M_1, \ul{I}), (M_2, \ul{I}) & \xleftarrow{\hphantom{\gamma_1\times\gamma_2}}\!\shortmid & (M_1,M_2, \ul{I}) & \longmapsto & (M_1\amalg M_2, \ul{I})
\end{array}$$

\paragraph{...coming from a tower}
To understand the symmetric monoidal structure on $h_1(\Bord_n^{(\infty,1)})$ coming from a symmetric monoidal structure as a tower, we use that $\Bord_n^{(\infty,1)}=\Bord_n^{-(n-1)}$ has a symmetric monoidal structure coming from the collection of $k$-hybrid $(k+1)$-fold Segal spaces given (essentially) by the $k$-hybrid completion of 
$$\deloop[k]{ \PBord_n^{k-(n-1)} }{\emptyset }.$$
This symmetric monoidal structure induces one on the homotopy category $h_1(\Bord_n^{(\infty,1)}) \simeq h_1(\Bord_n^{-(n-1)})$. Since completion is a Dwyer-Kan equivalence, see \ref{sec completion}, it is enough to understand the symmetric monoidal structure on $h_1(\PBord_n^{-(n-1)})$.

The monoidal structure arises from composition in $\PBord_n^{1-(n-1)}$, the next layer of the tower $\PBord_n^{2-(n-1)}$ gives a braiding and the higher layers show that it is symmetric monoidal, see Section \ref{sec monoidal comparison}. Consider the diagram
$$(\PBord_n^{1-(n-1)})_{1,\bullet} \overunder[{(\PBord_n^{1-(n-1)})_{0,\bullet}}]{h}{\times} (\PBord_n^{1-(n-1)})_{1,\bullet} \overunder[d^1_0 \times d^1_2]{\simeq}{\xleftarrow{\hphantom{d^1_0\times d^1_2}}} (\PBord_n^{1-(n-1)})_{2,\bullet} \overset{d^1_1}{\longrightarrow} (\PBord_n^{1-(n-1)})_{1,\bullet}.$$

Similarly to Remark \ref{rem hybrid easy looping}, we find that
$$\deloop{\PBord_n^{1-(n-1)} }{ \emptyset }_{1,\bullet} \simeq \myloopnop{\PBord_n^{1-(n-1)} }_\bullet \simeq (\PBord_n^{-(n-1)})_\bullet$$
and together with the maps above this gives a monoidal structure
$$h_1(\PBord_n^{-(n-1)})\times h_1(\PBord_n^{-(n-1)}) \longrightarrow h_1(\PBord_n^{-(n-1)}).$$
We spell this structure out explicitly. Consider two objects or 1-morphisms represented by elements 
$$(M)=(M\subseteq V\times B(\ul{I}), \ul{I}), \quad (N)=(N\subseteq W\times B(\tilde{\ul{I}}), \tilde{\ul{I}}))$$
in $(\PBord_n^{-(n-1)})_k$ for $k=0$ or $k=1$. Without loss of generality we can assume that $V = W =\R^r$, that $(M), (N) \in (\PBord_n^{-(n-1), \R^r})_k$, and that (perhaps after rescaling) $\ul{I}=\tilde{\ul{I}}$. Furthermore, choose $c>0$ such that $(M), (N) \in (\PBord_n^{-(n-1), \bound{r}})_k$. What follows will be independent of the choice of $c$.

Under the map $\ell_r(c): \PBord_n^{-(n-1),\bound{r}} \to \myloopnop{\PBord_n^{1-(n-1), \R^{r-1}}}$ from Proposition \ref{thm looping}, $(M)$ and $(N)$ are sent to
$$(M_1) = (M\subseteq \R^{r-1}\times(-(c+1), c+1)\times B(\ul{I}), (-(c+1), -c] \leq [c, c+1), \ul{I}),$$
$$(N_1)=(N\subseteq \R^{r-1}\times (-(c+1), c+1) \times B({\ul{I}}), (-(c+1), -c] \leq [c, c+1), \ul{I}).$$
Now we can use the gluing procedure as in the proof of the Segal condition for $\PBord_n$ in Proposition \ref{prop PBord Ssp}. In this case, the sources and targets of $(M_1) $ and $(N_1)$ are all empty, so the construction of the glued element is as follows: we choose a path from $(N_1)$ to another element $(N_2)$ by moving the first coordinate in the box such that the pair $\big((M), (N_2)\big)$ lies in
$$(\PBord_n^{1-(n-1)})_{1,\bullet} \overunder[{(\PBord_n^{1-(n-1)})_{0,\bullet}}]{}{\times} (\PBord_n^{1-(n-1)})_{1,\bullet},$$
i.e.~such that 
$$(N_2)=(N\subseteq \R^{r-1}\times(c, 3c+2) \times B({\ul{I}}), (c,c+1] \leq [3c+1, 3c+2), \ul{I}).$$
Since $d^1_1((M_1)) = d^1_0 ((N_1)) =\emptyset$ we have that $M$ and $N_2$ are disjoint as submanifolds of $\R^{r-1} \times (-(c+1), 3c+2)\times B(\tilde{\ul{I}})$. So the glued element is
$$\left(M \amalg N \subset V_{d-1}\times (-(c+1), 3c+2)\times B({\ul{I}}), (-(c+1), -c] \leq [c, c+1] \leq [3c+1, 3c+2), \ul{I} \right).$$
The third face map $d^1_1$ sends it to
$$\left(M \amalg N \subset V_{d-1}\times (-(c+1), 3c+2)\times B({\ul{I}}), (-(c+1), -c]  \leq [3c+1, 3c+2), \ul{I} \right)$$
which by $u_r:\myloopnop{\PBord_n^{1-(n-1), \R^r} } \to \PBord_n^{-(n-1)}$ is sent to
$$\left(M \amalg N \subset \R^r\times B({\ul{I}}), \ul{I} \right).$$

\subsubsection{The homotopy category and \texorpdfstring{$n\Cob$}{nCob}}

Our higher categories of bordisms give back the ordinary categories of $n$-bordisms, as we see in the main proposition in this section. First, let us briefly recall the definition of the symmetric monoidal category of bordisms. A good reference for the details and subtleties is e.g.~\cite{Kock}.
\begin{defn}
The symmetric monoidal category of $n$-dimensional bordisms $n\Cob$ is defined as follows:
\begin{itemize}
\item Objects are closed $(n-1)$-dimensional smooth manifolds. 
\item A morphism from $M$ to $N$ is a diffeomorphism class of $n$-dimensional cobordisms from $M$ to $N$, where an {\em $n$-dimensional bordism} from $M$ to $N$ is a smooth manifold $\Sigma$ with boundary, together with a specified diffeomorphism $\partial \Sigma \cong M \amalg N$. 
\item Composition of morphisms $\Sigma_1:M_0\to M_1$ and $\Sigma_2:M_1\to M_2$ is given by the diffeomorphism class of the gluing $\Sigma_1 \amalg_{M_1} \Sigma_2$.
\item The identity morphism on $M$ is the diffeomorphism class of the cylinder $M\times [0,1]$ viewed as a morphism from $M$ to $M$. 
\item The symmetric monoidal structure is given by taking disjoint unions of objects and morphisms.
\end{itemize}
\end{defn}

\begin{rem}
An $n$-dimensional bordism $\Sigma$ from $M$ to $N$ is exactly an $n$-dimensional 1-bordism $\Sigma$ as in Definition \ref{defn k-bordism} with $\partial_{in}\Sigma = M$ and $\partial_{out}\Sigma = N$.
\end{rem}

\begin{prop}\label{h_1 nCob}
There is an equivalence of symmetric monoidal categories between the homotopy category of the $(\infty,1)$-category $\Bord_n^{(\infty,1)}$ and the category of $n$-bordisms,
$$h_1(\Bord_n^{(\infty,1)})\simeq n\Cob.$$
\end{prop}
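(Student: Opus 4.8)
The plan is to construct a functor $F: n\Cob \to h_1(\Bord_n^{(\infty,1)})$, show it is essentially surjective, fully faithful, and symmetric monoidal, hence an equivalence. Recall that $\Bord_n^{(\infty,1)} = \Bord_n^{-(n-1)}$ is the completion of $\PBord_n^{-(n-1)}$, and since completion is a Dwyer–Kan equivalence (see \ref{sec completion}), $h_1(\Bord_n^{(\infty,1)}) \simeq h_1(\PBord_n^{-(n-1)})$; so it suffices to work with the latter. Its objects are points of $(\PBord_n^{-(n-1)})_0$, i.e.\ closed $(n-1)$-manifolds embedded in some $V_r \times (0,1)^{n-1}$ together with a collar interval, and its morphisms from $M_0$ to $M_1$ are $\pi_0$ of $\{M_0\}\times^h_{(\PBord_n^{-(n-1)})_0}(\PBord_n^{-(n-1)})_1\times^h_{(\PBord_n^{-(n-1)})_0}\{M_1\}$, i.e.\ embedded $n$-bordisms with collars, up to the equivalence generated by paths in that space.

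First I would define $F$ on objects by sending a closed $(n-1)$-manifold $N$ to (a choice of) embedding of $N$ into $V_r \times (0,1)^{n-1}$ for $r$ large — possible by Whitney, and independent of the choice up to the contractibility of the embedding space, which gives a path in $(\PBord_n^{-(n-1)})_0$ and hence identifies the images. On morphisms, a bordism $W: N_0 \to N_1$ (in the sense of $\<1\>$-manifolds / $1$-bordisms from Section \ref{hocat}) admits a neat embedding into $V_r \times [0,1] \times (0,1)^{n-1}$ with collars by Theorem \ref{thm embedding k-bordism}; extending the collar slightly and recording collar intervals $I_0 \leq I_1$ produces an element of $(\PBord_n^{-(n-1)})_1$, and its class in $\pi_0$ of the relevant Hom-space is $F([W])$. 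I would check well-definedness: diffeomorphic bordisms give equal classes because a diffeomorphism $W \to W'$ compatible with the boundary parametrizations, combined with an isotopy of embeddings, traces out a path (using that $\mathrm{Emb}$ spaces are contractible); and the choice of collar size does not matter by the cutoff-path construction of Example \ref{ex cutoff path} and Remark \ref{rem cutoff}. Functoriality: composition in $n\Cob$ is gluing of bordisms along a common boundary; composition in $h_1$ comes from the span through $(\PBord_n^{-(n-1)})_2$, and by the Segal condition (Proposition \ref{prop PBord Ssp}, which shows the strict Segal map is a homeomorphism, so composition is literally gluing along the collar interval) the glued bordism represents the composite — one only has to match the collar data, again using the contractibility of collars.

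Next, essential surjectivity is immediate since every object of $h_1(\PBord_n^{-(n-1)})$ is, by definition, an embedded closed $(n-1)$-manifold, so it is $F$ of its underlying abstract manifold. For fullness: any element of $(\PBord_n^{-(n-1)})_1$ is an embedded $n$-manifold $M$ over a collar $[a_0,b_0]\leq[a_1,b_1]$ with $p_{\{1\}}$ submersive over the collars; by Corollary \ref{cor composed bordisms} and Proposition \ref{prop composed bordisms} the piece $\pi^{-1}([b_0,a_1])$ (when $b_0 < a_1$; the degenerate case is an identity) is an abstract $n$-bordism from $p_{\{1\}}^{-1}(t_0)$ to $p_{\{1\}}^{-1}(t_1)$, and $F$ of this bordism is the given element up to a path (shrink collars and apply contractibility of embeddings). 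For faithfulness: two bordisms whose images lie in the same path component of the Hom-space must be diffeomorphic rel boundary — this is exactly the content of the time-dependent Morse lemma, Theorem \ref{thm time Morse lemma}, which turns a path in $(\PBord_n^{-(n-1)})_1$ into a family of diffeomorphisms $\psi_{s,t}$ intertwining with the composed bordisms, hence in particular fixing the boundary collars; so the two bordisms are equal in $n\Cob$. Finally, $F$ is symmetric monoidal: disjoint union in $n\Cob$ matches the monoidal structure on $h_1(\Bord_n^{(\infty,1)})$ made explicit in Section \ref{sec symm mon h_1(Bord)}, since in both the $\Gamma$-object and the tower description the product is implemented by placing two bordisms disjointly (using an extra $\R$-coordinate to separate them, as in the proof that $\prod \gamma_\beta$ is an equivalence), and the symmetry/unit constraints match on the nose up to the usual contractible choices.

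The main obstacle I expect is faithfulness, i.e.\ upgrading "connected by a path in the Hom-space" to "diffeomorphic rel boundary in the strict sense required by $n\Cob$." A single path only gives a diffeomorphism between the fibers at its endpoints, and one must be careful that (i) the diffeomorphism produced by Theorem \ref{thm time Morse lemma} genuinely respects the boundary parametrizations used to define composition in $n\Cob$ (this is why the "intertwines with the composed bordisms" clause matters, and one should check the collar-compatibility carefully), and (ii) the equivalence relation defining $\pi_0$ is generated by zig-zags of paths, so one needs the diffeomorphism-rel-boundary relation to be transitive and closed under such zig-zags — which it is, but spelling this out against the precise definition of morphisms in $n\Cob$ (including the choice of how $n\Cob$ handles collars and diffeomorphism classes) requires care. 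Everything else is assembling results already proved in the excerpt.
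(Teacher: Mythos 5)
Your proposal is correct and rests on exactly the same ingredients as the paper's proof, but runs the equivalence in the opposite direction: the paper constructs $F\colon h_1(\PBord_n^{-(n-1)})\to n\Cob$ by taking preimages of cutting intervals, whereas you build a functor $n\Cob\to h_1(\PBord_n^{-(n-1)})$ by choosing embeddings. The reversal redistributes the work. In the paper, the time-dependent Morse lemma (Theorem \ref{thm time Morse lemma}) is what makes $F$ well defined on morphisms, contractibility of embedding spaces gives injectivity, and fullness is the genuinely geometric step, for which the paper gives a bare-hands construction (separating the boundary components by a hyperplane using contractibility of the configuration space of balls in $\R^{2n}$, then straightening the half-space with the complex logarithm) rather than citing Theorem \ref{thm embedding k-bordism}. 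In your version that same existence-of-embeddings statement is needed up front just to define the functor on morphisms; invoking Theorem \ref{thm embedding k-bordism} for this is legitimate and is precisely what the paper does in the $h_2$ comparison, so your route is arguably the more uniform one. Contractibility of embedding spaces then gives you well-definedness, functoriality and fullness, and the Morse lemma gives faithfulness. Two points to tighten: objects of $(\PBord_n^{-(n-1)})_0$ are $n$-dimensional submanifolds of $V\times(a,b)$ submersing properly over a single interval, not $(n-1)$-manifolds in $V\times(0,1)^{n-1}$, so essential surjectivity is not quite immediate --- you must pass to a fiber $\pi^{-1}(t)$ and connect the given element to your chosen embedding $F(\pi^{-1}(t))$ by a path, which contractibility again provides. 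The boundary-compatibility issue you flag under faithfulness is genuine, but it appears to exactly the same degree in the paper's own well-definedness step, so your treatment meets the paper's standard.
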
 

\begin{proof} We first show that there is an equivalence of categories $h_1(\Bord_n^{(\infty,1)})\simeq n\Cob$ and then show that it respects the symmetric monoidal structures.

Rezk's completion functor is a Dwyer-Kan equivalence of Segal spaces, and thus by definition induces an equivalence of the homotopy categories. So it is enough to show that
$$h_1(\PBord_n^{-(n-1)})\simeq n\Cob.$$
We define a functor
$$F:h_1(\PBord_n^{-(n-1)}) \longrightarrow n\Cob$$
and show that it is essentially surjective and fully faithful.

\paragraph{Definition of the functor}
By definition, an object in $h_1(\PBord_n^{-(n-1)})$ is an element $(M)=(M\subset V\times (a,b), I=(a,b)) \in \big(\PBord_n^{-(n-1)}\big)_0$. Since $\pi: M\to (a,b)$ is submersive and proper, in particular $\frac{a+b}{2}$ is a regular value of $\pi$ and $\pi^{-1}(\frac{a+b}{2})$ is a closed $(n-1)$-dimensional manifold. We define
$$F((M)) = \pi^{-1}(\frac{a+b}{2}).$$

A morphism in $h_1(\PBord_n^{-(n-1)})$ is an element in $\pi_0((\PBord_n^{-(n-1)})_1)$, represented by
$$(N)= \big(N\subset V\times (a_0,b_1), I_0=(a_0, b_1] \leq I_1=[a_1, b_1) \big) \in \big(\PBord_n^{-(n-1)}\big)_1.$$

We let $F$ send $(N)$ to the isomorphism class of
$$\bar{N}=\pi^{-1}\left(  \big[\frac{2 a_0 + b_0}{3}, \frac{ a_1 + 2 b_1}{3} \big] \right).$$
This is an $n$-dimensional manifold with boundary
$$\pi^{-1}( \frac{2a_0 + b_0}{3}) \amalg \pi^{-1}(  \frac{a_1+2b_1}{3}).$$
Since $\pi$ only has regular values in $I_0$ and $I_1$, the Morse lemma gives diffeomorphisms
$$\pi^{-1}( \frac{2a_0 + b_0}{3}) \cong \pi^{-1}( \frac{a_0 + b_0}{2}) \quad\mbox{and}\quad \pi^{-1}(  \frac{a_1+2b_1}{3}) \cong \pi^{-1}(  \frac{a_1+b_1}{2}),$$
Thus $F((N))$ is an $n$-dimensional bordism from the image of the source $F(d_0(N))$ to the image of the target $F(d_1(N))$.

We need to check that this assignment is well-defined, i.e.~independent of the choice of the representative of the isomorphism class. Any two representatives $(N_0), (N_1)$ are connected by a path in $(\PBord_n^{-(n-1)})_1$. From this path we can obtain another one which has ``shorter'' intervals, namely just by shrinking them to $(a_0(s), \frac{2a_0(s)+b_0(s)}{3}]$ and $[\frac{a_1(s)+2b_1(s)}{3}, b_1(s))$. Now Theorem \ref{thm time Morse lemma} gives a diffeomorphism $\psi_{0,1}:N_0\to N_1$ which restricts to a diffeomorphism $\bar{\psi}_{0,1}:\bar{N_0}\to \bar{N_1}$.

Note that the Morse lemma implies that any image of the degeneracy map in $(\PBord_n^{-(n-1)})_1$ is sent to an identity morphism in $n\Cob$ and that $F$ behaves well with composition.

\paragraph{The functor is an equivalence of categories}
Whitney's embedding theorem shows that $F$ is essentially surjective. Moreover, it is injective on morphisms: Let $\iota_0:N_0\hookrightarrow V\times B(\ul{I})$ and $\iota_1:N_1\hookrightarrow W\times B(\ul{\tilde{I}})$ be embeddings which are representatives of two 1-morphisms $(N_0\subset V\times B(\ul{I}) , \ul{I})$ and $(N_1\subset W\times B(\tilde{\ul{I}}), \ul{\tilde{I}})$ which have diffeomorphic images. Without loss of generality we can assume that $V=W$ and $\ul{I}=\ul{\tilde{I}}$. Then there is a diffeomorphism $\psi: \bar{N_0} \to \bar{N_1}$, which can be extended to the rest of the collars, i.e.~we get a diffeomorphism $\psi:N_0\to N_1$. Since $\mathrm{Emb}(N_0, \R^\infty\times B(\ul{I}))$ is contractible, there is a path from $\iota_0$ to $\iota_1\circ \psi$, which induces a 1-simplex $(N \subset V \times B(\ul{I})\times [0,1], B(\ul{I}))$ in $(\PBord_n^{-(n-1)})_1$ such that the fiber at $s=0$ is $(N_0\subset V\times B(\ul{I}) , \ul{I})$ and the fiber at 1 is $\big(im(\iota_1\circ\psi)(N_0)=N_1\subset V\times B(\ul{I}) , \ul{I} \big)$.

It remains to show that $F$ is full. In the case $n=1,2$ this is easy to show, as we have a classification theorem for 1- and 2-dimensional manifolds with boundary. In the 1-dimensional case it is enough to show that an open line, the circle and the half-circle, once as a bordism from 2 points to the empty set and once vice versa, lie in the image of the map, which is straightforward. In the 2-dimensional case, the pair-of-pants decomposition tells us how to embed the manifold.

For general $n$ we need to find a suitable embedding of our bordism. Theorem \ref{thm embedding k-bordism} provides one for much more general $k$-bordisms, but as there is a much simpler argument for $k=1$, we provide it here.

We first embed the manifold with boundary into $\R^+\times \R^{2n}$ using Laures' embedding theorem \ref{thm Laures embedding} for manifolds with boundary. Then the boundary of the half-space is $\partial( \R^+\times \R^{2n})=\R^{2n}$. We want to transform this embedding into an embedding into $(0,1)\times \R^{2n}$ such that the incoming boundary is sent into $\{\epsilon\}\times\R^{2n}$ and the outgoing boundary is sent into $\{1-\epsilon\}\times\R^{2n}$.

We first show that the boundary components can be separated by a hyperplane in $\R^{2n}$. The boundary components are compact, so they can be embedded into (large enough) balls $B^{2n}$. By perhaps first applying a suitable ``stretching'' transformation, one can assume that these balls do not intersect. Now, since $2n>1$ we have that the configuration space of these balls $\pi_0(\mathrm{Conf}(B^{2n}, \R^{2n}))\cong *$ is contractible, there is a transformation to a configuration in which the boundary components are separated by a hyperplane, without loss of generality given by the equation $\{x_1=0\}\subset\R^{2n}$.

Consider the restriction of the (holomorphic) logarithm function with branch cut $-i\R^+$ to $(\R^+\times \R)\setminus\{(0,0)\}\cong\mathbb{H}\setminus 0\subseteq\mathbb{C}$. It is a homeomorphism to $\{(x,y)\in \R^2: 0\leq y \leq \pi\}$. We can apply $\log\times id_{\R^{2n-1}}$ to $(\R^+\times \R_{x_1})\times \R^{2n-1}$ and, composing this with a suitable rescaling, obtain an embedding into $(\epsilon,1-\epsilon)\times \R^{2n}$. Now choose a collaring of the bordism to extend the embedding to $(0,1)\times \R^{2n}$.

\paragraph{The functor is a symmetric monoidal equivalence}

In the case of the structure coming from a $\Gamma$-object, one can similarly to the previous paragraph define an equivalence of categories
$$F[m]: h_1\Bord_n^{-(n-1)}[m] \longrightarrow n\Cob^m.$$
Then one can easily check that the following diagram commutes.
$$
\begin{tikzcd}
h_1\Bord_n^{-(n-1)}[1] \times h_1\Bord_n^{-(n-1)}[1] \arrow{d}{F\times F} & h_1\Bord_n^{-(n-1)}[2] \arrow{d}{F[2]} \arrow{r}{} \arrow{l}{\simeq} & h_1\Bord_n^{-(n-1)}[1] \arrow{d}{F}\\
n\Cob\times n\Cob \arrow[equal]{r}{} & n\Cob\times n\Cob \arrow{r}{\amalg}& n\Cob
\end{tikzcd}
$$
Thus we have a functor of $\Gamma$-objects in categories. Finally, there is an equivalence of categories between $\Gamma$-objects in categories and symmetric monoidal categories, which is a direct consequence of MacLane's coherence theorem for symmetric monoidal categories \cite{MacLane}.

For the case of the structure coming from a tower, we explicitly saw that the symmetric monoidal structure on $h_1(\Bord_n^{(\infty,1)})$ sends two objects or 1-morphisms determined by
$$(M)=(M\subseteq V_d\times B(\ul{I}), \ul{I}), \quad (N)=(N\subseteq V_d\times B(\ul{I}), \ul{I})$$
to
$$(M\amalg N)=\left(M \amalg N \hookrightarrow V \times B(\ul{I}), \ul{I} \right),$$
where the images of $M$ and $N$ lie in disjoint ``heights'' in the $v_1$-direction in $V_d$. Thus, under the functor $F$ the element $(M\amalg N)$ is sent to $F((M))\amalg F((N))$.

Finally, in both cases, any element represented by $(\emptyset, \ul{I})$ is sent to $\emptyset$.
\end{proof}

\section{Bordisms with additional structure: orientations and framings}\label{sec decorations}

In the study of fully extended topological field theories, one is particularly interested in manifolds with extra structure, especially that of a framing. In this section we explain how to define the $(\infty,n)$-category of structured $n$-bordisms, in particular for the structure of an orientation or a framing.

\subsection{Structured manifolds}\label{structured mfld}

We first recall the definition of structured manifolds and the topology on their morphism spaces making them into a topological category. In the next subsection we will see that the smooth singular simplices on these topological spaces essentially will give rise to the spatial structure of the levels of the $n$-fold Segal space of structured bordisms similarly to the construction in Section \ref{sec space PBord}.

Throughout this subsection, let $M$ be an $n$-dimensional smooth manifold.

\begin{defn}
Let $X$ be a topological space and $E\to X$ a topological $n$-dimensional vector bundle which corresponds to a (homotopy class of) map(s) $e: X\to B\mathrm{GL}(\R^n)$ from $X$ to the classifying space of the topological group $\mathrm{GL}(\R^n)$. More generally, we could also consider a map $e:X\to B\mathrm{Homeo}(\R^n)$ to the classifying space of the topological group of homeomorphisms of $\R^n$, but for our purposes vector bundles are enough. An {\em $(X,E)$-structure or, equivalently, an $(X,e)$-structure on an $n$-dimensional manifold $M$} consists of the following data:
\begin{enumerate}
\item a map $f: M\to X$, and
\item an isomorphism of vector bundles
$$triv: TM \cong f^*(E).$$
\end{enumerate}
Denote the set of $(X,E)$-structured $n$-dimensional manifolds by $\Man_n^{(X,E)}$.
\end{defn}

An interesting class of such structures arises from topological groups with a morphism to $O(n)$.
\begin{defn}
Let $G$ be a topological group together with a continuous homomorphism $e:G\to O(n)$, which induces $e: BG\to B\mathrm{GL}(\R^n)$. As usual, let $BG=EG/G$ be the classifying space of $G$, where $EG$ is total space of its universal bundle, which is a weakly contractible space on which $G$ acts freely. Then consider the vector bundle $E=(\R^n\times EG)/G$ on $BG$.
A $(BG, E)$-structure or, equivalently, a $(BG, e)$-structure on an $n$-dimensional manifold $M$ is called a {\em $G$-structure on $M$}. The set of $G$-structured $n$-dimensional manifolds is denoted by $\Man_n^G$.
\end{defn}

For us, the most important examples will be the following three examples.
\begin{ex}
If $G$ is the trivial group, $X=BG=*$ and $E$ is trivial. Then a $G$-structure on $M$ is a trivialization of $TM$, i.e.~a framing.
\end{ex}

\begin{example}
Let $G=O(n)$ and $e = id_{O(n)}$. Then, since the inclusion $O(n) \to \mathrm{Diff}\,(\R^n)$ is a deformation retract, an $O(n)$-structured manifold is just a smooth manifold.
\end{example}

\begin{example}
Let $G=SO(n)$ and $e:SO(n)\to O(n)$ is the inclusion. Then an $SO(n)$-structured manifold is an oriented manifold.
\end{example}

\begin{defn}
Let $M$ and $N$ be $(X, E)$-structured manifolds. Then let the space of morphisms from $M$ to $N$ be
$$\mathrm{Map}^{(X,E)}(M,N)= \mathrm{Emb}(M,N) \overunder[\mathrm{Map}_{/B\mathrm{Homeo}(\R^n)}(M,N)]{h}{\times} \mathrm{Map}_{/X}(M,N).$$
Taking (singular or differentiable) simplices leads to a space, i.e.~a simplicial set of morphisms from $M$ to $N$.
Thus we get a topological (or simplicial) category $\CMan_n^{(X,E)}$ of $(X,E)$-structured manifolds. Disjoint union gives $\CMan_n^{(X,E)}$ a symmetric monoidal structure.
\end{defn}

\begin{rem}
For $G=O(n)$ we recover $\mathrm{Emb}(M,N)$, and for $G=SO(n)$, the space of orientations on a manifold is discrete, so an element in $\mathrm{Map}^{SO(n)}(M,N)$ is an orientation preserving map.

If $G$ is the trivial group we saw above that a $G$-structure is a framing. In this case, the above homotopy fiber product reduces to
$$\mathrm{Map}^{(X,E)}(M,N)= \mathrm{Emb}(M,N) \overunder[\mathrm{Map}_{GL(d)}(\mathrm{Fr}(TM), \mathrm{Fr}(TN))]{h}{\times} \mathrm{Map}(M,N).$$
Thus, a framed embedding is a pair $(f,h)$, where $f:M\to N$ lies in $\mathrm{Emb}(M,N)$ and $h$ is a homotopy between between the trivialization of $TM$ induced by the framing of $M$ and that induced by the pullback of the framing on $N$.
\end{rem}

\subsection{The \texorpdfstring{$(\infty,n)$}{(infty,n)}-category of structured bordisms}

Fix a type of structure given by the pair $(X,E)$. In this subsection we define the $n$-fold (complete) Segal space of $(X,E)$-structured bordisms $\Bord_n^{(X,E)}$.

Compared to Definition \ref{def PBord} we add an $(X,E)$-structure to the data of an element in a level set.

\begin{defn}
Let $V$ be a finite-dimensional vector space. For every $n$-tuple $k_1,\ldots, k_n\geq0$, let $\big(\PBord_n^{(X,E), V}\big)_{k_1,\ldots, k_n}$ be the collection of tuples $(M,f, triv, (I^i_0\leq\cdots\leq I^i_{k_i})_{i=1,\ldots ,n})$, where
\begin{enumerate}
\item $(M, (I^i_0\leq\cdots\leq I^i_{k_i})_{i=1}^n)$ is an element in the set $(\PBord_n^V)_{k_1,\ldots, k_n}$, and
\item $(f, triv)$ is an $(X,E)$-structure on the (abstract) manifold $M$.
\end{enumerate}
\end{defn}

\begin{rem}
Note that there is a forgetful map
$$U: \big(\PBord_n^{(X,E), V}\big)_{k_1,\ldots, k_n} \to (\PBord_n^V)_{k_1,\ldots, k_n}$$
forgetting the $(X,E)$-structure.
\end{rem}

\begin{defn}
An $l$-simplex of $\big(\PBord_n^{(X,E), V}\big)_{k_1,\ldots, k_n}$ consists of tuples $(M, f, triv, \oul{I}(s)= (I^i_0(s) \leq\cdots\leq I^i_{k_i}(s))_{s\in\eDelta{l}}$ such that
\begin{enumerate}
\item $\oul{I}=(I^i_0 \leq\cdots\leq I^i_{k_i})_{1\leq i \leq n} \to \eDelta{l}$ is an $l$-simplex  in $\Int^n_{k_1,\ldots, k_n}$,
\item $M$ is a closed and bounded $(n+l)$-dimensional submanifold of $V\times B(\oul{I})$ such that\footnote{Recall from Section \ref{sec boxing} that $B(\oul{I})$ denotes the total space of $B(\oul{I})\to\eDelta{l}$ and is the subspace $\bigcup_{s\in\eDelta{l}} B(\oul{I}(s)) \times \{s\}$ of $\R^n\times\eDelta{l}$.}
\begin{enumerate}
\item the composition $\pi: M \hookrightarrow V\times B(\oul{I}) \twoheadrightarrow B(\oul{I})$ of the inclusion with the projection is proper,
\item its composition with the projection onto $\eDelta{l}$ is a submersion $\pi_l: M\to \eDelta{l}$, 
\item  $(f,triv): \mathrm{ker}(D\pi_l:TM\to T\eDelta{l}) \to f^*E$ is a fiberwise linear isomorphism.
\end{enumerate}
\item for every $S\subseteq\{1,\ldots, n\}$, let $p_S:M\xrightarrow{\pi}B(\oul{I}) \subseteq \R^n\times\eDelta{l} \xrightarrow{\pi_S}\R^S \times \eDelta{l}$ be the composition of $\pi$ with the projection $\pi_S$ onto the $S$-coordinates. Then for every $1\leq i\leq n$ and $0\leq j_i\leq k_i$,  at every $x\in p_{\{i\}}^{-1}(I^i_{j_i} (s) \times\{s\})$, the map $p_{\{i,\ldots,n\}}$ is submersive.
\end{enumerate}
\end{defn}

Similarly as for $\PBord_n$ the levels can be given a spatial structure with the above $l$-simplices and then the collection of levels can be made into a complete $n$-fold Segal space $\Bord_n^{(X,E)}$.

Moreover, $\Bord_n^{(X,E)}$ has a symmetric monoidal structure given by $(X,E)$-structured versions of the $\Gamma$-object and of the tower giving $\Bord_n$ a symmetric monoidal structure.

\subsection{Example: Objects in \texorpdfstring{$\Bord_2^{fr}$}{Bord2fr} are 2-dualizable}

In dimension one, a framing is the same as an orientation. Thus the first interesting case is the two-dimensional one. In this case, the existence of a framing is a rather strong condition. However, we will see that any object in $\Bord_2^{fr}$ is 2-dualizable. Being 2-dualizable means that it is dualizable with evaluation and coevaluation maps themselves have adjoints, see \cite{Lurie}.

Consider an object in $\Bord_2^{fr}$, which, since in this case $\Bord_2^{fr} = \PBord_2^{fr}$ by remark \ref{rem PBord incomplete}, is an element of the form
$$\big(M\subseteq V\times (a^1, b^1)\times (a^2, b^2), F, (a^1, b^1), (a^2, b^2) \big),$$
where $F$ is a framing of $M$. By the submersivity condition \ref{cond 3} in the Definition \ref{def PBord} of $\PBord_2$, $M$ is a disjoint union of manifolds which are diffeomorphic to $(0,1)^2$. Thus, it suffices to consider an element of the form
$$\big((0,1)^2\subseteq (0,1)^2, F, (0,1), (0,1) \big),$$
where $F$ is a framing of $(0,1)^2$. Depict this element by
$$\begin{tikzpicture}
\draw (0,0) -- (2, 0) -- (3.4, 1.4) -- (1.4, 1.4) -- cycle;
\draw[->, red] (0.85,0.35) -- (1.85,0.35) node[anchor = west] {\tiny 1};
\draw[->, blue] (0.85,0.35) -- (1.55, 1.05) node[anchor = west] {\tiny 2};
\end{tikzpicture}
$$
One should think of this as a point together with a 2-framing, 
$$\begin{tikzpicture}
\fill (0.85,0.35) circle (0.1);
\draw[->, red] (0.85,0.35) -- (1.85,0.35) node[anchor = west] {\tiny 1};
\draw[->, blue] (0.85,0.35) -- (1.55, 1.05) node[anchor = west] {\tiny 2};
\end{tikzpicture}
$$

We claim that its dual is the same underlying unstructured manifold together with the opposite framing
$$\begin{tikzpicture}
\draw (0,0) -- (2, 0) -- (3.4, 1.4) -- (1.4, 1.4) -- cycle;
\draw[->, red] (1.55, 1.05) -- (2.55, 1.05) node[anchor = west] {\tiny 1};
\draw[->, blue] (1.55, 1.05) -- (0.85, 0.35) node[anchor = west] {\tiny 2};
\end{tikzpicture}
\hspace{3em}
\begin{tikzpicture}
\fill (1.55, 1.05) circle (0.1);
\draw[->, red] (1.55, 1.05) -- (2.55, 1.05) node[anchor = west] {\tiny 1};
\draw[->, blue] (1.55, 1.05) -- (0.85, 0.35) node[anchor = west] {\tiny 2};
\end{tikzpicture}
$$

An evaluation 1-morphism $ev_{\ptpos}$ between them is given by the element in $(\Bord_2^{fr})_{1,0}$ which is a strip, i.e.~$(0,1)^2$, with the framing given by slowly rotating the framing by $180^\circ$, and is embedded into $\R\times (0,1)^2$ by folding it over once as depicted further down.

$$\begin{tikzpicture}

\begin{scope}[shift = {(-0.85,-0.35)}]
\draw (0,0) -- (7.5, 0) -- (8.9, 1.4) -- (1.4, 1.4) -- cycle;
\draw[->, red] (0.85,0.35) -- (1.85,0.35) node[anchor = west] {\tiny 1};
\draw[->, blue] (0.85,0.35) -- (1.55, 1.05) node[anchor = west] {\tiny 2};
\end{scope}

 \foreach \t in {1,2,3}{
\tikzset{shift={(1.8*\t,0)},rotate=45*\t}
\begin{scope}[shift = {(-0.85,-0.35)}]
\draw[->, red] (0.85,0.35) -- (1.85,0.35) node[anchor = east] {\tiny 1};
\draw[->, blue] (0.85,0.35) -- (1.55, 1.05) node[anchor = east] {\tiny 2};
\end{scope}
}
\begin{scope}[shift={(4.5,-0.35)}]
\draw[->, red] (2.55, 1.05) -- (1.55, 1.05) node[anchor = east] {\tiny 1};
\draw[->, blue] (2.55, 1.05) -- (1.85, 0.35) node[anchor = west] {\tiny 2};
\end{scope}

\end{tikzpicture}
$$

$$
\begin{tikzpicture}[scale=0.7]

\draw (0,-1) arc (-90:90:1);
\draw (0.707, - 0.707) -- (2.707, 1.293);
\draw (2.707, 1.293) arc (-45:90:1);

\foreach \t in {-45, -40,..., 90}
	\draw[gray] ({cos(\t)},{sin(\t)}) -- ({2+cos(\t)},{2+sin(\t)});

\draw (-2,-1) -- (0,-1);
\draw (-2,1) -- (0,1);
\draw (0, 3) -- (2,3);
\draw (-2, 1) -- (0,3);
\draw (-2,-1) -- (0,1);

\foreach \t in {1,2,...,22}
	\draw[gray] (-0.087*\t,1) -- (2-0.087*\t, 3);

\draw[->, red] (-1,-0.5) -- (0, -0.5) node[anchor = west] {\tiny 1};
\draw[->, blue] (-1, -0.5) -- (-0.3, 0.2) node[anchor = west] {\tiny 2};

\draw[->, red] (-0.3, 2.2) -- (0.7, 2.2) node[anchor = west] {\tiny 1};
\draw[->, blue] (-0.3, 2.2) -- (-1, 1.5) node[anchor = west] {\tiny 2};
\end{tikzpicture}
$$

A coevaluation $coev_{\ptpos}$ is given similarly by rotating the framing along the strip in the other direction, by -$180^\circ$.

The composition
$$
\begin{tikzpicture}[scale=0.7]

\draw (-2, 1) -- (-5, 1) -- (-3, 3) -- (0,3);

\draw (0,-1) arc (-90:90:1);
\draw (0.707, - 0.707) -- (2.707, 1.293);
\draw (2.707, 1.293) arc (-45:90:1);

\draw (-2,-1) -- (0,-1);
\draw (-2,1) -- (0,1);
\draw (0, 3) -- (2,3);
\draw (-2, 1) -- (0,3);

\draw (-2.707, -1.293) -- (-0.414 ,1);

\draw (-2,-1) arc (-90: 90: -1);
\draw (-2, -3) -- (0, -3) -- (2, -1) -- (0, -1);

\draw (0,-3) -- (3, -3) -- (5, -1) -- (2, -1);

\draw[->, red] (-1,-0.5) -- (0, -0.5) node[anchor = west] {\tiny 1};
\draw[->, blue] (-1, -0.5) -- (-0.3, 0.2) node[anchor = south] {\tiny 2};

\draw[->, red] (-0.3, 2.2) -- (0.7, 2.2) node[anchor = south] {\tiny 1};
\draw[->, blue] (-0.3, 2.2) -- (-1, 1.5) node[anchor = west] {\tiny 2};

\begin{scope}[shift={(0,-4)}]
\draw[->, red] (-0.3, 2.2) -- (0.7, 2.2) node[anchor = south] {\tiny 1};
\draw[->, blue] (-0.3, 2.2) -- (-1, 1.5) node[anchor = west] {\tiny 2};
\end{scope}

\begin{scope}[shift={(3,-4)}]
\draw[->, red] (-0.3, 2.2) -- (0.7, 2.2) node[anchor = south] {\tiny 1};
\draw[->, blue] (-0.3, 2.2) -- (-1, 1.5) node[anchor = west] {\tiny 2};
\end{scope}

\begin{scope}[shift={(-3,0)}]
\draw[->, red] (-0.3, 2.2) -- (0.7, 2.2) node[anchor = south] {\tiny 1};
\draw[->, blue] (-0.3, 2.2) -- (-1, 1.5) node[anchor = west] {\tiny 2};
\end{scope}

\end{tikzpicture}
$$
is connected by a path to the flat strip with the following framing given by pulling at the ends of the strip to flatten it.
$$\begin{tikzpicture}[scale=0.7]

\begin{scope}[shift = {(-0.85,-0.35)}]
\draw (0,0) -- (16, 0) -- (17.4, 1.4) -- (1.4, 1.4) -- cycle;
\draw (7, 0) -- (9, 0) -- (10.4, 1.4) -- (8.4, 1.4) -- cycle;
\draw (2,0) -- (3.4, 1.4);
\draw (14,0) -- (15.4, 1.4);
\draw[->, red] (0.85,0.35) -- (1.85,0.35) node[anchor = west] {\tiny 1};
\draw[->, blue] (0.85,0.35) -- (1.55, 1.05) node[anchor = west] {\tiny 2};
\end{scope}

\begin{scope}[shift={(0.6,0)}]
 \foreach \t in {1,2,3}{
\tikzset{shift={(1.9*\t,0)},rotate=45*\t}
\begin{scope}[shift = {(-0.85,-0.35)}]
\draw[->, red] (0.85,0.35) -- (1.85,0.35) node[anchor = east] {\tiny 1};
\draw[->, blue] (0.85,0.35) -- (1.55, 1.05) node[anchor = east] {\tiny 2};
\end{scope}
}
\end{scope}

\begin{scope}[shift={(6.1,-0.35)}]
\draw[->, red] (2.55, 1.05) -- (1.55, 1.05) node[anchor = east] {\tiny 1};
\draw[->, blue] (2.55, 1.05) -- (1.85, 0.35) node[anchor = west] {\tiny 2};
\end{scope}

\begin{scope}[shift= {(9.2, 0)}]
 \foreach \t in {1,2,3}{
\tikzset{shift={(1.2*\t,0)},rotate=180-45*\t}
\begin{scope}[shift = {(-0.85,-0.35)}]
\draw[->, red] (0.85,0.35) -- (1.85,0.35) node[anchor = east] {\tiny 1};
\draw[->, blue] (0.85,0.35) -- (1.55, 1.05) node[anchor = east] {\tiny 2};
\end{scope}
}
\end{scope}

\begin{scope}[shift= {(13.8, 0)}]
\begin{scope}[shift = {(-0.85,-0.35)}]
\draw[->, red] (0.85,0.35) -- (1.85,0.35) node[anchor = west] {\tiny 1};
\draw[->, blue] (0.85,0.35) -- (1.55, 1.05) node[anchor = west] {\tiny 2};
\end{scope}
\end{scope}

\end{tikzpicture}
$$

This strip is homotopic to the same strip with the trivial framing. Thus the composition is connected by a path to the identity and thus is the identity in the homotopy category. Similarly,
$$\big(ev_{\ptpos} \otimes id_{\ptpos}\big) \circ \big(id_{\ptpos} \otimes coev_{\ptpos} \big)\simeq id_{\ptpos}.$$

In the above construction, we used $ev_{\ptpos}$ and $coev_{\ptpos}$ which arose from strips with framing rotating by $\pm 180^\circ$. A similar argument holds if you use for the evaluation any strip with the framing rotating by $\alpha\pi$ for any odd integer $\alpha$ and for the coevaluation rotation by $\beta\pi$ for any odd $\beta$. Denoting these by $ev(\alpha)$ and $coev(\beta)$, they will be adjoints to each other if $\alpha+\beta = 2$. 

The counit of the adjunction is given by the cap with the framing coming from the trivial framing on the (flat) disk.

$$
\begin{tikzpicture}[scale=0.8]
\draw (0,3) arc [start angle=90, end angle=270, x radius=1, y radius=3];
\draw (1,3) arc [start angle=90, end angle=-90, radius=3];
\draw (1,3) arc [start angle=90, end angle=270, x radius=1, y radius=3];
\filldraw[fill=white, draw=black] (0, 3) -- (1, 3) arc [start angle=90, end angle=-90, x radius=1, y radius=3] -- (0,-3) arc [start angle=-90, end angle=90, x radius=1, y radius=3];
\draw (1.5, 0) node {$ev$};
\draw (-0.5, 0) node {$coev$};
\end{tikzpicture}
\hspace{2cm}
\begin{tikzpicture}[scale=0.8]
\draw (0,0) circle (3);
 \foreach \t in {0,1,...,7}{
\tikzset{rotate=45*\t}
\draw (2.3, 0) node {$\ptframe$};
}
 \foreach \t in {0,1,...,3}{
\tikzset{rotate=90*\t}
\draw (1, 0) node {$\ptframe$};
}
\end{tikzpicture}
$$

Similarly, the unit of the adjunction is given by a saddle with the framing coming from the one of the torus which turns by $2\pi$ along each of the fundamental loops.
$$
\begin{tikzpicture}[scale=0.8]
\begin{scope}[rotate= -90]
\draw (0,0) circle [x radius=2, y radius=3];
\draw (0,0) circle [x radius=0.4, y radius=0.8];

\draw[densely dashed] (-0.4, 0) arc [start angle=0, end angle=-90, x radius=0.5, y radius=0.2];
\draw (-0.4, 0) arc [start angle=0, end angle=90, x radius=0.7, y radius=0.2];

\draw (0.4, 0) arc [start angle=180, end angle=90, x radius=0.5, y radius=0.2];
\draw[densely dashed] (0.4, 0) arc [start angle=180, end angle=270, x radius=0.7, y radius=0.2];

\draw[densely dashed] (-0.9, -0.2) -- (-0.9, -1.5) -- (1.1, -1.5) -- (1.1, -0.2);
\draw (-1.1, 0.2) -- (-1.1, -1.1) -- (0.9, -1.1) -- (0.9, 0.2);
\end{scope}
\end{tikzpicture}
$$

Then the following 2-bordism is also framed and exhibits the adjunction.
$$
\begin{tikzpicture}[scale=2.5]

\begin{scope} [rotate=90]
\node (X) at (2.7, 1.75) {};

\draw (2,2) arc (-90: 0: 0.3cm and 0.1cm);
\draw [densely dashed] (2.3, 2.1) arc (0:90: 0.3cm and 0.1cm);
\draw (2, 2.2) -- (1.9, 2.2) -- (1.9, 3) -- (3.1,3) arc (90: -90: 0.3cm and 0.1cm);
\draw (2,2) -- (2, 2.8) -- (3.1, 2.8) 
	(3.4, 2.9) -- (3.4, 2.1)
	 (3.1, 2) -- (3,2) (3.1, 2) arc (-90: 0: 0.3cm and 0.1cm)  ;
\draw (3,2) arc (270: 180: 0.3cm and 0.1cm);
\draw [densely dashed]  (3, 2.2) arc (90: 180: 0.3cm and 0.1cm) (3, 2.2) -- (3.1, 2.2) arc (90: 0: 0.3cm and 0.1cm);

\draw (2.3, 1) -- (2.3, 2.1) arc (180: 0: 0.2cm and 0.3cm);
\draw (1.9, 2.2) -- (1.9, 1.1) -- (2, 1.1) (2, 2) -- (2, 0.9);
\draw (2,0.9) arc (-90: 0: 0.3cm and 0.1cm);
\draw [densely dashed] (2.3, 1) arc (0:90: 0.3cm and 0.1cm);

\draw (2.7, 2.1) arc (180: 360: 0.35 cm) arc (0: -90: 0.3cm and 0.1cm) -- +(-0.1, 0) arc (270: 180: 0.3cm and 0.1cm);
\draw [densely dashed] (2.7, 2.1) arc (180: 90: 0.3cm and 0.1cm) -- +(0.1, 0) arc (90: 0: 0.3cm and 0.1cm);

\draw (3.1, 2.8) -- (3.1, 2);
\draw[densely dashed] (3, 3) -- (3, 2.2);

\draw (2.7, 0.7) node {=};

\begin{scope}[shift={(2.2, -1.3)}]
	\node (A) at (0.9, 0.75) {};
	\draw (.1,.2) -- (0, 0.2) --(0,1.6) -- (.6, 1.6) arc (90: -90:  0.3cm and 0.1cm) -- (0.1, 1.4) 
	-- (0.1,0) -- (.6, 0) arc (-90: 0:  0.3cm and 0.1cm)  -- (.9, 1.5);
	\draw [densely dashed] (.1, .2) -- (.6, .2) arc (90: 0: 0.3cm and 0.1cm);
\end{scope}
\end{scope}
\end{tikzpicture}
$$

\begin{rem}
One can use a similar, but much longer, argument to show that objects in $\Bord_n^{fr}$ are in fact $n$-dualizable.
\end{rem}

\section{Fully extended topological field theories}\label{TFT}

Now that we have a good definition of a symmetric monoidal $(\infty,n)$-category of bordisms modelled as a symmetric monoidal complete $n$-fold Segal space, we can define fully extended topological field theories \`a la Lurie. 

\subsection{Definition}

\begin{defn}
A {\em fully extended unoriented $n$-dimensional topological field theory} is a symmetric monoidal functor of $(\infty,n)$-categories with source $Bord_n$.
\end{defn}

\begin{rem}
Consider a fully extended unoriented $n$-dimensional topological field theory
$$Z: \Bord_n \longrightarrow \C,$$
where $\C$ is a symmetric monoidal complete $n$-fold Segal space. We have seen in Corollary \ref{cor looping is again a Bord} and Section \ref{hocat} that there is a map $n\Cob \simeq h_1(\Bord_n^{(\infty,1)}) \to h_1( \myloopnop[n-1]{\Bord_n})$. Precomposition of $\myloop[n-1]{Z}{\emptyset}$ with this map induces a symmetric monoidal functor
$$n\Cob \to h_1(\myloopnop[n-1]{\Bord_n}) \longrightarrow h_1( \myloop[n-1]{\C}{ Z(*)} ),$$
i.e.~an ordinary $n$-dimensional topological field theory. 
\end{rem}

\paragraph{Additional structure} Recall from the previous section that there are variants of $\Bord_n$ which require that the underlying manifolds of their elements are endowed with some additional structure, e.g.~an orientation or a framing. These variants lead to the following definitions.

\begin{defn}
Fix a type of structure given by the pair $(X,E)$. A {\em fully extended $n$-dimensional $(X,E)$-topological field theory} is a symmetric monoidal functor of $(\infty,n)$-categories with source $\Bord_n^{(X,E)}$.
\end{defn}

In particular, the most interesting cases are the following:
\begin{defn}
A {\em fully extended $n$-dimensional framed topological field theory} is a symmetric monoidal functor of $(\infty,n)$-categories with source $\Bord_n^{fr}$.
\end{defn}

\begin{defn}
A {\em fully extended $n$-dimensional oriented topological field theory} is a symmetric monoidal functor of $(\infty,n)$-categories with source $\Bord_n^{or}$.
\end{defn}

\subsection{\texorpdfstring{$n$}{n}-TFT yields \texorpdfstring{$k$}{k}-TFT}
Every fully extended $n$-dimensional (unoriented, oriented, framed) TFT yields a fully extended $k$-dimensional (unoriented, oriented, framed) TFT for any $k\leq n$ by truncation from subsection \ref{sec truncation}.

Note that for $k<n$, we have a map of $k$-fold Segal spaces
$$\PBord_k \longrightarrow \tau_k(\PBord_n)=(\PBord_n)_{\underbrace{\bullet,\dots,\bullet}_{k\textrm{ times}},\underbrace{0,\dots,0}_{n-k\textrm{ times}}}$$
induced by sending $\big(M \hookrightarrow V\times B(\oul{I}), \oul{I}=(I^i_0\leq \cdots \leq I^i_{j_k})_{i=1}^k \big)\in (\PBord_k)_{j_1,\ldots, j_k}$ to 
$$\big( M\times (0,1)^{n-k} \hookrightarrow V\times (0,1)^{n-k}\times B(\oul{I}), \oul{I}, \underbrace{(0,1),\ldots, (0,1)}_{n-k} \big).$$

The completion map $\PBord_n\to \Bord_n$ induces a map on the truncations. Precomposition with the above map yields a map of (in general non-complete) $n$-fold Segal spaces
$$\PBord_k \longrightarrow \tau_k(\PBord_n) \longrightarrow \tau_k(\Bord_n).$$
Recall from \ref{sec truncation} that since $\tau_k(\Bord_n)$ is complete, by the universal property of the completion we obtain a map $\Bord_k\to\tau_k(\Bord_n)$, which is compatible with the symmetric monoidal structure (for both approaches).

\begin{rem}
Note that this map is usually not an equivalence, since completion does not commute with truncation. Moreover, if we equip the bordisms with an orientation or a framing, the image of $\PBord_k$ in $\tau_k(\PBord_n)$ consists of those $n$-oriented or $n$-framed bordisms whose orientation or framing is a stabilization of a $k$-orientation or $k$-framing.
\end{rem}

We conclude that any fully extended $n$-dimensional (unoriented, oriented, framed) TFT with values in a complete $n$-fold Segal space $\C$, $\Bord_n\to\C$ leads to a $k$-dimensional (unoriented, $n$-oriented, $n$-framed) TFT given by the composition
$$\Bord_k\longrightarrow \tau_k(\Bord_n) \longrightarrow \tau_k(\C)$$
with values in the complete $k$-fold Segal space $\tau_k(\C)$.

\bibliographystyle{alpha}
\bibliography{literature}

\end{document}